\tikzset{curve/.style={settings={#1},to path={(\tikztostart)
    .. controls ($(\tikztostart)!\pv{pos}!(\tikztotarget)!\pv{height}!270:(\tikztotarget)$)
    and ($(\tikztostart)!1-\pv{pos}!(\tikztotarget)!\pv{height}!270:(\tikztotarget)$)
    .. (\tikztotarget)\tikztonodes}},
    settings/.code={\tikzset{quiver/.cd,#1}
        \def\pv##1{\pgfkeysvalueof{/tikz/quiver/##1}}},
    quiver/.cd,pos/.initial=0.35,height/.initial=0}
\definecolor{citation}{rgb}{0,.40,.80}
\title{CoHA of Cyclic Quivers and an Integral Form of Affine Yangians}
\author{Shivang Jindal}
\date{\today}
\begin{document}

\maketitle

\begin{abstract}
We calculate the deformed and non-deformed cohomological Hall algebra (CoHA) of the preprojective algebra for the case of cyclic quivers by studying the Kontsevich-Soibelman CoHA and using tools from cohomological Donaldson-Thomas theory. We show that for the cyclic quiver of length $K$, this algebra is the universal enveloping algebra of the positive half of a certain extension of matrix differential operators on $\C^{*}$, while its deformation gives a positive half of an explicit integral form of Guay's Affine Yangian $\ddot{\mathcal{Y}}_{\hbar_1,\hbar_2}(\mathfrak{gl}(K))$. By the main theorems of \cite{botta2023okounkovs} and \cite{schiffmann2023cohomological}, we also determine the Maulik-Okounkov Yangian for the case of cyclic quivers. Furthermore, we explain the construction of factorization coproduct, provide evidence for the strong rationality conjecture, calculate the spherical subalgebra of the non-deformed CoHA for any quiver without loops, recover results about the CoHA of compactly supported semistable sheaves on the minimal resolution of the Kleinian singularity $\C^2/\mathbb{Z}_{K}$ and identify a commutative algebra inside the additive shuffle algebra associated to the cyclic quiver. We end by conjecturally relating the obtained integral form with the algebra defined by Gaiotto-Rapčák-Zhou, in the context of twisted M-theory. 
\end{abstract}

\tableofcontents

\section{Introduction}

Given a quiver $Q$ and potential $W \in \C Q/[\C Q,\C Q]$, where $\C Q$ denotes the path algebra, in \cite{kontsevich2011cohomological} Kontsevich and Soibelman defined a cohomological Hall algebra structure on the critical cohomology \[ \mathcal{A}_{Q,W}:= \bigoplus_{\bd \in \N^{Q_0}} \HH(\mathfrak{M}_{\bd}(Q),\pPhi_{\Tr(W)} \Q_{\mathfrak{M}_{\bd}(Q)}^{\vir}) \] as a mathematical definition of the algebra of BPS states. Here $\pPhi_{\Tr(W)}$ is the vanishing cycle functor associated to the function $\Tr(W)_{\bd}$ on the moduli space of $\bd$ dimensional representations $\mathfrak{M}_{\bd}(Q)$ of the quiver $Q$. The main result of this thesis calculates the algebra and its deformations $\mathcal{A}^{T}_{Q,W}$ for the quiver $\tilde{Q^{K}}$ and potential $\tilde{W^{K}}$ as defined in Example \ref{canonicalcubicpotential}. This choice of quiver with potential, often called the tripled quiver with canonical potential, provides a presentation of the moduli space of representations of the preprojective algebra of the cyclic quiver $Q^{K}$, equipped with an endomorphism, as a global critical locus. This gives rise to new integral forms of affine Yangians and Lie algebras, which are of independent interest. We start by explaining our main motivations, coming from enumerative geometry and the local geometry of the moduli space of objects in 2-Calabi-Yau categories.

\subsection{Noncommutative Donaldson-Thomas theory}

Given any projective 3-Calabi-Yau (CY) variety $X$, indivisible Chern class $\alpha \in \HH^{\textrm{even}}(X,\mathbb{Z})$ and a generic stability condition $\zeta$, the Donaldson-Thomas invariants $\DT^{\zeta}_{\alpha}$ defined in \cite{thomas2001holomorphic} are virtual counts of semistable sheaves on $X$ of given Chern class $\alpha$. They are defined by taking the degree of the virtual fundamental class of the moduli space of sheaves $\mathcal{M}^{\zeta}_{\alpha}(X)$. Later, given any scheme $Y$, Behrend defined a constructible function $\nu_Y \colon Y \rightarrow \C$ and showed that  $\DT^{\zeta}_{\alpha}$ is precisely the weighted Euler characteristic $\chi(\mathcal{M}^{\zeta}_{\alpha}(X), \nu_{\mathcal{M}^{\zeta}_{\alpha}(X)})$ \cite{behrend2005donaldsonthomas}[Theorem 4.18]. When $Y$ can be written as the critical locus of a function $f: \tilde{Y} \rightarrow \C$ where $\tilde{Y}$ is smooth, then there exists the vanishing cycle sheaf $\pPhi_{f} \Q^{\vir}$ which satisfies (see \cite{kiem2016categorification}[2.1] for details) \[\sum (-1)^{i} \dim \HH^{i}(\tilde{Y},\pPhi_f \Q^{\vir}) = \chi(Y,v_{Y}). \] Thus, the cohomology \[\HH(\tilde{Y},\pPhi_f \Q^{\vir})\] could be understood as a categorification of the DT invariant. 

For any 3CY variety $X$, the moduli space $\mathfrak{M}^{\zeta}_{\alpha}(X)$ can rarely be written as a global critical locus. However, by the result of \cite{Toda_2018} analytically locally these moduli spaces can be written as the moduli space of representations of a Jacobi algebra $\Jac(Q,W)$(Definition \ref{jacobialgebra})\footnote{To be more precise, Toda's theorem gives an analytic potential and so we need to consider an analytic Jacobi algebra $\Jac \{Q, W \}$ as defined in \cite{davison2023refined}[Definition 2.7].}. Furthermore, the moduli stack of representations of a Jacobi algebra can be written as the critical locus of a smooth function $\Tr(W)$ on a smooth moduli space $\mathfrak{M}_{\bd}(Q)$. The study of the DT invariants of a non-compact Calabi–Yau threefold via virtual counts of moduli of $\Jac(Q,W)$ modules, where $\Jac(Q,W)$ is the Jacobi algebra derived equivalent to $X$, was initiated by Szendr\H{o}i in the case of the conifold \cite{Szendr_i_2008}. For any quiver $Q$, we can define the tripled quiver $\tilde{Q}$ with the canonical cubic potential $\tilde{{W}}$ (Example \ref{canonicalcubicpotential}). When $X= \C^{3}$ the moduli stack $\Coh_{n}(\C^{3})$ of torsion sheaves of length $n$ can be described as a moduli stack of $n$ dimensional representations of the Jacobi algebra of the tripled Jordan quiver $\tilde{Q_{\Jor}}$ with canonical potential $\tilde{W}_{\Jor}: = x[y,z]$.

The example concerning the main result of this paper captures the geometry of Kleinian surfaces. Let $S_{K} \rightarrow \C^2/\mathbb{Z}_{K+1}$ be the minimal resolution of the quotient singularity where the action of $\Z_{K+1}$ is given by $(x,y) \rightarrow (\omega x, \omega^{-1}y)$ and $\omega^{K+1}=1$ is a primitive root of unity. We then have an equivalence of derived categories
\begin{align} \label{derivedequivalence}
\Db(\Coh(S_{K} \times \C)) \simeq \Db(\Jac(\QTC{K},\WTC{K})) 
\end{align} 
where $Q^K$ is the cyclic quiver with $K+1$ vertices. The Jacobi algebra $\Jac(\QTC{K},\WTC{K})$ is the $3$CY completion of the preprojective algebra $\mathrm{\Pi}_{Q^K}$ \cite{keller2021introduction}.

\subsection{Cohomological Hall algebras of preprojective algebras}

There is a deep relationship between cohomological Donaldson-Thomas Theory of objects in the 3CY completion $\tilde{\mathcal{C}}$ of a 2CY category $\mathcal{C}$ and the Borel-Moore homology of objects in $\mathcal{C}$ which goes under the name of dimensional reduction (\cite{Kinjo_2022}, \cite{davisondeformed}, \cite{davison2016critical}[Appendix]). In particular, the dimensional reduction theorem implies that for any quiver $Q$, there is an isomorphism\footnote{Note that the quotient stack $\mathfrak{M}_{\bd}(\mathrm{\Pi}_Q)$ is not smooth, so here by $\Q^{\vir}$ we mean the shifted complex $\Q[-2(\bd,\bd)]$, where $(\hyphen, \hyphen)$ denotes the Euler form, as defined in Definition \ref{eulerform}.}   
\begin{equation} \label{dimensionreductionexample}
 \HH(\mathfrak{M}_{\bd}(\tilde{Q}),\pPhi_{\Tr(\tilde{W})}\Q^{\vir}_{\mathfrak{M}_{\bd}(\tilde{Q})}) \simeq \HH^{\textrm{BM}}(\mathfrak{M}_{\bd}(\mathrm{\Pi}_Q),\Q^{\vir}) 
\end{equation}
The mixed Hodge structure on the right, i.e. on the BM homology of the stack of representations of the preprojective algebra $\mathrm{\Pi}_Q$, plays an important role in multiple branches of mathematics. In particular, by the Ext quiver construction, the stack of representations of the preprojective algebra \'{e}tale locally models the geometry of the stack of objects in arbitrary 2CY categories possessing good moduli spaces \cite{davison2023purity}. Examples of such moduli spaces include Higgs bundles on smooth projective curves, moduli of local systems on Riemann surfaces, and coherent sheaves on K3 surfaces. The first two examples also make studying this space a valuable tool in the study of non-abelian Hodge theory, while on the other hand, there is an algebra structure on \[ \mathcal{A}_{\mathrm{\Pi}_Q} := \bigoplus_{\bd \in \N^{Q_0}} \HBM(\mathfrak{M}_{\bd}(\mathrm{\Pi}_Q),\Q^{\vir}) \]
which has been defined in the context of equivariant cohomology for the case of Jordan quiver \cite{schiffmann2012cherednik} in the context of AGT conjecture and later defined for arbitrary quivers in \cite{Yang_2018}. This algebra acts on the cohomology of Nakajima quiver varieties by raising operators and, in some sense, is the biggest possible algebra acting on the cohomology of Nakajima quiver varieties, making it an important object in geometric representation theory.
It is shown in \cite{Ren:2015zua}[Appendix A] and \cite{Yang_2019} that up to a sign twist, the isomorphism in (\ref{dimensionreductionexample}) in fact preserves the algebra structure and thus is an isomorphism of algebras. Thus studying $\mathcal{A}_{Q,W}$ completely determines $\mathcal{A}_{\mathrm{\Pi}_Q}$.

\section{Results}

\subsection{Integral matrix $\mathcal{W}_{1+\infty}$ Lie algebras} \label{matrixwLiealgebra} 
Let $\hbar$ be a formal variable and let $D_{\hbar}(\C^{*})$ be the algebra of $\hbar \hyphen$differential operators on $\C^{*}$. It is defined as a unital associative $\C[\hbar]$ linear algebra generated by $z^{\pm 1}, D$ subject to the relations:
\begin{equation}
D z = z(D + \hbar), \ z^{+ 1} z^{-1} = z^{-1} z^{+1} = 1.
\end{equation} 
We will view $D_{\hbar}(\C^{*})$ as a $\C[\hbar]$ linear Lie algebra with the commutator Lie bracket $[\cdot, \cdot]$, coming from the associative algebra structure on $D_{\hbar}(\C^{*})$. A central extension of the Lie algebra $D_{\hbar}(\C^{*})$, after specialization at $\hbar =1$, is called $\mathcal{W}_{1+\infty}$ Lie algebra. The $\mathcal{W}_{1+\infty}$ Lie algebra was first introduced in \cite{18f811f0-ae01-3567-abe3-dc5d50ca3cde}. Its connection with the vertex algebras $\mathcal{W}(\mathfrak{gl}_N)$ for central charge $N$, where $N$ is any natural number, has been developed in \cite{Frenkel_1995} \cite{kac1995representation}. Their matrix 
generalization was considered in \cite{Awata_1995} (see also \cite{strominger2021w}, \cite{eberhardt2019matrixextended} and \cite{Creutzig_2019} for a more physical perspective). 

Let $D_{\hbar}(\C^{*}) \otimes \mathfrak{gl}_K$ be the $\C[\hbar]$ linear associative algebra of $K \times K$ matrices with values in $D_{\hbar}(\C^{*})$, i.e. for any $f(z,D) \otimes X$ and $ g(z,D) \otimes Y $ in $D_{\hbar}(\C^{*}) \otimes \mathfrak{gl}_K $ where $f(z,D),g(z,D) \in D_{\hbar}(\C^{*})$ and $X,Y \in \mathfrak{gl}_K$, the multiplication is given by \[ (f(z,D) \otimes X) (g(z,D) \otimes Y) = f(z,D)g(z,D) \otimes XY. \]

We view $D_{\hbar}(\C^{*}) \otimes \mathfrak{gl}_K$ as a $\C[\hbar]$ linear Lie algebra with the commutator Lie bracket coming from the associative algebra structure on $D_{\hbar}(\C^{*}) \otimes \mathfrak{gl}_K$, i.e. Lie bracket is given by 
\[ [ f(z,D) \otimes X, g(z,D) \otimes Y] = (f(z,D) g(z,D)) \otimes XY - (g(z,D)f(z,D)) \otimes YX  \] for any $f(z,D) \otimes X,g(z,D)  \otimes Y $ in  $D_{\hbar}(\C^{*}) \otimes \mathfrak{gl}_K$. The elements \[T_{k,a}(X)= z^kD^a \otimes X \textrm{ where } k \in \mathbb{Z}, a \geq 0, X \in \mathfrak{gl}_{K} \] form a spanning subset of $D_{\hbar}(\C^{*}) \otimes \mathfrak{gl}_K$. We slightly extend this Lie algebra by defining an integral form.

\begin{defn} \label{integralsubalgebra}
Let \[\mathcal{W}_K \subset (D_{\hbar}(\C^{*}) \otimes \mathfrak{gl}_{K}) \otimes_{\C[\hbar]} \C[\hbar^{\pm 1}] \] be the $\C[\hbar]$ linear subspace spanned by $T_{k,a}(X), X \in \mathfrak{gl}_{K}$ and \[t_{k,a} := T_{k,a}(1)/\hbar \] where $k \in \mathbb{Z}, a \geq 0, X \in \mathfrak{gl}_K$ and by $1$, we mean the identity matrix $\Id \in \mathfrak{gl}_K$. 

\end{defn}
The subspace $\mathcal{W}_K \subset (D_{\hbar}(\C^{*}) \otimes \mathfrak{gl}_{K}) \otimes_{\C[\hbar]} \C[\hbar^{\pm 1}]$ in fact forms a Lie subalgebra (Proposition \ref{integralsubLiealgebra}). 
We remark that the Lie algebra $\mathcal{W}_{K}$ is also considered in \cite{gaiotto2023deformed}. We consider the classical limit as $\hbar \rightarrow 0$ of $\mathcal{W}_K$. 

Let $\O(\mathrm{T}^{*}\C^{*})$ be the ring of functions on $\TT^{*}\C^{*}$. We denote by $\mathfrak{po}(\TT^{*}\C^{*})$, the Lie algebra on $\O(\TT^{*}\C^{*})$, where the Lie bracket is given by the standard Poisson bracket $\{ \hyphen, \hyphen \}$. Let $\O(\TT^{*}\C^{*}) \otimes \mathfrak{sl}_K$ be the Lie algebra with the Lie bracket given by \[ [f \otimes X, g \otimes Y ] = fg \otimes [X,Y]\] for any $X,Y \in \mathfrak{sl}_K$ and $f,g \in \O(\TT^{*}\C^{*})$. 

The Lie algebra $\mathfrak{po}(\TT^{*}\C^{*})$ acts on $\O(\TT^{*}\C^{*}) \otimes \mathfrak{sl}_{K}$ via the Poisson bracket with the first tensor product, i.e. \[f \cdot( g \otimes X):= \{ f,g \} \otimes X, \] for any $f \in \mathfrak{po}(\TT^{*}\C^{*})$ and $g \otimes X \in \O(\TT^{*}\C^{*}) \otimes \mathfrak{sl}_{K}$. Let \[\mathfrak{po}(\TT^{*}\C^{*}) \ltimes (\O(\TT^{*}\C^{*}) \otimes \mathfrak{sl}_{K}) \] be the Lie algebra given by the semidirect product of $\mathfrak{po}(\TT^{*}\C^{*})$ with $\O(\TT^{*}\C^{*}) \otimes \mathfrak{sl}_{K}$. The classical limit $\hbar \rightarrow 0$ of $\mathcal{W}_{K}$ is exactly the Lie algebra $\mathfrak{po}(\TT^{*}\C^{*}) \ltimes (\O(\TT^{*}\C^{*}) \otimes \mathfrak{sl}_{K})$, i.e. There is an isomorphism of Lie algebras \[ (\mathcal{W}_{K})/(\hbar=0) \simeq \mathfrak{po}(\TT^{*}\C^{*}) \ltimes (\O(\TT^{*}\C^{*}) \otimes \mathfrak{sl}_{K}). \]


In Proposition \ref{generatorrelationswkliealgebra}, we present this Lie algebra via generators and relations. Our first main result describes the cohomological Hall algebra $\mathcal{A}_{\QTC{K},\WTC{K}}$ as a universal enveloping algebra of the positive half of the Lie algebra $\mathfrak{po}(\TT^{*}\C^{*}) \ltimes (\O(\TT^{*}\C^{*}) \otimes \mathfrak{sl}_{K})$.
\begin{mainthm}[Theorem \ref{Theorem1}] \label{th}
Let $K \geq 1$. There is an isomorphism of Lie algebras \[ \abps{K} \simeq (\mathfrak{po}(\TT^{*}\C^{*}) \ltimes (\O(\TT^{*}\C^{*}) \otimes \mathfrak{sl}_{K+1}) )^{+} \] between the the affinized BPS Lie algebra $\abps{K}$ for the cyclic quiver (Section \ref{affinizedBPSliealgebra}) and the positive half $(\mathfrak{po}(\TT^{*}\C^{*}) \ltimes (\O(\TT^{*}\C^{*}) \otimes \mathfrak{sl}_{K}) )^{+}  \subset \mathfrak{po}(\TT^{*}\C^{*}) \ltimes (\O(\TT^{*}\C^{*}) \otimes \mathfrak{sl}_{K}) $, defined in Definition \ref{positivehalfofclassicallimit}. This gives an isomorphism of algebras \[ \mathcal{A}^{\chi}_{\QTC{K},\WTC{K}} \simeq 
  \bU((\mathfrak{po}(\TT^{*}\C^{*}) \ltimes(\O(\TT^{*}\C^{*}) \otimes \mathfrak{sl}_{K+1}) )^{+}), \]
where $\chi$ is a sign twist, defined in Section \ref{signtwist}. 
\end{mainthm}

It is also possible to recover the positive half of $\mathcal{W}_K$ itself, without taking the classical limit. To do so, we consider the deformed cohomological Hall algebra. For any torus $T$ acting on the quiver $\QTC{K}$, leaving the potential $\WTC{K}$ invariant, there is a cohomological Hall algebra (Section \ref{hallalgebra}) structure on the equivariant vanishing cycle cohomology 
\[ \mathcal{A}^{T}_{\QTC{K},\WTC{K}} :=  \bigoplus_{\bd \in \N^{K+1}} \HH_{T}(\mathfrak{M}_{\bd}(\QTC{K}),\pPhi_{\Tr(\WTC{K})} \Q_{\mathfrak{M}_{\bd}(\QTC{K})}^{\vir}).\]

Our next main result, calculates this algebra for $T= \C^{*}$, whose action on $\mathfrak{M}_{\bd}(\QTC{K})$, leaves the added loops $\omega_i$ invariant for all $i \in Q_0$. We set up a bit more notation. For any commutative ring $R$ and a $R$ linear Lie algebra $\mathfrak{g}$, its tensor algebra is defined to be
\[ T_R(\mathfrak{g}) := \bigoplus_{m \geq 0} \underbrace{\mathfrak{g} \otimes_R \dots \otimes_R \mathfrak{g}}_{\text{m times }}, \] and correspondingly, the universal enveloping algebra over $R$ is defined to be the quotient 
\[ \bU_{R}(\mathfrak{g}) := T_R(\mathfrak{g})/ \langle ab-ba - [a,b] \mid a,b \in \mathfrak{g} \rangle. \] 


\begin{mainthm}[Theorem \ref{Theorem2}]
For $K \geq 1$, let $\C^{*}$ act on representations of the tripled quiver $\QTC{K}$ by acting on all the original arrows $a$ with weight $1$, all opposite arrows $a^*$ with weight $-1$ and with weight $0$ on added loops $\omega_i, i \in [0,K]$ (Section \ref{bpsLiealgebracyclic}). Then there is an isomorphism of $\C[\hbar]$ linear Lie algebras \[\widehat{\mathfrak{g}}^{\BPS, \C^{*}}_{\QTC{K},\WTC{K}} \simeq (\mathcal{W}_{K+1})^{+} \] between the deformed affinized BPS Lie algebra, defined in the Section \ref{affinizedBPSliealgebra} and the positive half of an integral form of differential operators on $\C^{*}$ valued in matrices, defined in Defintion \ref{positivehalfofdeformedmatrixdifferentialoperators}. This gives an isomorphism of $\C[\hbar]$ linear algebras \[ \mathcal{A}^{\C^{*},\chi}_{\QTC{K},\WTC{K}} \simeq 
  \bU_{\C[h]}( (\mathcal{W}_{K+1})^{+})  \] where $\chi$ is a sign twist, defined in Section \ref{signtwist}. \end{mainthm} Next, we cansider the action of a larger torus $\T = \C^{*} \times \C^{*}$, which acts on the added loops $\omega_i$ for all $i \in Q_0$ non-trivially. This gives rise to quantum groups. 
\subsection{Integral form of Yangians}
Given a semisimple Lie algebra $\mathfrak{g}$, in \cite{doi:10.1142/9789812798336_0013} Drinfeld defined the Yangian $\mathcal{Y}_{\hbar}(\mathfrak{g})$ as a deformation of the universal enveloping algebra $\bU(\mathfrak{g}[x])$ of the polynomial current Lie algebra $\mathfrak{g}[x]$. Yangians form a family of quantum groups related to rational solutions of classical Yang-Baxter equations. For any quiver $Q$, in \cite{maulik2018quantum}, the authors defined the Yangian $\textbf{Y}^{\textrm{MO}}_{Q}$ by geometrically defining $R$ matrices using stable envelopes on the singular cohomology of Nakajima quiver varieties and applying the $FRT$ formalism to construct quantum groups. One of Maulik and Okounkov's motivations was to realise the Quantum cohomology ring as a maximal commutative subalgebra of the Yangian. From the calculations in \cite{MR2525779} for the quiver variety for cyclic quivers, it was clear that one needs slightly larger algebra than the usual affine Yangians for the case of cyclic quivers. They conjectured a strong relation between the quantum cohomology of the Nakajima quiver varieties and the Bethe subalgebras of $\textbf{Y}^{\textrm{MO}}_{Q}$. However, the algebras $\textbf{Y}^{\textrm{MO}}_{Q}$ are not known for cases beyond ADE and Jordan quivers. The computation for the Jordan quiver occupies much of the latter half of \cite{maulik2018quantum}, where it is shown to be isomorphic to the affine Yangian $\ddot{\mathcal{Y}}_{\hbar_1,\hbar_2}(\mathfrak{gl}(1))$ of $\widehat{\mathfrak{gl}(1)}$ and used in their proof of AGT conjecture. For the ADE quivers $Q$, a central quotient of $\mathbf{Y}^{\mathrm{MO}}_{Q}$ is shown to be isomorphic to the Yangian $\mathcal{Y}_{\hbar}(\mathfrak{g}_{Q})$, as defined by Drinfeld \cite{mcbreenthesis}. 

For any quiver with potential $Q,W$, the cohomological Hall algebras are also known to satisfy a similar property, i.e. there is a Lie algebra $\mathfrak{g}^{\BPS,T}_{Q,W}$ such that $\mathcal{A}^{T}_{Q,W} \simeq \Sym_{\HH_{T}(\pt)}(\mathfrak{g}^{\BPS,T}_{Q,W}[u])$ as graded vector spaces \cite{davison2020cohomological}, allowing us to think of $\mathcal{A}^{T}_{Q,W}$ as generalized Yangians. For particular choices of torus, it is shown that for ADE quivers, there is an isomorphism of algebras $\mathcal{A}^{\T}_{\tilde{Q},\tilde{W}} \simeq Y^{+}_{\hbar}(\mathfrak{g}_{Q})$(\cite{Yang_2018}+ \cite{schiffmann2017cohomological}) and for Jordan quiver, there is an isomorphism of algebras $\mathcal{A}^{\T}_{\tilde{Q_{\Jor}},\tilde{W_{\Jor}}} \simeq \ddot{\mathcal{Y}}^{+}_{\hbar_1,\hbar_2}(\mathfrak{gl}(1))$ by (\cite{davison2022affine}+ \cite{rapcak2023cohomological}) and \cite{schiffmann2012cherednik} where $+$ denotes the positive half. Furthermore, for all tripled quivers with canonical potential $\tilde{Q},\tilde{W}$, there is an isomorphism of algebras 
\[ \textbf{Y}^{\textrm{MO}, +}_{Q} \simeq \mathcal{A}^{T}_{\tilde{Q},\tilde{W}}\] as proved in \cite{botta2023okounkovs} and isomorphism \[ \textbf{Y}^{\textrm{MO}, -}_{Q} \simeq \mathcal{A}^{T,\tilde{\mathcal{N}}}_{\tilde{Q},\tilde{W}}\] is proved in \cite{schiffmann2023cohomological}, where $\mathcal{A}^{T,\tilde{\mathcal{N}}}_{\tilde{Q},\tilde{W}}$ is a nilpotent cohomological Hall algebra. Here, $T$ is an appropriately chosen torus. Strictly speaking, Maulik-Okounkov Yangians are only defined with respect to $ T$-equivariant cohomology, where $T$ is some torus that scales the natural symplectic form on the Nakajima quiver varieties associated to the quiver $Q$, non-trivially. 

We now consider the cyclic quiver $Q^{K}$ and choose $\T = \C^{*} \times \C^{*}$ to be a 2 torus. The spherical subalgebra of the localized $K$-theoretic version of the cohomological Hall algebra \cite{pădurariu2019ktheoretic} of the preprojective algebra, defined by considering $K$ theory instead of cohomology, was considered in \cite{neguţ2015quantum} for the case of cyclic quivers. It is shown to be isomorphic to the positive half of the quantum toroidal algebra $\ddot{\bU}_{q,t}(\mathfrak{gl}(K))$. Its rational counterpart, the affine Yangian, has been defined in \cite{GUAY2007436} to be a $\C[\hbar_1,\hbar_2]$ algebra $\ddot{\mathcal{Y}}_{\hbar_1,\hbar_2}(\mathfrak{gl}(K))$ for $K >2$ as a deformation of a central extension of $\bU(\mathfrak{sl}_{K}[u^{\pm},v])$. The case when $K=2$ was first defined by Kodera in \cite{koderayangian} and also considered by Bershtein-Tsymbaliuk in \cite{Bershtein_2019}. A specialization of this algebra at $\hbar_1=\hbar_2$ has been shown to be isomorphic to a spherical subalgebra of $\mathcal{A}^{\C^{*}}_{\QTC{K},\WTC{K}}$ where $\C^* \subset \T$ acts by scaling the arrows with the same weight in \cite{yang2018pbw}. However $\mathcal{A}^{\T}_{\tilde{Q},\tilde{W}}$ is not spherically generated and this makes computing $\mathcal{A}^{\T}_{\tilde{Q},\tilde{W}}$ essentially a new problem.  In Proposition \ref{Isomorphismloc} implies that $\mathcal{A}^{\T}_{\QTC{K},\WTC{K}} \otimes_{\HH_{\T}(\pt)} \mathrm{Frac}(\HH_{\T}(\pt))\simeq \ddot{\mathcal{Y}}_{\hbar_1,\hbar_2}(\mathfrak{gl}(K)) \otimes_{\HH_{\T}(\pt)} \mathrm{Frac}(\HH_{\T}(\pt))$. Motivated by this, we slightly enlarge the algebra by including the non-spherically generated part. Note that our only addition is to add elements $\mathfrak{K}^{(r)}_{\pm}$ to the existing definition of Affine Yangian. We use the conventions in \cite{Bershtein_2019}. We define 
\begin{defn}[CoHA Affine Yangian] \label{cohayangiandefn}
Assume $n \geq 2$. Let $\mathcal{Y}^{(n), \CoHA}_{\hbar_1,\hbar_2}$ be the $\mathbb{C}[\hbar_1,\hbar_2]$ algebra generated by $X^{\pm}_{i,r},H_{i,r}$ for any $i \in \mathbb{Z}/n\mathbb{Z}, r \in \Z_{\geq 0}$ and $\mathfrak{K}_{+}^{(r)}, \mathfrak{K}_{-}^{(r)}$ for $r \in \Z_{\geq 0}$, with the relations
\begin{subequations}
\begin{align}
    [X^{+}_{i,r},X^{-}_{j,s}] \tag{R0}\label{R0} &= \delta_{i,j} H_{i,r+s} \\  [H_{i,r},H_{j,s}] &= 0 \tag{R1}\label{R1}
\end{align} 
\end{subequations} for any $n \geq 2$. When $n >2$, 

\begin{subequations}
\begin{align}
     [X_{i,r+1}^{\pm},X^{\pm}_{j,s}] - [X^{\pm}_{i,r},X^{\pm}_{j,s+1}] &= -m_{ij}(\hbar_1+\hbar_2/2)[X^{\pm}_{i,r},X^{\pm}_{j,s}] \pm a_{ij} \hbar_2/2 \{X^{\pm}_{i,r},X^{\pm}_{j,s} \}  \tag{R2} \label{R2}\\ 
    [H_{i,r+1},X^{\pm}_{j,s}] - [H_{i,r},X^{\pm}_{j,s+1}] &= -m_{ij} (\hbar_1+\hbar_2/2)[H_{i,r},X^{\pm}_{j,s}] \pm a_{ij} \hbar_2/2 \{H_{i,r},X^{\pm}_{j,s} \} \tag{R3} \label{R3} \\
    [X^{\pm}_{i,r},X^{\pm}_{j,s}] &= 0 \ \textrm{ for all } \ |i-j|> 1  \tag{R4} \label{R4} \\
    \sum_{\sigma \in S_2} [X^{\pm}_{i,r_{\sigma(1)}},[X^{\pm}_{i,r_{\sigma(2)}},X^{\pm}_{i\pm1,s}]] &= 0 \ ; \  \ [H_{i,0},X^{\pm}_{j,s}] = \pm a_{ij} X^{\pm}_{j,s} \tag{R5} \label{R5}
\end{align} \end{subequations}\vspace{-7mm}\begin{align} \tag{Int1} \label{Int1}
(\hbar_1)(\hbar_1+\hbar_2)\mathfrak{K}^{(r)}_{\pm} = \mathrm{T}^r \left(\sum_{i \in \mathbb{Z}/n\mathbb{Z}} [X^{\pm}_{i,0},[X^{\pm}_{i+1,1}, [X^{\pm}_{i+2,0},[ \cdots, X^{\pm}_{i+n-1,0}]]]] \right) \nonumber \\ -\hbar_2 \mathrm{T}^{r}\left(\sum_{i \in \mathbb{Z}/n\mathbb{Z}} X^{\pm}_{i,0} \left([X^{\pm}_{i+1,0},[X^{\pm}_{i+2,0},[\cdots, X^{\pm}_{i+n-1,0}]]] \right))\right) \nonumber
\end{align}for any $i,j \in \mathbb{Z}/n \mathbb{Z}$ and $r,s \in \mathbb{Z}_{\geq 0}$ where $m_{ij}=-\delta_{i+1,j}+\delta_{i,j+1}$, $a_{ij}= 2 \delta_{i,j}-\delta_{i,j+1}-\delta_{i,j-1}$. Here $\mathrm{T}$ is an operator defined so that $\mathrm{T}(X^{\pm}_{i,r}) := \pm (X^{\pm}_{i,r+1}), \mathrm{T}(H_{i,r})=0$ and $\mathrm{T}$ is a derivation, i.e. $\mathrm{T}(ab) := \mathrm{T}(a)b+a \mathrm{T}(b)$ for any $a,b$. By the curly brackets, we mean $\{a,b \}:= ab+ba$ and finally, $S_2$ is the symmetric group in $2$ elements.  

When $n=2$, we have the following relations:
\begin{equation}\tag{R2.1}\label{R2.1}
  [X^\pm_{i,r+1},X^\pm_{i,s}]-[X^\pm_{i,r},X^\pm_{i,s+1}]=\pm h_2\{X^\pm_{i,r},X^\pm_{i,s}\},
\end{equation}
\begin{equation} \tag{R2.2} \label{R2.2}
\begin{split}
  & [X^\pm_{i,r+2},X^\pm_{j,s}]-2[X^\pm_{i,r+1},X^\pm_{j,s+1}]+[X^\pm_{i,r},X^\pm_{j,s+2}]=\\
  & \hbar_1(\hbar_1+\hbar_2)[X^\pm_{i,r},X^\pm_{j,s}]\mp \hbar_2(\{X^\pm_{i,r+1},X^\pm_{j,s}\}-\{X^\pm_{i,r},X^\pm_{j,s+1}\})\ \mathrm{for}\ j\ne i,
\end{split}
\end{equation}
\begin{equation}\tag{R3.1}\label{R3.1}
  [H_{i,r+1},X^\pm_{i,s}]-[H^\pm_{i,r},X^\pm_{i,s+1}]=\pm h_2\{H_{i,r},X^\pm_{i,s}\},
\end{equation}
\begin{equation}\tag{R3.2}\label{R3.2}
\begin{split}
  & [H_{i,r+2},X^\pm_{j,s}]-2[H_{i,r+1},X^\pm_{j,s+1}]+[H_{i,r},X^\pm_{j,s+2}]=\\
  & \hbar_1(\hbar_1+\hbar_2)[H_{i,r},X^\pm_{j,s}]\mp \hbar_2(\{H_{i,r+1},X^\pm_{j,s}\}-\{X_{i,r},X^\pm_{j,s+1}\})\ \mathrm{for}\ j\ne i,
\end{split}
\end{equation}
\begin{equation}\tag{R4.1}\label{R4.1}
  [H_{i,0},X^\pm_{j,s}]=\pm a_{i,j} X^\pm_{j,s},\ \
  [H_{i,1},X^\pm_{i+1,s}]=\mp (2H^\pm_{i+1,s+1}+\hbar_2\{H_{i,0},X^\pm_{i+1,s}\}),
\end{equation}
\begin{equation}\tag{5.1}\label{R5.1}
  \underset{r_1,r_2,r_3}\Sym [X^\pm_{i,r_1},[X^\pm_{i,r_2},[X^\pm_{i,r_3},X^\pm_{i+1,s}]]]=0.
\end{equation}
\begin{align} \tag{Int2} \label{Int2}
(\hbar_1)(\hbar_1+\hbar_2)\mathfrak{K}^{(r)}_{\pm} = \mathrm{T}^r \left([X^{\pm}_{0,0},X^{\pm}_{1,1}]+ [X^{\pm}_{1,0},X^{+}_{0,1}] \right) \nonumber \\ -\hbar_2 \mathrm{T}^{r}\left(X^{\pm}_{0,0}X^{+}_{1,0}+ X^{+}_{1,0}X^{+}_{0,0}\right)  \nonumber
\end{align}
\end{defn}

Clearly $\mathcal{Y}^{(n), \CoHA}_{\hbar_1,\hbar_2} \otimes_{\C[\hbar_1,\hbar_2]} \C[\hbar_{1}^{\pm 1},(\hbar_1+\hbar_{2})^{\pm 1}] \simeq \ddot{\mathcal{Y}}_{\hbar_1,\hbar_2}(\mathfrak{gl}(n)) \otimes_{\C[\hbar_1,\hbar_2]} \C[\hbar_{1}^{\pm 1},(\hbar_1+\hbar_{2})^{\pm 1}]$, since we have only added elements $\mathfrak{K}^{(r)}_{\pm}$ to the originial definition of $\ddot{\mathcal{Y}}_{\hbar_1,\hbar_2}(\mathfrak{gl}(K))$ which by relations (\ref{Int1}) and (\ref{Int2}), after multiplication by $(\hbar_1)(\hbar_1+\hbar_2)$ belong to $ \ddot{\mathcal{Y}}_{\hbar_1,\hbar_2}(\mathfrak{gl}(n))$. We then have

\begin{mainthm}[Theorem \ref{equivcoha}]
For $K \geq 1$, let $\T = \C^{*} \times \C^{*}$ be the 2-torus which acts on representations of the tripled quiver $\QTC{K}$ by scaling all the original arrows $a$ with weight $(1,0)$, all opposite arrows $a^*$ with weight $(0,1)$ and added loops $\omega_i, i \in [0,K]$ with weight $(-1,-1)$ (Section \ref{bpsLiealgebracyclic}). Then we have an isomorphism of $\C[\hbar_1,\hbar_2]$ algebras
\[\mathcal{A}^{\T,\chi}_{\QTC{K},\WTC{K}} \simeq \mathcal{Y}^{(K+1),+ , \CoHA}_{-\hbar_2,\hbar_1+\hbar_2}\] where $\mathcal{Y}^{(K+1),+ , \CoHA}_{-\hbar_2,\hbar_1+\hbar_2} \subset \mathcal{Y}^{(K+1), \CoHA}_{-\hbar_2,\hbar_1+\hbar_2}$ is the subalgebra generated by $X^{+}_{i,r}$ and $\mathfrak{K}^{(r)}_{+}$ for $i \in [0,K], r \geq 0$ and $\chi$ is a sign twist, defined in Section \ref{signtwist}. 
\end{mainthm}
Let $\overline{\mathcal{Y}}^{n, \CoHA}_{\hbar_1,\hbar_2}$ be the subalgebra of $ \mathcal{Y}^{n,\CoHA}_{\hbar_1,\hbar_2}$ which is generated by generators $\langle \mathfrak{K}^{(r)}_{+}, X^{\pm}_{i,r}, H_{i,r} \mid i \in \mathbb{Z}/n\mathbb{Z}, r \geq 0 \rangle$ (So we only exclude the generators $\mathfrak{K}^{(r)}_{-}$ for $r \geq 0$). Let $\overline{\mathcal{Y}}^{n, \CoHA,\mathfrak{e}}_{\hbar_1,\hbar_2} = \overline{\mathcal{Y}}^{n, \CoHA}_{\hbar_1,\hbar_2}[\langle k_{i,r} \rangle]$ be the extended algebra, where we have added central elements $\langle k_{i,r} \mid i \in \mathbb{Z}/n\mathbb{Z}, r \geq 0 \rangle$ to the list of generators. Then we have
\begin{maincoro}[Theorem \ref{moyangianconjecture}] \label{corollarymo}
For $K \geq 1$, the Maulik-Okounkov Yangian $\mathbf{Y}^{\mathrm{MO},\mathbb{T}}_{Q^{K}}$ is isomorphic to an extended integral form of Guay's affine Yangian. i.e. There is an isomorphism of $\C[\hbar_1,\hbar_2]$ linear algebras  \[ \mathbf{Y}^{\mathrm{MO},\mathbb{T}}_{Q^K} \simeq \overline{\mathcal{Y}}^{(K+1), \CoHA,\mathfrak{e}}_{-\hbar_2,\hbar_1+\hbar_2}. \]
\end{maincoro}


\subsubsection*{Proof Strategy for Theorems A, B, C} 
We remark that although by taking the classical limit, Theorem C $\implies$ Theorem B $ \implies$ Theorem A, to prove them, we first prove Theorem A. There exists a cocommutative coproduct on $\mathcal{A}^{\chi}_{\QTC{K},\WTC{K}}$ which is compatible with the cohomological Hall algebra structure on $\mathcal{A}^{\chi}_{\QTC{K},\WTC{K}}$ (Section \ref{affinizedBPSliealgebra}). By the Milnor-Moore theorem, $\mathcal{A}^{\chi}_{\QTC{K},\WTC{K}} \simeq \bU(\mathcal{P})$ where $\mathcal{P}$ is the Lie algebra of the primitive elements with respect to this coproduct. We refer to this Lie algebra as the affinized BPS Lie algebra $\abps{K}$. There exists a filtration, referred to as the perverse filtration $\mathfrak{P}$ on $\mathcal{A}^{\chi}_{\QTC{K},\WTC{K}}$, given by a decomposition type theorem. The associated graded algebra with respect to this filtration is isomorphic to $\Sym(\mathfrak{g}^{\BPS}_{\QTC{K},\WTC{K}}[u])$, where $\mathfrak{g}^{\BPS}_{\QTC{K},\WTC{K}}$ is the non-affinized BPS Lie algebra (Section \ref{perversefiltrationandbps}). This implies that, as a vector space $\abps{K} \simeq \mathfrak{g}^{\BPS}_{\QTC{K},\WTC{K}}[u]$, but not as a Lie algebra (Section \ref{bpsLiealgebracyclic}). With the help of an action of the three-dimensional Heisenberg Lie algebra (Section \ref{HeisLiealgebra}), we show that a much smaller subspace of $\abps{K}$, and the relations among them, determine the affinized BPS Lie algebra. Finally, to compute the relations, we construct a geometric action of $\mathcal{A}^{\chi}_{\QTC{K},\WTC{K}}$ on the cohomology of the equivariant Hilbert scheme $\Hilb^{\Z_{K+1}}(\C^2)$ and the cohomology of Hilbert scheme of points on the minimal resolution $\Hilb(S_K)$ (Chapter \ref{chapter: quivervarieties}). By identifying the induced action of $\mathfrak{g}^{\BPS}_{\QTC{K},\WTC{K}}$ with the action of the affine Lie Algebra and the infinite-dimensional Heisenberg Lie algebra due to Nakajima, we prove Theorem A in Section \ref{sectionaffinized}. 

To calculate $\mathcal{A}^{\T,\chi}_{\QTC{K},\WTC{K}}$, we use the embedding into the cohomological Hall algebra $\mathcal{A}^{\T,\chi}_{\QTC{K}}$ of the same quiver but without potential. The algebra structure on $\mathcal{A}^{\T,\chi}_{\QTC{K}}$ can be explicitly described in terms of a shuffle product (Section \ref{injectiontoshuffle}). The calculation of the undeformed version gives a minimal set of generators of $\mathcal{A}^{\T,\chi}_{\QTC{K},\WTC{K}}$ (Proposition \ref{imageofcoha}). We find the image of these generators in $\mathcal{A}^{\T,\chi}_{\QTC{K}}$ (Proposition \ref{imageimaginary}). This allows us to construct a homomorphism of algebras from the positive half of Guay's affine Yangian to the algebra $\mathcal{A}^{\T,\chi}_{\QTC{K},\WTC{K}}$ (Section \ref{mapyangian}). Finally, with the help of the PBW theorem for $\mathcal{A}^{\T,\chi}_{\QTC{K},\WTC{K}}$, we prove that this morphism is an the isomorphism, proving Theorem C (Proposition \ref{Isomorphismloc} and Theorem \ref{equivcoha}). Finally, we take the classical limit $\hbar_1+ \hbar_2 \rightarrow 0$ to calculate the $\C^*$ deformed algebra $\mathcal{A}^{\C^{*},\chi}_{\QTC{K},\WTC{K}}$, proving Theorem B.
\subsection{Spherical subalgebra }

The spherical subalgebra, i.e, the subalgebra generated by the smallest possible dimension vectors of the deformed version of the cohomological Hall algebra, has been studied extensively in \cite{yang2017cohomological}. An important tool used in \cite{yang2017cohomological} is embedding into the cohomological Hall algebra of quiver without potential. However, no such injection exists in the non-deformed case (Section \ref{notinjection}). As an application of general tools from \cite{davison2022affine}, developed in Section \ref{HeisLiealgebra}, we can precisely calculate the spherical part for any quivers without loops. 

\begin{mainthm}[Theorem \ref{sphericalsubalgebra}] \label{thm:mainA}
Let $Q$ be any quiver without loops. Then there is an isomorphism of algebras \[\bU(\mathfrak{n}^{+}_{Q}[D]) \simeq \mathcal{SA}^{\chi}_{\tilde{Q},\tilde{W}}\] where $\mathfrak{n}_{Q}^{+}$ is the positive half of the Kac-Moody Lie algebra (Definition \ref{negativehalfLiealgebra}) associated with $Q$, $\mathfrak{n}_{Q}^{+}[D]$ is the current Lie algebra (Definition \ref{currentliealgebra}), $\mathcal{S}\mathcal{A}^{\chi}_{\tilde{Q},\tilde{W}} \subset \mathcal{A}^{\chi}_{\tilde{Q},\tilde{W}}$ is the subalgebra generated by $\mathcal{A}^{\chi}_{\tilde{Q},\tilde{W}, \delta_i}$ for $i \in Q_0$ and $\chi$ is a sign twist, defined in Section \ref{signtwist}.  
\end{mainthm}

We remark that for quivers without loops, $\mathcal{SA}_{\tilde{Q},\tilde{W}}^{\chi} = \mathcal{A}_{\tilde{Q},\tilde{W}}^{\chi}$ if and only if the quiver is an orientation of a finite type ADE Dynkin quiver \cite{davison2022affine}[Proposition 5.7].

\subsection{Strong rationality conjecture}
Given $f\colon  X \rightarrow Y$ a three-fold flopping contraction with exactly one exceptional fibre, which is isomorphic to a rational curve $C$, one can associate numerous invariants. One is the Gopakumar-Vafa invariant $n_{C,r}$. In \cite{davison2023refined}, the author gives a cohomological categorification $\BPS_{r,d}$ of these numbers in terms of BPS cohomology of the contraction algebra associated with the flopping curve $C$, where $d$ is the Euler characteristic of the coherent sheaves on $X$. The invariance of BPS invariants under the choice of Euler characteristic for $3CY$ varieties has been conjectured in the work of Pandharipande and Thomas \cite{Pandharipande_2009}. By analogy, the conjecture \cite{davison2023refined}[Conjecture 5.9] is the statement that this invariance lifts to an isomorphism of BPS cohomology 
\[ \BPS_{r,d} \simeq \BPS_{r,d+1}.\]

Here, the isomorphism is constructed by defining a Hecke correspondence from the cohomological Hall algebra. In particular, it implies the strong rationality conjectures in  \cite{toda2022gopakumarvafa}\cite{Pandharipande_2009} for flopping curves. We can consider a similar Hecke correspondence for the case when $X = S_K \times \C$ where $S_K \rightarrow \C^2/\Z_{K+1}$ is the minimal resolution of a Kleinian singularity. Then the proof of Theorem C implies that the Hecke correspondence indeed gives an isomorphism between BPS cohomology, as we vary $d$.  

\begin{maincoro}[Corollary \ref{strongrationalityconjectureproof}]
    The cohomological lift of the strong rationality conjecture is true for $S_K \times \C$. 
\end{maincoro}

\subsection{Cohomological Hall algebra of Kleinian  surfaces}
Given any smooth quasi-projective surface $S$, in \cite{kapranov2022cohomological}, the authors defined a cohomological Hall algebra structure on the Borel-Moore homology of the stack of compactly supported sheaves on $S$. Furthermore, this structure was upgraded to the categorical level in \cite{porta2022twodimensional}. Given any finite subgroup $G \subset \SL_2(\C)$, let $S_G \rightarrow \C^2/G$ be the minimal resolution. Then in \cite{diaconescu2020mckay}, the authors study the cohomological Hall algebra $\mathcal{A}^{\omega}_{\mu}(S_G)$ of $\omega$-semistable properly supported sheaves on $S_G$ with fixed slope $\mu$. They show that there is an isomorphism of algebras 
\[ \mathcal{A}^{\omega}_{\mu}(S_G) \simeq \mathcal{A}^{\zeta}_{0}(\mathrm{\Pi}_{Q_G})  \] where $Q_G$ is the affine ADE quiver associated to the group $G$ by the McKay correspondence. Here $\mathcal{A}^{\zeta}_{0}(\mathrm{\Pi}_Q)$ is the cohomological Hall algebra of slope $0$, $\zeta$-semistable representations of the preprojective algebra $Q$, where the stability condition $\zeta$ is explictly determined in terms of $\omega$ and $\mu$. Now assume $G= \mathbb{Z}_{K+1}$. The main result and the properties of the affinized BPS Lie algebra allow us to explicitly determine $\mathcal{A}^{\zeta'}_{\mu'}(\mathrm{\Pi}_{Q^{K}})$  as a subalgebra of $\mathcal{A}_{\mathrm{\Pi}_{Q^{K}}}$,  for any choice of $\zeta'$ and $\mu'$. In particular, as a corollary, we recover the calculation of the cohomological Hall algebra of $0$-dimensional sheaves $\mathcal{A}(S_K)$ as in \cite{mellit2023coherent} and of $\mathcal{A}^{\omega}_{\mu}(S_K)$ as in \cite{diaconescu2020mckay}. 
\begin{maincoro}

\begin{enumerate}
\item Let $W(S_K)$ be the Lie algebra defined in the Corollary (\ref{W_sLiealgebra}). We then have an isomorphism of algebras (Corollary (\ref{W_sLiealgebra}))
\[\mathcal{A}(S_K) \simeq \bU(W(S_K)).\]
\item Let $W^{\omega}_{\mu}(S_K)$ be the Lie algebra as defined in the Definition \ref{Womegamuliealgebra}. We then have an isomorphism of algebras (Corollary \ref{womegamuLiealgebra})
\[ \mathcal{A}^{\omega}_{\mu}(S_K) \simeq \bU(W^{\omega}_{\mu}(S_K)).\]
\end{enumerate}
\end{maincoro}

\subsection{Shuffle algebra realization and commutative subalgebras}
It is an interesting problem to construct the realization of quantum groups in terms of a shuffle algebra, which is an algebra structure on the symmetric polynomial ring with a shuffle product. This idea dates back to the work of Enriques in \cite{enriquez2000quantization}. In \cite{neguţ2022shuffle}, the localized K-theoretic Hall algebras are realized in terms of polynomials in the shuffle algebra satisfying Feigin-Odesskii type wheel conditions\footnote{In some references, the shuffle algebra is defined to be the subalgebra of symmetric Laurent polynomials satisfying the wheel conditions. See \cite{neguţ2022shuffle}[Definition 2.9].}. However, determining the explicit integral form is far from being known. In \cite{neguţ2022integral}, the author described the seminilpotent K-theoretic Hall algebra of the preprojective algebra of the Jordan quiver as a shuffle algebra with polynomials satisfying certain divisibility conditions, giving a shuffle realization of
an integral form of $\ddot{\bU}_{q,t}(\mathfrak{gl}(1))$. In \cite{zhao2019feiginodesskii}, the author gives some necessary but not sufficient divisibility conditions in the shuffle algebra associated to the surface to be in the image of the K-theoretic Hall algebra of any smooth projective surface. In \cite{Tsymbaliuk_2017}, the author describes an integral form of the Yangian of $\mathfrak{sl}_{n+1}$ by divisibility conditions. 

In Section \ref{imageimaginary}, we describe the image of $\mathcal{A}^{\T,\chi}_{\tilde{Q^{K}},\tilde{W^K}}$ in the shuffle algebra $\mathcal{A}^{\T,\chi}_{\tilde{Q^{K}}}$ as a subalgebra generated by certain specific polynomials. We remark that the additive shuffle algebra (without wheel conditions) considered in \cite{Bershtein_2019}[Section 5.2] is isomorphic to the algebra $\mathcal{A}^{\T, \chi}_{\tilde{Q^{K}}}$ after localization (Remark \ref{ontwoshufflealgebras}). 

We now state an interesting corollary of this embedding. By computing the image of a natural trivial Lie subalgebra of $\mathfrak{g}^{\BPS,\T}_{\QTC{K},\WTC{K}}$, we define elements $L_n, n \geq 1$ in the shuffle algebra (Corollary \ref{elementsLn}) such that 
\begin{maincoro}[Corollary \ref{elementsLn}]
For all $n \geq 1$, let $A^{\T}_K \subset \mathcal{A}^{\T,\chi}_{\QTC{K}}$ be the subalgebra generated by $L_n$. Then $A^{\T}_K \simeq \C[L_1,L_2,\cdots]$ is a polynomial subalgebra. 
\end{maincoro}
 For the quantum toroidal algebra $\ddot{\bU}_{q,t}(\mathfrak{gl}(1))$, a commutative polynomial subalgebra is defined in \cite{Feigin_2011}[Theorem 2.4] and its action on the $K$ theory of $\Hilb^n(\C^2)$ has been studied in relation to the theory of Macdonald polynomials. Its rational counterpart, has been defined in \cite{TTsymbaliuk_2017}[Theorem 7.7], and it can be seen as the image of the commutative Lie algebra $\mathfrak{g}^{\BPS,\T}_{\tilde{Q_{\Jor}},\tilde{W}_{\Jor}}$ in the shuffle algebra from \cite{davison2022affine}[Section 3.13]. 
 
\subsection{Algebras in physics literature}
In \cite{gaiotto2023deformed}, for every $K \neq 2$, the authors construct certain double deformed current algebras $A^K$ as the algebra of gauge-invariant local observables on M2 branes, in the twisted M-theory background. They further construct a $\C[\hbar_1,\hbar_2]$ linear algebra $Y^{K}$ by gluing two copies of $A^{K}$. They show that $Y^K$ is also an integral form of the affine Yangian $\ddot{\mathcal{Y}}_{\hbar_1,\hbar_2}(\mathfrak{gl}(K))$ and after quotienting by a central element, this algebra deforms $\mathcal{W}_K$. We expect that (see Section \ref{speculations}) 
\begin{Conjecture}
For any $K > 2$, there is an isomorphism of algebras
\[ Y^{K} \simeq \mathcal{Y}^{(K),\CoHA}_{\hbar_1,\hbar_2}. \]

\end{Conjecture}
This would also explain the expectation in \cite{costello2017holography}[Section 1.8] concerning the embedding of the cohomological Hall algebra in Costello's deformed double current algebra $\O_{\hbar_1}(\mathfrak{M}_{\hbar_2}(\bullet, K))$, since it is shown in \cite{gaiotto2023deformed}[Theorem 2] that Costello's algebra is isomorphic to a particular integral form of the algebra $A^{K}$ which contains the positive half of $Y^{K}$ \cite{gaiotto2023deformed}[Definition 7.01].

\subsection{Acknowledgements}
I want to thank my parents and family for their support and my advisor Ben Davison for developing and teaching me the technology to study cohomological Hall algebras. I thank Andrei Negu\cb{t}, Oleksandr Tsymbaliuk for answering my emails and Francesco Sala, Yehou Zhou, Eric Vasserot and Olivier Schiffmann for explaining their work. I would especially like to thank Misha Bershtein for answering my questions about Quantum Groups. I would also like to thank Olivier Schiffmann for introducing me to this area of research. I thank Lucas Buzaglo, Lucien Hennecart, Sebastian Schlegel Mejia, Tommasso Maria Botta, Woonam Lim, Sarunas Kaubrys, and Sonja Klisch for helping me out with annoying precision. I thank Dinakar Muthiah and Alexander Shapiro for helpful comments on my PhD thesis on which this paper is based on. 
\subsection{Notations} \label{notations}
\begin{itemize}
    \item All schemes/stacks are considered over $\C$. 
    \item For any algebraic group $G$, $\text{BG} := \pt/G$ is the stack theoretic quotient, while $\HH_{G}:= \HH(\text{BG},\Q)$.

\end{itemize}

\section{Quivers and Moduli Spaces}

In this chapter, we will introduce our main object of study. We first set up some notations.

A quiver is a directed graph $Q = (Q_0,Q_1,s,t)$, where $Q_0,Q_1$ are finite sets and $s,t: Q_1 \rightarrow Q_0$. The set $Q_0$ is the set of vertices, and $Q_1$ is the set of edges. For every edge $\alpha \in Q_1$, $s(\alpha)$ is the source of the edge, and $t(\alpha)$ is the target of the edge. 

\begin{defn}[Path algebra]
    Let $\C Q$ be the path algebra of $Q$, i.e., the algebra over $\C$ having the paths in $Q$ as a basis, with multiplication given by concatenation of paths. For each vertex $i \in Q_0$, there is a lazy path of length $0$ starting and ending at $i$, and we denote by $e_i$ the resulting element of $\C Q$.
\end{defn}

Given a quiver $Q$, the doubled quiver $\overline{Q}$ is the quiver obtained by adding a new arrow $a^{*}$ for each arrow $a \in Q_1$, with $s(a^{*})=t(a)$ and $t(a^{*})=s(a)$. We then define 


\begin{defn}[Preprojective algebra]
The preprojective algebra is defined as the quotient \[ \mathrm{\mathrm{\Pi}}_Q := \C \overline{Q}/ \langle \sum [a,a^{*}] \rangle. \]
    
\end{defn}
From now on, by a representation of a quiver, we mean a left $\ CQ$ module. 
 
\begin{defn}[Dimension]
    A left $\C Q$ module $\rho$ is said to be of dimension $\bd \in \N^{Q_0}$ defined by \[ \bd_i := \dim(e_i \cdot \rho).\]
\end{defn}

\begin{convention}\label{indices}
For quiver $Q$, and for any $0 \leq i \leq |Q_0|-1$ we set $\delta_i= (\underbrace{0,\cdots,0}_{ i \ 0 s},1,0,\cdots,0) \in \N^{Q_0}$ and for any $m \leq |Q_0|-1$, we set \[[i,m) = \sum_{j=i}^{j=i+m \mod(|Q_0|)} \delta_j \in \N^{Q_0}\] is a cyclic string of $0$s and $1$s with $m+1$ consecutive $1$s starting from position $i$ and remaining terms $0$. Note that $[i,0)= \delta_i$ and $[i+1,|Q_0|-2) = \delta-\delta_i$ where $\delta=(\underbrace{1,\cdots,1}_{|Q_0| \textrm{ times }})$.   
\end{convention}

Given two representations $\rho,\rho^{\prime}$ of dimension $\bd, \be$ respectively, we consider the Euler form \[ (\rho, \rho^{\prime}) := \dim(\Hom(\rho, \rho^{\prime})) - \dim(\Ext^{1}(\rho,\rho^{\prime})),\] which in this case, only depends on the dimensions $\bd,\be$. 

\begin{defn}[Euler form] \label{eulerform}
    Given a pair of dimension vectors $\bd,\be \in \N^{Q_0}$, we define the Euler form \[ \chi_{Q}(\bd,\be) :=  \sum_{i \in Q_0} \bd_i \be_i - \sum_{\alpha \in Q_1} \bd_{s(\alpha)} \be_{t(\alpha)}.\]
\end{defn}

\begin{defn}[Serre Subcategory]
A Serre subcategory $\mathcal{S} \subset \mathbb{C} Q-\mathrm{mod}$ such that for every short exact sequence of representations \[ 0 \rightarrow \rho_1 \rightarrow \rho_2 \rightarrow \rho_3 \rightarrow 0,\] $\rho_2$ is in $\mathcal{S}$ if and only if $\rho_1,\rho_3$ are in $\mathcal{S}$. 
\end{defn}

\begin{defn}[Potential]
A potential\footnote{Also called superpotential in physics terminology} on a quiver $Q$ is an element $W \in \C Q/[\C Q,\C Q]$. A potential is given by a linear combination of cyclic words in $Q$, where two cyclic words are considered to be the same if one can be cyclically permuted to be the other. If $W$ is a single cyclic word and $a \in Q_1$, then we define \[ \partial W/\partial a = \sum _{W = cac'} c'c\]
    And we extend this definition linearly to general $W$. 
\end{defn}

\begin{defn}[Jacobi algebra] \label{jacobialgebra}
    The Jacobi algebra associated to $(Q,W)$ is defined to be \[ \Jac(Q,W) := \C Q/\langle \partial W/\partial a  | a \in Q_1 \rangle. \]
   
\end{defn}

\begin{example}[Canonical Cubic Potential] \label{canonicalcubicpotential}
Given a quiver $Q$, we consider the quiver $\Tilde{Q}$, given by adding loops $\omega_i$ to each vertex $i \in  \overline{Q}_0$. Then the tripled quiver $\Tilde{Q}$ carries the canonical cubic potential \[ \tilde{W} = \left(\sum_{i \in Q_0} \omega_i \right) \left( \sum_{a \in Q_1} [a,a^{*}] \right). \] We have an isomorphism \[ \mathrm{\mathrm{\Pi}}_Q[\omega] \simeq \Jac(\tilde{Q},\tilde{W})\] for the polynomial ring $\mathrm{\mathrm{\Pi}}_Q[\omega]$ where $\omega \mapsto \sum \omega_i$. For $Q$ not of finite type, the representation category of $\Jac(\tilde{Q},\tilde{W})$ is the 3CY completion of the 2CY category of representations of $\mathrm{\mathrm{\Pi}}_Q$ \cite{keller2021introduction}[Section 5].
\end{example}

\begin{defn}[Nilpotent Representation] \label{nilpotetnserre}
We define the notion of nilpotency as considered in \cite{bozecschiffmann}, which is based on the work of Lusztig  \cite{lusztig}. Let $\mathcal{N}_{\overline{Q}} \subset \C \overline{Q}$-mod be the subcategory of semi-nilpotent representations of the quiver $\overline{Q}$, i.e representations $\rho$ for which there exist some $l$ such that $\mathbb{C}\overline{Q}_{\geq l} \cdot \rho = 0$. Furthermore, 
    
    \begin{itemize}
        \item We say that a representation of $\mathrm{\Pi}_Q$ is nilpotent if underlining $\mathbb{C}\overline{Q}$ module is nilpotent. We shall denote that subcategory of representations by $\mathcal{N}$.
        \item Let $\tilde{Q}$ be the tripled quiver introduced in Example \ref{canonicalcubicpotential}. We define $\tilde{\mathcal{N}} \subset \C \tilde{Q}$-mod to be the subcategory for which underlining $\overline{Q}$ module is in $\mathcal{N}_{\overline{Q}}$. 
    \end{itemize} 
\end{defn}

We will also consider representations under stability conditions.

    \begin{defn}[Semistability] \label{semistability}
    Given a quiver $Q$, a stability condition is a tuple $\zeta \in \Q^{Q_0}$. For a non-zero representation $\rho$ of $Q$, we define the slope \[\mu_{\zeta}(\rho) := \frac{\chi_{\zeta}(\dim(\rho)):= \zeta \cdot \dim(\rho)}{|\dim(\rho)|}.\] 
    Note that it only depends on $\dim(\rho)$. A representation $\rho$ is semistable if for all non-zero, proper $\rho^{\prime} \subset \rho$, we have $\mu_{\zeta}(\rho^{\prime}) \leq \mu_{\zeta}(\rho)$ and stable if the inequalities are strict. We say that $\zeta$ is generic for dimension $\bd$ if for all $\bd^{\prime} < \bd$, we have $\mu_{\zeta}(\bd^{\prime}) \neq \mu_{\zeta}(\bd)$.
    \end{defn}
    For $\mu \in (-\infty,\infty)$ a slope, we denote by \[\mathrm{\Lambda}^{\zeta}_{\mu} \subset \N^{Q_0} \] the submonoid of dimension vectors $\bd$ such that $\bd=0$ or $\mu_{\zeta}(\rho)=\mu$. 
     \subsection{Torus Actions} Given any quiver $Q$ and a lattice $N = \mathbb{Z}^{r}$, let the map $\bw: Q_1 \rightarrow N$ be a weighting function on the edges. Fix $T = \Hom_{\textrm{Group}}(N, \C^{*})$. For every $a \in Q_1$, we can define a homomorphism $T \rightarrow \C^{*}$ given by $t \rightarrow t(\textbf{w}(a))$. This gives a morphism of stacks $\lambda_{a} \colon \textrm{BT} \rightarrow \BC$. Taking pullback, defines a cohomology class $\bt(a) \in \HH^{2}(\textrm{BT},\Q)$ given by $\bt(a) = \lambda_a^{*}(u)$ where $u$ is the generator of $\HH^{2}(\BC,Q) \simeq \Q[u]$. Given a path $p = a_1 a_2 \cdots a_n $ on the quiver $Q$, we define its weight to be $\textbf{w} = \sum_{i=1}^{i=n} \textbf{w}(a_i)$. A linear combination of paths is said to be homogeneous if the weights of each path in the linear combination are the same. 
\section{Moduli Spaces of quiver representations}
Given a quiver $Q$ and dimension $\bd$, define 
\begin{align*}
\Rep_{\bd}(Q) &:= \prod_{a \in Q_1} \Hom(\C^{\bd(s(a))},\C^{\bd(t(a))}) \\ 
\GL_{\bd} &:= \prod_{i \in Q_0} \GL_{\bd(i)} 
\end{align*} 
where the group $\GL_{\bd}$ acts on $\Rep_{\bd}(Q)$ by conjugation. Let 
\[\mathfrak{M}_{\bd}(Q) = \Rep_{\bd}(Q)/\GL_{\bd}(Q) \] be the moduli stack of $\bd$ dimensional representations of $Q$. Let $\mathcal{M}_{\bd}(Q) = \Spec(\C[\Rep_{\bd}(Q)]^{\GL_{\bd}})$ be the coarse moduli space parametrizing semisimple points. We then have the affinization map given by taking the direct sum of successive quotients in the Jordan-Holder filtration of the representation
\[ \JH_{\bd} \colon \mathfrak{M}_{\bd}(Q) \rightarrow \mathcal{M}_{\bd}(Q).\]

There is an action of $T$ on $\Rep_{\bd}(Q)$ by \[ (t, (\rho_{a})_{a \in Q_1}) \rightarrow (t(\textbf{w}(a)) \cdot \rho_a)_{a \in Q_1}\] and then we define \[ \mathfrak{M}_{\bd}^{T}(Q) := \Rep_{\bd}(Q)/(\GL_{\bd}(Q) \times T). \] Similarly, we define the stack \[ \mathcal{M}^{T}_{\bd}(Q) := \mathcal{M}_{\bd}(Q)/T\] where $T$ acts on $\mathcal{M}_{\bd}(Q)$ since the action of $\Rep_{\bd}(Q)$ commutes with the action of the gauge group $\GL_{\bd}$. We have an induced morphism
\[ \JH^{T}_{\bd} \colon \mathfrak{M}^{T}_{\bd}(Q) \rightarrow \mathcal{M}^{T}_{\bd}(Q). \]

We can also consider the versions of these objects with stability conditions. Given a King's stability condition $\zeta$, we define $\Rep_{\bd}^{\zeta-\sss}(Q) \subset \Rep_{\bd}(Q)$ to be the open subvariety of $\zeta$ semistable representations. We refer \cite{reineke2008modulirepresentationsquivers} for an excellent introduction to quiver moduli spaces.

We define the quotient stack $\mathfrak{M}^{T,\zeta-\sss}_{\bd}(Q) : = \Rep_{\bd}^{\zeta-\sss}(Q)/(\GL_{\bd}(Q) \times T)$. When there is no $T$, we have the coarse moduli space  $\mathcal{M}^{\zeta-\sss}_{\bd}(Q)$ given by GIT Quotient. Since the points of this variety are in bijection with polystable representations, it is easy to see that $T$ acts on $\mathcal{M}^{\zeta-\sss}_{\bd}(Q)$. We define $\mathcal{M}^{T,\zeta-\sss}_{\bd}(Q)$ to be the quotient stack $\mathcal{M}^{\zeta-\sss}_{\bd}(Q)/T$. We then have the induced morphism \[ \JH^{T,\zeta}_{\bd} \colon \mathfrak{M}^{T,\zeta-\sss}_{\bd}(Q) \rightarrow \mathcal{M}^{T,\zeta-\sss}_{\bd}(Q). \] We denote by $\JH^{T,\zeta}_{\mu} = \oplus_{\bd \in \mathrm{\Lambda}^{\zeta}_{\mu}} \JH^{T,\zeta}_{\bd}: \mathfrak{M}^{T,\zeta-\sss}_{\mu}(Q) \rightarrow \mathcal{M}^{T,\zeta-\sss}_{\mu}(Q)$, where $\mathfrak{M}^{T,\zeta-\sss}_{\mu} := \oplus_{\bd \in \mathrm{\Lambda}^{\zeta}_{\mu}} \mathfrak{M}^{T,\zeta-\sss}_{\bd}(Q)$ and $\mathcal{M}^{T,\zeta-\sss}_{\mu} := \oplus_{\bd \in \mathrm{\Lambda}^{\zeta}_{\mu}} \mathcal{M}^{T,\zeta-\sss}_{\bd}(Q)$. Finally for two dimension vectors $\bd^{\prime}$ and $\bd^{\prime \prime}$, we define $\Rep^{T,\zeta-\sss}_{\bd^{\prime},\bd^{\prime \prime}} \subset \Rep^{T,\zeta-\sss}_{\bd^{\prime}+\bd^{\prime \prime}}$ to be the subvariety of representations such that the flag $\C^{\bd^{\prime}_{i}} \subset \C^{\bd^{\prime}_{i}+ \bd^{\prime \prime}_{i}}$ for all $i \in Q_0$ is preserved. Let $\GL_{\bd^{\prime},\bd^{\prime \prime}} \times T \subset \GL_{\bd^{\prime}+\bd^{\prime \prime}} \times T$ be the subgroup preserving this flag. We then define the quotient stack of extensions \[\mathrm{Ext}^{T,\zeta-\sss}_{\bd^{\prime},\bd^{\prime \prime}}(Q): = \Rep^{T,\zeta-\sss}_{\bd^{\prime},\bd^{\prime \prime}}(Q)/(\GL_{\bd^{\prime},\bd^{\prime \prime}} \times T).\]

For any algebra $A$ given by the quotient of $\mathbb{C}Q$ by a two sided ideal which is preserved by $T$, we define quotient stack $\mathfrak{M}^{T,\zeta-\sss}_{\bd}(A)$ as the $\GL_{\bd}$ quotient of $\zeta$ semistable $\bd$ dimensional representations of $A$. We similarly have stacks $\Ext^{T,\zeta-\sss}_{\bd^{\prime},\bd^{\prime \prime}}(Q)$ and coarse moduli spaces $\mathcal{M}_{\bd}^{T,\zeta-\sss}(A)$. We will be focusing especially on the case when $A=\mathrm{\Pi}_Q$, which is the quotient of the path algebra of the doubled quiver $\overline{Q}$. This stack can be more conceptually understood in the following way. 

\subsection{Representations of preprojective algebra} \label{moduliofnilpotent}
Let $\overline{Q}$ be the doubled quiver. Then we can identify \[ \mathrm{Rep}_{\bd}(\overline{Q}) \simeq T^{*} \mathrm{Rep}_{\bd}(Q)\] by the pairing between $\Hom(\mathbb{C}^{\bd(s(a))},\mathbb{C}^{\bd(t(a))})$ and $\Hom(\mathbb{C}^{\bd(t(a))},\mathbb{C}^{\bd(s(a))})$ for any arrow $a \in Q_1$. The symplectic manifold $T^*\mathrm{Rep}_{d}(Q)$ carries the action of Gauge group $\mathrm{GL}_{\bd}$ with moment map

\[\mu_{Q,\bd}: \Rep_{\bd}(\overline{Q}) \rightarrow \mathfrak{gl}_{\bd}:= \prod_{i \in Q_0} \mathfrak{gl}_{\bd_i}\] given by \[(\rho(a),\rho(a^{*}))_{a \in Q_1} \mapsto \sum_{a \in Q_1} [\rho(a),\rho(a^{*})] .\] Here we have identified $\mathfrak{gl}_{\bd}$ with $\mathfrak{gl}_{\bd}^{\vee}$ by the trace pairing. Then a substack of $\mathfrak{M}_{\bd}(\overline{Q})$, the quotient stack $\mu^{-1}_{Q,\bd}(0)/\mathrm{GL}_{\bd}$ can be identified with the stack of representations of the preprojective algebra $\mathfrak{M}_{\bd}(\mathrm{\Pi}_Q)$. 

Let $\mathfrak{M}_{\bd}^{\zeta-\sss}(\mathrm{\Pi}_Q)= \mu^{-1,\zeta-\sss}(0)/\mathrm{GL}_{\bd}$ be the open substack of $\zeta$ semisstable stable quiver representations. We may also consider GIT quotients $\mathcal{M}_{\bd}^{\zeta-\sss}(\mathrm{\Pi}_Q)$. Similar to quiver representations, we have semisimplification morphism \[ \JH: \mathfrak{M}_{\bd}(\mathrm{\Pi_Q}) \rightarrow \mathcal{M}_{\bd}(\mathrm{\Pi_Q}).\] When the torus $T$ preserves the moment map relations, we can construct moduli spaces $\mathfrak{M}_{\bd}^{T,\zeta-\sss}(\mathrm{\Pi_Q})$ and $\mathcal{M}_{\bd}^{T,\zeta-\sss}(\mathrm{\Pi_Q})$ in exactly similar way as before. 


Earlier, we defined the notion of nilpotency $\mathcal{N}$ for preprojective algebra representation. Following Lusztig, we can define Lusztig nilpotent stack $\mathfrak{M}^{\mathcal{N}}_{\bd}(\mathrm{\Pi}_Q)$ as the closed substack $\mu_{\bd}^{-1,\mathcal{N}}(0)/GL_{\bd}$ of nilpotent representations of $\mathrm{\Pi}_Q$ where $\mu_{\bd}^{-1,\mathcal{N}}(0) := (\mu^{-1}_{\bd}(0) \cap \Rep_{\bd}^{\mathcal{N}}(\overline{Q})$.


\section{Vanishing Cycles} \label{vanishingcycles}
Vanishing cycles provide a coefficient system of counting points in the moduli space of objects in 3CY categories, which are locally a critical locus. In this section, we will introduce vanishing cycles and the properties we need in later sections. For more details, refer to \cite{MR1074006}, \cite{davison2020cohomological} and \cite{achar}. Let $f: X \rightarrow \mathbb{C}$ be a function on a smooth complex variety $X$. Then we consider the following commutative diagram: 
\[\begin{tikzcd}[ampersand replacement=\&]
	{X_0} \& {X_{\leq 0}} \& X \& {X_{>0}} \\
	{\{z=0 \}} \& {\{z \in \mathbb{C} \mid \mathfrak{Re}(z) \leq 0 \}} \& {\mathbb{A}^{1}} \& {\{z \in \mathbb{C} \mid \mathfrak{Re}(z)> 0 \}}
	\arrow["{i_0}", from=1-1, to=1-2]
	\arrow["i", curve={height=-20pt}, dashed, from=1-1, to=1-3]
	\arrow[from=1-1, to=2-1]
	\arrow["{i_{\leq 0}}", hook, from=1-2, to=1-3]
	\arrow[from=1-2, to=2-2]
	\arrow["f", from=1-3, to=2-3]
	\arrow["r"', from=1-4, to=1-3]
	\arrow[from=1-4, to=2-4]
	\arrow[hook, from=2-1, to=2-2]
	\arrow[hook, from=2-2, to=2-3]
	\arrow[hook', from=2-4, to=2-3]
\end{tikzcd}\]

\begin{defn}[Nearby cycle functor]
 The perverse\footnote{We have shifted the usual functor by [-1].} nearby cycle functor 

 \[ {}^{\mathfrak{p}}\mathrm{\Psi}_{f}: \Dbc(X) \rightarrow \Dbc(X)\] is defined to be 
 \[\mathrm{\Psi}_{f}  := i_{*}i^{*}r_{*}r^{*}[-1] \]
 
 \end{defn}

By adjunction, there is a natural transformation $\mathrm{id} \rightarrow r_{*}r^{*}$, precomposing with $i_{*}i^{*}$ yields a natural transformation

\begin{equation} \label{exacttriangle}i_{*}i^{*}[-1] \rightarrow {}^{\mathfrak{p}}\mathrm{\Psi}_f.
\end{equation}

\begin{defn}[Vanishing cycle functor]

    The vansishing cycle functor \[ {}^{\mathfrak{p}}\mathrm{\Phi}_{f}: \Dbc(X) \rightarrow \Dbc(X)\] is defined to be the cone of above morphism
    \[ {}^{\mathfrak{p}}\mathrm{\Phi}_{f}:= \mathrm{Cone}(i_{*}i^{*}[-1] \rightarrow {}^{\mathfrak{p}}\mathrm{\Psi}_f) \]
\end{defn}

These functors behave nicely with the perverse structure, in particular, the \textbf{vanishing cycle sheaf} $: \pPhi_{f}(\Q^{\vir}_{X})$ is a perverse sheaf on $X$.

Like perverse sheaves, vanishing cycles can be generalized to Artin stacks (See \cite{MR2480756} and \cite{tubach}. In this thesis, we will mainly be dealing with the case of smooth quotient stacks $X/G$, where one can understand its cohomology as the equivariant cohomology $H_{G}(X,\pPhi_f)$. 
\begin{defn}[Critical cohomology]
Given a function $f: \mathfrak{X} \rightarrow \mathbb{A}^1$, we define critical cohomology with slight abuse of notation: \[ \HH(\fX, \pPhi_f) : = \HH(\fX, \pPhi_f \Q^{\vir}_{\mathfrak{X}}). \]
\end{defn}

The vanishing cycle functor enjoys many nice properties, which we shall explain now. We refer \cite{davison2020cohomological} for more details. We assume that all the stacks considered are quotient stacks.

\subsection{Thom-Sebastiani} \label{ThomSebastini} There is an analogue of the K\"{u}nneth isomorphism for the critical cohomology. Let $f$ and $g$ be two functions on smooth stacks $\mathfrak{X}$ and $\mathfrak{Y}$ respectively. The Thom-Sebastiani theorem states that there is a natural equivalence \[ \TS: \pPhi_{f \boxplus g}(\mathcal{F} \boxtimes \mathcal{G})|_{f^{-1}(0) \times g^{-1}(0)} \simeq \pPhi_{f} \mathcal{F} \boxtimes \pPhi_{g} \mathcal{F} \] and when $\Supp(\pPhi_{f \boxplus g}(\mathcal{F} \boxtimes \mathcal{G})) \subset f^{-1}(0) \times g^{-1}(0)$, we get an isomorphism on cohomology 
\[ \TS \colon \HH(\mathfrak{X}, \pPhi_{f} \mathcal{F}) \otimes  \HH(\mathfrak{Y}, \pPhi_{f} \mathcal{G}) \simeq \HH(\mathfrak{X} \times \mathfrak{Y},\pPhi_{f \boxplus g}(\mathcal{F} \boxtimes \mathcal{G})).  \]

\subsection{Pullback} \label{Pullback} Given a morphism $p \colon \mathfrak{X} \rightarrow \mathfrak{Y}$ between smooth stacks, there is a natural morphism \[ p^{*} \pPhi_{f}  \rightarrow \pPhi_{fp}  p^{*} \] which is an isomorphism when the map $p$ is smooth. Applying this to $\Q_{\mathfrak{Y}}$ and pushing forward to the point, we get a pullback map
\[ p^{*} \colon \HH(\mathfrak{Y},\pPhi_{f} \Q_{\mathfrak{M}}) \rightarrow \HH(\mathfrak{X},\pPhi_{fp} \Q_{\mathfrak{X}}).\]

\subsection{Verdier Duality} \label{VerdierDual} There is a natural isomorphism $\D \pPhi_{f} \simeq \pPhi_f \D$ where $\D$ is the Verdier duality functor. 

\subsection{Pushforward} \label{Pushforward} Given any morphism $p \colon \mathfrak{X} \rightarrow \mathfrak{Y}$ and a function $f \colon \mathfrak{Y} \rightarrow \C$ there is a natural map \[ \pPhi_{f}p_{*}  \rightarrow p_{*}\pPhi_{fp}\] which is an isomorphism when $p$ is proper. Applying this to $\mathcal{F} = \Q_{\mathfrak{X}}$ and composing gives a  morphism $\pPhi_{f} \Q_{\mathfrak{Y}} \rightarrow p_{*}(\pPhi_{fp} \Q_{\mathfrak{X}})$. Applying $\D$ and pushing forward to the point yields a map for $p$ proper:\[ p_{*} \colon  \HH(\mathfrak{X}, \pPhi_{fp}  \Q_{\mathfrak{X}}) \rightarrow \HH(\mathfrak{Y}, \pPhi_{f}  \Q_{\mathfrak{Y}})[2(\dim(\mathfrak{Y})-\dim(\mathfrak{X}))]. \] 

\subsection{Dimension Reduction} \label{dimensionreduction}

Let $X=X' \times \A^n$ be a smooth variety, let $f$ be a function on $X$ of weight one for the scaling action on $\mathbb{A}^n$ and trivial action on $X$ and let $\pi \colon X \rightarrow X'$ be the projection map so that we can write \[f = \sum_{1 \leq i \leq n} x_i \pi^{*}(f_i)\] where $f_i$ are functions on $X'$ and $x_i$ are coordinates on $\A^{n}$. Let \[Z':= \{ x \in X^{\prime} \mid f_i(x)=0 \ \forall \  x \in X^{\prime}\} \]and let $Z := \pi^{-1}(Z')$. Let $i_{Z'}: Z' \hookrightarrow X'$, and $i: Z \rightarrow X$ be the inclusions. Then the natural map \[\pi_{!}\pPhi_{f}\pi^{*} \rightarrow \pi_{!} \pi^{*}(i_{Z^{\prime}})_{*}(i_{Z^{\prime}})^{*}\] is an isomorphism \cite{davison2016critical}[Appendix A].

\begin{example}
We apply dimension reduction isomorphism to the sheaf $\mathcal{F} = \mathbb{Q}_{X^{\prime}}$. This gives an isomorphism of complexes

\[ \pi_{!} \pPhi_{f} \mathbb{Q}_X \simeq \pi_{!} \pi^{*}(i_{Z^{\prime}})_{*}\mathbb{Q}_{Z^{\prime}} \simeq \pi_{!}i_{*}\mathbb{Q}_{Z} = \mathbb{Q}_{Z^{\prime}}[-2n].\]
Taking global sections gives an isomorphism of cohomology
\begin{equation}
\HH_c(X, \pPhi_{f} \mathbb{Q}_X) \simeq \HH_c(Z, \mathbb{Q}_{Z}) \simeq \HH_c(Z^{\prime}, \mathbb{Q}_{Z^{\prime}}[-2n]).
\end{equation}

Applying Verdier duality gives an isomorphism of cohomology:

\[ \HBM(Z^{\prime},\mathbb{Q}_{Z^{\prime}}[-2n]) \simeq \HH(X, \pPhi_f(\mathbb{Q}^{\vir}_X)[\dim(X)]).\]

This isomorphism also extends to quotient stacks. Suppose that an algebraic group $G$ acts on $X^{\prime}$ and $\mathbb{A}^n$ such that the function $f$ is also $G$ invariant. Then we get an isomorphism of sheaves \[ \pi_{!} \pPhi_{f} \mathbb{Q}_{X/G} \simeq  \mathbb{Q}_{Z^{\prime}/G}[-2n].\] Taking Verdier dual gives an isomorphism of sheaves 
\begin{equation} \label{dimensionreduction11} \mathbb{D}\mathbb{Q}_{Z^{\prime}/G}[2n-(\dim(X/G))] \simeq \pi_{*} \pPhi_{f} \mathbb{Q}^{\vir}_{X/G}.\end{equation}



\end{example}

\section{Kontsevich-Soibelman Cohomological Hall algebra} \label{hallalgebra}We recall the construction of the cohomological Hall algebra of a quiver with potential from \cite{kontsevich2011cohomological}. Let $Q$ be any quiver with potential $W$ and let $T$ be any torus defined by weighting $\textbf{w}: Q_1 \rightarrow \mathbb{Z}^r$ such that the weight of potential $W$ is 0. Then given any stability condition $\zeta$ and slope $\mu$, we consider the graded vector space \[ \mathcal{A}^{T,\zeta}_{Q,W,\mu} := \bigoplus_{\bd \in \mathrm{\Lambda}^{\zeta}_{\mu}} \mathcal{A}^{T,\zeta}_{Q,W,\bd} \] where \[ \mathcal{A}^{T,\zeta}_{Q,W,\bd}  := \HH(\mathfrak{M}^{T,\zeta-\sss}_{\bd}(Q), \pPhi_{\Tr^{\zeta}_{\bd}(W)}\Q^{\vir}), \] and $\Tr(W)^{\zeta}_{\bd} \colon \mathfrak{M}^{T,\zeta-\sss}_{\bd}(Q) \rightarrow \C$ is the trace function\footnote{We shall ignore the subscripts from $\Tr(W)^{\zeta}_{\bd}$ when context is clear.}, the grading is given by dimension and cohomology and $\Q^{\vir}$ is the constant $T$ equivariant perverse sheaf defined\footnote{We choose this twist since we want to think of $\mathcal{A}^{T}_{Q,W}$ as version of $\mathcal{A}_{Q,W}$ with equivariant parameters. For $\mathcal{A}_{Q,W}$, the sheaf $\Q^{\vir} = \Q[-(\bd,\bd)]$ is perverse, while for $\mathcal{A}^{T}_{Q,W}$ we choose the sheaf so that restriction to the fibres of projection $\mathfrak{M}^{T}_{\bd}(Q) \rightarrow \pt/T$ are perverse.} by 
\[ \Q^{\vir}|_{\mathfrak{M}^{T}_{\bd}(Q)} = \Q_{\mathfrak{M}^{T}_{\bd}(Q)}[-(\bd,\bd)]. \] 
We then consider the convolution diagram \[\begin{tikzcd}[ampersand replacement=\&] \label{cohadiagram}
	\& {\mathfrak{M}^{T,\zeta-\sss}_{\bd_1,\bd_2}(Q)} \\
	{\mathfrak{M}^{T,\zeta-\sss}_{\bd_1}(Q) \times_{[\pt/T]}\mathfrak{M}^{T,\zeta-\sss}_{\bd_2}(Q)} \&\& {\mathfrak{M}^{T,\zeta-\sss}_{\bd_1+\bd_2}(Q)}
	\arrow["{\pi_1 \times \pi_3}"', from=1-2, to=2-1]
	\arrow["{\pi_2}", from=1-2, to=2-3]
\end{tikzcd}\]
By taking the inverse of the Thom-Sebastiani isomorphism (\ref{ThomSebastini}), we have the isomorphism of graded vector spaces:
\begin{align*}
\textrm{TS}^{-1} \colon 
\HH(\mathfrak{M}^{T,\zeta-\sss}_{\bd_1}(Q), \pPhi_{\Tr(W)} \Q^{\vir}) & \otimes \HH(\mathfrak{M}^{T,\zeta-\sss}_{\bd_2}(Q), \pPhi_{\Tr(W)} \Q^{\vir})\\& \simeq 
\HH(\mathfrak{M}^{T,\zeta-\sss}_{\bd_1}(Q) \times \mathfrak{M}^{T,\zeta-\sss}_{\bd_2}(Q), \pPhi_{\Tr(W)_{\bd_1} \boxplus \Tr(W)_{\bd_2}} \Q^{\vir}). 
\end{align*}

By the natural map $[\pt/T] \rightarrow \pt$, we have a forgetful morphism 
$j: \mathfrak{M}^{T,\zeta-\sss}_{\bd_1}(Q) \times_{[\pt/T]} \mathfrak{M}^{T,\zeta-\sss}_{\bd_2}(Q) \rightarrow  \mathfrak{M}^{T,\zeta-\sss}_{\bd_1}(Q) \times \mathfrak{M}^{T,\zeta-\sss}_{\bd_2}(Q)$, so by taking the pullback (\ref{Pullback}), we have the morphism of graded vector spaces 

\begin{align*}
j^{*} \colon \HH(\mathfrak{M}^{T,\zeta-\sss}_{\bd_1}(Q) \times \mathfrak{M}^{T,\zeta-\sss}_{\bd_2}(Q), & \pPhi_{\Tr(W)_{\bd_1} \boxplus \Tr(W)_{\bd_2}} \Q^{\vir}) \\ & \simeq \HH(\mathfrak{M}^{T,\zeta-\sss}_{\bd_1}(Q) \times_{[\pt/T]} \mathfrak{M}^{T,\zeta-\sss}_{\bd_2}(Q), \pPhi_{\Tr(W)_{\bd_1} \boxplus \Tr(W)_{\bd_2}} \Q^{\vir}).
\end{align*}

By taking pullback (\ref{Pullback}) along the morphism $\pi_1 \times \pi_3$ we have the morphism of graded vector spaces 
 
\begin{align*}
(\pi_1 \times \pi_3)^{*} \colon \HH(\mathfrak{M}^{T,\zeta-\sss}_{\bd_1}(Q)  & \times_{[\pt/T]} \mathfrak{M}^{T,\zeta-\sss}_{\bd_2}(Q),  \pPhi_{\Tr(W)_{\bd_1} \boxplus \Tr(W)_{\bd_2}} \Q^{\vir}) \rightarrow \\ & \HH(\mathfrak{M}^{T,\zeta-\sss}_{\bd_1,\bd_2}(Q), \pPhi_{\Tr(W)|_{\mathfrak{M}^{T,\zeta-\sss}_{\bd_1,\bd_2}(Q)}}\Q_{\mathfrak{M}^{T,\zeta-\sss}_{\bd_1,\bd_2}(Q)})[-(\bd_1,\bd_1)-(\bd_2,\bd_2)].
\end{align*}
since $\dim(\mathfrak{M}_{\bd}(Q)) = -(\bd,\bd)$. Then finally doing the pushforward (\ref{Pushforward}) along $\pi_2$, gives a morphism of graded vector spaces 
\begin{align*}(\pi_2)_{*} \colon \HH(\mathfrak{M}^{T,\zeta-\sss}_{\bd_1,\bd_2}(Q),  \pPhi_{\Tr(W)|_{\mathfrak{M}^{T,\zeta-\sss}_{\bd_1,\bd_2}(Q)}}\Q_{\mathfrak{M}^{T,\zeta-\sss}_{\bd_1,\bd_2}(Q)})[-(\bd_1,\bd_1)-& (\bd_2,\bd_2)] \rightarrow \\ &\HH(\mathfrak{M}^{T,\zeta-\sss}_{\bd_1+\bd_2}(Q),\pPhi_{\Tr(W)}\Q^{\vir}). 
\end{align*} Note that above respects the cohomological grading since  \[\dim(\mathfrak{M}_{\bd_1+\bd_2}^{T,\zeta-\sss})- 2 \dim(\mathfrak{M}^{T,\zeta-\sss}_{\bd_1,\bd_2}(Q)) = -(\bd_1,\bd_1)-(\bd_2,\bd_2).\]Composing the above morphisms together gives a morphism of cohomologically graded vector spaces  \[m_{\bd_1,\bd_2}^{T}:= (\pi_2)_{*} (\pi_1 \times \pi_3)^{*} j^{*}  \textrm{TS}^{-1} \colon  \mathcal{A}^{T,\zeta}_{Q,W,\bd_1} \otimes \mathcal{A}^{T,\zeta}_{Q,W,\bd_2}  \rightarrow \mathcal{A}^{T,\zeta}_{Q,W,\bd_1+ \bd_2}. \]
Taking the direct sum of maps $m_{\bd_1,\bd_2}$ over all $(\bd_1,\bd_2) \in \mathrm{\Lambda}^{\zeta}_{\mu} \times \mathrm{\Lambda}^{\zeta}_{\mu}$ defines a map of $\mathrm{\Lambda}^{\zeta}_{\mu} \times \mathbb{Z}$ graded vector spaces \[ m^{\zeta}_{\mu} \colon \mathcal{A}^{T,\zeta}_{Q,W\mu} \otimes \mathcal{A}^{T,\zeta}_{Q,W,\mu} \rightarrow \mathcal{A}^{T,\zeta}_{Q,W,\mu} \] where by $\mathrm{\Lambda}^{\zeta}_{\mu}$ we mean the dimension grading while $\Z$ is the cohomological grading. The map $m^{\zeta}_{\mu}$ in fact gives $\mathcal{A}^{T,\zeta}_{Q,W,\mu}$ a structure of $\mathrm{\Lambda}^{\zeta}_{\mu} \times \Z$ graded associative algebra structure \cite{kontsevich2011cohomological}[Section 2.3]. We will often denote the multiplication by $\star$. We note that when $\zeta=(0,\cdots,0)$ and $\mu=0$ then $\mathfrak{M}^{T,\zeta-\sss}_{\bd}(Q) = \mathfrak{M}^{T}_{\bd}(Q)$ and the direct sum is over all $\bd$ in $\N^{Q_0}$. We will denote the resulting algebra by $\mathcal{A}^{T}_{Q,W}$ and $\mathcal{A}_{Q,W,\mu}$ for the case without $T$. Note that by definition $\mathcal{A}^{T,\zeta}_{Q,W,\mu}$ is a $\HHT$ linear algebra. 

\subsection{Nilpotent 3d CoHA}
Following \cite{davison2020cohomological}, one can define a CoHA structure for any geometrically defined Serre subcategory of the quiver representations. In this thesis, we will only consider a case of this construction which is given by the Serre category $\tilde{\mathcal{N}}$  of nilpotent representations of the tripled quiver $\tilde{Q}$, defined in Section \ref{nilpotetnserre}. Let $\overline{\omega}: \mathfrak{M}_{\bd}^{\tilde{\mathcal{N}},\zeta-\sss}(\tilde{Q}) \rightarrow \mathfrak{M}_{\bd}^{\zeta-\sss}(\tilde{Q})$ denote the inclusion of the stacks, where $\mathfrak{M}^{\overline{N}}_{\bd}(\tilde{Q})$ is the stack of $\zeta$ semistable $\bd$ dimensional representations of quiver $\tilde{Q}$. We consider the graded vector space \[\mathcal{A}^{T,\zeta,\tilde{\mathcal{N}}}_{Q,W,\mu} := \bigoplus_{\bd \in \mathrm{\Lambda}^{\zeta}_{\mu}} \mathcal{A}^{T,\zeta,\tilde{\mathcal{N}}}_{Q,W,\bd} \]
where we define
\[\mathcal{A}^{T,\zeta,\tilde{\mathcal{N}}}_{Q,W,\bd} :=   \HH(\mathfrak{M}^{T,\zeta-\sss}_{\bd}(Q), \overline{\omega}_{*}(\overline{\omega})^{!}\pPhi_{\Tr(W)}\Q^{\vir}).\]

Then, in the same way as the last section, the CoHA convolution diagram constructs an associative algebra structure on $\mathcal{A}^{T,\zeta,\tilde{\mathcal{N}}}_{Q,W,\mu}$.

\subsection{Relative CoHA}
The algebra $\mathcal{A}^{T,\zeta}_{Q,W,\mu}$ admits a lift to an algebra object $\mathcal{RA}^{T,\zeta}_{Q,W,\mu}$ in the derived category $\Dbc(\Per^{\prime}(\mathcal{M}^{T,\zeta}_{\mu}(Q)))$ of perverse sheaves on the coarse moduli space. \footnote{The shifted perverse $t$ structure on $\Dbcc(\mathcal{M}^{T,\zeta-\sss}_{\bd}(Q))$ is defined by setting ${}^{\mathfrak{p}'}\!\tau^{\leq i} : = {}^{\mathfrak{p}}\!\tau^{\leq i- \dim(T)}$ and ${}^{\mathfrak{p'}}\!\tau^{\geq i} := {}^{\mathfrak{p}}\!\tau^{\geq i-\dim(T)}  $. Let $\Per^{'}(\mathfrak{X}/T)$ to be heart with respect to this shifted perverse structure. This is defined so that for any $\mathcal{F} \in \Per^{'}(\mathfrak{X}/T)$, $(\mathfrak{X} \rightarrow \mathfrak{X}/T)^{*}(\mathcal{F}) \in \Per(\mathfrak{X})$.}This lift is sometimes useful in studying this algebra as many of the products in algebra can then be interpreted as morphisms of semisimple Perverse sheaves, which are relatively easier to understand. Let \[\mathfrak{M}^{T,\zeta-\sss}_{\mu}(Q) := \coprod_{\bd \in \mathrm{\Lambda}^{\zeta}_{\mu}} \mathfrak{M}^{T, \zeta- \sss}_{\bd}(Q) \] and similarly let \[\mathcal{M}^{\zeta-\sss}_{\mu}(Q) := \coprod_{\bd \in \mathrm{\Lambda}^{\zeta}_{\mu}} \mathcal{M}^{\zeta- \sss}_{\bd}(Q).\] We denote $\mathfrak{M}^{T,\zeta-\sss}_{\mu}(Q)$ and $\mathcal{M}^{T,\zeta-\sss}_{\mu}(Q)$  by $\mathfrak{M}^{T}(Q)$ and $\mathcal{M}^{T,\zeta-\sss}_{\mu}(Q)$ respectively, in the case when $\zeta=0, \mu=0$. Finally let $\JH^{T}_{\mu}:= \coprod_{\bd \in \mathrm{\Lambda}^{\zeta}_{\mu}} \JH^{T}_{\bd}: \mathfrak{M}^{T,\zeta-\sss}_{\mu}(Q) \rightarrow \mathcal{M}^{T,\zeta-\sss}_{\mu}(Q)$ to be the semisimplification morphism. 

We consider the direct sum map \[ \mathcal{M}^{\zeta}_{\mu}(Q) \times \mathcal{M}^{\zeta}_{\mu}(Q) \xrightarrow{\oplus} \mathcal{M}^{\zeta}_{\mu}(Q) \]  It is proved in \cite{svenrein} that this morphism is a finite and commutative morphism of schemes. This morphism lifts to the $T$ quotients
\[ \mathcal{M}^{T,\zeta}_{\mu}(Q) \times_{[\pt/T]} \mathcal{M}^{T,\zeta}_{\mu}(Q) \xrightarrow{\oplus^T} \mathcal{M}^{T,\zeta}_{\mu}(Q) \] is also finite and commutative. We consider the monoidal product \[ \mathcal{F} \boxtimes_{\oplus^{T}} \mathcal{G} := \oplus^{T}_{*}(\mathcal{F} \boxtimes_{[\pt/T]} \mathcal{G})\] over $[\pt/T]$ on $\Dbc(\Per^{\prime}(\mathcal{M}^{T,\zeta}_{\mu}(Q))$, allowing us to consider algebra objects in $\Dbc(\Per^{\prime}(\mathcal{M}^{T,\zeta}_{\mu}(Q))$. Since the morphism $\oplus^{T}$ is finite, it implies that the monoidal product is bi-exact and symmetric. 

Let $\mathrm{\Lambda}^{\zeta}_{\mu} \times [\pt/T]$ be the stack consisting of the copy of $[\pt/T]$ for every dimension vector $\bd \in \mathrm{\Lambda}^{\zeta}_{\mu}$. Then the $\mathrm{\Lambda}^{\zeta}_{\mu}$ graded algebra structure on $\mathcal{A}^{T,\zeta-\sss}_{Q,W,\mu}$ can be understood as the algebra structure on the object $p_{*}\pPhi_{\Tr(W)}\mathbb{Q}^{\vir}_{\mathfrak{M}^{T,\zeta}_{\mu}(Q)} \in \Dbc(\mathrm{\Lambda}^{\zeta}_{\mu} \times [\pt/T])$ where $p: \mathfrak{M}^{\zeta}_{\mu}(Q) \rightarrow \mathrm{\Lambda}^{\zeta}_{\mu} \times [\pt/T]$ is the canonical morphism and $\mathrm{\Lambda}^{\zeta}_{\mu} \times [\pt/T]$ is a monoid over $[\pt/T]$ given by the sum of dimension vectors. We now consider something intermediate. 

Let \[ \mathcal{R}\mathcal{A}^{T,\zeta}_{Q,W,\mu} := (\JH^{T,\zeta}_{\mu})_{*}\pPhi_{\Tr(W)}(\mathbb{Q}^{\vir}_{\mathfrak{M}^{T,\zeta}_{\mu}(Q)}).\] Since, after doing semisimplification, non-trivial extensions are the same as the direct sum, the CoHA convolution diagram lifts to the commutative diagram

\[\begin{tikzcd}[ampersand replacement=\&]
	\& {\mathrm{Ext}^{T,\zeta-\sss}_{\bd_1,\bd_2}(Q)} \\
	{\mathfrak{M}^{T,\zeta-\sss}_{d_1}(Q) \times_{[\pt/T]}\mathfrak{M}^{T,\zeta-\sss}_{d_2}(Q)} \&\& {\mathfrak{M}^{T,\zeta-\sss}_{\bd_1+\bd_2}(Q)} \\
	{\mathcal{M}^{T,\zeta-\sss}_{d_1}(Q) \times_{[\pt/T]}\mathcal{M}^{T,\zeta-\sss}_{d_2}(Q)} \&\& {\mathcal{M}^{T,\zeta-\sss}_{\bd_1+\bd_2}(Q)}
	\arrow["{{\pi_1 \times \pi_3}}"', from=1-2, to=2-1]
	\arrow["{{\pi_2}}", from=1-2, to=2-3]
	\arrow["{\JH^{T}_{\bd_1} \times \JH^{T}_{\bd_2}}"', from=2-1, to=3-1]
	\arrow["{\JH^{T}_{\bd_1+\bd_2}}", from=2-3, to=3-3]
	\arrow["{\oplus^{T}}"', from=3-1, to=3-3]
\end{tikzcd}\]

Now, we can do the same game as in the definition of Kontsevich-Soibelman CoHA, but instead we push forward underlying sheaves to $\mathcal{M}^{T,\zeta-\sss}_{\mu}(Q)$. It is proved in \cite{davison2020cohomological} and \cite{davison2022bps} that this defines a morphism \[ \mathcal{R}m: \mathcal{R}\mathcal{A}^{T,\zeta}_{Q,W,\mu} \boxtimes_{\oplus^{T}} \mathcal{R}\mathcal{A}^{T,\zeta}_{Q,W,\mu} \rightarrow \mathcal{R}\mathcal{A}^{T,\zeta}_{Q,W,\mu}\] which makes $\mathcal{R}\mathcal{A}^{T,\zeta}_{Q,W,\mu}$ an algebra object inside the monoidal category $\Dbc(\Per^{\prime}(\mathcal{M}^{T,\zeta}_{\mu}(Q)))$. We will often refer to this as the relative CoHA. Taking cohomology of the moprhism $\mathcal{R}m$ above, we obtain the usual CoHA $\mathcal{A}^{T,\zeta}_{Q,W,\mu}$.

\subsection{Tautological bundles}\label{tautologicalbundle}

Given a family $\mathcal{F}$ of $\bd$ dimensional $\zeta$ semistable representations of $\C Q$ we can obtain a line bundle by taking $\Det(\mathcal{F})$. This gives a map $\Det \colon \mathfrak{M}_{\bd}^{T,\zeta-\sss}(Q) \rightarrow \BCu$ where $\BCu = \pt/\mathbb{C}^{*}_{u}$ and we use variable $u$ to emphasise that $u \in \HH^{2}(\BCu,\Q)$ is the tautological generator for this $\C^{*}$. Thus, we have a map of stacks
\begin{equation}(\Det \times \id) \colon \mathfrak{M}_{\bd}^{T,\zeta-\sss}(Q) \rightarrow \BCu \times \mathfrak{M}_{\bd}^{T,\zeta-\sss}(Q)  \end{equation} which induces a natural map in the constructible category \begin{equation}  \label{tautologicalmap} \pPhi_{0 \boxplus \Tr(W)_{\bd}} \Q_{\BCu 
 \times \mathfrak{M}_{\bd}^{T,\zeta-\sss}(Q)} \rightarrow  (\Det \times \id)_{*}\pPhi_{\Tr(W)_{\bd}}\Q_{\mathfrak{M}_{\bd}^{T,\zeta-\sss}(Q)}\end{equation} while by 
 Thom-Sebastiani $\pPhi_{0 \boxplus \Tr(W)_{\bd}} \Q_{\BCu 
 \times \mathfrak{M}_{\bd}^{T,\zeta-\sss}(Q)} \simeq \Q_{\BCu}  \boxtimes \pPhi_{\Tr(W)_{d}}\Q_{\mathfrak{M}_{\bd}^{T,\zeta-\sss}(Q)}$ and so after shifting by $[(\bd,\bd)]$ and taking global sections, we get 

 \[ (\Det \times \id)^{*} \colon \HH( \BCu,\Q) \otimes \HH(\mathfrak{M}^{T,\zeta-\sss}_{\bd}(Q),\pPhi_{\Tr(W)_{\bd}}) \rightarrow \HH(\mathfrak{M}^{T,\zeta-\sss}_{\bd}(Q),\pPhi_{\Tr(W)_{\bd}}) \]
 This gives an action of $\Q[u] = \HH(\BCu,\Q)$ on $\mathcal{A}^{T,\zeta}_{Q,W,\mu}$ which we denote by $\cdot$. We will see later that this action acts by derivation. 

 We may also pushforward the morphism \ref{tautologicalmap} by $\JH^{T,\zeta}_{d}$ to induce similar action of $\Q[u]$ on $\mathcal{RA}^{T,\zeta}_{Q,W\mu}$. 

\subsection{Perverse filtration and BPS Lie algebra}

\subsubsection{Sign Twist}\label{signtwist}

We slightly twist the multiplication on $\mathcal{A}^{T,\zeta}_{Q,W}$ so that the resulting algebra satisfies the PBW theorem. Given dimension vectors $\bd^{\prime}, \bd^{\prime\prime}$, we define 
\begin{align*}
\tau(\bd^{\prime},\bd^{\prime \prime}) = \chi_{Q}(\bd^{\prime},\bd^{\prime })\chi_{Q}(\bd^{\prime \prime},\bd^{\prime \prime}) + \chi_Q(\bd^{\prime},\bd^{\prime \prime}) 
\end{align*} 
Then a bilinear form $\psi$ on $(\mathbb{Z}/2\mathbb{Z})^{Q_0}$ is defined so that we have \[ \psi(\bd^{\prime},\bd^{\prime \prime}) + \psi(\bd^{\prime \prime},\bd^{\prime}) \equiv \tau(\bd^{\prime},\bd^{\prime \prime}) \mod 2. \] Then $\mathcal{A}^{T,\zeta,\psi}_{Q,W}$ is defined as an algebra on the same vector space, but with the multiplication $m^{\psi}_{\bd^{\prime},\bd^{\prime \prime}} = (-1)^{\psi(\bd^{\prime},\bd^{\prime \prime})} m_{\bd^{\prime},\bd^{\prime \prime}}$. Similarly we define $\mathcal{R}\mathcal{A}^{T,\zeta,\psi}_{Q,W,\mu}$ to be the algebra object with the same underlying complex of perverse sheaves on $\mathcal{R}\mathcal{A}^{T,\zeta,\psi}_{Q,W,\mu}$ but with the modified multiplication $\mathcal{R}m^{\psi}_{\bd^{\prime},\bd^{\prime \prime}} = (-1)^{\psi(\bd^{\prime},\bd^{\prime \prime})} \mathcal{R}m_{\bd^{\prime},\bd^{\prime \prime}}$

For a general situation of quiver $Q$ with potential $W$, we fix a choice of $\psi$. For the tripled quiver $\tilde{Q}$ and canonical potential $\tilde{W}$, we have 
\[ \tau_{\tilde{Q}}(\bd^{\prime},\bd^{\prime \prime}) \equiv \sum_{\alpha: i \rightarrow j} \bd^{\prime}_{i}\bd^{\prime \prime}_j + \bd^{\prime}_{j}\bd^{\prime \prime}_{i} \] and so we choose \[ \psi(\bd^{\prime},\bd^{\prime \prime}) = \chi_{Q}(\bd^{\prime},\bd^{\prime \prime}) = \sum_{i \in Q_0} \bd^{\prime}_{i} \bd^{\prime \prime}_{i} - \sum_{\alpha: i \rightarrow j} \bd^{\prime}_{i} \bd^{\prime \prime}_{j}.\] when the quiver $Q$ is fixed, we will denote this sign twist by $\chi$. 


\subsection{Perverse filtration and integrality}  \label{perversefiltrationandbps}
We assume that quiver $Q$ is symmetric from now on\footnote{This is not essential and there are more general results involving $\mu$ generic stability conditions in \cite{davison2020cohomological}}. The morphism $\JH^{T,\zeta}_{\bd}$ behaves like a proper map \cite{davison2020cohomological} by which we mean that it satisfies an analog of the decompositon theorem, i.e. the direct image $(\JH^{T,\zeta}_{\bd})_{*} \pPhi_{\Tr(W)}$ splits, that is we have a non-canonical isomorphism in $\Db(\Per^{\prime}(\mathcal{M}^{T,\zeta-\sss}_{\bd}(Q)))$: 
\begin{equation} \label{splitting1}
(\JH^{T,\zeta}_{\bd})_{*} \pPhi_{\Tr(W)} \Q^{\vir} \simeq \bigoplus_{i \in \mathbb{Z}} {}^{\mathfrak{p'}} \! \mathcal{H}^{i}((\JH^{T,\zeta}_{\bd})_{*} \pPhi_{\Tr(W)})[-i]. 
\end{equation} Furthermore, it is proved in \cite{davison2020cohomological} that ${}^{\mathfrak{p'}}\!\mathcal{H}^{i}((\JH^{T,\zeta}_{\bd})_{*} \pPhi_{\Tr(W)})=0$ for all $i <1$, so the above splitting starts from $i \geq 1$. For any $i$, applying the natural transformation ${}^{\mathfrak{p'}}\!(\tau^{\leq i}) \rightarrow \id$, gives a split morphism \[ {}^{\mathfrak{p'}}\!\tau^{\leq i}((\JH^{T,\zeta}_{\bd})_{*} \pPhi_{\Tr(W)} \Q^{\vir}) \rightarrow ((\JH^{T,\zeta}_{\bd})_{*} \pPhi_{\Tr(W)} \Q^{\vir}). \] Thus pushing forward all the way to the point, gives an injection \[ \mathfrak{P}^{i} \mathcal{A}^{T,\zeta, \psi}_{Q,W,\bd} := \HH( \mathcal{M}_{\bd}^{T,\zeta-\sss}(Q), {}^{\mathfrak{p'}}\!\tau^{\leq i}((\JH^{T,\zeta}_{\bd})_{*} \pPhi_{\Tr(W)} \Q^{\vir}) ) \rightarrow \mathcal{A}^{T,\zeta, \psi}_{Q,W,\bd}, \]
giving an increasing filtration \[\mathfrak{P}^{\bullet}(\mathcal{A}^{T,\zeta, \psi}_{Q,W,\bd}) := 0 \subset \mathfrak{P}^{1}(\mathcal{A}^{T,\zeta,\psi}_{Q,W,\bd}) \subset \mathfrak{P}^{2}(\mathcal{A}^{T,\zeta,\psi}_{Q,W,\bd}) \cdots \subset \mathcal{A}^{T,\zeta, \psi}_{Q,W,\bd} \] of $\mathcal{A}^{T,\zeta,\psi}_{Q,W,\bd}$. This filtration respects the algebra structure on $\mathcal{A}^{T,\zeta,\psi}_{Q,W,\mu}$. It is shown in \cite{davison2022affine}[Theorem 3.3] that in fact the vector space $\oplus_{\bd \in \mathrm{\Lambda}^{\zeta}_{\mu}} \mathfrak{P}^{1}(\mathcal{A}^{T,\zeta,\psi}_{Q,W,\bd})$ is preserved under the Lie bracket induced by the associative algebra structure on $\mathcal{A}^{T,\zeta,\psi}_{Q,W,\mu}$. The BPS Lie algebra is defined to be the  $\mathrm{\Lambda}^{\zeta}_{\mu} \times \mathbb{Z}$ graded vector subspace\footnote{Since we define the BPS Lie algebra after fixing the twist $\psi$, we omit it from the notation.} \[ \mathfrak{g}^{\BPS, T,\zeta}_{Q,W,\mu} := \bigoplus_{\bd \in \mathrm{\Lambda}^{\zeta}_{\mu} \backslash {\mathbf{0}}} \mathfrak{P}^{1}(\mathcal{A}^{T,\zeta,\psi}_{Q,W,\bd}) \subset \mathcal{A}^{T,\zeta,\psi}_{Q,W,\mu}. \]

Furthermore, by the action of tautological bundle (Section \ref{tautologicalbundle}), we have a morphism of graded vector spaces $\HH(\BCu,\Q) \otimes \mathfrak{g}^{\BPS, T,\zeta}_{Q,W,\mu} \rightarrow \mathcal{A}^{T,\zeta,\psi}_{Q,W,\mu}$ which induces a morphism from the tensor algebra 
\begin{align}\label{tenosralgebra}
\textrm{T}_{\HH_{T}(\pt)}(\HH(\BCu,\Q) \otimes \mathfrak{g}^{\BPS,T,\zeta}_{Q,W,\mu}) \rightarrow \mathcal{A}^{T,\zeta,\psi}_{Q,W,\mu}.  
\end{align}

Let $\Sym_{\HHT}(\HH(\BCu,\Q) \otimes \mathfrak{g}^{\BPS, T, \zeta}_{Q,W,\mu}) \subset \textrm{T}(\HH(\BCu,\Q) \otimes \mathfrak{g}^{\BPS, T, \zeta}_{Q,W,\mu}) $ be the subspace of symmetric tensors, then restricting morphism (\ref{tenosralgebra}) yields a map of $\mathrm{\Lambda}^{\zeta}_{\mu} \times \mathbb{Z}$ graded $\HHT$ modules
\begin{align}\label{pbwmorphism}
\Sym_{\HHT}(\HH(\BCu,\Q) \otimes \mathfrak{g}^{\BPS,T,\zeta}_{Q,W,\mu}) \rightarrow \mathcal{A}^{T,\zeta,\psi}_{Q,W,\mu}. 
\end{align} We then have
\begin{thm}[PBW Isomorphism \cite{davison2022affine}, \cite{davison2020cohomological}] \label{PBWtheorem}
    The morphism (\ref{pbwmorphism}) is an isomorphism of $\mathrm{\Lambda}^{\zeta}_{\mu} \times \mathbb{Z}$ graded $\HHT$ modules. In fact, let $\Gr_{\mathfrak{P}}(\mathcal{A}^{T,\zeta,\psi}_{Q,W,\mu})$ be the associated graded algebra with respect to the Perverse filtration. Then there is an isomorphism of algebras \begin{equation} \label{PBW isomorphism}\Gr_{\mathfrak{P}}(\mathcal{A}^{T,\zeta,\psi}_{Q,W,\mu}) \simeq \mathbf{Sym}_{\HHT}( \HH(\BCu,\Q) \otimes \mathfrak{g}^{\BPS,T,\zeta}_{Q,W,\mu}) \end{equation}
    where for any $\HHT$ module $V$, $\mathbf{Sym}_{\HHT}(V) = T_{\HHT}(V)/\langle x \otimes y- y \otimes x=0 \mid x,y \in V \rangle $ is the quotient of the tensor algebra. 
\end{thm}

\subsubsection{Relative Integrality Theorem}
There is a lift of Proposition \ref{PBWtheorem} to the Relative CoHA. Following \cite{davison2020cohomological}, we define the BPS sheaf $\mathcal{BPS}^{T,\zeta}_{Q,W,\mu}$ to be 
\begin{align} \label{BPSsheaf}
\mathcal{BPS}^{T,\zeta}_{Q,W,\mu} := {}^{\mathfrak{p'}}\!\tau^{\leq 1} ((\JH^{T,\zeta}_{\mu})_{*} (\pPhi_{\Tr(W)}))[1]. 
\end{align} So that
\begin{align} \label{bpsLiealgebrcohomology}
\mathfrak{g}^{\BPS, T,\zeta}_{Q,W,\mu} \simeq \HH(\mathfrak{M}^{T,\zeta-\sss}_{\mu}(Q),\mathcal{BPS}^{T,\zeta}_{Q,W,\mu}[-1]).
\end{align} 

By definition, we have a canonical morphism \[\mathcal{BPS}^{T,\zeta}_{Q,W,\mu}[-1] \rightarrow \mathcal{RA}^{T,\zeta}_{Q,W,\mu}.\]

Applying the action of $\mathbb{Q}[u]$ introduced in Section \ref{tautologicalbundle}, yields a morphism of complexes

\[ \HH(\BCu,\mathbb{Q}) \otimes \mathcal{BPS}^{T,\zeta}_{Q,W,\mu}[-1] \rightarrow \mathcal{RA}^{T,\zeta}_{Q,W,\mu}.\]

Since the category $\Dbc(\Per^{\prime}(\mathcal{M}^{T,\zeta}_{Q,W,\mu}(Q)))$ is symmetric monoidal over $[\pt/T]$, for any object $\mathcal{F} \in \Dbc(\Per^{\prime}(\mathcal{M}^{T,\zeta}_{Q,W,\mu}(Q)))$, we may define \[\Sym_{\oplus^{T}}(\mathcal{F}) := \bigoplus_{i \geq 0} \Sym^{i}_{\oplus^T}(\mathcal{F})\] where $\Sym^i_{\oplus^T}(\mathcal{F})$ is the $S_i$ invariant part of \[ \underbrace{\mathcal{F} \boxtimes_{\oplus^T} \cdots \boxtimes_{\oplus^T}\mathcal{F}}_{i \mathrm{ \ times }}.\] 
Thus, in exactly the same way as before, since $\mathcal{RA}^{T,\zeta}_{Q,W,\mu}$ has relative CoHA structure, the above morphism lifts to a morphism 

\begin{equation} \label{relativeintegrality}
\Sym_{\oplus^T}\left(\HH(\BCu,\mathbb{Q}) \otimes \mathcal{BPS}^{T,\zeta}_{Q,W,\mu}[-1] \right) \rightarrow \mathcal{RA}^{T,\zeta}_{Q,W,\mu} \end{equation}
of objects in $\Dbc(\Per^{\prime}(\mathcal{M}^{T,\zeta}_{Q,W,\mu}(Q)))$. We then have

\begin{thm}[\cite{davison2020cohomological}] \label{integralitytheorem}
The morphism \ref{relativeintegrality} is an isomorphism and for any $\bd \in \mathrm{\Lambda}^{\zeta}_{\mu} \backslash 0$, there is an isomorphism of perverse sheaves

\begin{align}
    \mathcal{BPS}^{T,\zeta}_{Q,W,\bd} \simeq \begin{cases}
        \pPhi_{\Tr^{\zeta}_{\bd}(W)}\mathcal{IC}_{\mathcal{M}^{T,\zeta-\sss}_{\bd}(Q)} \ &\mathrm{if} \ \mathcal{M}^{\zeta-\mathrm{st}}_{\bd}(Q) \neq 0 \\
        0 \ & \mathrm{otherwise}
    \end{cases}
\end{align}

where $\Tr^{\zeta}_{\bd}(W): \mathcal{M}^{T,\zeta-\sss}_{\bd}(Q) \rightarrow \mathbb{C}$ is the unique function through which the of morphism $\Tr^{\zeta}_{\bd}(W): \mathfrak{M}^{T,\zeta-\sss}_{\bd}(Q) \rightarrow \mathbb{C}$ factors. 
\end{thm}

We recall the Wall Crossing theorem of Toda, which relates the BPS sheaves with or without stability.

\begin{prop}[\cite{toda2022gopakumarvafa}] \label{todatheorem}
For any symmetric quiver $Q$, potential $W$, $\zeta \in \Q^{Q_0}$ a stability condition and $\bd \in \N^{Q_0}$ a dimension vector, there is a natural isomorphism \[ (q^{\zeta}_{\bd})_{*} \mathcal{BPS}^{\zeta}_{Q,W,\bd} \simeq \mathcal{BPS}_{Q,W,\bd} \] where $q^{\zeta}_{\bd}$ is the GIT affinization map \[ q^{\zeta}_{\bd} \colon \mathcal{M}_{\bd}^{\zeta-\sss}(Q) \rightarrow \mathcal{M}_{\bd}(Q). \] 
\end{prop}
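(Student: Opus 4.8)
The plan is to propagate the BPS sheaf across the affinization map by a recollement argument, reducing everything to a comparison of lowest perverse cohomologies. First I would record the square of Jordan--H\"{o}lder and affinization maps. Writing $j\colon \mathfrak{M}^{\zeta-\sss}_{\bd}(Q)\hookrightarrow\mathfrak{M}_{\bd}(Q)$ for the open inclusion of the semistable locus, one has the identity $\JH_{\bd}\circ j = q^{\zeta}_{\bd}\circ \JH^{\zeta}_{\bd}$, and the resulting square is Cartesian: a representation is $\zeta$-semistable precisely when all of its composition factors have slope $\mu$, which is exactly the condition that its image under $\JH_{\bd}$ lies in the image of $q^{\zeta}_{\bd}$. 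Since $q^{\zeta}_{\bd}$ is projective it is proper, so $(q^{\zeta}_{\bd})_{*}$ commutes with Verdier duality (\ref{VerdierDual}) and with the decomposition theorem for $(\JH^{\zeta}_{\bd})_{*}\pPhi_{\Tr(W)}$. Functoriality, together with the fact that $j^{*}$ of the vanishing-cycle complex is its restriction to the semistable locus, then gives $(q^{\zeta}_{\bd})_{*}(\JH^{\zeta}_{\bd})_{*}\pPhi_{\Tr(W)} \simeq (\JH_{\bd})_{*}j_{*}j^{*}\pPhi_{\Tr(W)}$.

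Next I would feed the recollement triangle $i_{*}i^{!}\pPhi_{\Tr(W)}\to \pPhi_{\Tr(W)}\to j_{*}j^{*}\pPhi_{\Tr(W)}$, where $i$ is the closed inclusion of the $\zeta$-unstable locus, into $(\JH_{\bd})_{*}$, producing an exact triangle whose terms are the unstable contribution $(\JH_{\bd})_{*}i_{*}i^{!}\pPhi_{\Tr(W)}$, the full pushforward $(\JH_{\bd})_{*}\pPhi_{\Tr(W)}$, and $(q^{\zeta}_{\bd})_{*}(\JH^{\zeta}_{\bd})_{*}\pPhi_{\Tr(W)}$. Recalling that all three push-forwards have vanishing perverse cohomology below degree $1$, the statement follows from two claims: (A) the pushforward $(q^{\zeta}_{\bd})_{*}$ commutes with ${}^{\mathfrak{p}}\!\tau^{\leq 1}$ applied to $(\JH^{\zeta}_{\bd})_{*}\pPhi_{\Tr(W)}$, so that ${}^{\mathfrak{p}}\!\tau^{\leq 1}(q^{\zeta}_{\bd})_{*}(\JH^{\zeta}_{\bd})_{*}\pPhi_{\Tr(W)}\simeq (q^{\zeta}_{\bd})_{*}\mathcal{BPS}^{\zeta}_{Q,W,\bd}[-1]$; and (B) the unstable contribution $(\JH_{\bd})_{*}i_{*}i^{!}\pPhi_{\Tr(W)}$ is concentrated in perverse degrees $\geq 2$, so that applying ${}^{\mathfrak{p}}\!\tau^{\leq 1}$ to the triangle and reading off the long exact sequence of perverse cohomology identifies the lowest pieces of the middle and right-hand terms. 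Together these give ${}^{\mathfrak{p}}\!\tau^{\leq 1}(\JH_{\bd})_{*}\pPhi_{\Tr(W)}\simeq (q^{\zeta}_{\bd})_{*}\mathcal{BPS}^{\zeta}_{Q,W,\bd}[-1]$, which is the desired isomorphism after the shift by $[1]$ in the definition (\ref{BPSsheaf}).

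The hard part will be the perverse-degree estimates (A) and (B), both of which fail for a general proper map and rely on the geometry of $q^{\zeta}_{\bd}$. The obstruction in (A) is that $q^{\zeta}_{\bd}$ has positive-dimensional fibres over the strictly semistable strata --- these fibres are moduli of $\zeta$-semistable representations with prescribed simple factors --- and a priori such fibres can shift perverse cohomology downward into degree $\leq 1$. I would control this by combining the Harder--Narasimhan and Jordan--H\"{o}lder stratifications with the support lemma for BPS sheaves, which presents $\mathcal{BPS}^{\zeta}_{Q,W,\bd}$ as a sum of intersection complexes supported on the symmetric strata, and then bounding the fibre dimensions stratum by stratum so that the off-diagonal contributions land in strictly higher perverse degree; over the stable (equivalently $\C Q$-simple) locus, where $q^{\zeta}_{\bd}$ is an isomorphism, the two BPS sheaves restrict to the same vanishing-cycle complex, fixing the leading term. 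The self-duality of the BPS sheaf coming from (\ref{VerdierDual}) then rules out contributions in the symmetric boundary degree and forces the residual cokernel in the long exact sequence to vanish, pinning down the isomorphism on the nose. Establishing these dimension bounds --- equivalently, a semismallness-type estimate for $q^{\zeta}_{\bd}$ relative to the BPS stratification --- is the technical heart of the argument.
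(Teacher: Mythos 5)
First, a point of comparison: the paper does not prove this proposition at all --- it is imported from \cite{toda2022gopakumarvafa} as a black box --- so your proposal has to be judged on its own terms rather than against an internal argument.

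Your reduction to the two claims (A) and (B) is a reasonable skeleton, but those two claims are where the entire content of the theorem lives, and neither is established; moreover the route you sketch for the crucial one is not the one that works. Claim (A) contains the assertion that $(q^{\zeta}_{\bd})_{*}\mathcal{BPS}^{\zeta}_{Q,W,\bd}$ is perverse, which is essentially the theorem itself: the fibres of $q^{\zeta}_{\bd}$ over deep strata of $\mathcal{M}_{\bd}(Q)$ are assembled from moduli spaces $\mathcal{M}^{\zeta-\sss}_{\be}(Q)$ for smaller $\be$, and no semismallness estimate for $q^{\zeta}_{\bd}$ against the codimensions of strata is available or used in the literature. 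The known proof instead runs through the cohomological integrality theorem: both $(\JH_{\bd})_{*}\pPhi_{\Tr(W)}$ and $(q^{\zeta}_{\bd})_{*}(\JH^{\zeta}_{\bd})_{*}\pPhi_{\Tr(W)}$ are expressed as $\Sym_{\boxtimes_{\oplus}}$ of the respective BPS sheaves tensored with $\HH(\BCu,\Q)$, the functor $(q^{\zeta}_{\bd})_{*}$ intertwines the two $\boxtimes_{\oplus}$-structures because it commutes with the finite direct-sum maps, and one identifies the primitive parts by induction on $\bd$. Similarly, your claim (B) --- that $(\JH_{\bd})_{*}i_{*}i^{!}\pPhi_{\Tr(W)}$ sits in perverse degrees $\geq 2$ --- is true but is a consequence of the cohomological Harder--Narasimhan recursion (the terms of the $\boxtimes_{\oplus}$-product over decreasing slopes with at least two factors have perverse degree $\geq 2$, since external products add perverse degrees and the direct-sum map is finite); it cannot be read off from the bare recollement triangle and a codimension count for $i^{!}$ of a vanishing-cycle complex. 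Two smaller corrections: the square of $\JH$ and affinization maps is commutative but \emph{not} Cartesian, since distinct $\zeta$-polystable representations can share a semisimplification whenever $q^{\zeta}_{\bd}$ has positive-dimensional fibres (fortunately you only use commutativity); and ``$(q^{\zeta}_{\bd})_{*}$ commutes with the decomposition theorem'' is not a usable statement --- what is true is that the pushforward of a semisimple complex of geometric origin along a proper map remains semisimple, which does not by itself identify the summands.
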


Note that the Lie bracket on $\mathfrak{g}^{\BPS,\zeta}_{Q,W,\mu}$ lifts to a morphism of complexes in $\Db(\Per(\mathcal{M}^{\zeta-\sss}_{\mu}(Q)))$  
\begin{align*}[ \cdot , \cdot ]\colon \mathcal{BPS}^{\zeta}_{Q,W,\mu}[-1] \boxtimes_{\oplus} \mathcal{BPS}^{\zeta}_{Q,W,\mu}[-1] \rightarrow \mathcal{BPS}^{\zeta}_{Q,W,\mu}[-1]
\end{align*}which after pushforward along the map $q_{\bd}$ and applying Toda's theorem, gives 
\begin{prop}\label{todaLiealgebras}
    There is an isomorphism of Lie algebras \begin{align}
\mathfrak{g}^{\BPS, \zeta}_{Q,W,\mu}  \simeq \bigoplus_{\bd \in \mathrm{\Lambda}^{\zeta}_{\mu} \backslash {\mathbf{0}}} \mathfrak{g}^{\BPS}_{Q,W,\bd}. 
\end{align} 

\end{prop}

\section{Preprojective Cohomological Hall algebra}
Let $\overline{w}: \overline{Q} \rightarrow N$ be a torus weighting on the doubled quiver such that the preprojective algebra relation $\sum_{a \in Q_1} [a,a^{*}]$ is homogeneous with respect to this weighting. We then consider the $\mathrm{\Lambda}^{\zeta}_{\mu} \times \mathbb{Z}$ graded $\HHT$ module \footnote{There also exist versions with a stability condition, however, we don't need to consider that for our applications.} 
\[ \mathcal{A}^{T}_{\mathrm{\Pi}_Q} := \bigoplus_{\bd \in \mathbb{N}^{Q_0}} \HBM(\mathfrak{M}^{T}_{\bd}(\mathrm{\Pi}_Q),\Q^{\vir})\]
where we define\footnote{Note that the quotient stack $\mathfrak{M}_{\bd}(\mathrm{\Pi}_Q)$ is not smooth, so the defined shifted complex is not the same as the intersection complex. Note that $\mathbb{D}\mathbb{Q}^{\vir}_{\mathfrak{M}^{T}_{\bd}(\mathrm{\Pi}_Q)} =(\mathbb{D}\mathbb{Q}_{\mathfrak{M}^{T}_{\bd}(\mathrm{\Pi}_Q)})[2(\bd,\bd)]$.} $\Q^{\vir}|_{\mathfrak{M}^{T}_{\bd}(\mathrm{\Pi}_Q)}:=  \Q_{\mathfrak{M}^{T}_{\bd}(\mathrm{\Pi}_Q)}[-2(\bd,\bd)]$. In \cite{Yang_2018} (For any quiver) and \cite{schiffmann2012cherednik} (For the Jordan quiver), by doing virtual pullback and push forward, the authors define a $\mathbb{N}^{Q_0} \times \Z$ graded associative algebra structure  $\mathcal{A}^{T}_{\mathrm{\Pi}_Q}$ which we refer to as the preprojective cohomological Hall algebra. We can also twist the multiplication in $\mathcal{A}^{T}_{\mathrm{\Pi}_Q}$ as  we did for $\mathcal{A}_{\tilde{Q},\tilde{W}}$ in Section \ref{signtwist}. Let us denote the resulting algebra by $\mathcal{A}^{T,\chi}_{\mathrm{\Pi}_Q}$. In a similar way, in \cite{schiffmanncohagenerators}, authors define a $\mathbb{N}^{Q_0} \times \mathbb{Z}$ graded algebra structure on 
\[ \mathcal{A}^{T,\mathcal{N}}_{\mathrm{\Pi}_Q}:= \bigoplus_{\bd \in \mathbb{N}^{Q_0}}\HBM(\mathfrak{M}^{T,\mathcal{N}}_{\bd}(\mathrm{\Pi}_Q),\Q^{\vir}). \]

These algebras can be seen as a particular case of the Kontsevich-Soibelman cohomological Hall algebra. 

\subsection{Tripled quiver with canonical cubic potential} \label{dimensionreductiontripled}In Example \ref{canonicalcubicpotential}, we saw that for the tripled quiver $\tilde{Q}$ with canonical cubic potential $\tilde{W}$, we have an isomorphism of algebras $\Jac(\tilde{Q},\tilde{W}) \simeq \mathrm{\Pi}_Q[\omega]$; this in fact allows us to see the algebra $\mathcal{A}^{T}_{\mathrm{\Pi}_Q}$ as the dimension reduction of $\mathcal{A}^{T}_{\tilde{Q},\tilde{W},\mu}$ where the torus $T$ action on $\mathfrak{M}_{\bd}(\tilde{Q})$ is defined by uniquely extending the action on $\mathfrak{M}_{\bd}(\overline{Q})$ such that it leaves the potential $\tilde{W}$ invariant. Let \[ \pi: \mathfrak{M}^{T}_{\bd}(\tilde{Q}) \rightarrow \mathfrak{M}^{T}_{\bd}(\overline{Q})\] be natural forgetful morphism. We have decomposition 
\begin{equation}\mathfrak{M}^{T}_{\bd}(\tilde{Q}) = \mathfrak{M}_{\bd}(\overline{Q}) \times \mathfrak{M}_{\bd}(L) \end{equation} where $L$ is quiver with vertices $Q_0$ and loops $\omega_i$ on each vertex $i$. We may consider the $\mathbb{C}^{*}$ action, which scales the loops. Then applying dimension reduction (\ref{dimensionreduction}) gives an isomorphism of complexes
\begin{equation} \label{dimensionreductionpreprojective}\mathbb{D}\mathbb{Q}^{\vir}_{\mathfrak{M}^{T}_{\bd}(\mathrm{\Pi}_Q)} \simeq \pi_{*} \pPhi_{\Tr(\tilde{W})} \mathbb{Q}^{\vir}_{\mathfrak{M}^{T}_{\bd}(\tilde{Q})}.\end{equation} Taking cohomology, gives an isomorphism of $\mathbb{N}^{Q_0} \times \mathbb{Z}$ graded $\HHT$ modules
\begin{align}
\textrm{DR}_{\bd}: \HH(\mathfrak{M}_{\bd}^{T}(\tilde{Q}), \pPhi_{\Tr(W)}) \simeq \HH(\mathfrak{M}_{\bd}^{T}(\mathrm{\Pi}_Q),\Q^{\vir}) . 
\end{align} 
In \cite{Ren:2015zua}[Appendix A] and \cite{Yang_2019}, it shown that taking sum of maps $\tilde{\mathrm{DR}}_{\bd}:= \mathrm{DR}_{\bd} \cdot \binom{\bd}{2}$ over all $\bd \in \mathrm{\Lambda}^{\zeta}_{\mu}$, where $\binom{\bd}{2} := \prod_{i \in Q_0} \prod \binom{\bd_i}{2}$ gives an isomorphism of $\mathrm{\Lambda}^{\zeta}_{\mu} \times \Z$ graded algebras 
\[ \tilde{\mathrm{DR}} \colon \mathcal{A}^{T}_{\tilde{Q},\tilde{W}} \simeq \mathcal{A}^{T,\zeta}_{\mathrm{\Pi}_Q}. \] A similar analysis for the case of nilpotent representations, as done in \cite{davison2022affine}, one obtains an isomorphism of algebras
\[ \tilde{\mathrm{DR}} \colon \mathcal{A}^{T,\tilde{\mathcal{N}}}_{\tilde{Q},\tilde{W}} \simeq \mathcal{A}^{T,\mathcal{N}}_{\mathrm{\Pi}_Q}. \]

\subsection{Support lemma} \label{LessPerverse}

In this section, we restrict ourselves to the non-equivariant setting. We remark that these results also hold when any torus $T$. We saw that morphism \ref{dimensionreductionpreprojective} is an isomorphism of complexes. Combined with the Integrality Theorem \ref{integralitytheorem}, this gives a integrality theorem for the pushforward $\JH^{\overline{Q}}_{*} \mathbb{D}\mathbb{Q}^{\vir}_{\mathfrak{M}^{T}(\mathrm{\Pi}_Q)}$, where we treat $\mathbb{D}\mathbb{Q}^{\vir}_{\mathfrak{M}^{T}(\mathrm{\Pi}_Q)}$ as a complex on $\mathfrak{M}(\overline{Q})$. We have a commutative diagram
\[\begin{tikzcd}[ampersand replacement=\&]
	{\mathfrak{M}(\tilde{Q})} \& {\mathfrak{M}(\overline{Q})} \\
	{\mathcal{M}(\tilde{Q})} \& {\mathcal{M}(\overline{Q})}
	\arrow["\pi", from=1-1, to=1-2]
	\arrow["{\mathrm{JH}^{\tilde{Q}}}"', from=1-1, to=2-1]
	\arrow["{\JH^{\overline{Q}}}", from=1-2, to=2-2]
	\arrow["{\pi^{\prime}}"', from=2-1, to=2-2]
\end{tikzcd}\]

Thus, we have an isomorphism of complexes
\begin{align*}
    \JH^{\overline{Q}}_{*}\mathbb{D}\mathbb{Q}^{\vir}_{\mathfrak{M}(\mathrm{\Pi}_Q)} \simeq \pi^{\prime}_{*}\JH^{\tilde{Q}}_{*}\mathbb{Q}^{\vir}_{\mathfrak{M}_{\bd}(\overline{Q})} \simeq \Sym_{\oplus}\left(q_{*}\mathcal{BPS}_{\tilde{Q},\tilde{W}}[-1] \otimes \HH(\BCu,\mathbb{Q}) \right) 
\end{align*}

It turns out that the complex $q_{*}\mathcal{BPS}_{\tilde{Q},\tilde{W}}[-1]$ itself becomes a perverse sheaf on $\mathcal{M}(\overline{Q})$. This is explained by the support lemma. Let $l: \mathfrak{M}_{\bd}(\overline{Q}) \times \mathbb{A}^1 \rightarrow \mathfrak{M}_{\bd}(\tilde{Q})$ be an embedding of stacks induced by the closed inclusion $\mathrm{Rep}_{\bd}(\overline{Q}) \times \mathbb{A}^1 \rightarrow \Rep_{\bd}(\tilde{Q})$ defined by sending a representation $\rho$ of $\overline{Q}$ and a scalar $\lambda$ to a representation of $\tilde{Q}$ whose underlying $\overline{Q}$ representation is $\rho$ and the loops $\omega_i$ act by multiplication with the scalar $\lambda$. Then it is proved in \cite{davison2022integrality}[Lemma 4.1] that 

\begin{thm}[Support Lemma] \label{supportlemma}
    There exist a unique complex \[\mathcal{BPS}_{\mathrm{\Pi}_Q} \in \Dbc(\Per(\mathcal{M}(\overline{Q}))) \] such that \[ \mathcal{BPS}_{\tilde{Q},\tilde{W}} \simeq l_{*}(\mathcal{BPS}_{\mathrm{\Pi}_Q} \otimes \mathbb{Q}_{\mathbb{A}^1}[1]).\]
\end{thm}
Thus, we have an isomorphism of complexes
\begin{align} \label{splitting2}
\JH^{\overline{Q}}_{*}\mathbb{D}\mathbb{Q}^{\vir}_{\mathfrak{M}(\mathrm{\Pi}_Q)} \simeq \Sym\left(\mathcal{BPS}_{\mathrm{\Pi}_Q} \otimes \HH(\BCu,\mathbb{Q}) \right) 
\end{align}

\subsection{Kac polynomials and Graded Dimension of BPS Lie algebra }\label{Kacspolynomial}
Given any quiver $Q$, for any finite field $\mathbb{F}_{q}$ and any dimension vector $\bd \in \N^{Q_0}$. A representation is said to be \textit{indecomposable} if it cannot be non-trivially written as a direct sum of two representations of $Q$. A representation $\rho$ is said to be \textit{absolutely indecomposable} if it remains indecomposable after tensoring with the closure of $\mathbb{F}_q$, i.e., $\rho \otimes_{\mathbb{F}_q} \overline{\mathbb{F}}_q$ is indecomposable. 

Let $a_{Q,\bd}(q)$ be the number of isomorphism classes of absolutely indecomposable $\bd$ dimensional representations of $Q$ over $\mathbb{F}_q$. There is an intricate relationship between the Kac polynomials and the Lie algebra associated with the quiver, which can be considered as the generalization of Gabriel's Theorem, which classifies finite type quivers. We recall

\begin{defn}[Associated Kac-Moody Lie algebra] \label{negativehalfLiealgebra}
For any quiver $Q$, the real subquiver $Q'$ is defined to be the subquiver of $Q$ containing those vertices of $Q$ that do not support any edge loops, along with all arrows between these vertices. 
Let $\mathfrak{g}_{Q'}$ be the associated Kac-Moody Lie algebra. It is defined as a Lie algebra generated by symbols $e_i.f_i,h_i; i \in Q^{\prime}_0$ with the relations 
\begin{align*}
&[h_i,e_j]=a_{ij}e_j\\
&[h_i,f_j]=-a_{ij}f_j\\
&[e_i,f_j]= \delta_{ij} h_i \\
&[e_i, \hyphen ]^{a_{ij}+1}(e_j)=0; \\
&[f_i, \hyphen ]^{a_{ij}+1}(f_j)=0; 
\end{align*}
where  $a_{ij}$ is the number of edges joining $i$ and $j$ in the graph obtained from $Q^{\prime}$. We identify $\mathbb{Z}^{Q^{\prime}_0}$ along with the symmetrized Euler form $\chi_{Q}(\bd,\be)+\chi_{Q}(\be,\bd)$ with the root lattice $\mathrm{\Delta}_{\mathfrak{g}_{Q^{\prime}}}$ of the Lie algebra $\mathfrak{g}_{Q^{\prime}}$ along with the Cartan pairing via the morphism $\bd \mapsto \sum \bd_i \alpha_i$ where $\alpha_i$ are the simple roots of $\mathfrak{g}_{Q^{\prime}}$. Let $\mathfrak{n}^{+}_{Q^{\prime}}$ be the Lie subalgebra generated by $e_i$ and $\mathfrak{n}^{-}_{Q^{\prime}}$ denote the Lie subalgebra generated by $f_i$. 
\end{defn}

Then it is a Theorem of Kac that:

\begin{thm}[\cite{Kac1980}]
$a_{Q,\bd}(q)$ is a monic polynomial in $q$ of degree $1-(\bd,\bd)$ with coefficients in $\mathbb{Z}$. It doesn't depend on the orientation of $Q$. For a quiver without loops, $a_{Q,\bd}(q)$ is non-zero if and only if $\bd$ is a positive root of the associated Lie algebra $\mathfrak{g}_Q$. 
\end{thm}

\begin{example}[Finite type]
If $Q$ if finite type $ADE$ quiver then by theorem of Gabriel,
\begin{align}
a_{Q^{ADE},\bd}(q) = \begin{cases}
    1, \mathrm{when \ } (\bd,\bd)=1 (\bd \ \mathrm{ \ is \ a \ positive \ root}) \\
    0, \mathrm{otherwise}
\end{cases} 
\end{align}
\end{example}
More generally, for an arbitrary quiver $Q$, there exists a formula to compute the Kac Polynomial due to Hua \cite{huakac}. Kac conjectured that the coefficients of $a_{Q,\bd}(Q)$ are always positive. For indivisible dimensional vectors, this was proved in 2004 by Crawley-Boevey and Van den Bergh (\cite{crawleyvanden}). Then, using DT invariants, Mozgovoy proved this for a quiver without loops in \cite{mozgovoy}. Finally, this conjecture was settled by Hausel, Letellier,
and Rodriguez-Villegas in \cite{hauselKac} for arbitrary quivers. There is also a proof of this conjecture using BPS Lie algebras. In fact, in \cite{davison2023purity}, Davison proved this conjecture by realizing the coefficients of Kac polynomials as the graded dimensions of BPS Lie algebras $\mathfrak{g}^{\BPS}_{\tilde{Q},\tilde{W}}$. We have 

\begin{thm}[\cite{davison2022integrality}]
For any quiver $Q$, 
\begin{equation} \label{kacpolynomialandrelationwithbpsliealgebra}
 \sum_{i } (-1)^{i} \dim(\mathfrak{g}^{\BPS,i}_{\tilde{Q},\tilde{W},\bd})q^{i/2} = a_{Q,\bd}(q^{-1}).
\end{equation} 
\end{thm}

\begin{example}
We can also calculate the size of the Nilpotent cohomological Hall algebra. Let $i: \mathcal{M}^{\mathcal{N_{\overline{Q}}}}_{\bd} (\overline{Q}) \rightarrow \mathcal{M}_{\bd}(\overline{Q})$ be the inclusion of the coarse moduli space of nilpotent $\overline{Q}$ representations, clearly $\mathcal{M}^{\mathcal{N_{\overline{Q}}}}_{\bd} (\overline{Q}) $ is projective. We have a Cartesian diagram

    \[\begin{tikzcd}[ampersand replacement=\&]
	{\mathfrak{M}^{\mathcal{N}_{\overline{Q}}}_{\bd} (\overline{Q}) } \& {\mathfrak{M}_{\bd}(\overline{Q})} \\
	{\mathcal{M}^{\mathcal{N}_{\overline{Q}}}_{\bd} (\tilde{Q}) } \& {\mathcal{M}_{\bd}(\tilde{Q})}
	\arrow["{i^{\prime}}"', from=1-1, to=1-2]
	\arrow["{\mathrm{JH}}", from=1-1, to=2-1]
	\arrow["\lrcorner"{anchor=center, pos=0.125}, draw=none, from=1-1, to=2-2]
	\arrow["{\mathrm{JH}}"', from=1-2, to=2-2]
	\arrow["i", from=2-1, to=2-2]
\end{tikzcd}\]
and so we have 
\begin{align*}
\JH_{*}\mathbb{D}\mathbb{Q}^{\vir}_{\mathfrak{M}^{\mathcal{N}}(\mathrm{\Pi}_Q)} \simeq i^{!}\JH^{\overline{Q}}_{*}\mathbb{D}\mathbb{Q}^{\vir}_{\mathfrak{M}(\mathrm{\Pi}_Q)} \simeq \ \Sym\left(i^{!}\mathcal{BPS}_{\mathrm{\Pi}_Q} \otimes \HH(\BCu,\mathbb{Q}) \right) 
\end{align*}
and thus there is an isomorphism of graded vector spaces
\[ \mathcal{A}^{\tilde{\mathcal{N}}}_{\tilde{Q},\tilde{W}} \simeq \Sym\left(\HH(\mathcal{M}_{\bd}^{\mathcal{N}_{\overline{Q}}}(\overline{Q}), i^{!}\mathcal{BPS}_{\mathrm{\Pi}_Q}) \otimes \HH(\BCu,\mathbb{Q}) \right) \]
But since Verdier duality functor $\mathbb{D}$ commutes with the vanishing cycles and intersection cohomology sheaf, it follows that
\begin{align*}
    \HH(\mathcal{M}_{\bd}^{\mathcal{N}_{\overline{Q}}}(\overline{Q}), i^{!}\mathcal{BPS}_{\mathrm{\Pi}_Q}) \simeq (\HH_{c}(\mathcal{M}_{\bd}^{\mathcal{N}_{\overline{Q}}}(\overline{Q}), i^{*}\mathcal{BPS}_{\mathrm{\Pi}_Q}))^{\vee}
\end{align*}
Note that $(\mathcal{M}_{\bd}^{\mathcal{N}_{\overline{Q}}}(\overline{Q})$ is homotopic to $(\mathcal{M}_{\bd}(\overline{Q})$ via torus $T$ action which scales the arrows, for which the BPS sheaf is equivariant. Thus we have
\begin{align*}
\HH_{c}(\mathcal{M}_{\bd}^{\mathcal{N}_{\overline{Q}}}(\overline{Q}), i^{*}\mathcal{BPS}_{\mathrm{\Pi}_Q}) &= \HH(\mathcal{M}_{\bd}^{\mathcal{N}_{\overline{Q}}}(\overline{Q}), i^{*}\mathcal{BPS}_{\mathrm{\Pi}_Q}) \simeq \HH(\mathcal{M}_{\bd}(\overline{Q}), i^{*}\mathcal{BPS}_{\mathrm{\Pi}_Q}) \simeq \mathfrak{g}^{\BPS}_{\tilde{Q},\tilde{W}}.
\end{align*}
Thus we have isomorphism of graded vector spaaces \[ \HH(\mathcal{M}_{\bd}^{\mathcal{N}_{\overline{Q}}}(\overline{Q}), i^{!}\mathcal{BPS}_{\mathrm{\Pi}_Q})  \simeq (\mathfrak{g}^{\BPS}_{\tilde{Q},\tilde{W}})^{\vee} \] and so we recover \begin{equation} \label{nilpotentkacpolynomialandrelationwithbpsliealgebra}
 \sum_{i } (-1)^{i} \dim(\HH^{i}(\mathcal{M}_{\bd}^{\mathcal{N}_{\overline{Q}}}(\overline{Q}), i^{!}\mathcal{BPS}_{\mathrm{\Pi}_Q})  )q^{i/2} = a_{Q,\bd}(q).
\end{equation} 
\end{example}

The above relations allow us to conclude non-trivial statements about the BPS Lie algebra. For any quiver $Q$, since $\Tr(\tilde{W})=0$ for $\bd = \delta_i$, we have an isomorphism of vector spaces $\mathcal{A}^{\chi}_{\tilde{Q},\tilde{W},\delta_i} \simeq \C[x_{i,1}]$. We define $\alpha_i \subset \mathcal{A}_{\tilde{Q},\tilde{W}}^{\chi}$ to be the inverse of $1 \in \C[x_{i,1}]$. Clearly $\alpha_i \in \mathfrak{g}^{\BPS,0}_{\tilde{Q},\tilde{W},\delta_i}$ when the vertex $i$ doesn't support any edge loops. We claim that for any $i,j$, $a_{Q,\delta_j+(1+a_{ij})\delta_i}(q)=0$, this is because Kac's polynomials are orientation independent. So if we choose the orientation such that all the arrows are from vertex $i$ to $j$, then there is no indecomposable representation of dimension $\delta_j + (1+a_{ij}) \delta_i$. We thus have a map of Lie algebras 
\begin{align}\label{mapfromnegativekac}
\mathfrak{n}_{Q'}^{+} \rightarrow \mathfrak{g}^{\BPS,0}_{\tilde{Q},\tilde{W}}
\end{align} given by $e_i \mapsto \alpha_i$ where $\mathfrak{n}_{Q}^{+}$. The equality mentioned above suffices to show that this morphism is an injection. This is done by comparing the action on the Nakajima quiver varieties (See Section \ref{cohaactionnakajimainfinity}) with Nakajima's action, which is known to be faithful. We have

\begin{prop}[\cite{davison2022bps}, Theorem 6.6] \label{zerocohomologybps}
The map (\ref{mapfromnegativekac}) gives an isomorphism of Lie algebras\footnote{In \cite{davison2022bps}, one writes the above isomorphism for the negative half. But to our perspective, it is more natural to identify it to the positive half as the cohomological Hall algebra acts by \textit{creation} operators on the cohomology of Nakajima quiver varieties.} \[ \mathfrak{n}^{+}_{Q'} \simeq \mathfrak{g}^{\BPS,0}_{\tilde{Q},\tilde{W}}. \]

\end{prop}

\subsection{Flat deformation}
In the case when the natural mixed Hodge structure on $\mathcal{A}_{Q,W}$ is pure, the deformed cohomological Hall algebra $\mathcal{A}^{T,\psi}_{Q,W}$ forms a flat deformation of $\mathcal{A}^{\psi}_{Q,W}$. 

\begin{prop} \label{flatdeformationofcoha}
Let $\mathbf{w} \colon Q_1 \rightarrow \Z^{s}$ be a $W$ invariant weighting function and let $v\colon \mathbb{Z}^{s} \rightarrow \mathbb{Z}^{s'}$ be a surjective morphism inducing an inclusion of tori $T' \rightarrow T$ where $T'$ is the torus associated to weighting $v \mathbf{w}$. Choose a splitting of $v: \Z^{s} \rightarrow \Z^{s'}$ inducing an isomorphism $
\HH_{T}(\pt)  \simeq \HH_{T'}(\pt) \otimes \HH_{T''}(\pt). $
Assume the mixed Hodge structure on $\mathcal{A}_{Q,W}$ is pure. Then we have an isomorphism of $\N^{Q_0} \times \Z$ graded $\HHT$ modules
\begin{align*}
\mathcal{A}^{T}_{Q,W} \simeq & \mathcal{A}^{T'}_{Q,W} \otimes \HH_{T''}(\pt) \\
\mathfrak{g}^{\BPS,T}_{Q,W} \simeq & \mathfrak{g}^{\BPS, T'}_{Q,W} \otimes \HH_{T''}(\pt),
\end{align*} while there is an isomorphism of $\N^{Q_0} \times \Z$ graded $\HH_{T^{\prime}}(\pt)$ algebras \[ \mathcal{A}^{T,\psi}_{Q,W} \otimes_{\HH_{T}(\pt)} \HH_{T'}(\pt) \simeq \mathcal{A}^{T',\psi}_{Q,W} \] and $\N^{Q_0} \times \Z$ graded $\HH_{T^{\prime}}(\pt)$ Lie algebras
\[ \mathfrak{g}^{\BPS, T}_{Q,W} \otimes_{\HH_{T}(\pt)} \HH_{T'}(\pt) \simeq \mathfrak{g}^{\BPS,T'}_{Q,W}. \]

\end{prop}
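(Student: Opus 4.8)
The plan is to deduce every assertion from the PBW isomorphism of Proposition~\ref{PBWtheorem} together with a single equivariant formality statement that is forced by the purity hypothesis; everything else is naturality and bookkeeping of gradings and the twist $\psi$. First I would observe that purity of $\mathcal{A}_{Q,W}$ propagates to the BPS complex. By the non-equivariant PBW isomorphism $\mathcal{A}_{Q,W} \simeq \Sym(\HH(\BCu,\Q)\otimes \mathfrak{g}^{\BPS}_{Q,W})$, and in it the Lie algebra $\mathfrak{g}^{\BPS}_{Q,W} = \bigoplus_{\bd} \mathfrak{P}^{1}(\mathcal{A}_{Q,W,\bd})$ sits as a sub–mixed Hodge structure, namely the bottom piece of the perverse filtration. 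A sub-MHS of a pure MHS is pure, so $\mathfrak{g}^{\BPS}_{Q,W}$, equivalently the cohomology $\HH(\mathcal{M}_{\bd}(Q),\mathcal{BPS}_{Q,W,\bd})$ of the BPS complex on the good moduli space, is pure.

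Next comes the engine of the proof. For any subtorus $S\subseteq T$ the BPS Lie algebra is, by (\ref{bpsLiealgebrcohomology}), the $S$-equivariant cohomology of this pure BPS complex. Running the Borel/Leray spectral sequence whose second page is $\HH_{S}(\pt)\otimes \HH(\mathcal{M}_{\bd}(Q),\mathcal{BPS}_{Q,W,\bd})$, purity of the fibre cohomology together with the Tate purity of $\HH_{S}(\pt)$ forces degeneration with no nontrivial extensions, by weight reasons. This yields an isomorphism of $\N^{Q_0}\times\Z$-graded $\HH_{S}(\pt)$-modules
\[ \mathfrak{g}^{\BPS,S}_{Q,W} \simeq \mathfrak{g}^{\BPS}_{Q,W}\otimes \HH_{S}(\pt). \]
Applying this to $S=T$ and to $S=T'$ and inserting the splitting $\HH_{T}(\pt)\simeq \HH_{T'}(\pt)\otimes \HH_{T''}(\pt)$ gives the graded vector space isomorphism $\mathfrak{g}^{\BPS,T}_{Q,W}\simeq \mathfrak{g}^{\BPS,T'}_{Q,W}\otimes \HH_{T''}(\pt)$. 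I stress that this is only a module isomorphism: the equivariant bracket generally acquires corrections in the equivariant parameters, so one may \emph{not} assert the same as Lie algebras. Feeding the formula back into Proposition~\ref{PBWtheorem}, the module $\HH(\BCu,\Q)\otimes \mathfrak{g}^{\BPS,T}_{Q,W}$ is free of the form $V\otimes_{\Q}\HH_{T}(\pt)$ with $V=\HH(\BCu,\Q)\otimes \mathfrak{g}^{\BPS}_{Q,W}$, and since $\Sym$ commutes with base change we obtain $\mathcal{A}^{T}_{Q,W}\simeq \Sym_{\Q}(V)\otimes \HH_{T}(\pt)\simeq \mathcal{A}_{Q,W}\otimes \HH_{T}(\pt)$ as graded modules, likewise for $T'$. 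Combining with the splitting produces the first displayed isomorphism $\mathcal{A}^{T}_{Q,W}\simeq \mathcal{A}^{T'}_{Q,W}\otimes \HH_{T''}(\pt)$, and in particular exhibits $\mathcal{A}^{T}_{Q,W}$ as a free, hence flat, $\HH_{T''}(\pt)$-module.

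For the two remaining base-change isomorphisms I would use the naturality of all constructions under the restriction $T'\hookrightarrow T$. The pullback along the smooth map $\mathcal{M}^{T'}_{\bd}(Q)\to \mathcal{M}^{T}_{\bd}(Q)$ realises, on cohomology, the base change $-\otimes_{\HH_{T}(\pt)}\HH_{T'}(\pt)$ that sets the $T''$-parameters to zero; because every operation assembling the product $m^{\zeta}_{\mu}$ and the bracket lifted to the BPS complex is built from the same correspondences with merely reduced equivariance, this restriction map is a homomorphism of $\HH_{T'}(\pt)$-algebras and of Lie algebras respectively. Choosing the formality splittings for $T$ and $T'$ compatibly (both induced from the non-equivariant one), the identifications of the previous paragraph turn the restriction map into $\mathrm{id}\otimes(\HH_{T}(\pt)\to\HH_{T'}(\pt))$, so that after base change it becomes the identity on $\mathfrak{g}^{\BPS}_{Q,W}\otimes\HH_{T'}(\pt)$, respectively on $\mathcal{A}_{Q,W}\otimes\HH_{T'}(\pt)$. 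Hence the natural maps $\mathfrak{g}^{\BPS,T}_{Q,W}\otimes_{\HH_{T}(\pt)}\HH_{T'}(\pt)\to \mathfrak{g}^{\BPS,T'}_{Q,W}$ and $\mathcal{A}^{T,\psi}_{Q,W}\otimes_{\HH_{T}(\pt)}\HH_{T'}(\pt)\to \mathcal{A}^{T',\psi}_{Q,W}$ are isomorphisms that carry the Lie and algebra structures, giving the last two displayed isomorphisms.

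The hard part is entirely the equivariant formality step, that is, converting purity into degeneration and freeness. One must ensure that the weights on $\HH(\mathcal{M}_{\bd}(Q),\mathcal{BPS}_{Q,W,\bd})$ are concentrated so as to preclude any differential or extension against the polynomial algebra $\HH_{S}(\pt)$, and crucially that purity is applied to the BPS complex \emph{on the good moduli space} $\mathcal{M}_{\bd}(Q)$, where Toda's theorem and the perverse decomposition are available, rather than on the stack. Once this is secured, the passage through $\Sym$ and PBW, and the identification of the natural restriction maps with base change, are formal.
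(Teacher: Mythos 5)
Your argument is correct, and in substance it is the same purity-to-formality mechanism that the paper relies on; the difference is one of packaging. The paper's proof consists of citing \cite{davison2022integrality}[Theorem~9.6] (whose proof is exactly the degeneration-by-weights argument you spell out), then deducing the vector-space statement for $\mathfrak{g}^{\BPS,T}_{Q,W}$ \emph{from} the CoHA statement via the PBW isomorphism, and obtaining the Lie-algebra base-change isomorphism from the algebra base-change isomorphism by a size (graded-dimension) count. You go in the opposite direction: you first establish formality at the level of the BPS sheaf on the good moduli space (purity of $\mathfrak{g}^{\BPS}_{Q,W}$ as a sub-MHS of $\mathcal{A}_{Q,W}$ via the perverse filtration, then degeneration of the Borel spectral sequence for weight reasons), and only then propagate upward through $\Sym$ and Proposition~\ref{PBWtheorem}. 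Both routes are valid; yours has the advantage of being self-contained and of making explicit where purity is actually used, while the paper's is shorter and avoids the one slightly delicate point in your write-up, namely the claim that the formality splittings for $T$ and $T'$ can be chosen compatibly so that restriction literally becomes $\mathrm{id}\otimes(\HH_{T}(\pt)\to\HH_{T'}(\pt))$. That compatibility is more than you need: it suffices to observe that restriction is a filtered map inducing, after $\otimes_{\HH_{T}(\pt)}\HH_{T'}(\pt)$, an isomorphism on associated graded pieces, each of which is finite dimensional in every $(\bd,\Z)$-degree — which is precisely the paper's size argument. With that small repair your proof is complete.
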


\begin{proof}
This statement is proved for $\tilde{Q},\tilde{W}$ in \cite{davison2022integrality}[Theorem 9.6]. The proof works as it is in this case, as it only relies on the purity of $\mathcal{A}_{Q,W}$. The statement $\mathfrak{g}^{\BPS, T}_{Q,W} \simeq  \mathfrak{g}^{\BPS, T'}_{Q,W} \otimes \HH_{T''}(\pt)$ as vector spaces follows from the PBW theorem (Proposition \ref{PBWtheorem}) while the isomorphism of algebras $\mathcal{A}^{T,\psi}_{Q,W} \otimes_{\HH_{T}(\pt)} \HH_{T'}(\pt) \simeq \mathcal{A}^{T',\psi}_{Q,W}$ implies in particular that there is map of $\N^{Q_0} \times \Z$ graded Lie algebras
$\mathfrak{g}^{\BPS, T}_{Q,W} \otimes_{\HH_{T}(\pt)} \HH_{T'}(\pt) \simeq \mathfrak{g}^{\BPS, T'}_{Q,W}$, which then becomes an isomorphism by a size argument. 
\end{proof}

We will, in particular, be applying the above proposition in the setting of the tripled quiver $\tilde{Q}$ with tripled canonical potential $\tilde{W}$, which is shown to be of pure mixed Hodge structure in \cite{davison2022integrality}[Theorem A]. In particular, this implies that 
\begin{thm} \label{free}
The cohomological Hall algebera $\mathcal{A}^{T}_{\tilde{Q},\tilde{W}}$ is free module over $\HHT$. 
\end{thm}

A similar theorem holds for the case of nilpotent CoHA. It is proved in \cite{davison2022integrality} and \cite{schiffmann2017cohomological} that

\begin{thm} \label{nilpotentfree}
There is an isomorphism of $\mathbb{N}^{Q_0} \times \mathbb{Z}$ graded $\HHT$ modules 
$\mathcal{A}^{T,\tilde{\mathcal{N}}}_{\tilde{Q},\tilde{W}} \simeq \mathcal{A}^{\tilde{\mathcal{N}}}_{\tilde{Q},\tilde{W}} \otimes \HHT $ i.e.  The nilpotent cohomological Hall algebra $\mathcal{A}^{T,\tilde{\mathcal{N}}}_{\tilde{Q},\tilde{W}}$ is free module over $\HHT$.
\end{thm}

\section{BPS Lie algebra of Cyclic quivers} \label{chapterbpscyclic}

Let $Q^{K}= \tilde{A_{K}}$ be the cyclic quiver of length $K+1$, i.e. we have $K+1$ vertices $0,1,\cdots,K$ with arrows $i \rightarrow i+1$ for $i=0,\cdots,K$ and an arrow $K \rightarrow 0$. We then consider the tripled cyclic quiver $\tilde{Q^{K}}$ with the canonical potential $\tilde{W^{K}}$ as in Example \ref{canonicalcubicpotential} (See Figure \ref{fig:triplecyclic quivers}). Let $T$ be any torus action which leaves the potential $\tilde{W_{K}}$ invariant. We consider two distinct choices for the torus $T$. 
\begin{figure}[h!]
    \centering
    \[\begin{tikzcd}[ampersand replacement=\&]
	\& 0 \\
	3 \&\& 1 \\
	\& 2
	\arrow["{\omega_0}"{description}, from=1-2, to=1-2, loop, in=55, out=125, distance=10mm]
	\arrow["{a_{30}^{*}}"{description}, curve={height=-6pt}, from=1-2, to=2-1]
	\arrow["{a_{01}}"{description}, curve={height=-6pt}, from=1-2, to=2-3]
	\arrow["{a_{30}}"{description}, curve={height=-6pt}, from=2-1, to=1-2]
	\arrow["{\omega_3}"{description}, from=2-1, to=2-1, loop, in=145, out=215, distance=10mm]
	\arrow["{a_{23}^{*}}"{description}, curve={height=-6pt}, from=2-1, to=3-2]
	\arrow["{a_{01}^{*}}"{description}, curve={height=-6pt}, from=2-3, to=1-2]
	\arrow["{\omega_1}"{description}, from=2-3, to=2-3, loop, in=325, out=35, distance=10mm]
	\arrow["{a_{12}}"{description}, curve={height=-6pt}, from=2-3, to=3-2]
	\arrow["{a_{23}}"{description}, curve={height=-6pt}, from=3-2, to=2-1]
	\arrow["{a_{12}^{*}}"{description}, curve={height=-6pt}, from=3-2, to=2-3]
	\arrow["{\omega_2}"{description}, from=3-2, to=3-2, loop, in=235, out=305, distance=10mm]
\end{tikzcd}\]
    \caption{$\tilde{Q^{3}},\tilde{W^{3}}= \sum_{i \in \mathbb{Z}/4\mathbb{Z}} \omega_{i}(a_{i,i+1}^{*}a_{i,i+1} - a_{i,i-1}a_{i,i-1}^{*})$}
    \label{fig:triplecyclic quivers}
\end{figure}
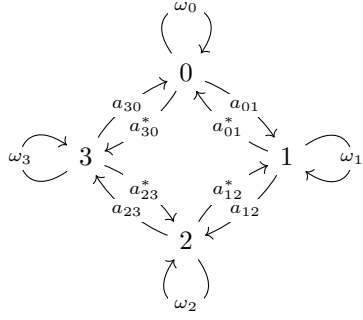

\begin{itemize}
    \item Action of $\C^{*}$ given by weighting function $\textbf{w}: \QTC{K} \rightarrow \mathbb{Z}$, such that $\textbf{w}(a)=1$, $\textbf{w}(a^{*}) = -1$ and $\textbf{w}(\omega_i)=0$ for all vertices $i \in Q^K_0$.  
    \item Action of $\T = \C^{*} \times \C^{*}$ given by weighting function $\textbf{w}: \QTC{K} \rightarrow \mathbb{Z}^2$ where $\textbf{w}(a)=(1,0)$, $\textbf{w}(a^{*})=(0,1)$ and $\textbf{w}(\omega_i)=(-1,-1)$ for all vertices $i \in Q^K_0$. 
\end{itemize} 
We denote by $\mathcal{A}^{\chi}_{\QTC{K},\WTC{K}}$, the cohomological Hall algebra of the tripled cyclic quiver with canonical potential and its $\C^{*}$ and $\T$ deformed versions by  $\mathcal{A}^{\C^{*},\chi}_{\QTC{K},\WTC{K}}$ and $\mathcal{A}^{\T^{*},\chi}_{\QTC{K},\WTC{K}}$ respectively. 

\subsubsection{(Deformed) BPS Lie algebra of Cyclic quiver} \label{bpsLiealgebra}

In this section, we shall calculate the BPS Lie algebra for tripled cyclic quivers with a canonical cubic potential. Following the work of Davison, Hennecart, and Schelegel Mejia, these Lie algebras are now known for arbitrary tripled quivers with canonical potential (See Section \ref{generalizedBPS}).

In the case of cyclic quivers, we can calculate these Lie algebras more directly, as we explain next. We first calculate the Kac polynomial for cyclic quivers, which tells us about the graded dimensions of BPS Lie algebras. One can check that (See \cite{mythesis}[Proposition 5.0.1]) 

\begin{prop} \label{kacpolynomialcyclicquiver}
The Kac polynomial of the cyclic quiver $Q^{K}$ is given by
\begin{equation}  a_{Q,\bd}(q) = \begin{cases}
    & q+ K \textrm{ when } \bd \in \mathbb{Z}_{> 0} \cdot \delta \textrm{ (positive imaginary root)} \\ & 1 \textrm{ when } \bd \in \mathrm{\Delta}^{+}_{\textrm{re}} \textrm{ (positive real root)}
\end{cases}
\end{equation}
where $\mathrm{\Delta}^{+}_{\textrm{re}}$ and $\delta= (1,\cdots,1)$ are respectively, the real positive roots and the primitive imaginary root of the affine Lie algebra $\tilde{\mathfrak{sl}}_{K+1}$. 
\end{prop}

\begin{prop}\label{bpsLiealgebracyclic}
There is an isomorphism of Lie algebras 
    \begin{align*}  
    \mathfrak{g}^{\BPS}_{\QTC{K},\WTC{K}} & \simeq \mathfrak{n}_{Q^{K}}^{+} \oplus s \Q[s]  \\ 
    \mathfrak{g}^{\BPS,\C^*}_{\QTC{K},\WTC{K}} & \simeq (\mathfrak{n}_{Q^{K}}^{+} \oplus s \Q[s]) \otimes \HH_{\C^{*}}(\pt) \\
 \mathfrak{g}^{\BPS, \T}_{\QTC{K},\WTC{K}} & \simeq (\mathfrak{n}_{Q^{K}}^{+} \oplus s \Q[s]) \otimes \HH_{\T}(\pt)
    \end{align*}
    where $\mathfrak{n}^{+}_{Q^{K}}\oplus s\Q[s]$ is a trivial central extension of $\fn_{Q^{K}}$ and $\deg(s^n) = n \cdot \delta$.
    
\end{prop}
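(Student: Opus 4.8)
The plan is to determine $\mathfrak{g}^{\BPS}_{\QTC{K},\WTC{K}}$ first as a bigraded vector space, graded by dimension vector and by cohomological degree, and then to reconstruct the Lie bracket from these two gradings. Feeding the Kac polynomial (\ref{kacpolynomialcyclicquiver}) into the cohomological integrality identity (\ref{kacpolynomialandrelationwithbpsliealgebra}), I read off that for a positive real root $\bd$ the BPS cohomology is one-dimensional and concentrated in degree $0$, whereas for an imaginary root $n\delta$ it is $K$-dimensional in degree $0$ and one-dimensional in degree $-2$, vanishing in all other (dimension, degree) bidegrees. I denote the resulting degree-$0$ and degree-$(-2)$ pieces of $\mathfrak{g}^{\BPS}_{\QTC{K},\WTC{K}}$ by $\HH^{0}$ and $\HH^{-2}$. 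Since $Q^K$ supports no loops, its real subquiver equals $Q^K$, so Proposition \ref{zerocohomologybps} identifies $\HH^{0}$ with $\mathfrak{n}^{-}_{Q^K}$ as Lie algebras; this is consistent with the count above, as the imaginary root spaces of $\mathfrak{n}^{-}_{\tilde{A}_K}$ have multiplicity $K$.

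It then remains to treat $\HH^{-2}$, which I write as $s\Q[s]$ with $s^n$ the generator placed in dimension $n\delta$. The decisive structural fact is that the Hall product, and hence the induced BPS Lie bracket, preserves cohomological degree (the symmetric-quiver degree computation of Section \ref{hallalgebra}). Consequently $[\HH^{-2},\HH^{-2}]\subseteq\HH^{-4}=0$, so $s\Q[s]$ is abelian, while $[\HH^{0},\HH^{-2}]\subseteq\HH^{-2}$ is supported only in imaginary dimensions. Since $\mathfrak{n}^{-}_{Q^K}=\HH^{0}$ is generated by the $f_i$ in dimension $\delta_i$, and $[f_i,s^n]$ would live in the degree-$(-2)$, dimension-$(\delta_i+n\delta)$ part, which vanishes because $\delta_i+n\delta$ is never a multiple of $\delta$, an induction on bracket length via the Jacobi identity gives $[\HH^{0},\HH^{-2}]=0$. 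Thus $s\Q[s]$ is a central ideal with complementary subalgebra $\HH^{0}\cong\mathfrak{n}^{-}_{Q^K}$, proving the first isomorphism as a split, hence trivial, central extension.

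For the two deformed statements I would begin from Proposition \ref{flatdeformationofcoha}: purity of $\mathcal{A}_{\QTC{K},\WTC{K}}$ yields bigrading-compatible vector-space identifications $\mathfrak{g}^{\BPS,\C^*}\cong\mathfrak{g}^{\BPS}\otimes\HH_{\C^*}(\pt)$ and $\mathfrak{g}^{\BPS,\T}\cong\mathfrak{g}^{\BPS}\otimes\HH_{\T}(\pt)$, under which the bracket becomes $\HH_{\T}(\pt)$-bilinear and still degree-preserving. Rerunning the degree bookkeeping over the base ring, whose generators lie in strictly positive even degree, shows that $s\Q[s]\otimes\HH_{\T}(\pt)$ remains central: each of $[s^m,s^{m'}]$, $[s^m,h]$ and $[s^m,e_\alpha]$ (here $h$ denotes an imaginary, Cartan-type, degree-$0$ generator and $e_\alpha$ a real-root one) lands in a bidegree whose target is either zero or free of equivariant parameters, and in the parameter-free cases the classical limit (the base change of Proposition \ref{flatdeformationofcoha} to the point) forces the coefficient to vanish. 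The same bookkeeping shows the structure constants among degree-$0$ generators are parameter-free and agree with the undeformed ones. Hence $\mathfrak{g}^{\BPS,\T}$ is a central extension of $\mathfrak{n}^{-}_{Q^K}\otimes\HH_{\T}(\pt)$ by the central ideal $s\Q[s]\otimes\HH_{\T}(\pt)$, classified by a $2$-cocycle valued in $\hbar\,s\Q[s]$ (respectively $(\hbar_1,\hbar_2)\,s\Q[s]$).

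The heart of the matter, and the step I expect to be the main obstacle, is to prove that this cocycle is trivial: that the bracket of two degree-$0$ generators whose dimensions sum to $n\delta$ carries no correction of type $\hbar_i s^n$, equivalently that the imaginary Cartan generators still commute after deformation and that the real-root corrections can be absorbed into a redefinition of the Cartan generators by central elements. I would establish this using the shuffle-algebra model available in the deformed setting: the embedding $\mathcal{A}^{\T,\chi}_{\QTC{K},\WTC{K}}\hookrightarrow\mathcal{A}^{\T,\chi}_{\QTC{K}}$ into the CoHA of the quiver without potential (Section \ref{injectiontoshuffle}) turns the product into an explicit shuffle product, so computing the images of the relevant degree-$0$ generators and of $s^n$ reduces the vanishing of the cocycle to a finite symmetric-function identity. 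A lighter alternative for the one-parameter action is to exploit the involution exchanging $a\leftrightarrow a^{*}$ and $\hbar\leftrightarrow-\hbar$, under which a nonzero $\hbar s^n$-correction would be forced to reverse sign while the underlying bracket is preserved, again giving its vanishing. Either route upgrades the vector-space identifications to the asserted Lie-algebra isomorphisms with $(\mathfrak{n}^{-}_{Q^K}\oplus s\Q[s])\otimes\HH_{\C^*}(\pt)$ and $(\mathfrak{n}^{-}_{Q^K}\oplus s\Q[s])\otimes\HH_{\T}(\pt)$.
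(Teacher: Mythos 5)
Your treatment of the undeformed Lie algebra $\mathfrak{g}^{\BPS}_{\QTC{K},\WTC{K}}$ is correct and is essentially the paper's own argument: read the bigraded dimensions off the Kac polynomial via (\ref{kacpolynomialandrelationwithbpsliealgebra}), identify $\HH^{0}$ with $\mathfrak{n}^{-}_{Q^{K}}$ by Proposition \ref{zerocohomologybps}, and kill $[\HH^{-2},\HH^{-2}]$ and $[\HH^{0},\HH^{-2}]$ by observing that the target bidegrees (degree $-4$ at imaginary roots, degree $-2$ at real roots) vanish, propagating from the generators $\alpha_i$ by Jacobi. No issues there.

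The gap is in the two deformed statements, and it is exactly where the paper itself warns that ``this argument does not work for deformed versions.'' Your degree bookkeeping correctly disposes of most brackets, but it leaves open precisely one family of potential corrections: for degree-$0$ elements $\alpha,\beta$ with $\dim\alpha+\dim\beta=n\delta$, the target $\HH^{0}$-piece at $n\delta$ in the deformed algebra contains $\gamma_{n\delta}\otimes\HH^{2}_{\T}(\pt)$, so $[\alpha,\beta]$ may acquire a term $\ell(\hbar_1,\hbar_2)\,\gamma_{n\delta}$ invisible in the classical limit. You correctly isolate this as the crux, but neither of your proposed resolutions is carried out, and neither is routine. The shuffle route is not ``a finite symmetric-function identity'': there are infinitely many decompositions $n\delta=\bd'+\bd''$ into real roots, and the obvious induction via Jacobi is obstructed because intermediate brackets can themselves land on smaller multiples of $\delta$ and pick up corrections; moreover, by Proposition \ref{imageimaginary} the image $i^{\T}(\gamma_{n\delta})$ is divisible by $(t_1+t_2)^{K}$, so detecting or excluding an $\hbar$-multiple of it inside $i^{\T}([\alpha,\beta])$ requires controlling the image of the classical part of the bracket to the same order --- this is genuine work of the kind done in Section \ref{injectiontoshuffle} and is not supplied. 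The involution $a\leftrightarrow a^{*}$, $\hbar\leftrightarrow-\hbar$ is only sketched: it reverses the orientation of the cycle and negates the potential, so one must compose with a vertex relabelling and a sign change on the loops, verify that the result is an algebra automorphism of $\mathcal{A}^{\C^{*},\chi}_{\QTC{K},\WTC{K}}$ permuting the $\alpha_i$ and fixing $\gamma_{n\delta}$ up to sign, and in any case it says nothing about the full torus $\T=\C^{*}\times\C^{*}$. The paper closes this gap by citing \cite{davison2024bps}[Corollary 11.9], which asserts for any tripled quiver with canonical potential that $\mathfrak{g}^{\BPS,T}_{\tilde{Q},\tilde{W}}\simeq\mathfrak{g}^{\BPS}_{\tilde{Q},\tilde{W}}\otimes\HH_{T}(\pt)$ as Lie algebras; without that input (or a completed version of one of your two computations), the deformed isomorphisms are not established.
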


\begin{proof}
From Proposition \ref{zerocohomologybps}, we have an isomorphism of Lie algebras \[ \mathfrak{n}_{Q^K}^{+} \simeq \HH^{0}(\mathfrak{g}^{\BPS}_{\QTC{K},\WTC{K}}).\]
In particular this also shows that $\HH^{0}(\mathfrak{g}^{\BPS}_{\QTC{K},\WTC{K}})$ is generated as a Lie algebra by $\alpha_i$, where we recall that $\alpha_{i} \in \mathfrak{g}^{\BPS}_{\QTC{K},\WTC{K},\delta_i}$. The Kac polynomial of the cyclic quiver (\ref{kacpolynomialcyclicquiver}) and it's relation with BPS Lie algebra (\ref{kacpolynomialandrelationwithbpsliealgebra}) also tells us that for every $n \in \Z_{>0}$, there exist $\gamma_{n \cdot \delta} \in \HH^{-2}(\mathfrak{g}^{\BPS}_{\QTC{K},\WTC{K},n \cdot \delta})$. By equation (\ref{kacpolynomialcyclicquiver}), there is no element of cohomological degree $-4$ in dimension $\mathbb{Z}_{> 0} \delta$. Thus it follows that $[\gamma_{n \cdot \delta}, \gamma_{m \cdot \delta}]=0$. Again, by equation \ref{kacpolynomialcyclicquiver}, there is no $-2$ cohomological degree term in dimension $\delta+\delta_i$ for any $i$, thus it follows that $[\gamma_{n \cdot \delta}, \alpha_i]=0$. But then $\alpha_i$ generates $\HH^{0}(\mathfrak{g}^{\BPS}_{\QTC{K},\WTC{K}})$ and all of $\HH^{-2}(\mathfrak{g}^{\BPS}_{\QTC{K},\WTC{K}})$ is central and so we are done. This argument does not work for deformed versions. However in \cite{davison2024bps}[Corollary 11.9], it is shown that for any quiver $Q$ and for any torus action such that the canonical cubic potential $\tilde{W}$ remains invariant,  $\mathfrak{g}^{\BPS,T}_{\tilde{Q},\tilde{W}}$ is a trivial deformation of $\mathfrak{g}^{\BPS}_{\tilde{Q},\tilde{W}}$, i.e. there is an isomorphism of Lie algebras 
\[ \mathfrak{g}^{\BPS,T}_{\tilde{Q},\tilde{W}} \simeq \mathfrak{g}^{\BPS}_{\tilde{Q},\tilde{W}} \otimes \HH_{T}(\pt).\] Hence, the conclusion follows.
\end{proof}
The BPS Lie algebras $\mathfrak{g}^{\BPS}_{\tilde{Q},\tilde{W}}$ for tripled quiver $\tilde{Q}$ and potential $\tilde{W}$ are now completely known in terms of generators and relations. This also gives a candidate for a double of the BPS Lie algebra. We explain the construction in general.
\subsection{Generalized Kac-Moody Lie algebra} \label{generalizedBPS}

In this section, we will recall the construction of the Generalized Kac-Moody Lie algebra for a monoid with a weight function defined in \cite{davison2024bps}, based on the earlier construction of Borcherds-Bozec Kac-Moody Lie algebras in \cite{bozec}. 

Let $(M,+)$ be a monoid. Let $(\hyphen,\hyphen): M \times M \rightarrow \mathbb{Z}$ be the symmetric bilinear form. Then we define
\begin{defn}[Primitive Positive Roots]
 The set of primitive positive roots $\sum_{M,(\hyphen,\hyphen)}$ is defined to be the set of $m \in M, m \neq 0$ such that $2-(m,m) \geq 0$ and $2-(m,m) > \sum_{j=1}^{s} (2-(m_j,m_j))$ for any non-trivial decomposition $m= \sum_{j=1}^{s} m_j$ with $m_j \in M, m_j \neq 0$.
\end{defn}

\begin{defn}[Simple Positive Roots]
The set of simple positive roots $\Phi^{+}_{M,(\hyphen,\hyphen)}$ is defined to be the union of the set of primitive positive roots $\sum_{M,(\hyphen,\hyphen)}$ and $lm, l \geq 1$ where $m \in \sum_{M,(\hyphen,\hyphen)}$ and $(m,m) =0$. 
\end{defn}

Then define subsets
\begin{align*}
\Phi^{+,\mathrm{real}}_{M,(\hyphen,\hyphen)} &:= \{ m \in \Phi^{+}_{M,(\hyphen,\hyphen)} \mid (m,m) = 2 \} \mathrm{\  (\ Real \ positive \ roots \ ) } \\
\Phi^{+,\mathrm{im}}_{M,(\hyphen,\hyphen)} &:= \{ m \in \Phi^{+}_{M,(\hyphen,\hyphen)} \mid (m,m) \leq 0 \} \mathrm{\  (\ Imaginary \ positive \ roots \ ) } \\
\Phi^{+,\mathrm{iso}}_{M,(\hyphen,\hyphen)} &:= \{ m \in \Phi^{+}_{M,(\hyphen,\hyphen)} \mid (m,m) = 0 \} \mathrm{\  (\ Isotopic \ positive \ roots \ )} \\
\Phi^{+,\mathrm{hyp}}_{M,(\hyphen,\hyphen)} &:= \{ m \in \Phi^{+}_{M,(\hyphen,\hyphen)} \mid (m,m) < 0 \} \mathrm{\  (\ Hyperbolic \ positive \ roots \ ) }
\end{align*}

\begin{defn}[Cartan Matrix]
Let \[ A_{M,(\hyphen,\hyphen)} = (a_{m,n} 
= (m,n))_{m,n \in \Phi^{+}_{M,(\hyphen,\hyphen)}}\] to be the Cartan Matrix for $M, (\hyphen,\hyphen)$. 
\end{defn}

The final piece of the data is the weight function. 

\begin{defn}[Weight Function]
Let \[ P \colon \Phi^{+}_{M,(\hyphen,\hyphen)} \rightarrow \mathbb{N}[t^{\pm 1/2}]\] given by $P \mapsto P_m(t^{1/2})$ be the weight function. We assume that $P_{m}(t^{1/2}) = 1$ whenever $(m,m)=2$. 
\end{defn}

 Let $P_m(t^{1/2}) = \sum_{j \in \mathbb{Z}} p_{m,j} t^{j/2}$ with $p_{m,j} \in \mathbb{N}$. 

\begin{defn}[Generalized Kac Moody Lie algebra for Monoid and Weight Function]\label{generalizedKACMOODY}
  Given a monoid $M$ with symmetric bilinar form $(\hyphen,\hyphen)$, such that $(m,n) \leq 0$ if $m \neq n$ and $(m,m) \in 2 \mathbb{Z}_{\leq 1}$. The generalized Kac Moody Lie algebra $\mathfrak{g}_{M,(\hyphen,\hyphen),P}$ is defined to be the Lie algebra generated by $e_{m,j,l}, f_{m,j,l}, h_m$ with $m  \in \Phi^{+}_{M,(\hyphen,\hyphen)}, j \in \mathbb{Z}$ and $1 \leq l \leq p_{m,j}$, with the relations 
  \begin{align*}
      h_{m+n} &= h_m+h_n \\
      [h_m,h_n]&=0 \\ 
      [h_m, e_{n,j,l}] &= a_{m,n}e_{n,j,l} \\
      [h_m,f_{n,j,l}] &= -a_{m,n}f_{n,j,l} \\
      [e_{m,j,l},f_{n,j^{\prime},l^{\prime}}] &= \delta_{m,n}\delta_{j,j^{\prime}} \delta_{l,l^{\prime}}h_{n}  \\
      \mathrm{ad}(e_{m,j,l})^{1-a_{m,n}}(e_{n,j^{\prime},l^{\prime}}) &= 0 \mathrm{\ if \ } a_{m,n}=2 \\
      \mathrm{ad}(f_{m,j,l})^{1-a_{m,n}}(f_{n,j^{\prime},l^{\prime}}) &= 0 \mathrm{\ if \ } a_{m,n}=2 \\
    [e_{m,j,l},e_{n,j^{\prime},l^{\prime}}] &= [f_{m,j,l},f_{n,j^{\prime},l^{\prime}}] = 0 \mathrm{ \ if \ } a_{m,n} =0
\end{align*}
\end{defn}
We place $e_{m,j,l}$ in cohomological degree $j$, each $f_{m,j,l}$ in cohomological degree $-j$ and $h_m$ of cohomological degee $0$. We define positive half $\mathfrak{g}^{+}_{M,(\hyphen,\hyphen),P} \subset \mathfrak{g}_{M,(\hyphen,\hyphen),P} $ by Lie subalgebra generated by $e_{m,j,l}$. Similarly define Cartan $\mathfrak{h}_{M,(\hyphen,\hyphen),P}$ as the Lie subalgebra generated by $h_m$. We then have the following theorem.

\begin{thm}[\cite{davison2024bps}]\label{dhsm1}
Let $Q$ be any quiver. Let $M = \mathbb{N}^{Q_0}$ and let $(\bd,\be)_{\mathrm{\Pi}_Q} = \chi_Q(\bd,\be)+\chi_Q(\be,\bd)$ be the bilinear form. Let  
\begin{align*}
P(\bd) =  \begin{cases}
\sum_{j \in \mathbb{Z}} \dim \HH^{j}(\mathcal{IC}_{\mathcal{M}_{\bd}(\mathrm{\Pi}_Q)})t^{j/2} \mathrm{ \ when \ } \bd \in \Phi^{+,\mathrm{real}}_{\mathbb{N}^{Q_0},(\hyphen,\hyphen)_{\mathrm{\Pi}_Q}} \backslash \Phi^{+,\mathrm{iso}}_{\mathbb{N}^{Q_0},(\hyphen,\hyphen)_{\mathrm{\Pi}_Q}}  \\
\sum_{j \in \mathbb{Z}} \dim \HH^{j}(\mathcal{IC}_{\mathcal{M}_{\bd^{\prime}}(\mathrm{\Pi}_Q)})t^{j/2} \mathrm{ \ when \ } \bd = l \bd^{\prime}  \in \Phi^{+,\mathrm{iso}}_{\mathbb{N}^{Q_0},(\hyphen,\hyphen)_{\mathrm{\Pi}_Q}}, \bd^{\prime} \in   \sum^{+}_{\mathbb{N}^{Q_0},(\hyphen,\hyphen)_{\mathrm{\Pi}_Q}} 
\end{cases}
\end{align*}
be the weight function. Then there is an isomorphism of Lie algebras: 

\[ \mathfrak{g}^{\BPS}_{\tilde{Q},\tilde{W}} \simeq \mathfrak{g}^{+}_{\mathbb{N}^{Q_0},(\hyphen,\hyphen)_{\mathrm{\Pi}_Q},P}.\]
\end{thm}

We will refer to the Lie algebras $\mathfrak{g}_{\mathbb{N}^{Q_0},(\hyphen,\hyphen)_{\mathrm{\Pi}_Q},P}$ by $\mathfrak{g}^{\GKM}_{Q}$. 

\begin{example}[Cyclic Quivers] \label{dhsmcyclic}
Let $Q$ be the cyclic quiver $Q^K$. Then the Symmetrized euler form is given by \[ 2 \left(\sum_{i=0}^{K} (d_i-d_{i+1})^2 .\right)\]

So only solutions to $2-(\bd,\bd)_{\mathrm{\Pi}_Q} \geq 0$ is given by strings of type $(n,\cdots,n,n+1,n+1,\cdots,n+1,n,\cdots,n)$, i.e $\bd = n \cdot \delta + [i,m)$ or $(n+1) \cdot \delta$ for some $n \geq 0$, $i \in [0,K]$ and $0 \leq m \leq K-1$ (See section \ref{indices} for Notation).

Moreover if $\bd = n \cdot \delta + [i,m)$, then it cannot be a primitive root since one can write $2-(\bd,\bd) = 0$, but $2-(n\cdot \delta,n\cdot \delta)_{\mathrm{\Pi}_{Q^K}} = 2$. Similarly if $\bd  = [i,m)= \delta_i+ \delta_{i+1}+\cdots+\delta_{i+m-1}$ then also it can't be a primitive root as then $2-(\bd,\bd)_{\mathrm{\Pi}_{Q^K}}=0$ but then so does $2-(\delta_i,\delta_i)_{\mathrm{\Pi}_{Q^K}}$ which contradicts the primitiveness of $\bd$. Thus, we have real positive roots $\delta_i$ for $i \in [0,K]$ and imaginary positive roots $  l\delta, l\geq 1$. Now we consider the weight function. 
\begin{align*}
P(\delta_i) &= \sum \dim(\HH^{j}(\mathbb{Q}_{\pt}))t^{j/2} = 1 \\
P(l\delta) &= \sum \dim(\HH^{j}(\mathcal{IC}_{\mathcal{M}_{\delta}(\mathrm{\Pi}_{Q^K})}))t^{j/2} = t^{-1} 
\end{align*}
So we have generators $e_i,f_i,h_i$ for $i \in [0,K]$ of cohomological degree $0$ and $e_{l\delta},f_{l\delta},h_{l\delta}$ of cohomological degree $-2,0,2$ respectively satisfying the relations above, i.e the affine Lie algebra $\hat{\mathfrak{gl}}(K+1)$. 

\end{example}

We can also define the notion of a Verma Module.

\begin{defn}[Verma Module of Generalized Kac Moody Lie algebra ]
Let $\mathfrak{g}^{\GKM,\leq 0}_{Q}$ be the negative Borel, i.e., Lie subalgebra generated by $f_{m,j,l}$ and $h_m$. For $\bff \in \Hom(\mathbb{Z}^{Q_0} \rightarrow \mathbb{Z})$, we define the Verma module as \[ V_{\mathfrak{g}^{\GKM}_{Q}, \bff} = \mathbf{U}(\mathfrak{g}^{\GKM}_{Q}) \otimes_{\mathbf{U}(\mathfrak{g}^{\GKM, \leq 0}_{Q})} \mathbb{C}\]
where $\mathbb{C}$ becomes a $\mathbf{U}(\mathfrak{g}^{\GKM,\leq 0}_{Q})$ module via the quotient $\mathbf{U}(\mathfrak{g}^{\GKM,\leq 0}_{Q}) \rightarrow \mathbf{U}(\mathfrak{h}_M)$ and $h_m \cdot 1 = \mathbf{f}(m)$. 
\end{defn}

Then, analogous to the usual case of semisimple Lie algebras, one defines 
\begin{defn}
The Lowest Weight module $L_{\mathfrak{g}^{\GKM}_{Q}, \bff}$ is defined to be quotient of $V_{\mathfrak{g}^{\GKM}_{Q}, \bff}$ by the unique maximal submodule of $V_{\mathfrak{g}^{\GKM}_{Q}, \bff}$ which doesn't contain $1 \otimes 1$.
\end{defn}

\section{Affinized BPS Lie Algebra and Factorization Coproduct}
For any symmetric quiver $Q$ and potential $W$, we saw in the Section \ref{perversefiltrationandbps}, that there exists a Lie algebra $\mathfrak{g}^{\BPS}_{Q,W}$ such that $\Sym(\mathfrak{g}^{\BPS}_{Q,W}[u]) \simeq \mathcal{A}^{\psi}_{Q,W}$ as vector spaces. This makes it natural to wonder if the vector subspace $\mathfrak{g}^{\BPS}_{Q,W}[u] \subset \mathcal{A}^{\psi}_{Q,W}$ is itself closed under the commutator Lie bracket coming from the associative algebra structure on $\mathcal{A}^{\psi}_{Q,W}$. The answer to this question is no for a general quiver with potential $Q,W$\footnote{This can be seen when $Q,W$ comes from the derived category of resolved conifold and will be explained in future work.}. However, the answer becomes yes in the case of the tripled quiver with potential $\tilde{Q},\tilde{W}$. This is obtained by constructing a coproduct which uses factorization structure on the stack of representations of the Jacobi algebra for the tripled quiver with potential and is outlined by Davison in \cite{davison2022affine}. This is proved rigorously in author's PhD thesis \cite{mythesis}[Chapter 6] and will be used subsequently in this paper. The construction for more general 3-Calabi–Yau completions
of 2-Calabi–Yau categories will appear in the joint work of Davison, Hennecart, Kinjo, Schiffmann and Vasserot. 

In the case of tripled quiver with canonical potential, there is an isomorphism of algebras $\Jac(\tilde{Q},\tilde{W}) \simeq \mathrm{\Pi}_{Q}[\omega]$ and thus a representation of $\Jac(\tilde{Q},\tilde{W})$ is the same as a representation of preprojective algebra $\mathrm{\Pi}_Q$ with an endomorphism. This allows us to define 
\begin{defn}
Let $U \subset \C$ be any open ball. Then we define  $\mathfrak{M}^{U}_{\bd}(\tilde{Q}) \subset \mathfrak{M}_{\bd}(\tilde{Q})$ to be open substack of representations such that generalized eigenvalues of $\omega_i$ are all inside $U$. a
\end{defn}

Using the support lemma of the BPS sheaves \ref{supportlemma}, it is proved in \cite{mythesis}[Proposition 6.1.3, 6.1.4] that there is an isomorphism 
\[ \HH(\mathfrak{M}_{\bd}(\tilde{Q}),\pPhi_{\Tr(W)_{\bd}}) \simeq \HH(\mathfrak{M}^{U}_{\bd}(\tilde{Q}),\pPhi_{\Tr(W)_{\bd}|_{U}}). \]

Furthermore, for two disjoint open balls $U_1$ and $U_2$, since any point $\rho \in \mathfrak{M}^{U_1 \coprod U_2}(\Jac(\tilde{Q},\tilde{W}))$ admits a canonical direct sum decompotion $\rho_1 \oplus \rho_2$ such that generalized eigenvalues of $\omega$ acting on $\rho_1$ are in in $U_1$ and of $\omega$ acting on $\rho_2$ are in $U_2$, we have an isomorphism of stacks 
\begin{align} \label{step1}
\mathfrak{M}_{\bd}^{U_1 \coprod U_2}(\Jac(\tilde{Q},\tilde{W})) \simeq \mathfrak{M}_{\bd}^{U_1}(\Jac(\tilde{Q},\tilde{W}))  \times \mathfrak{M}_{\bd}^{U_2}(\Jac(\tilde{Q},\tilde{W})). 
\end{align}
Then using Thom-Sebastiani isomoprhism and deformed dimensional reduction, it is proved in \cite{mythesis}[Proposition 6.1.8] that we have isomorphism 
\begin{align} \label{step2}
\HH(\mathfrak{M}^{U_1 \coprod U_2}(\tilde{Q}),\pPhi_{\Tr(W)|_{U_1 \coprod U_2}})  \simeq \HH(\mathfrak{M}^{U_1}(\tilde{Q}),\pPhi_{\Tr(W)|_{U_1}}) \otimes \HH(\mathfrak{M}^{U_2}(\tilde{Q}),\pPhi_{\Tr(W)|_{U_2}}). 
\end{align}  
Combining (\ref{step1}) and (\ref{step2}),  we get a morphism 
\[ \mathrm{\Delta}: \mathcal{A}^{\chi}_{\tilde{Q},\tilde{W}} \rightarrow \mathcal{A}^{\chi}_{\tilde{Q},\tilde{W}}  \otimes \mathcal{A}^{\chi}_{\tilde{Q},\tilde{W}}. \]
It is proved in \cite{mythesis}[Proposition 6.1.13] that this morphism is compatible with the CoHA structure. Furthermore, it is co-commutative \cite{mythesis}[Proposition 6.1.12], since $U_1$ and $U_2$ can be exchanged via a homotopy. Thus by the Milnor-Moore theorem $\mathcal{A}^{\chi}_{\tilde{Q},\tilde{W}} \simeq \bU(\mathcal{P})$ where $\mathcal{P}$ are the primitive elements with respect to the coproduct. Furthermore, it is proved in \cite{mythesis}[Proposition 6.1.15] that $\mathfrak{g}^{\BPS}_{\tilde{Q},\tilde{W}} \otimes \C[u]$ is primitive under this coproduct and by the PBW theorem (Proposition \ref{PBWtheorem}), $\mathcal{A}^{\chi}_{\tilde{Q},\tilde{W}} \simeq \Sym(\mathfrak{g}^{\BPS}_{\tilde{Q},\tilde{W}} \otimes \C[u])$ and so by considering the size of $\mathcal{P}$, it follows that $\mathcal{P} = \mathfrak{g}^{\BPS}_{\tilde{Q},\tilde{W}} \otimes \C[u]$. This shows that the sub vector space $\mathfrak{g}^{\BPS}_{\tilde{Q},\tilde{W}} \otimes \C[u] \hookrightarrow \mathcal{A}^{\chi}_{\tilde{Q},\tilde{W}}$ is closed under the Lie bracket. 

\begin{defn}[The affinized BPS Lie algebra] \label{affinizedBPSliealgebra}
For any quiver $Q$, the affinized BPS Lie algebra $\abps{}$ is defined to be the Lie algebra structure on the $\N^{Q_0} \times \Z$ graded vector space \[ \abps{}:= \mathfrak{g}^{\BPS}_{\tilde{Q},\tilde{W}} \otimes \C[u] \hookrightarrow \mathcal{A}^{\chi}_{Q,W} \] where the Lie bracket is defined by taking the commutator Lie bracket coming from the associative algebra structure on $\mathcal{A}_{\tilde{Q},\tilde{W}}$. 
\end{defn}

\begin{convention}
    Given an element $\alpha \in \mathfrak{g}^{\BPS}_{\QTC{},\WTC{}}$, we denote by \[\alpha^{(n)} := u^n \cdot \alpha \in \abps{} \]
\end{convention}

Let $\textbf{w}: \QTC{} \rightarrow N$ be a torus weighting such that $\textbf{w}(\omega_i)=0$ for all $i \in Q_0$\footnote{This condition is cruicial. This construction doesn't work, for example, in the case of tripled ADE quiver with canonical potential with torus weighting $\textbf{w}(a)=1, \textbf{w}(a^{*})=1$ and $\textbf{w}(\omega_i) = -2$ for all $i \in Q_0$.}. Let $T^{\prime} = \Hom(N,\mathbb{C}^{*})$. Then since $\textbf{w}(\omega)=0$, where $\omega = \sum_{i \in Q_0} \omega_i$, the morphism \[ \lambda \colon \mathfrak{M}_{\bd}(\QTC{}) \rightarrow \Sym(\mathbb{A}^1) \] which records the generalized eigenvalue, is $T^{\prime}$ equivariant. This induces a morphism \[ \lambda^T \colon \mathfrak{M}^{T'}_{\bd}(\QTC{}) \rightarrow \Sym(\mathbb{A}^1).\] We can apply the construction of factorization coproduct to $\mathcal{A}^{T',\chi}_{\QTC{},\WTC{}}$, where we consider $\mathcal{A}^{T',\chi}_{\QTC{},\WTC{}}$ as an object in the category of $\HH_{T'}(\pt)$ modules, i.e. we have a $\HH_{T'}(\pt)$ module morphism 
\[ \mathrm{\Delta}^{T} \colon 
\mathcal{A}^{T',\chi}_{\QTC{},\WTC{}} \rightarrow \mathcal{A}^{T',\chi}_{\QTC{},\WTC{}} \otimes_{\HH_{T'}(\pt)} \mathcal{A}^{T',\chi}_{\QTC{},\WTC{}}.\]
Considering the primitive elements of the cocommutative coproduct $\mathrm{\Delta}^T$, shows that the $\HH_{T}(\pt)$ module \[ \widehat{\mathfrak{g}}^{\BPS,T'}_{\QTC{},\WTC{}} := \mathfrak{g}^{\BPS,T^{\prime}}_{\QTC{},\WTC{}} \otimes \C[u] \subset \mathcal{A}^{T',\chi}_{\QTC{},\WTC{}} \] has a structure of Lie algebra, where the Lie bracket is defined by taking the commutator Lie bracket, coming from the associative algebra structure on $\mathcal{A}^{T',\chi}_{\QTC{},\WTC{}}$. Furthermore, there is an isomorphism of $\HH_{T'}(\pt)$ algebras 
\[ \bU_{\HH_{T'}(\pt)}(\widehat{\mathfrak{g}}^{\BPS,T'}_{\QTC{},\WTC{}}) \simeq \mathcal{A}^{T',\chi}_{\QTC{},\WTC{}}. \]

We call $\widehat{\mathfrak{g}}^{\BPS,T'}_{\QTC{},\WTC{}}$ the $T'\hyphen$deformed affinized BPS Lie algebra. We calculate $\widehat{\mathfrak{g}}^{\BPS}_{\QTC{},\WTC{}}$ and it's deformation, for the case of the cylic quivers in Section \ref{sectionaffinized} and \ref{sectiondeformedaffinized} respectively. 

\section{Heis Lie algebra action} \label{HeisLiealgebra}
The action of the tautological bundle on the vector space $\mathcal{A}^{\psi}_{Q,W}$ increases the cohomological degree by $2$. We can describe the other half of this action, which acts by decreasing the cohomological degree. This structure has been exploited in \cite{joyce2021enumerative} as a translation operator in the construction of vertex algebras and by \cite{davison2022bps} for the calculation of the affinized BPS Lie algebra for the Jordan quiver. We consider the action by scaling by the automorphisms of $\mathfrak{M}^{T,\zeta}_{\bd}(Q)$. More precisely, we have a morphism
\begin{align*}
\act: \BCu \times \mathfrak{M}^{T,\zeta}_{\bd}(Q) \rightarrow \mathfrak{M}^{T,\zeta}_{\bd}(Q)
\end{align*} 
defined by sending the tautological line bundle $\mathcal{L}$ and family of representations $\mathcal{F}$ to $\mathcal{F \otimes L}$. Doing the pullback (\ref{Pullback}), gives a map of sheaves
\begin{equation} \label{act*morphism}
\pPhi_{\Tr(W_{\bd})} (\Q_{\mathfrak{M}^{T,\zeta-\sss}_{\bd}(Q)}) \rightarrow \act_{*}(\Q_{\BCu} \boxtimes \pPhi_{\Tr(W_{\bd})}(\Q_{\mathfrak{M}^{T,\zeta-\sss,\psi}_{\bd}(Q)})).
\end{equation} 
After shifting and taking global sections, this gives a map of cohomologically graded vector spaces
\[ \act^{*}: \HH(\mathfrak{M}^{T,\zeta-\sss}_{\bd}(Q),\pPhi_{\Tr(W_{\bd})})  \rightarrow \HH( \BCu,\Q) \otimes \HH(\mathfrak{M}^{T,\zeta-\sss}_{\bd}(Q),\pPhi_{\Tr(W_{\bd})}).
\]
In particular, for any $\alpha \in \mathcal{A}^{T,\zeta,\psi}_{Q,W,\bd}$ we have \[\act^{*}(\alpha) = \sum_{n \geq 0} u^n \otimes \alpha_{(n)}. \] We define \[\partial \colon \mathcal{A}^{T,\zeta,\psi}_{Q,W} \rightarrow \mathcal{A}^{T,\zeta,\psi}_{Q,W}\] by $\alpha \mapsto \alpha_{(1)}$. Since the morphism $\act^{*}$ preserves the cohomological degree, $\partial$ is an operator which decreases the cohomological degree by $-2$. We then have

\begin{prop}{\cite{davison2022affine}} \label{heisliealgebraction}
    The morphisms  
    \begin{align*}
     \partial: \mathcal{A}^{T,\zeta,\psi}_{Q,W} & \rightarrow \mathcal{A}^{T,\zeta,\psi}_{Q,W} \\
     u: \mathcal{A}^{T,\zeta,\psi}_{Q,W} & \rightarrow \mathcal{A}^{T,\zeta,\psi}_{Q,W}
    \end{align*} define derivations on $\mathcal{A}^{T,\zeta,\psi}_{Q,W}$ i.e. \begin{align*}
    \partial(a \star b) &= (\partial a) \star b + a \star (\partial b) \\ u(a \star b) &= (u \cdot a) \star b + a \star (u \cdot b).
    \end{align*} Furthermore, for any $\alpha \in \mathcal{A}^{T,\zeta,\psi}_{Q,W,\bd}$, we have $[\partial,u]\alpha = |\bd| \alpha$. 
\end{prop}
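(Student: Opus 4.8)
The plan is to establish the three assertions separately. Throughout, I note that the sign twist $\psi$ enters only through factors $(-1)^{\psi(\bd_1,\bd_2)}$ depending on dimension vectors, which are preserved by both constructions, so it factors out identically on both sides of each identity and may be ignored. For the derivation property of $u$ I would argue exactly as in Proposition \ref{uactionnakajima}: since $u\cdot(\hyphen)=c_1(\Det)\cup(\hyphen)$ and the determinant is additive on short exact sequences, on the flag stack $\mathfrak{M}^{T,\zeta-\sss}_{\bd_1,\bd_2}(Q)$ one has $\pi_2^*\,c_1(\Det)=(\pi_1\times\pi_3)^*\big(c_1(\Det)\otimes 1+1\otimes c_1(\Det)\big)$. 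Substituting this into $m^{T}_{\bd_1,\bd_2}=(\pi_2)_*(\pi_1\times\pi_3)^*j^*\,\TS^{-1}$ and applying the projection formula for $(\pi_2)_*$ splits $u\cdot(a\star b)$ into $(u\cdot a)\star b+a\star(u\cdot b)$. This step is routine.

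The derivation property of $\partial$ is the heart of the matter. By construction $\partial$ is the degree-one component of the coaction $\act^*\colon\mathcal{A}^{T,\zeta,\psi}_{Q,W,\bd}\to\Q[u]\otimes\mathcal{A}^{T,\zeta,\psi}_{Q,W,\bd}$, $\act^*(\alpha)=\sum_{n\ge 0}u^n\otimes\alpha_{(n)}$. The key lemma I would prove is that $\act^*$ intertwines $\star$ with the product on $\Q[u]\otimes\mathcal{A}^{T,\zeta,\psi}_{Q,W}$ given by $(u^m\otimes a)(u^n\otimes b)=u^{m+n}\otimes(a\star b)$, i.e. $\act^*(a\star b)=\sum_{m,n}u^{m+n}\otimes(a_{(m)}\star b_{(n)})$. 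Granting this, together with the normalization $\alpha_{(0)}=\alpha$ (which holds because the restriction of $\act$ to the unit section $\{\mathcal{O}\}\times\mathfrak{M}$ is the identity), extracting the coefficient of $u^1$ yields $\partial(a\star b)=\partial(a)\star b+a\star\partial(b)$.

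To prove the key lemma I would view $\act$ as an action of the group stack $\BCu$ (tensoring representations by line bundles) and check that every arrow of the convolution diagram is equivariant: tensoring a total representation $\rho_2$ of dimension $\bd_1+\bd_2$ by a line bundle $\mathcal{L}$ preserves every sub/quotient flag and twists the pieces $\rho_1,\rho_3$ by the \emph{same} $\mathcal{L}$. The genuinely delicate point — and the main obstacle — is the bookkeeping of the two a priori independent scalings on the factor $\mathfrak{M}^{T,\zeta-\sss}_{\bd_1}(Q)\times\mathfrak{M}^{T,\zeta-\sss}_{\bd_2}(Q)$: the single $\BCu$ scaling the total representation pulls back through the flag to the \emph{diagonal} $\BCu\hookrightarrow\BCu\times\BCu$, so that both generators $u_1,u_2$ are identified with $u$ under the induced $\Delta^*\colon\Q[u_1]\otimes\Q[u_2]\to\Q[u]$. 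This collapse is exactly what converts $u_1^m u_2^n$ into $u^{m+n}$ and hence produces the stated product; verifying that $\TS^{-1}$ and $j^*$ respect these identifications is the technical core, following the strategy of \cite{davison2022affine}.

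For the commutator I would compute directly. Since $\act^*$ is a ring homomorphism for the cup product and the tautological bundle $\mathcal{F}$ has rank $|\bd|$, the identity $\Det(\mathcal{F}\otimes\mathcal{L})=\Det(\mathcal{F})\otimes\mathcal{L}^{\otimes|\bd|}$ gives $\act^*(c_1(\Det))=|\bd|\,u\otimes 1+1\otimes c_1(\Det)$. Applying $\act^*$ to $u\cdot\alpha=c_1(\Det)\cup\alpha$ and reading off the coefficient of $u^1$ (again using $\alpha_{(0)}=\alpha$) yields $\partial(u\cdot\alpha)=|\bd|\,\alpha+u\cdot\partial(\alpha)$, whence $[\partial,u]\alpha=|\bd|\,\alpha$.
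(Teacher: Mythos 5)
Your proposal is correct, and since the paper states Proposition \ref{heisliealgebraction} as a citation to \cite{davison2022affine} without reproducing a proof, there is nothing internal to compare against; your argument — $\BCu$-equivariance of the convolution diagram collapsing the two scalings to the diagonal, so that $\act^{*}(a\star b)=\sum_{m,n}u^{m+n}\otimes(a_{(m)}\star b_{(n)})$, plus $\Det(\mathcal{F}\otimes\mathcal{L})=\Det(\mathcal{F})\otimes\mathcal{L}^{\otimes|\bd|}$ for the commutator — is exactly the standard one from that reference. The only point worth making explicit in a full write-up is that $\Tr(W)$ is invariant under tensoring by a line bundle (so $\act$ genuinely acts on the vanishing-cycle cohomology) and that the pushforward $(\pi_2)_{*}$ commutes with $\act^{*}$ by proper base change.
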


We now show that the operators $\partial$ and $u$ interact with the perverse filtration $\mathfrak{P}^{}$ on $\mathcal{A}^{T,\zeta,\psi}_{Q,W}$ in a nice way.

\begin{prop}\label{partialuder}
The action $u$ increases the perverse degree by $2$ while $\partial$ decreases the perverse degree by $2$, i.e. 
\begin{align*}
u \cdot \fP^{i} & \subset \fP^{i+2} \\ 
\partial \cdot \fP^{i} & \subset \fP^{i-2}
\end{align*} 
\end{prop}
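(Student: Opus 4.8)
The plan is to realize each of the two operators as the global-sections map of an honest morphism of complexes on the coarse moduli space $\mathcal{M}^{T,\zeta-\sss}_{\bd}(Q)$ that shifts cohomological degree by $\pm 2$, and then to invoke a general compatibility between such morphisms and the perverse truncation. Write $\mathcal{G}_{\bd} := (\JH^{T,\zeta}_{\bd})_{*}\pPhi_{\Tr(W)}\Q^{\vir}$, so that $\mathcal{A}^{T,\zeta,\psi}_{Q,W,\bd} = \HH(\mathcal{M}^{T,\zeta-\sss}_{\bd}(Q),\mathcal{G}_{\bd})$ and $\mathfrak{P}^{i}$ is the image of $\HH(\mathcal{M}^{T,\zeta-\sss}_{\bd}(Q),{}^{\mathfrak{p'}}\!\tau^{\leq i}\mathcal{G}_{\bd})$. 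The formal input is the following $t$-structure fact: for any morphism $\phi\colon A\to B$ in $\Db(\mathcal{M}^{T,\zeta-\sss}_{\bd}(Q))$, the composite ${}^{\mathfrak{p'}}\!\tau^{\leq i}A\to A\xrightarrow{\phi} B$ factors through ${}^{\mathfrak{p'}}\!\tau^{\leq i}B$, since $\Hom({}^{\mathfrak{p'}}\!\tau^{\leq i}A,{}^{\mathfrak{p'}}\!\tau^{\geq i+1}B)=0$; taking global sections, $\phi$ carries the image of $\HH({}^{\mathfrak{p'}}\!\tau^{\leq i}A)$ into the image of $\HH({}^{\mathfrak{p'}}\!\tau^{\leq i}B)$. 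Combined with the shift identity ${}^{\mathfrak{p'}}\!\tau^{\leq i}(C[k]) = ({}^{\mathfrak{p'}}\!\tau^{\leq i+k}C)[k]$, this is exactly the mechanism that produces the claimed shifts.

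For $u$, the action is cup product with $c_{1}(\Det)\in\HH^{2}(\mathfrak{M}^{T,\zeta-\sss}_{\bd}(Q))$, which is a morphism $\pPhi_{\Tr(W)}\Q^{\vir}\to\pPhi_{\Tr(W)}\Q^{\vir}[2]$ on the stack. Pushing forward along $\JH^{T,\zeta}_{\bd}$ gives $\phi_{u}\colon\mathcal{G}_{\bd}\to\mathcal{G}_{\bd}[2]$. Applying the formal input with $B=\mathcal{G}_{\bd}[2]$ and the shift identity with $k=2$, the composite ${}^{\mathfrak{p'}}\!\tau^{\leq i}\mathcal{G}_{\bd}\to\mathcal{G}_{\bd}[2]$ factors through $({}^{\mathfrak{p'}}\!\tau^{\leq i+2}\mathcal{G}_{\bd})[2]$, so $u\cdot\mathfrak{P}^{i}\subset\mathfrak{P}^{i+2}$. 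This direction is essentially immediate.

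For $\partial$ I would first record the geometric fact that tensoring by a line bundle is invisible on the coarse space: the scaling map $\act$ fits into a commuting square $\JH^{T,\zeta}_{\bd}\circ\act = \JH^{T,\zeta}_{\bd}\circ\mathrm{pr}_{2}$ from $\BCu\times\mathfrak{M}^{T,\zeta-\sss}_{\bd}(Q)$ to $\mathcal{M}^{T,\zeta-\sss}_{\bd}(Q)$, since a polystable representation and its twist by a $1$-dimensional space have the same image in $\mathcal{M}^{T,\zeta-\sss}_{\bd}(Q)$. Pushing the sheaf-level morphism (\ref{act*morphism}) forward by $\JH^{T,\zeta}_{\bd}$, using this square and the identity $(\mathrm{pr}_{2})_{*}(\Q_{\BCu}\boxtimes\pPhi_{\Tr(W)}\Q^{\vir})\simeq\HH(\BCu,\Q)\otimes\pPhi_{\Tr(W)}\Q^{\vir}$, I obtain a morphism $\Theta\colon\mathcal{G}_{\bd}\to\HH(\BCu,\Q)\otimes\mathcal{G}_{\bd}\simeq\bigoplus_{n\geq0}\mathcal{G}_{\bd}[-2n]$ of complexes on $\mathcal{M}^{T,\zeta-\sss}_{\bd}(Q)$, whose $n$-th component $\Theta_{n}\colon\mathcal{G}_{\bd}\to\mathcal{G}_{\bd}[-2n]$ induces on global sections the assignment $\alpha\mapsto\alpha_{(n)}$. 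In particular $\partial$ is the global-sections map of $\Theta_{1}\colon\mathcal{G}_{\bd}\to\mathcal{G}_{\bd}[-2]$, and the formal input with $k=-2$ yields $\partial\cdot\mathfrak{P}^{i}\subset\mathfrak{P}^{i-2}$.

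The main obstacle is the $\partial$ side, specifically the construction of $\Theta$: one must check that the $u^{1}$-coefficient of $\act^{*}$ genuinely lifts to a morphism of complexes $\mathcal{G}_{\bd}\to\mathcal{G}_{\bd}[-2]$ over $\mathcal{M}^{T,\zeta-\sss}_{\bd}(Q)$, rather than merely a map on hypercohomology. This is precisely where the commuting square for $\JH^{T,\zeta}_{\bd}$ and $\act$ does the real work; once $\Theta$ is available at the sheaf level, the truncation argument is formal and identical to the $u$ case. One should also verify compatibility with the shifted ($\mathfrak{p}'$) perverse structure and with the $T$-equivariance, but since every shift involved is uniform these introduce no additional difficulty.
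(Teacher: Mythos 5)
Your proof is correct and follows essentially the same route as the paper: the key step for $\partial$ is exactly the paper's observation that $\JH^{T,\zeta}_{\bd}\circ\act=\JH^{T,\zeta}_{\bd}\circ\mathrm{pr}_2$, which collapses the pushed-forward action morphism into $\HH(\BCu,\Q)\otimes\mathcal{G}_{\bd}$ and lets the truncation compatibility do the rest. For the $u$ direction the paper simply cites \cite{davison2020cohomological}[Lemma 5.8], whereas you spell out the same mechanism (a degree-$2$ endomorphism of $\mathcal{G}_{\bd}$ plus the factorization through ${}^{\mathfrak{p'}}\!\tau^{\leq i+2}$), which is a harmless and correct substitute.
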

\begin{proof}

Applying $\JH^{T,\zeta}_{\bd}$ to morphism (\ref{act*morphism}), gives a morphism of complexes in $\Db(\mathcal{M}^{T,\zeta-\sss}_{\bd}(Q))$

\[ a: (\JH^{T,\zeta}_{\bd})_{*}\pPhi_{\Tr(W_{\bd})} (\Q^{\vir}_{\mathfrak{M}^{T,\zeta-\sss}_{\bd}(Q)}) \rightarrow (\JH^{T,\zeta}_{\bd})_{*} \act_{*}(\Q_{\BCu} \boxtimes \pPhi_{\Tr(W_{\bd})}(\Q^{\vir}_{\mathfrak{M}^{T,\zeta-\sss}_{\bd}(Q)})). \]

After taking semisimplification, the action map collapses, and so we have the following commutative square:

\[\begin{tikzcd}[ampersand replacement=\&]
	{\BCu \times \mathfrak{M}_{\bd}^{T,\zeta-\sss}(Q)} \& {\mathfrak{M}_{\bd}^{T,\zeta-\sss}(Q)} \\
	{\mathfrak{M}^{T,\zeta-\sss}_{\bd}(Q)} \& {\mathcal{M}^{T,\zeta-\sss}_{\bd}(Q)}
	\arrow["{\JH_{\bd}^{T,\zeta}}", from=1-2, to=2-2]
	\arrow["\act", from=1-1, to=1-2]
	\arrow["{\pi_{2}}", from=1-1, to=2-1]
	\arrow["{\JH^{T,\zeta}_{\bd}}", from=2-1, to=2-2]
\end{tikzcd}\]
where $\pi_2$ is the projection to the second component. Thus we have
\[   (\JH^{T,\zeta}_{\bd})_{*} \act_{*}(\Q_{\BCu} \boxtimes \pPhi_{\Tr(W_{\bd})}(\Q^{\vir}_{\mathfrak{M}^{T,\zeta-\sss}_{\bd}(Q)}))  \simeq \HH(\BCu,\Q) \otimes (\JH^{T,\zeta-\sss}_{\bd})_{*}\pPhi_{\Tr(W_{\bd})} (\Q^{\vir}_{\mathfrak{M}^{T,\zeta-\sss}_{\bd}(Q)}). \]
But since 
\[ \tau^{\leq n}( \HH^{2k}(\BCu,\Q) \otimes (\JH^{T,\zeta}_{\bd})_{*}\pPhi_{\Tr(W_{\bd})} (\Q^{\vir}_{\mathfrak{M}^{T,\zeta-\sss}_{\bd}(Q)})) = \tau^{\leq n-2k}((\JH^{T,\zeta}_{\bd})_{*}\pPhi_{\Tr(W_{\bd})} (\Q^{\vir}_{\mathfrak{M}^{T,\zeta-\sss}_{\bd}})), \] 
the morphism $\tau^{\leq n}(a)$ factors through \[ \bigoplus_{k \geq 0} \HH^{2k}(\BCu,\Q) \otimes \tau^{n-2k} ((\JH^{T,\zeta}_{\bd})_{*}\pPhi_{\Tr(W_{\bd})}(\Q^{\vir}_{\mathfrak{M}^{T,\zeta-\sss}_{\bd}})).\]
So for any $\alpha \in \fP^{i}$, $\act^{*}(\alpha) = \sum_{n \geq 0} u^n \alpha_{(n)} $ where $\alpha_{n} \in \fP^{i-2n}$ and thus in particular $\alpha_{1} \in \fP^{\leq i-2}$.  Finally, the action of $u$ increases the perverse degree, which is exactly \cite{davison2020cohomological}[Lemma 5.8] applied to the determinant bundle. 

\end{proof}

The above argument also shows that if $\alpha_{\bd} \in \mathfrak{g}^{\BPS,T,\zeta}_{Q,W,\bd}$ then $\partial(\alpha_{\bd})=0$, since the perverse filtration starts from degree $1$ (Section \ref{perversefiltrationandbps}). Now, by repetitively applying the Proposition \ref{partialuder}, it follows that 
\begin{prop} \label{derivationbyp}
Let $\alpha_{\bd} \in \mathfrak{g}^{\BPS,T,\zeta}_{Q,W,\bd}$, then $u^n \alpha_{\bd} \in \fP^{2n+1}$ and we have $\partial(u^n \alpha_{\bd}) = n|\bd| u^{n-1} \alpha_{\bd}.$
\end{prop}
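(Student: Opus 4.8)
The plan is to prove both assertions by induction on $n$, drawing entirely on the two structural facts already in place: Proposition \ref{partialuder}, which records that $u\cdot\fP^{i}\subset\fP^{i+2}$ and $\partial\cdot\fP^{i}\subset\fP^{i-2}$, and the commutation relation $[\partial,u]\alpha=|\bd|\alpha$ for $\alpha\in\mathcal{A}^{T,\zeta,\psi}_{Q,W,\bd}$ from Proposition \ref{heisliealgebraction}. No new geometric input is needed.

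First I would dispose of the perverse-degree claim $u^{n}\alpha_{\bd}\in\fP^{2n+1}$. By definition $\alpha_{\bd}\in\mathfrak{g}^{\BPS,T,\zeta}_{Q,W,\bd}=\fP^{1}(\mathcal{A}^{T,\zeta,\psi}_{Q,W,\bd})$, and since $u$ raises the perverse degree by $2$, an immediate induction yields $u^{n}\alpha_{\bd}\in\fP^{1+2n}$, as required.

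The substance is then the identity $\partial(u^{n}\alpha_{\bd})=n|\bd|\,u^{n-1}\alpha_{\bd}$. The base case is $\partial\alpha_{\bd}=0$: because $\partial$ lowers the perverse degree by $2$ we have $\partial\alpha_{\bd}\in\fP^{-1}=0$, the vanishing inherited from the filtration starting in degree $1$ (this is the remark immediately following Proposition \ref{partialuder}). For the inductive step I would first note that multiplication by $u$, being cup product with a first Chern class, preserves the $\N^{Q_0}$-grading, so every $u^{k}\alpha_{\bd}$ still lies in $\mathcal{A}^{T,\zeta,\psi}_{Q,W,\bd}$ and hence the relation $\partial u=u\partial+|\bd|\,\mathrm{id}$ of Proposition \ref{heisliealgebraction} applies to it. Then, assuming $\partial(u^{n-1}\alpha_{\bd})=(n-1)|\bd|\,u^{n-2}\alpha_{\bd}$, one computes
\[ \partial(u^{n}\alpha_{\bd})=\partial u\,(u^{n-1}\alpha_{\bd})=u\,\partial(u^{n-1}\alpha_{\bd})+|\bd|\,u^{n-1}\alpha_{\bd}=(n-1)|\bd|\,u^{n-1}\alpha_{\bd}+|\bd|\,u^{n-1}\alpha_{\bd}=n|\bd|\,u^{n-1}\alpha_{\bd}. \]

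Since both steps are clean inductions resting on results already proved, I do not expect any serious obstacle. The only points demanding care are verifying that the commutator relation of Proposition \ref{heisliealgebraction} is genuinely applied to elements of dimension $\bd$ — which is exactly why one must observe that $u$ is dimension-preserving — and confirming that the base case $\partial\alpha_{\bd}=0$ is legitimately obtained from the vanishing of $\fP^{<1}$ rather than assumed. Everything else is bookkeeping.
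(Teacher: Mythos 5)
Your proof is correct and follows essentially the same route as the paper: the base case $\partial\alpha_{\bd}=0$ comes from the perverse filtration starting in degree $1$, and the formula then follows by induction using the relation $[\partial,u]\alpha=|\bd|\alpha$ of Proposition \ref{heisliealgebraction} together with the degree shifts of Proposition \ref{partialuder}. The paper states this only as a one-line sketch ("repetitively applying" the previous proposition), so your write-up simply makes the intended induction explicit.
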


The action of $u,\partial$ can be interpreted as an action of the 3-dimensional Heisenberg $\Heis$ on $\mathcal{A}^{T,\zeta}_{Q,W}$, as considered in \cite{davison2022affine}. 
\begin{defn}
    Let $\textbf{Heis}$ be the Lie algebra over $\Q$ having basis $p,q,c$ such that \[[q,p]=c,\] $c$ is central. We give $\textbf{Heis}$ a `cohomological' grading by setting elements $p,q,c$ as elements of cohomological degree $2,-2,0$ respectively. Then $M$ is said to be a graded $\textbf{Heis}$ module, if $M = \oplus_{i \in \mathbb{Z}} M_{2i}$ and the $\Heis$ action respects the grading, i.e.  $p(M_{2i}) \subset M_{2i+2}, q(M_{2i}) \subset M_{2i-2}, c(M_{2i}) \subset M_{2i}$.
\end{defn}

If a graded $\Heis$ module $\mathfrak{L}$ has a Lie algebra structure, then it is said to be a $\Heis$ Lie algebra if $p$ and $q$ act by Lie algebra derivations on $\mathfrak{L}$. In many cases, a much smaller subspace controls $\Heis$ Lie algebras. For any $\Heis$ Lie algebra $\mathfrak{L}$, let $\mathfrak{L}^{\leq 0} = \oplus_{i \leq 0} \mathfrak{L}_{2i}$ be the the subspace of non-positive even graded elements. Clearly $\mathfrak{L}^{\leq 0} \subset \mathfrak{L}$ forms a Lie subalgebra.  

\begin{defn}[Negatively determined $\Heis$ Lie algebras] 
We say that a $\Heis$ Lie algebra $\mathfrak{L}$ is negatively determined if $\mathfrak{L}_{i} =0$ for all odd $i$ and the map $q: \mathfrak{L}_{2i} \rightarrow \mathfrak{L}_{2i-2}$ is an isomorphism of vector spaces for any $i >0$. 

\end{defn}

If we have two negatively determined Lie algebras, then a morphism between them is also negatively determined.

\begin{prop} \label{mapofheisLiealgebras}
Let $\mathfrak{L}$ and $\mathfrak{H}$ be two negatively determined $\Heis$ Lie algebras such that there is a homomorphism of Lie algebras \[\phi^{\leq 0} \colon \mathfrak{L}^{\leq 0} \rightarrow \mathfrak{H}^{\leq 0}\] which commutes with the action of $\Heis$, i.e, where for any $\alpha_{-2i} \in \mathfrak{L}_{-2i},i >0$, $\phi^{\leq 0}(p \alpha_{-2i}) = p \phi^{\leq 0}(\alpha_{-2i})$, for any $\alpha_{-2i+2} \in \mathfrak{L}_{-2i+2}, i>0$, $ \phi^{\leq 0}(q \alpha_{-2i+2}) = q  \phi^{\leq 0}(\alpha_{-2i+2})$ and for any $\alpha_0 \in \mathfrak{L}_0, \phi^{\leq 0}(c\alpha_0)=c\phi^{\leq 0}(\alpha_0)$. Then, the map $\phi^{\leq 0}$ extends uniquely to map $\phi \colon \mathfrak{L} \rightarrow \mathfrak{H}$ of graded $\Heis$ modules. 
Furthermore, $\phi$ is a Lie algebra homomorphism if and only if $\phi([\alpha_{2i},\phi_{2j}]) = [\phi(\alpha_{2i}),\phi(\alpha_{2j})]$ for $i+j \leq 0$. 
\end{prop}

\begin{proof}

For any element $\alpha_{2i} \in \mathfrak{L}_{2i}$ with $i >0$, we define $\phi(\alpha_{2i})$ to be the unique element in $\mathfrak{H}_{2i}$ such that $q^{i}\phi(\alpha_{2i}) = \phi^{\leq 0}(q^i\alpha_{2i})$ and for any $\alpha_{2i} \in \mathfrak{L}_{2i}$ with $i \leq 0$, we set $\phi( \alpha_{2i}) = \phi^{\leq 0}(\alpha_{2i})$. The map is well defined by the injectivity of $q$ on positive components. 

The morphism $\phi$ is a map of $\Heis$ modules since when $i=1$, any $\alpha_2 \in \mathfrak{L}_2$ satisfies $q\phi(\alpha_2) = \phi^{\leq 0}(q\alpha_2)$ and if $i>1$, then $\phi(q \alpha_{2i})$ is defined to be unique element in $\mathfrak{H}$ such that $q^{i-1}\phi(q\alpha_{2i}) = \phi^{\leq 0}(q^i\alpha_{2i})$, which is clearly satisfied by $q \phi(\alpha_{2i})$. Similarly, since $q^ic\phi(\alpha_{2i}) = cq^i\phi(\alpha_{2i}) = c\phi^{\leq 0}(q^i\alpha_{2i}) = \phi^{\leq 0}(cq^i\alpha_{2i}) = \phi^{\leq 0}(q^ic\alpha_{2i})$ implies that $\phi(c\alpha_{2i}) = c \phi(\alpha_{2i})$. Finally succesive application of $[q,p]=c$ implies that for any $\alpha \in \mathfrak{H}$ or $\mathfrak{L}$, $q^{i+1}p \alpha = pq^{i+1} \alpha + (i+1)q^{i}c \alpha$. So $q^{i+1}(p \phi(\alpha_{2i})) = (pq^{i+1}+(i+1)q^ic)\phi(\alpha_{2i}) = pq \phi^{\leq 0}(q^i\alpha_{2i}) + (i+1)c\phi^{\leq 0}(q^i\alpha_{2i}) = \phi^{\leq 0}((pq^{i+1}+(i+1)cq^{i})\alpha_{2i}) = \phi^{\leq 0}(q^{i+1}p\alpha_{2i})$. Thus $p\phi(\alpha_{2i}) = \phi(p\alpha_{2i})$. 

It suffices to check that it is indeed a Lie algebra homomorphism if $\phi([\alpha_{2i},\phi_{2j}]) = [\phi(\alpha_{2i}),\phi(\alpha_{2j})]$ for $i+j \leq 0$. For this, we consider $\alpha_{2i} \in \mathfrak{L}_{2i}$ and $\beta_{2i} \in \mathfrak{L}_{2j}$. If $i+j \leq 0$, then $\phi([\alpha_{2i},\alpha_{2j}]) = ([\phi(\alpha_{2i}), \phi(\alpha_{2j})])$ by the assumption. So, we can assume $i+j>0$. Then 
$\phi([\alpha_{2i},\alpha_{2j}])$ is the unique element such that \[ q^{i+j} \phi([\alpha_{2i},\alpha_{2j}])= \phi^{\leq 0}(q^{i+j}[\alpha_{2i},\beta_{2j}]) = \sum_{r = 0}^{i+j}\binom{i+j}{r} \phi^{\leq 0}[q^{r}\alpha_{2i},q^{i+j-r}\alpha_{2j}]\]
By assumption, we have

\begin{align*}
\sum_{r=0}^{i+j}\binom{i+j}{r} \phi^{\leq 0}[q^{r}\alpha_{2i},q^{i+j-r}\alpha_{2j}] &= \sum_{r = 0}^{i+j}\binom{i+j}{r} [\phi(q^{r}\alpha_{2i}),\phi(q^{i+j-r}\alpha_{2j})] \\ &= \sum_{r = 0}^{i+j}\binom{i+j}{r} [q^r \phi(\alpha_{2i}), q^{i+j-r} \phi( \alpha_{2j})]  \\ &= q^{i+j}[\phi(\alpha_{2i}),\phi(\beta_{2j})].
\end{align*}
and thus $\phi([\alpha_{2i},\alpha_{2j}]) = [\phi(\alpha_{2i}),\phi(\alpha_{2j})]$ for all $i,j$ and we are done. 
\end{proof}

\begin{defn}
We say that a $\Heis$ module $M$ is integrable if it admits a $\Heis$ module decomposition $M = \oplus_{n \in\mathbb{Z}} M^{[n]}$ such that each the central element $c$ acts on $M^{[n]}$ by scaling with $n$. We correspondingly say that $M$ is integrable $\Heis$ Lie algebra if it it integrable $\Heis$ module, $\Heis$ Lie algebra, and if the Lie algebra structure is also $\mathbb{Z}$ graded (i.e $[M^{[n]},M^{[m]}] \subset M^{[n+m]}$).
\end{defn}

An example of an integrable $\Heis$ Lie algebra is as follows. 

\begin{defn} \label{currentliealgebra}
For any Lie algebra $\mathfrak{g}$ we define $\mathfrak{g}[D] := \fg \otimes_{\Q} \Q[D]$ to be a Lie algebra given by basis $\alpha D^n$ where $n \geq 0$ and $\alpha \in \fg$ with the  relations $[\alpha D^n, \alpha^{'} D^m] = [\alpha,\alpha^{'}] D^{n+m}$ where $\alpha,\alpha^{'} \in \fg$ and $n,m \in \mathbb{Z}_{\geq 0}$.
\end{defn}

\begin{prop}\label{integrableliealgebraexample}
Let $\mathfrak{g}$ be a $\mathbb{Z}$ graded Lie algebra. Then \[\mathfrak{g}[D] = \bigoplus_{m \in \mathbb{Z}}  \mathfrak{g}_{m}[D] \]is negatively determined integrable $\Heis$ Lie algebra defined by $q(\alpha_{m} D^n) = n \alpha_{m} D^{n-1}$, $c(\alpha_{m} D^n) =m \alpha_{\bd} D^n$ and $p(\alpha_{m} D^n)= m \alpha_{\bd} D^{n+1}$ for any $\alpha_{m} \in \mathfrak{g}_{m}$. 
\end{prop}

We then have the following consequence of Proposition \ref{mapofheisLiealgebras}.

\begin{prop} \label{negativelydetermined} Let $\mathfrak{L}$ be a negatively determined integrable $\Heis$ Lie algebra such that $\mathfrak{L}_{i}=0$ for all $i<0$. Then there is an isomorphism of Lie algebras $\mathfrak{L}_{0}[D] \simeq \mathfrak{L}$ where $\mathfrak{L}_{0} \subset \mathfrak{L}$ is the degree $0$ Lie subalgebra. 

\end{prop}

\begin{proof}
Since $\mathfrak{L}$ is integral, it admits a decomposition $\mathfrak{L} = \oplus_{m \in \mathbb{Z}} \mathfrak{L}^{[m]}$ such that $\mathfrak{L}_{0} = \oplus_{m \in \mathbb{Z}} \mathfrak{L}^{[m]}_{0}$ is $\mathbb{Z}$ graded Lie algebra. By Proposition \ref{integrableliealgebraexample}, the Lie algebra $\mathfrak{L}_{0}[D]$ is integrable $\Heis$ Lie algebras. Note that $\mathfrak{L}_{0}[D]$ and $\mathfrak{L}$ are both negative determined Lie algebras. Thus by Proposition \ref{mapofheisLiealgebras}, the morphism $\id: \mathfrak{L}_{0} \rightarrow \mathfrak{L}_0$ extends to a morphism of Lie algebras $\phi: \mathfrak{L}_{0}[D] \rightarrow \mathfrak{L}$. This morphism is an injection, since for a non zero $\alpha \in \mathfrak{L}_{0}$, $q^i(\phi(\alpha D^i)) = \alpha \neq 0.$ Similarly it is surjection since for any $\alpha_{2i} \in \mathfrak{L}_{2i}$, $\phi((q^i \alpha_{2i}) D^i) = \alpha_{2i}$.
\end{proof}

Next, we see that affinized BPS Lie algebras are also negatively determined integrable $\Heis$ Lie algebras. 

\begin{prop} \label{negativelydeterminedaffinizedbps}
    For any quiver $Q$, the affinized BPS Lie algebra $\abps{}$ is negatively determined integrable $\Heis$ Lie algebra.
\end{prop}

\begin{proof}
From Proposition \ref{partialuder}, it follows that \[\abps{} = \bigoplus_{i \in \mathbb{Z}, \bd \in \mathbb{N}^{Q_0}}  \widehat{\mathfrak{g}}^{\BPS,i}_{\tilde{Q},\tilde{W},\bd}  = \bigoplus_{m \in \mathbb{Z}}  \left( \bigoplus_{i \in \mathbb{Z}, \bd \in \mathbb{N}^{Q_0}, |\bd|=m} \widehat{\mathfrak{g}}^{\BPS,i}_{\tilde{Q},\tilde{W},\bd} \right)\] is integrable $\Heis$ Lie algebra.  The cohomology degrees of the BPS Lie algebra $\mathfrak{g}^{\BPS}_{\tilde{Q},\tilde{W}}$ is determined by the Kac polymomial (\ref{Kacspolynomial}) and in particular, it implies that $\mathfrak{g}^{\BPS,i}_{\tilde{Q},\tilde{W}} =0$ if $i>0$ and for all odd integers $i \in \mathbb{Z}$.  But then since there is an isomorphism of cohomologically graded vector spaces $\abps{} \simeq  \mathfrak{g}_{\tilde{Q},\tilde{W}}[u]$, $\abps{}$ is spanned by $u^n \cdot \alpha$ where $\alpha \in \mathfrak{g}^{\BPS}_{\tilde{Q},\tilde{W}}$ and in particular if the cohomological degree of $u^n \cdot \alpha>0$ then $n \geq 1$. For a fixed dimension vector $\bd$, the derivation $\partial$ is a morphism of vector spaces $\partial^{i}_{\bd} \colon \widehat{\mathfrak{g}}^{\BPS,i}_{\tilde{Q},\tilde{W},\bd}\rightarrow  \widehat{\mathfrak{g}}^{\BPS,i-2}_{\tilde{Q},\tilde{W},\bd}$. When $i$ is an even positive integer then by Proposition \ref{derivationbyp}, the map $\partial^{i}_{\bd}$ is surjective since it sends the elements $u^n \alpha$ for $n>0$ to $n|\bd| u^{n-1} \alpha$. But by above argument or Equation \ref{kacpolynomialandrelationwithbpsliealgebra},  $\widehat{\mathfrak{g}}^{\BPS,i}_{\tilde{Q},\tilde{W},\bd}$ and $\widehat{\mathfrak{g}}^{\BPS,i-2}_{\tilde{Q},\tilde{W},\bd}$ has the same dimension when $i>0$ and thus $\partial$ is in fact isomorphism and we are done. 
\end{proof}

\section{Spherical subalgebra of CoHA}
We define the spherical part of a $\mathbb{N}^{Q_0}$ graded algebra as a subalgebra generated by the smallest possible dimension vectors. 

\begin{defn}[Spherical Generation]
For any quiver $Q$, let $\mathcal{A}$ be a $\N^{Q_0}$ graded algebra over a ring $R$. Then we say that $\mathcal{A}$ is spherically generated if $A$ is generated as an $R$ algebra by $ \langle \mathcal{A}_{\delta_i} \mid i \in Q_0 \rangle$. The spherical part of algebra $\mathcal{A}$ is the subalgebra generated by $\langle \mathcal{A}_{\delta_i} \mid i \in Q_0 \rangle $ and is denoted by $\mathcal{SA}$.
\end{defn}

We then have the following interesting consequence of the above abstraction. We determine $\mathcal{S}\mathcal{A}_{\tilde{Q},\tilde{W}}$ for any quiver without loops. 

\begin{thm}\label{sphericalsubalgebra}
For any quiver $Q$, let $Q'$ be the subquiver formed after removing all the loops of $Q$. Then 
\begin{enumerate}
    \item  The subvector space $\mathfrak{g}^{\BPS,0}_{\tilde{Q},\tilde{W}}\otimes \HH(\BCu,\Q) \subset \abps{}$ is closed under the Lie bracket and we have a Lie algebra isomorphism \[\mathfrak{g}^{\BPS,0}_{\tilde{Q},\tilde{W}}\otimes \HH(\BCu,\Q) \simeq \mathfrak{n}_{Q'}^{+}[D] .\] 
    \item  For any $Q$, there is an embedding of algebras 
    \[ \bU(\mathfrak{n}^{+}_{Q'}[D]) \hookrightarrow \mathcal{A}^{\chi}_{\tilde{Q},\tilde{W}}\] which is an isomorphism if and only if the quiver $Q$ is of finite type. 
    \item For quiver $Q$ without loops, the image of above embedding is the spherical subalgebra $\mathcal{S}\mathcal{A}^{\chi}_{\tilde{Q},\tilde{W}}$, so that we have an isomorphism of algebras \[ \bU(\mathfrak{n}^{+}_{Q'}[D]) \simeq \mathcal{S} \mathcal{A}^{\chi}_{\tilde{Q},\tilde{W}}. \]
\end{enumerate}

\end{thm}
   
\begin{proof}

For part (1), we note that given $\alpha^{(n)}= u^n \cdot \alpha$ and $\beta^{(m)}:= u^m \cdot \beta$ in $\mathfrak{g}^{\BPS,0}_{\tilde{Q},\tilde{W}} \otimes \HH(\BCu,\Q)$ where $\alpha,\beta \in \mathfrak{g}^{\BPS,0}_{\QTC{},\WTC{}}$, we have

\begin{equation}\label{summation0}
 [\alpha^{(n)},\beta^{(m)}] = \sum_{i \geq 0} u^{n+m+i} \gamma_i 
 \end{equation} for some $\gamma_i \in \mathfrak{g}^{\BPS,-2i}_{\QTC{},\WTC{}}$, since $[\alpha^{(n)},\beta^{(m)}]$ is of cohomological degree $2n+2m$ and $\mathfrak{g}^{\BPS}_{\QTC{},\WTC{}}$ is all supported in non-positive cohomological degrees (see proof of Proposition \ref{negativelydeterminedaffinizedbps}). We claim that almost all the terms in the above summation are superfluous, i.e., we claim that $\gamma_i=0$ for all $i>0$. Since the summation in (\ref{summation0}) is a finite sum, let $N>0$ be the smallest such that $\gamma_i =0$ for all $i>N$. 
 We apply the derivation $\partial$, $n+m+N$ times to the Equation \ref{summation0} and using Proposition \ref{derivationbyp} gives that 
 \[ 0 = (n+m+N)!(|\bd|)^{n+m+N} \gamma_N\] where $\bd$ is the dimension vector on which $[\alpha^{(n)},\beta^{(m)}]$ are supported.  Thus $\gamma_{N}=0$. Contradicting the minimality of $N$. Thus $N=0$ and the claim follows. 
 
 Thus $ \mathfrak{g}^{\BPS,0}_{\tilde{Q},\tilde{W}}\otimes \HH(\BCu,\Q)\subset \abps{}$ is closed under the Lie bracket. It is a negatively determined integrable $\Heis$ Lie algebra which starts from degree $0$. Thus by Proposition \ref{negativelydetermined}, there is an isomoprhism of Lie algebras $\mathfrak{g}^{\BPS,0}_{\tilde{Q},\tilde{W}}\otimes \HH(\BCu,\Q) \simeq \mathfrak{g}^{\BPS,0}_{\tilde{Q},\tilde{W}}[D]$. Further, by the Proposition \ref{zerocohomologybps}, there is an isomorphism of Lie algebras $\mathfrak{g}^{\BPS,0}_{\tilde{Q},\tilde{W}}\simeq \mathfrak{n}^{+}_{Q^{\prime}}$ and so we are done. Part (2) follows from the fact that Kac polynomials are constant if and only if $Q$ is of finite type. Finally for part (3), we observe that the subalgebra generated by $\mathcal{A}^{\chi}_{\tilde{Q},\tilde{W},\delta_i}$ is exactly the universal enveloping algebra of the Lie subalgebra of $\abps{}$ generated by $\widehat{\mathfrak{g}}^{\BPS}_{\tilde{Q},\tilde{W} \delta_i}$ for $i \in Q_0$. But then for a quiver without loops, we have $\mathfrak{g}^{\BPS}_{\tilde{Q},\tilde{W},\delta_i} =  \mathfrak{g}^{\BPS,0}_{\tilde{Q},\tilde{W},\delta_i}$ and so we are done by the above argument. 


\end{proof}

\section{Nakajima Quiver Varieties for Cyclic Quivers} \label{chapter: quivervarieties}
In this chapter, we will consider the action of cohomological Hall algebras on the cohomology of Nakajima quiver varieties. These actions have been defined in \cite{Yang_2018}, \cite{davison2022bps}, \cite{schiffmanncohagenerators} in the case of stability condition $\zeta^{-}$, which demands that the image of the framed vector generates the entire representation (Section \ref{stabilityinfinity}). When the quiver is cyclic, this gives an action of CoHA on the cohomology of the equivariant Hilbert Scheme. However, we can do similar consturction for a different stability condition (Section \ref{kapranovstability}) $\zeta^{n}$ which allows us to define action of a subalgebra of CoHA on the cohomology of Hilbert scheme of points on the resolution of Kleinian singularity $\mathbb{C}^2/\mathbb{Z}_{K}$ (Section \ref{leftactionhilbertscheme}). We study these two actions by comparing them with the work of Nakajima, where they defined the action of Kac-Moody Lie algebra on the cohomology of the equivariant Hilbert scheme and the action of Heisenberg Lie algebra on the cohomology of the Hilbert scheme of points.

Given any quiver $Q$, fix an identification of the vertex set $Q_0$ with the set $[0,1, \dots, K]$ where $K= |Q_0|-1$. Let $\bff \in \N^{Q_0}$ be a dimension vector, called the framing vector. Then we consider the space of representations:

\[ \mathrm{Rep}_{\bd,\bff}(Q) := \prod_{a \in Q_1}\Hom(\mathbb{C}^{\bd(s(a))},\mathbb{C}^{\bd(t(a))}) \times \prod_{i \in Q_0} \Hom(\mathbb{C}^{\bff_i},\mathbb{C}^{\bd_i}).\]

Then the cotangent bundle $T^{*}\Rep_{\bd,\bff}(Q)$ carries the action of $\mathrm{GL}_{\bd}$ which preserves the symplectic form. So we have the moment map

\begin{equation}\label{momentmaporigin}
\mu_{\bd,\bff}: T^{*}\mathrm{Rep}_{\bd,\bff}(Q) \rightarrow \mathfrak{gl}_{\bd}.
\end{equation}

Then Nakajima quiver variety is defined by taking the GIT quotient of $\mu_{\bd,\bff}^{-1}(0)$. By a trick due to Crawley-Boevey \cite{CB1}, we can look at the space $T^{*}\Rep_{\bd,\bff}(Q)$ as a representation of another quiver $Q_{\bff}$ of dimension $(\bd,1)$. $Q_{\bff}$ is defined by adding a vertex $\infty$ to the vertex set $Q_0$, and an arrows $\alpha_{i,n}$ for all $i \in Q_0, 1 \leq n \leq \bff_i$ where the source $s(\alpha_{i,n}) = \infty$ and $t(\alpha_{i,n}) = i$. For any dimension vector $\bd \in \mathbb{N}^{Q_0}$ and number $n \in \mathbb{N}$, let $(\bd,n) \in \N^{Q_{\bff}}$ be the dimension vector where we denote by $(\bd,n)_{\infty} = n$ and $(\bd,n)_i = \bd_i$ for all $i \in Q_0$. Here to get a representation of quiver $Q_{\bff}$ from $\Rep_{\bd,\bff}(Q)$, we are essentially identifiying $\Hom(\mathbb{C}^{\bff_i},\mathbb{C}^{\bd_i})$ with $\Hom(\mathbb{C},\mathbb{C}^{\bd_i})^{\bff_i}$.

Note that the moment map for $(\bd,1)$ dimensional representation of the preprojective algebra gives a morphism
\[ \mu_{Q_{\bff},(\bd,1)}: \Rep_{(\bd,1)}(\overline{Q_{\bff}})\rightarrow \mathfrak{gl}_{(\bd,1)}\]

The morphism \ref{momentmaporigin} is the composition of $\mu_{Q_{\bff},(\bd,1)}$ and the projection $\mathfrak{gl}_{(\bd,1)} \rightarrow \mathfrak{gl}_{\bd}$.
\[\begin{tikzcd}[ampersand replacement=\&]
	{\mu_{\bd,\bff} \colon \Rep_{(\bd,1)}(\overline{Q_{\bff}})} \& {\mathfrak{gl}_{(\bd,1)}} \& {\mathfrak{gl}_{\bd}}
	\arrow[from=1-1, to=1-2]
	\arrow["\pi", from=1-2, to=1-3]
\end{tikzcd}\]

So we have an extra relation \[ \sum_{i \in Q_0, 1 \leq n \leq \bff_i} \rho(\alpha_{i,n}^{*})\rho(\alpha_{i,n}) = 0\] in $\mu_{Q_{\bff},(\bd,1)}^{-1}(0)$. 
However, since the dimension of the vertex $\infty$ is $1$, this is same condition as having 
\begin{equation}\label{extrarelmoment}
 \sum_{i \in Q_0, 1 \leq n \leq \bff_i} \Tr( \rho(\alpha_{i,n}^{*})\rho(\alpha_{i,n})) = 0. 
\end{equation}However this is superflous since the defining relations for $\mu_{(\bd,\bff)}^{-1}(0)$ are \[\sum_{a \in Q_0} [\rho(a),\rho(a^*)] + \sum_{i \in Q_0, 1 \leq n \leq \bff_i}  \rho(\alpha_{i,n})\rho(\alpha_{i,n}^{*})=0 \] but then $\Tr([A,B])=0$ so this implies that \[\sum_{i \in Q_0, 1 \leq n \leq \bff_i}  \Tr(\rho(\alpha_{i,n})\rho(\alpha_{i,n}^{*})) = 0 \]
which is same as equation \ref{extrarelmoment}. Thus there are essentially no new equations and so \[\mu_{\bd,\bff}^{-1}(0) = \mu_{Q_{\bff},(\bd,1)}^{-1}(0) .\] So from now on we will working with later point of view. Taking the affine GIT quotient defines
\begin{defn}[Affine Nakajima Quiver Variety]
The affine Nakajima quiver variety is defined to be the affine quotient \[ \Nak^0_{\bd,\bff}(Q) = \C[\mu^{-1}_{Q_{\bff},(\bd,1)}(0)]^{\mathrm{GL}_{\bd}}].\]
\end{defn}

The varieties $\Nak^0_{\bd,\bff}(Q)$ are singular. There is a way to define an open subset of $\mu^{-1}_{Q_{\bff},(\bd,1)}(0)$ where the action of $\GL_{\bd}$ is free.  Given a dimension vector $\bd$ and a stability condition $\zeta \in \Q^{Q_0}$, we extend this to a stability condition $\hat{\zeta} \in \Q^{(Q_{\bff})_{0}}$ by setting $\zeta_{\infty} = - \sum \bd_i \zeta_i $. Assume that $\hat{\zeta}$ is generic for dimension vector $(\bd,1)$. This defines an open subset $\mu^{-1}(0)^{\zeta-\sss}_{Q_{\bff},(\bd,1)} \subset \mu^{-1}_{{Q_{\bff},(\bd,1)}}(0)$ of $\hat{\zeta}$ stable representations. 

\begin{example}[$\zeta^{\infty}$ stability condition]
For any quiver $Q$, let $\zeta \in \mathbb{Q}^{Q_0}$ be any tuple where $\zeta_i <0 $ for all $i \in Q_0$. Then $\hat{\zeta}$ is generic for any dimension $(\bd,1)$ since $\chi_{\hat{\zeta}}(\bd^{\prime},0) = \sum \zeta_i \bd_i \neq 0$ for all $(\bd^{\prime},0) < (\bd,1)$ unless $\bd_i=0$ and $\chi_{\hat{\zeta}}(\bd^{\prime},1) = \sum_{i \in Q_0} \zeta_i (\bd^{\prime}_i- \bd_i) $ unless all $(\bd^{\prime},1) = (\bd,1)$. Infact since $\chi_{\hat{\zeta}}(\bd^{\prime},1)$ is always positive unless $\bd^{\prime}=\bd$ implies that for any $\zeta$ stable representation of quiver $Q$, it cannot be stable unless $\bd^{\prime}=\bd$. Also, $\chi_{\hat{\zeta}}(\bd^{\prime},0)$ is always negative, so it doesn't impose any condition on the stability. Thus, a representation of $\zeta$ is semistable iff it is generated by the image of a vector at the vertex $\infty$. We shall denote this stability condition by $\zeta^{\infty}$. 
\end{example}

\begin{defn}[Nakajima quiver variety]
Given quiver $Q$, dimension vector $\bd$, framing vector $\bff$ and generic stability condition $\zeta$, the Nakajima quiver variety is defined to be the quotient

\[\Nak^{\zeta}_{\bd,\bff}(Q) =  \mu^{-1}_{Q_{\bff},(\bd,1)}(0)^{\zeta-\sss}/\mathrm{GL}_{\bd} \]
\end{defn}

We have the following fundamental theorem of Nakajima.

\begin{thm}
The open subset $\mu^{-1}(0)^{\zeta-\sss}_{Q_{\bff},(\bd,1)}$ is a non-singular complex variety of dimension $\dim(\Rep_{(\bd,1)}(Q_{\bff})) - \dim(\mathrm{GL}_{\bd}) = \sum_{a \in Q_0}(\bd_{s(a)} \bd_{t(a)}) + 2 \sum_{i \in Q_0} \bd_i\bff_i - \sum_{i \in Q_0} \bd_i^2$ and the Nakajima quiver variety $\Nak^{\zeta}_{\bd,\bff}(Q)$ is a non-singular complex variety of dimension $\dim(\Rep_{(\bd,1)}(Q_{\bff})) - 2 \dim(\mathrm{GL}_{\bd}) = 2\sum_{a \in Q_0}(\bd_{s(a)} \bd_{t(a)}) + 2 \sum_{i \in Q_0} \bd_i\bff_i - 2\sum_{i \in Q_0} \bd_i^2 = - 2\chi_{Q}(\bd,\bd)+ 2 \bd \cdot \bff$.
\end{thm}

Note that for any weighting $\mathbf{w}:(\overline{Q_{\bff}})_{1} \rightarrow \mathbb{Z}^{r}$ for which $\sum_{a \in Q_{\bff}} [a,a^{*}]$ is homogenous. Then the corresponding $T$ action preserves $\mu^{-1}_{Q_{\bff},(\bd,1)}(0)$ and thus defines an action on Nakajima quiver variety. 
By the definition of GIT, there exists a projective morphism \begin{equation} \label{affinizationmorphism}
q: \Nak^{\zeta}_{\bd,\bff}(Q) \rightarrow \Nak^{0}_{\bd,\bff}(Q)
\end{equation} which is a symplectic resolution whenever surjective.

By definition, Nakajima quiver varieties come with non-trivial tautological bundles for every vertex: 

\begin{defn}[Tautological Bundles on Nakajima quiver Variety]
 For each vertex $i$ of the quiver, we define tautological bundle $\Vcal_i$ to be the vector bundle associated with the $\GL_{d_i}$ principal bundle 
\begin{align*}
(\mu_{Q_{\bff},(\bd,1)}^{-1}(0) \cap \Rep^{\hat{\zeta}-\sss}_{(\bd,1)}(\overline{Q_{\bff}}))/\GL^{i}_{\bd} \rightarrow \Nak^{\zeta}_{\bd,\bff}(Q)
\end{align*} where by $\GL^{i}_{\bd}$ we mean $\prod_{j \neq i} \GL_{d_j}$, and the action on $\mu_{(\bd,1)}^{-1}(0) \cap \Rep^{\hat{\zeta}-\sss}_{(\bd,1)}(\overline{Q_{\bff}})/\GL^{i}_{\bd}$ is given by the injection $\prod_{j \neq i} \GL_{d_j} \rightarrow \GL_{\bd}$.  

Later, we will also consider topologically trivial vector bundles $\mathcal{W}_i$ of rank $\bff_i$ defined by $\Nak^{\zeta}_{\bd,\bff}(Q) \times \mathbb{C}^{\bff_i} \rightarrow \Nak^{\zeta}_{\bd,\bff}(Q) $. Nakajima quiver variety carries an action of the torus $T$, and the equivariant Chern class of $\mathcal{W}_i$ will then carry non-trivial information. 

\end{defn}

In Section \ref{moduliofnilpotent}, we defined the Lusztig nilpotent stack of preprojective algebra representations. Analogously, we can define

\begin{defn}
Given any quiver $Q$, dimnesion $\bd$, framing $\bff$ and stability condition $\zeta$, we define nilpotent quiver variety $\mathrm{L}^{\zeta}_{\bd,\bff}(Q)$ as the closed subvariety $q^{-1}(0)$ where $q: \Nak^{\zeta}_{\bd,\bff}(Q) \rightarrow \Nak^{0}_{\bd,\bff}(Q)$ is the GIT projective morphism defined in equation \ref{affinizationmorphism}. 
\end{defn}

Note that since $q$ is a projective morphism, $\mathrm{L}^{\zeta}_{\bd,\bff}(Q)$ is always projective. 

\subsection{Critical locus description of Nakajima quiver varieties}

In \cite{davison2022bps}, Davison provides a critical locus description of the Nakajima quiver varieties $\Nak^{\zeta^{<}}_{\bd,\bff}(Q)$. Their method doesn't work for an arbitrary generic stability condition $\zeta$. In this section, using dimension reduction, we will show that the statement is true at the level of cohomology.

Given framing $\bff$, we consider the tripled framed quiver $\tilde{Q}_{\bff}$. This is the usual doubled framed quiver $\overline{Q_{\bff}}$ that we used to define the Nakajima quiver
varieties, but with an extra loop at every vertex, including a loop at the framing vertex. We then consider the canonical cubic potential
\[ \tilde{W}_{\bff} =  \left(\sum_{i \in (Q_{\bff})_0} \omega_i \right) \left(\sum_{a \in (Q_{\bff})_1} [a,a^{*}] \right).\]
Assume that $\hat{\zeta}$ is generic for dimension vector $(\bd,1)$. Then the 3d Nakajima quiver variety is defined to be smooth variety  \[\mathcal{M}^{\zeta,3d}_{\bd,\bff}(Q): = \Rep_{(\bd,1)}^{\hat{\zeta}-\sss}(\tilde{Q}_{\bff})/\GL_{\bd}, \] where $\Rep_{(\bd,1)}^{\hat{\zeta}-\sss}(\tilde{Q}_{\bff}) \subset \Rep_{(\bd,1)}(\tilde{Q}_{\bff})$ the open subset of $\zeta$ semistable representations. We can similarly define $\Rep^{\hat{\zeta}-\sss}_{(\bd,1)}(\overline{Q_{\bff}}) \subset \Rep_{(\bd,1)}(\overline{Q_{\bff}})$ the open subset of $\hat{\zeta}$ stable $\overline{Q_{\bff}}$ representations. Let $\Tr^{\zeta}(\tilde{W}_{\bff})$ denote the restriction of the trace function. We then have 


\begin{lemma}
Let $\pi_{Q_{\bff},\bd}: \Rep_{(\bd,1)}(\tilde{Q}_{\bff}) \rightarrow \Rep_{(\bd,1)}(\overline{Q_{\bff}})$ be the projection map. Then \[ \crit(\Tr^{\zeta}(\tilde{W}_{\bff})) \cap \pi_{Q_{\bff},\bd}^{-1}(\Rep^{\hat{\zeta}-\sss}_{(\bd,1)}(\overline{Q_{\bff}})) = \crit(\Tr^{\zeta}(\tilde{W}_{\bff})) \cap \Rep_{(\bd,1)}^{\hat{\zeta}-\sss} (\tilde{Q}_{\bff}) \]
\end{lemma}

\begin{proof}  This proof uses same argument as in \cite{davison2022integrality}[Lemma 6.3]. Let $  \rho \in   \crit(\Tr^{\zeta}(\tilde{W}_{\bff})) \cap \Rep_{(\bd,1}^{\hat{\zeta}-\sss} (\tilde{Q}_{\bff}) \backslash \crit(\Tr^{\zeta}(\tilde{W}_{\bff})) \cap \pi_{Q_{\fr},\bd}^{-1}(\Rep^{\hat{\zeta}-\sss}_{(\bd,1)}(\overline{Q_{\bff}}))$. Then since $\Jac( \tilde{Q}_{\bff},\tilde{W}_{\bff}) \simeq \mathrm{\Pi}_{Q_{\bff}}[\omega]$, we have that $\rho = (\rho^{\prime}, f)$ where $\rho^{\prime}$ is a representation of $\mathrm{\Pi}_{Q_{\fr}}$ with an endomorphism $f \in \End_{\mathrm{\Pi}_{Q_{\bff}}}(\rho^{\prime})$. Our assumtion on $\rho$ is equivalent to  $\rho^{\prime}$ being not a semistable representation of $\mathrm{\Pi}_{Q_{\bff}}$. But then by the existence of Harder-Narasimhan filtrations, there exists a filtration of $\rho^{\prime}$ as a $\mathrm{\Pi}_{Q_{\bff}}$ module \[ 0 = \rho_0' \subset \rho_1' \subset \rho_2' \cdots \subset \rho_n= \rho^{\prime}, \] such that the slopes $\mu(\rho_i'/\rho_{i-1}')$ are strictly decreasing. Each of $\rho_i'/\rho_{i-1}'$ is $\hat{\zeta}$ semistable. Consider the restriction $f_{|\rho_1'}: \rho_1' \rightarrow \rho_n'$. Since $\mu(\rho_1')> \mu(\rho_n'/\rho_{n-1}')$, $f_{|\rho_1'} \colon \rho_1' \rightarrow \rho_n' \rightarrow \rho_{n}'/\rho_{n-1}'$ is the $0$ morphism. Thus $f_{|\rho_1'}$ factors through $\rho_{n-1}'$. Since $\mu(\rho_1') > \mu(\rho_{i}'/\rho_{i-1}')$ for all $i$, we may continue this argument inductively to conclude that $\rho_{1}'$ is invariant under $f$. Thus $(\rho_1',f_{|\rho_0'})$ forms a sub representation of $\rho$ with higher slope, contradicting the semistability of $\rho$.   
\end{proof}

Note that \[\pi_{Q_{\bff},\bd}^{-1}(\Rep^{\hat{\zeta}-\sss}_{(\bd,1)}(\overline{Q_{\bff}})) = \Rep^{\hat{\zeta}-\sss}_{(\bd,1)}(\overline{Q_{\bff}}) \times \bigoplus_{i \in Q_{\bff}} \Hom(\C^{\bd_i},\C^{\bd_i}) \] is a $\GL_{\bd}$ equivariant decomposition. Thus, by the above lemma and the dimensional reduction theorem (\ref{dimensionreduction}) applied in the case when \begin{align*}
\overline{X} &= \pi_{Q_{\bff},\bd}^{-1}(\Rep^{\hat{\zeta}-\sss}_{(\bd,1)}(\overline{Q_{\bff}})) \\  X &= \Rep^{\hat{\zeta}-\sss}_{(\bd,1)}(\overline{Q_{\bff}}) \\\mathbb{A}^n &= \bigoplus_{i \in Q_{\bff}} \Hom(\C^{\bd_i},\C^{\bd_i}),
\end{align*}it follows that
\[ \mathrm{DR}: \textrm{H}^{T}_{c}(\mathcal{M}^{\zeta,3d}_{\bd,\bff}(Q),\pPhi_{\Tr^{\zeta}(\tilde{W}_{\bff})} \Q_{\mathcal{M}^{\zeta}_{\bd,\bff}(Q)}[2(\bd,1)\cdot (\bd,1)])  \simeq  \textrm{H}^{T}_{c}(\Nak^{\zeta}_{\bd,\bff}(Q), \Q). \]


Thus, by applying Verdier duality, since $\dim(\mathcal{M}^{\zeta,3d}_{\bd,\bff}(Q) - \dim(\Nak^{\zeta}_{Q}(\bd,\bff) - 2(\sum_{i \in Q_{\bff}} \bd_i^2) = -1$, it follows that 

\begin{prop}\label{criticallocusnakajima} 
Let $\zeta \in \Q^{Q_0}$ by any stability condition such that $\hat{\zeta}$ is generic for dimension vector $(\bd,1)$. Then we have a natural isomorphism of graded vector spaces  
\begin{equation}
(-1)^{\binom{\bd}{2}}\mathrm{DR}: \HH^{T}(\mathcal{M}^{\zeta,3d}_{\bd,\bff}(Q), \pPhi_{\Tr^{\zeta}(\tilde{W_{\bff}})}\mathcal{IC}_{\mathcal{M}^{\zeta,3d}_{\bd,\bff}(Q)}[-1]) \simeq \HH^{T}(\Nak^{\zeta}_{\bd,\bff}(Q),\mathbb{Q}^{\vir}).
\end{equation}  

Note that here we have twisted dimension reduction isomorphism by the sign $(-)^{\binom{\bd}{2}}$, this is so that we have compatibility with the action of preprojective algebra, as we shall later see. 

\end{prop}

\section{CoHA Action} \label{cohaactionequivarianthilbert}

In this section, we will construct the action of COHA on the Nakajima quiver variety for the stability condition $\zeta^{\infty}$. This is done in \cite{davison2022affine} in the formulation we explain next and in \cite{yang2017cohomological}, \cite{schiffmanncohagenerators} for preprojective CoHA's. We will also see the compatibility with the cup product of the tautological line bundle. Later, we will see how these ideas can be used to define an action of a subalgebra of CoHA for a different stability condition. 

Given two dimension vectors $\bd_1,\bd_2$ and framing $\bff$, let \[\mathcal{M}^{\zeta^{\infty},3d}_{(\bd_1,\bd_2,\bff)}(Q) = \{ 0 \rightarrow (\rho_1)_{( \bd_1,0)} \rightarrow (\rho_2)^{\zeta^{\infty}-\sss}_{(\bd_1+\bd_2,1)} \rightarrow (\rho_3)_{(\bd_2,1)} \rightarrow 0 \} \] denote the stack of short exact sequences where $\rho_2$ is $\zeta^{\infty}$ semistable. We then have the usual correspondence diagram
\[\begin{tikzcd}[ampersand replacement=\&]
	{} \& {\mathfrak{M}_{\bd_1}(\tilde{Q}) \times \mathcal{M}^{\zeta^{\infty},3d}_{\bd_2,\bff}(Q)} \& {\mathcal{M}^{\zeta^{\infty},3d}_{(\bd_1,\bd_2,\bff)}(Q) } \& {\mathcal{M}_{\bd_1+\bd_2,\bff}^{\zeta^{\infty},3d}(Q)}
	\arrow["{\pi_1 \times \pi_3 }", from=1-3, to=1-2]
	\arrow["{\pi_2}"', from=1-3, to=1-4],
\end{tikzcd}\] where the morphism $\pi_1 \times \pi_3$ is projection to $(\rho_1,\rho_3)$ and $\pi_2$ is projection to $\rho_2$. $\pi_2$ is a proper map, since the $\zeta^{\infty}$ stability condition implies that every $(\bd_2,1)$ dimensional quotient of a $(\bd_1+\bd_2,1)$ dimensional $\zeta^{\infty}$ semistable representation is also semistable. Then, by doing push-pull along vanishing cycle cohomology, it follows that we have a left action of $\mathcal{A}_{\tilde{Q},\tilde{W}}$ on 
\[ \bigoplus_{\bd \in \N^{Q_0}} \HH(\mathcal{M}^{\zeta^{\infty}, 3d}_{\bd,\bff}(Q), \pPhi_{\Tr(\tilde{W_{\bff}})} \mathcal{IC}_{\mathcal{M}^{\zeta^{\infty}-\sss}_{\bd,\bff}(Q)}[-1]) \simeq  \bigoplus_{\bd \in \mathbb{N}^{Q_0}} \mathrm{H}(\Nak^{\zeta^{\infty}}_{\bd,\bff}(Q),\Q^{\vir}) =: \Nak^{\zeta^{\infty}}_{\bff}(Q)\] 
Here, the second isomorphism is defined in Proposition \ref{criticallocusnakajima}. 

\begin{prop} \label{cohaactionnakajimainfinity}
For any quiver $Q$ and framing $\bff$, the cohomological Hall algebra $\mathcal{A}_{\tilde{Q},\tilde{W}}^{\chi}$ acts on the cohomology of Nakajima quiver variety $\Nak^{\zeta^{\infty}}_{\bff}(Q)$.
\end{prop}
The proof that the above defines an action is the same for associativity of cohomological Hall algebra as proved in \cite{kontsevich2011cohomological}. We denote this action by $\bullet$.

Since we have defined the action of CoHA, there is an action of $\mathfrak{g}^{\BPS}_{\tilde{Q},\tilde{W}}$ on $\Nak^{\zeta^{\infty}}_{\bff}(Q)$. In theorem \ref{dhsm1}, we saw that $\mathfrak{g}^{\BPS}_{\tilde{Q},\tilde{W}}$ can be realized as a positive Half of the Generalized Kac Moody Lie algebra. Infact the action of $\mathfrak{g}^{\BPS}_{\tilde{Q},\tilde{W}}$ can be lifted to action of full $\mathfrak{g}^{\GKM}_{Q}$.

\begin{thm}[\cite{davison2024bps}, {Theorem 10.5}]\label{dhsm2}
The $\mathfrak{g}^{\BPS}_{\tilde{Q},\tilde{W}}$ action on $\Nak^{\zeta^{\infty}}_{\bff}(Q)$ extends to $\mathfrak{g}^{\GKM}_{Q}$ action and there is decomposition of $\mathfrak{g}^{\GKM}_{Q}$ modules 
\[ \Nak^{\zeta^{\infty}}_{\bff}(Q) \simeq  \bigoplus_{(\bd,1) \in \Phi^{+}_{\mathrm{\Pi}_{Q_{\bff}}}} \HH^{*}(\mathcal{IC}(\mathcal{M}_{\mathrm{\Pi}_{Q_{\bff}},(\bd,1)})) \otimes L_{\mathfrak{g}^{\GKM}_{Q},\hat{\bd}}.\]
where $\hat{\bd} \in \Hom(\mathbb{Z}^{Q_{0}}, \mathbb{Z})$ given by $\bd^{\prime} \mapsto (\bd,\bd^{\prime})_{\mathrm{\Pi}_Q} - \bff \cdot \bd^{\prime}$. 
\end{thm}
\begin{example} \label{peiceofdhsm}
$(\mathbf{0},1)$ is always a real positive root. Thus $L_{\mathfrak{g}^{\GKM}_Q,\hat{\mathbf{0}}}$ where $\hat{\mathbf{0}}: \bd^{\prime} \mapsto \bff \cdot \bd^{\prime}$ is always a summand in $\Nak^{\zeta^{\infty}}_{\bff}(Q)$.
\end{example}

On the other hand, there is the action of preprojective CoHA $\mathcal{A}^{\chi}_{\mathrm{\Pi}_Q}$ on $\Nak^{\zeta^{\infty}}_{\bff}(Q)$ defined in \cite{Yang_2018}[Section 5] and \cite{schiffmanncohagenerators}[Section 5.6]. However, it is shown in \cite{Yang_2019}[Theorem A] that these actions are compatible under the twisted dimension reduction morphism $\tilde{\mathrm{DR}}$ in Section \ref{dimensionreductiontripled}.

Although we will not be pursuing this until section \ref{moyangianconjecture}, the perspective we want to give is that $\mathcal{A}^{\mathcal{\overline{N}}}_{\tilde{Q},\tilde{W}} \simeq \mathcal{A}^{\mathcal{N}}_{\mathrm{\Pi}_Q}$ should be thought of as the opposite half of the CoHA in the following way. First, it is shown in \cite{schiffmanncohagenerators} that we can lift the above action to the case of nilpotent quiver variety(One can also prove using the above construction). 
\begin{prop}[\cite{schiffmann2023cohomological}] \label{actionondual}
    For any quiver $Q$ and framing $\bff$, the nilpotent cohomological Hall algebra $\mathcal{A}^{\mathcal{N},\chi}_{\tilde{Q},\tilde{W}}$ acts on the cohomology of nilpotent Nakajima quiver variety\footnote{Note that the nilpotent quiver variety $\Lus^{\zeta^{\infty}}_{\bd,\bff}(Q)$ aren't always smooth.}
    \[ \mathrm{L}^{\zeta^{\infty},\mathcal{N}}_{\bff}(Q) := \bigoplus_{\bd \in \mathbb{N}^{Q_0}} \HH^{\mathrm{BM}}(\Lus^{\zeta^{\infty}}_{\bd,\bff}(Q),\mathbb{Q}[2(\bd,\bd)- 2\bff \cdot \bd]). \]
\end{prop}

Now, we claim that there is duality \[ \HH(\Nak^{\zeta^{\infty}}_{\bd,\bff}(Q),\mathbb{Q}^{\vir}) \simeq \HH^{\mathrm{BM}}(\Lus^{\zeta^{\infty}}_{\bd,\bff}(Q),\mathbb{Q}[2(\bd,\bd)- 2\bff \cdot \bd])^{*}\]

This is simply because the varieties $\Nak^{\zeta^{\infty}}_{\bd,\bff}(Q)$ and $\Lus^{\zeta^{\infty}}_{\bd,\bff}(Q)$ are homotopic. But $\Lus^{\zeta^{\infty}}_{\bd,\bff}(Q)$ is also a projective variety. Hence, its usual cohomology is the same as the compactly supported cohomology, which is Poincaré dual to the Borel-Moore homology. Thus by Proposition \ref{actionondual}, we get a \textit{right} action of $\mathcal{A}^{\mathcal{N}}_{\tilde{Q},\tilde{W}}$ on $\Nak^{\zeta^{\infty}}_{\bff}(Q)$.

\begin{remark}
Note that there is addition action of $\GL_{\bff}$ on $\mathcal{M}^{\zeta^{\infty},3d}_{\bd,\bd}(Q)$ for any quiver $Q$ which reparametrize the framing of $\tilde{Q}_{\bff}$. Then the action of CoHA also works equivariantly. We get a left action of $\mathcal{A}^{T}_{\tilde{Q},\tilde{W}}$ on \[\bigoplus_{\bd \in \N^{Q_0}} \HH^{T \times \mathrm{GL}_{\bff}}(\Nak^{\zeta^{\infty}}_{\bd,\bff}(Q),\mathbb{Q}^{\vir}).\] and similarly a right action of $\mathcal{A}^{T,\tilde{\mathcal{N}}}_{\tilde{Q},\tilde{W}}$. In the case when $T$ scales the symplectic form non-trivially, due to a theorem of \cite{botta2023okounkovs} and \cite{schiffmann2023cohomological}, these CoHA can be realized as positive and negative half of MO Yangian respectively, which implies that after taking direct sum across all framings, these actions are faithful(See Chapter \ref{moyangianconjecture}). However, faithfulness is conjectural when $T$ acts trivially on the symplectic form. 
\end{remark}

\subsubsection{Compatibility with Cup product}
Note that, in the same way, we constructed the action of tautological bundles on $\mathcal{A}_{Q,W}$ in Section \ref{tautologicalbundle}, we can construct analogus action of the tautological line bundle $\otimes_{i \in Q_0} \mathcal{V}_i$ on $\mathrm{N}^{\zeta^{\infty}}_{\bff}(Q)$ by $u \cdot v := \sum_{i \in Q_0} c_1(\mathcal{V}_i) \cup v$. The following proposition will be useful later to understand the induced action of the affinized $\BPS$ Lie algebra.

\begin{prop} \label{uactionnakajima}
The action of $\mathcal{A}_{\tilde{Q},\tilde{W}}$ on $\mathrm{\Nak}^{\zeta^{\infty}}_{\bff}(Q)$ satisfies 
\[ (u \cdot \alpha) \bullet v = u \cdot (\alpha \bullet v) - \alpha \bullet (u \cdot v)   \]
\end{prop}

\begin{proof}
We have 
\begin{align*}
u \cdot (\alpha \bullet v) &= \sum_{i \in Q_0} c_1(\mathcal{V}_i) \cup ((\pi_2)_{*} (\pi_1 \times \pi_3)^{*}(\alpha \otimes v) \\ &= \sum_{i \in Q_0}  (\pi_2)_{*}((\pi_2)^{*}(c_1(\mathcal{V}_i)) \cup (\pi_1 \times \pi_3)^{*}(\alpha \otimes v)) \\ &= \sum_{i \in Q_0} (\pi_2)_{*}((\pi_1 \times \pi_3)^{*}((c_1(\mathcal{V}_i) \otimes 1 + 1 \otimes c_1(\mathcal{V}_i)) \cdot (\alpha \otimes v) ) \\& = \sum_{i \in Q_0}  (u \cdot \alpha) \bullet v +  \alpha \bullet (u \cdot v)
\end{align*}
where the third equality is true since for a short exact sequence $0 \rightarrow L_1 \rightarrow L_2 \rightarrow L_3 \rightarrow 0$, $c_1(L_2) = c_1(L_1)+c_1(L_3)$. 
\end{proof}

\section{Kleinian  singularity and McKay correspondence}
Let $G \subset \SL_2(\C)$ be a finite subgroup. The McKay correspondence associates to every finite subgroup of $\textrm{SL}_2(\C)$ a Dynkin graph of affine type ADE. Let $\rho_0,\rho_1, \cdots, \rho_{d-1}$ be the set of all the isomorphism classes of irreducible representations of the group $G$ where $\rho_0$ is assumed to be the trivial representation. Since $G \subset \SL_2(\C) \subset \GL_2(\C)$, we have the tautological representation of $G$ on $\C^2$. 
Let $a_{ij}$ be the multiplicities in the decomposition of the tensor product $\rho_i \otimes \C^2$ into the sum of irreducible representations, that is \[ \rho_i \otimes \C^2 \simeq \bigoplus_{j=0}^{d-1} \rho_{j}^{a_{ij}} . \] Then the graph with the set of vertices $0,1,\cdots, d-1$ and $a_{ij}$ edges between the vertices $i$ and $j$ is the corresponding McKay graph. We now consider the finite cylic group of order $K+1$ $\mathbb{Z}_{K+1} = \langle \omega \rangle \subset \SL_2(\C)$ embedded in $\SL_2(\C)$ by $\omega \mapsto \begin{pmatrix}
  \omega & 0 \\ 0 & \omega^{-1}
\end{pmatrix} $ where $\omega$ is a primitive $K+1$-th root of unity. The McKay graph corresponding to the group $\mathbb{Z}_{K+1}$ gives the doubled cyclic quiver $\overline{Q^{K}}$.

On the other hand, we may consider the affine quotient $\C^2/\Z_{K+1}$. It is a singular space with a singularity at the origin. Interestingly, the Nakajima quiver variety associated with the quiver cyclic quiver $Q^K$ gives a minimal resolution of this singularity. In the case we are interested in, these varieties can be understood as equivariant Hilbert schemes.  

\subsection{Equivariant Hilbert scheme}
Let $\Hilb(\C^2)$ denote the Hilbert scheme of points on $\C^2$. Since $\mathbb{Z}_{K+1}$ acts on $\C^2$, we have an induced action of $\Z_{K+1}$ on $\Hilb(\C^2)$. For any finite group $G$, let $R(G)$ be the monoid of the isomorphism classes of finite-dimensional representations of the group $G$. For every representation $V \in R(\mathbb{Z}_{K+1})$, let \[ \Hilb^{V}(\C^2) = \{ J \subset \C[x,y], \C[x,y]/J \simeq_{\mathbb{Z}_{K+1}} V \} \] where $\simeq_{\mathbb{Z}_{K+1}}$ denote isomorphism as representations of the finite group $\mathbb{Z}_{K+1}$. This gives a decomposition into components 
\begin{align}\label{components}
(\Hilb(\C^2))^{\Z_{K+1}} = \bigoplus_{V \in R(\Z_{K+1})} \Hilb^{V}(\C^2).
\end{align} where $(\Hilb(\C^2))^{\Z_{K+1}}$ is the fixed point locus of the $\mathbb{Z}_{K+1}$ action on $\Hilb(\C^2)$.  Note that since $\mathbb{Z}_{K+1}$ is abelian, all the irreducible representations are $1$ $1$-dimensional. Thus for every representation $V$, $V \simeq \oplus \rho_i^{\oplus \bd_i}$ as $\Z_{K+1}$ representations, where $\rho_i$ is the $1$ dimensional representation of weight $i$, given by $v \mapsto \omega^i v$. This gives an isomorphism of monoids $\bd: R(\Z_{K+1}) \mapsto \N^{K+1}$. 
\begin{example}
Consider the ideal $I$ of the functions which vanish at the points of a free $\Z_{K+1}$-orbit. The orbit consists of $K+1$ elements and the corresponding $\Z_{K+1}$-module $\C[x,y]/I$ is a regular representation $\rho_{\reg}$ of $\Z_{K+1}$. In fact $\bd(\rho_{\reg}) = \underbrace{(1,1,\cdots,1)}_{\text{$K+1$ times}} = \delta$. The action of $\Z_{K+1}$ on $\C^2$ is free outside the origin. The restriction of the Hilbert-Chow map (Proposition \ref{minimalresolution}) gives the minimal resolution $\Hilb^{\rho_{\reg}}(\C^2) \rightarrow \C^2/\Z_{K+1}$. 
\end{example}

\subsubsection{Tutological bundles}
By the way we have defined them, $\Hilb^{V}(\C^2)$ is an $\Z_{K+1}$ invariant subvariety of $\Hilb^{\dim(V)}(\C^2)$. Denote by $\O_{\C^2}^{n}$, the tautological rank $\dim(V)$ bundle on the Hilbert scheme $\Hilb^{\dim(V)}(\C^2)$ (See Definition \ref{hilbertschemetautological}). Consider the restriction $(\O_{\C^2}^{\dim(V)})|_{\Hilb^{V}(\C^2)}$ over $\Hilb^{V}(\C^2)$. Then, each fibre carries a structure of representation of $G$. We thus have \[ (\O_{\C^2}^n)_{\mid \Hilb^{V}(\C^2)} \simeq \bigoplus_{k \in \mathbb{Z}_{K+1}} \mathcal{H}_{k} \otimes \rho_k,\] giving tautological bundles $\mathcal{H}_k$ on $\Hilb^{V}(\C^2)$. Note that we can equivalently write $\mathcal{H}_k = \Hom(\rho_{i},(\O_{\C^2}^n)_{\mid \Hilb^{V}(\C^2)}) $

\subsubsection{Nakajima quiver variety description} \label{stabilityinfinity}
The equivariant Hilbert scheme is an example of a Nakajima quiver variety.

\begin{prop}[\cite{nakajima2002geometric}] \label{dimensionofnkajimaquiver}
 We have an isomorphism of smooth connected algebraic varieties  \[ \Hilb^{V}(\C^2) \simeq \Nak^{\zeta^{\infty}}_{\bd(V),\delta_0}(Q^{K}) \] and so their dimension is $ 2\bd_0(V) - \sum_{i \in \mathbb{Z}_{K+1}}(\bd_i(V)-\bd_{i+1}(V))^2$. 
\end{prop}

On closed points, given an ideal $J$, the above isomorphism sends $J$ to a representation of $\overline{Q^K_{\delta_0}}$, given by \[V_i = \Hom_{\mathbb{Z}_{K+1}}(\rho_i,\C[z_1,z_2]/J) \] for $i \in [0,K]$ and $V_{\infty} = \C$ and the morphisms between these vector spaces are given by composition with the morphisms $z_1: \C[z_1,z_2]/J \rightarrow \C[z_1,z_2]/J$ and $z_2: \C[z_1,z_2]/J \rightarrow \C[z_1,z_2]/J$ given by multiplication by $z_1$ and $z_2$ respectively. In particular, this means that the above isomorphism sends the tautological bundles $\mathcal{H}_i$ on $\Hilb^{V}(\C^2)$ to the tautological bundles $\mathcal{V}_i$ on the Nakajima quiver variety $\Nak^{\zeta^{\infty}}_{\bd(V),\delta_0}(Q^K)$.

\begin{convention}
Given any smooth variety $X$ and an irreducible closed subvariety $V$, let \[[V] \in \HH^{2(\dim(X)-\dim(V))}(X)\] be the fundamental class \cite{Anderson_Fulton_2023}[Appendix A]. 
\end{convention}
\subsubsection{Nakajima action} \label{nakajimliealgebraction}
We now briefly explain the action of the positive half $\mathfrak{n}_{K+1}^{+}$ of the affine Kac-Moody Lie algebra $\tilde{\mathfrak{sl}}_{K+1}$ on \[ \Nak^{\zeta^{\infty}}_{\delta_0}(Q^K) \simeq \bigoplus_{V \in R(\Z_{K+1})}  \text{H}(\Hilb^{V}(\C^2),\Q^{\vir}) \] due to \cite{Nak_quivers_kac_moody}\footnote{There is an action for general Nakajima quiver variety for quiver without loops. Here we only focus on a special case.}. For each $i$ and $V$, the Hecke correspondence is defined to be the subvariety
\[\mathfrak{B}_i(V) := \{ (I_1,I_2) \in \Hilb^{V - \rho_i}(\C^2) \times \Hilb^{V}(\C^2) | I_2 \subset I_1 \}  \subset \Hilb^{V - \rho_i}(\C^2) \times \Hilb^{V}(\C^2). \]
Note that $x,y$ act trivially on $I_1/I_2$. By \cite{Nak_quivers_kac_moody}[Section 5], $\mathfrak{B}_i(V)$ are known to be smooth Lagrangian submanifolds inside the product $\Hilb^{V-\rho_i}(\C^2) \times \Hilb^{V}(\C^2).$ Let $e_i$ be the standard Chevalley generators of $\mathfrak{n}^{+}_{K+1}$ as in (\ref{negativehalfLiealgebra}). We consider the usual correspondence diagram

\[\begin{tikzcd}[ampersand replacement=\&]
	{\mathfrak{B}_i(V)} \& {\Hilb^{V-\rho_i}(\C^2) \times \Hilb^{V}(\C^2)} \\
	{\Hilb^{V-\rho_i}(\C^2)} \&\& {\Hilb^{V}(\C^2)}
	\arrow[hook, from=1-1, to=1-2]
	\arrow["{p_1}"', from=1-2, to=2-1]
	\arrow["{p_2}", from=1-2, to=2-3]
\end{tikzcd}\]

Let $v \in \text{H}(\Hilb^{V-\rho_i}(\C^2),\Q^{\vir})$. The action of $\mathfrak{n}_{K+1}^{+}$ on $\Nak^{\zeta^{\infty}}_{\delta_0}(Q^K)$ is defined by 
\begin{align} \label{nakajimaformula}
e_i(v) & := (-1)^{d_{i}(V-\rho_i)+d_{i+1}(V-\rho_i)} (p_2)_{*}(p_1^{*}(v) \cup [\mathfrak{B}_i(V)]). 
\end{align}
Strictly speaking, in the original work, Nakajima constructed operators on a Lagrangian subvariety of $\Hilb^{V}(\mathbb{C}^2)$, which is dual to the cohomology of $\Hilb^{V}(\mathbb{C}^2)$. This formulation is present in the work of Nagao in \cite{nagao2007quiver}[Section 5.3.1]. Later in section \ref{moyangianconjecture}, we will see how these operators are generalized to the action of Yangians on the Equivariant cohomology of these varieties by Varagnolo \cite{varagnolo2000}. 

\subsubsection{Minimal resolution as Nakajima quiver variety} \label{componentsoffiber}
\begin{thm}[\cite{kronheimer1989construction}] \label{minimalresolution}
The affine Nakajima quiver variety $\Nak^{0}_{\delta,\delta_0}(Q^K) $ is isomorphic to the affine quotient $\C^2/\mathbb{Z}_{K+1}$ and the quiver variety $\Nak^{\zeta^{\infty}}_{\delta,\delta_0}(Q) \simeq \Hilb^{\rho_{\reg}}(\C^2)$ is the minimal resolution via the GIT map\footnote{There is more general result for any finite subgroup of $\mathrm{SL}_2(\mathbb{C})$.} \[ \pi \colon \Nak^{\zeta^{\infty}}_{(\delta,\delta_0)}(Q^K) \rightarrow \Nak^{\zeta^{0}}_{(\delta,\delta_0)}(Q^K) \simeq \C^2/\mathbb{Z}_{K+1}.\]
\end{thm}

The exceptional fibre $\pi^{-1}(0)$ components can be understood as Nakajima correspondences defined in the previous section. Consider $\mathfrak{B}_{i}(\rho_{\reg})$ for $i \neq 0$. Then by Proposition \ref{dimensionofnkajimaquiver}, the dimension of $\Hilb^{\rho_{\reg}- \rho_i}(\C^2)$ is $0$ and thus $\Hilb^{\rho_{\reg}- \rho_i}(\C^2)= \{ \pt_i \}$, for some point $\pt_i \in \Hilb^{\rho_{\reg}- \rho_i}(\C^2)$. Thus by definition, we may view $\mathfrak{B}_i(\rho_{\reg})$ as a subvariety of $\Hilb^{\rho_{\reg}}(\C^2)$: \begin{equation} \label{identification} \mathfrak{B}_{i}(\rho_{\reg}) = \{\pt_i \subset I \} \subset \Hilb^{\rho_{\reg}} \simeq S_{K} \end{equation} 
The subvarieties $\mathfrak{B}_i(\rho_{\reg})$ are smooth Lagrangian manifold inside the surface $S_{K}$. These are isomorphic to $\mathbb{P}^1$ \cite{hakkaido}[Proposition 6.2].  In \cite{hakkaido}[Example 6.3], it is shown that the exceptional fiber $\pi^{-1}(0)$ is the union of $\mathfrak{B}_i(\rho_{\reg})$. For any representation $\rho$ of $\overline{Q^{K}_{\delta_0}}$ of dimension $(\bd,1)$ and for any arrow $a \in Q^{K}_{\delta_0}$, let $\rho_{a}: \C^{s(a)} \rightarrow \C^{t(a)}$ be the corresponding linear map and $\rho^{*}_{a}: \C^{t(a)} \rightarrow \C^{s(a)}$ be the map in the opposite direction. 
\begin{example}\label{examplelocus}
We can observe that $\mathfrak{B}_i(\rho_{\reg})$ correspond to the isomorphism classes of representations of dimension $\delta$ where $\rho_a = 1, \rho_a^{*}=0$ where $a:j-1 \rightarrow j$ for $j=1,\cdots, i-1$; $\rho_a = 0, \rho_a^{*}=1$ where $a: j-1 \rightarrow j$ for $j=i+2,\cdots, K+1$ and $\rho_{a}=1,\rho_{a}^{*}=0$ where $a: \infty \rightarrow 0$, while the choice of the maps $\rho_{i-1 \rightarrow i}$ and $\rho_{i \rightarrow i+1}^{*}$ correspond to $\mathbb{P}^1$ (See Figure \ref{figure:1} for $K=3,i=2$). 
\end{example}

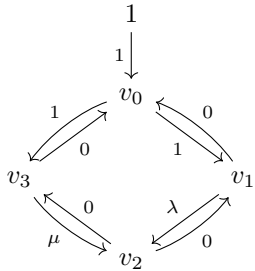
\begin{figure}[h!]
    \centering
    \[\begin{tikzcd}[ampersand replacement=\&] 
	\& 1 \\
	\& {v_0} \\
	{v_3} \&\& {v_1} \\
	\& {v_2}
	\arrow["1"', from=2-2, to=3-3]
	\arrow["0"', curve={height=6pt}, from=3-3, to=2-2]
	\arrow["0"', from=4-2, to=3-1]
	\arrow["0"', curve={height=6pt}, from=4-2, to=3-3]
	\arrow["\mu"', curve={height=6pt}, from=3-1, to=4-2]
	\arrow["1"', curve={height=6pt}, from=2-2, to=3-1]
	\arrow["\lambda"', from=3-3, to=4-2]
	\arrow["0"', from=3-1, to=2-2]
	\arrow["1"', from=1-2, to=2-2]
\end{tikzcd}\]
    \caption{$\mathfrak{B}_i(\delta)$ for $K=3,i=2$}
    \label{figure:1}
\end{figure}

The cohomology $\HH(S_K,\Q)$ is spanned by $[S_K] \in \HH^{0}(S_K,\Q)$ and irreducible components of exceptional fibers $[\mathfrak{B}_{i}(\rho_{\reg})] \in \HH^{2}(S_K,\Q)$. On the other hand, since Nakajima quiver varieties are tautologically generated \cite{McGerty_2017}, it follows that the Chern class of the tautological bundles $c_1(\mathcal{R}_i)$ also spans $\HH^{2}(S_K,\Q)$. For any finite subgroup $G \subset \SL_2(\C)$, the geometric McKay correspondence \cite{ASENS_1983_4_16_3_409_0} gives a duality between the irreducible components of the exceptional fiber $\pi: S_G \rightarrow \C^2/G$ and the tautological bundles associated to the irreducible representation of the group $G$. Reinterpretating this in terms of the Nakajima quiver variety gives
\begin{prop}\label{duality}
For all $i \in \{1,\cdots,K \}$, we have  \[ [\mathfrak{B}_i(\rho_{\reg})] \simeq \sum_{k=1}^{K} c_{ik} c_1(\Vcal_k), \] where $c_{ik} \in \End_{K}(\mathbb{Z})$ is the Cartan matrix for the $A_K$ quiver, i,e $c_{ik}= 2\delta_{ik} - \delta_{i,k-1}-\delta_{i,k+1}$ where $1 \leq i,j \leq K$.  
\end{prop}
\begin{proof}
For each edge $i$ where $ i \neq 0$, we have tautological morphisms $\rho_{i,i+1}: \mathcal{V}_i \rightarrow \mathcal{V}_{i+1}$ and $\rho_{i-1,i}^{*}: \mathcal{V}_{i} \rightarrow \mathcal{V}_{i-1}$. So the preprojective relations give rise to a complex of tautological bundles \[ \Vcal_i \xrightarrow[]{\rho_{i-1,i}^{*} \oplus  \rho_{i,i+1}}   \Vcal_{i-1} \oplus \Vcal_{i+1} \xrightarrow[]{\rho_{i-1,i} - \rho_{i,i+1}^{*}} \Vcal_i \] Thus we have a map between vector bundles \[  \mathcal{V}_i \xrightarrow[]{\rho_{i-1,i}^{*}  \oplus \rho_{i,i+1}} \textrm{Ker}(\rho_{i-1,i} - \rho_{i,i+1}^{*}). \]
Consider the locus $Z_{i}$ where the above map is not an isomorphism. It follows from Example \ref{examplelocus} or \cite{hakkaido}[Example 6.3] that $Z_i = \mathfrak{B}_{i}(\rho_{\reg})$. But note that the morphism \[ \Vcal_{i-1} \oplus \Vcal_{i+1} \xrightarrow[]{\rho_{i-1,i} - \rho_{i,i+1}^{*}} \Vcal_i\] is a a surjection. Since it is not a surjection, then it must be $0$. But then this gives a subrepresentation of $\overline{Q}^K$ which is not $\infty$ generated, contradicting the stability. Thus, this morphism is a surjection. But this implies that the fundamental class $[\mathfrak{B}_i(\rho_{reg})] = 2 c_1(\mathcal{V}_i)- c_1(\mathcal{V}_{i+1})-c_1(\mathcal{V}_{i-1})$.
\end{proof}

By Proposition \ref{cohaactionnakajimainfinity}, we have action of $\bps{K}$ on 
\[ \Nak^{\zeta^{\infty}}_{\delta_0}(Q^K) = \bigoplus_{\bd \in \N^{Q_0}} \text{H}(\mathcal{M}^{\zeta^{-}-\sss}_{\bd,\fr}(Q), \pPhi_{\Tr(\tilde{W_{\bff}})} \mathcal{IC}_{\mathcal{M}^{\hat{\zeta}-\sss}_{\bd,\fr}(Q^K)}[1]) \simeq  \bigoplus_{V \in R(\Z_{K+1})}  \text{H}(\Hilb^{V}(\C^2),\Q^{\vir}) .\]

The induced action of the $\mathfrak{n}_{Q^{K}}^{+} \subset \mathfrak{g}^{\BPS}_{\QTC{K},\WTC{K}}$ on $ \Nak^{\zeta^{\infty}}_{\delta_0}(Q^K)$ can be identified with Nakajima's operators $e_i$. We have  
\begin{prop}[] \label{relationwithnakajimaoperators}
 The action of generator $\alpha_i \subset \mathfrak{g}^{\BPS}_{\QTC{K},\WTC{K},\delta_i}$ 
\[ \alpha_i:  \HH(\Hilb^{V- \rho_i}(\C^2),\Q^{\vir}) \rightarrow  \HH(\Hilb^{V}(\C^2),\Q^{\vir}), \] coincides with the action of Nakajima operator $e_i$.

\end{prop}

\begin{proof}
    It is shown in \cite{Yang_2019}[Theorem 5.1], that dimension reduction from $\mathcal{A}^{\chi}_{\tilde{Q},\tilde{W}} \simeq \mathcal{A}^{\chi}_{\mathrm{\Pi}_Q}$ is compatible with action on Nakajima quiver variety. Furthermore, in \cite{Yang_2018}[Theorem 5.6], it is shown that the action of $x_{i,1}^{0} \in \mathbb{C}[x_{i,1}] = \mathcal{A}^{\chi}_{\mathrm{\Pi}_Q,\delta_i}$ coincides with the action of Nakajima operators. 
\end{proof}

We also consider the action of any non-zero $\gamma_{\delta} \in \mathfrak{g}^{\BPS,-2}_{\QTC{K},\WTC{K},\delta}$, this make sense since by Proposition \ref{bpsLiealgebracyclic}, $\mathfrak{g}^{\BPS,-2}_{\QTC{K},\WTC{K},\delta}$ is a 1 dimensional vector space and $u \cdot \gamma_{\delta}$ on $ \Nak^{\zeta^{\infty}}_{\delta_0}(Q^K)$. Choose a non zero element $\ket{0} \in \HH(\Hilb^{0}(\C^2),\Q^{\vir}) \subset  \Nak^{\zeta^{\infty}}_{\delta_0}(Q^K)$. We then have 
\begin{prop} \label{imaginaryactiononequivarianthilbert}
For some non-zero $\lambda^{\prime} \in \Q$, we have an equality
\[\gamma_{\delta}\ket{0} = \lambda^{\prime} [S_K]\] and \begin{align*}
(u \cdot \gamma_{\delta}) \ket{0} = \lambda^{\prime} \left( \sum_{i=1}^{K} c_1(\Rcal_i) \right) = \lambda^{\prime} \left( \sum_{i=1}^{K} \sum_{j=1}^{K} A_{ij} [\mathfrak{B}_j(\rho_{\reg})] \right)
\end{align*}
where $A = C^{-1}$, is the inverse of the Cartan matrix of $A_K$.

\end{prop}
\begin{proof}
The action of $\gamma_{\delta}$ gives a map \[ \gamma_{\delta} \colon \HH(\Hilb^{0}(\C^2),\Q)) \rightarrow \HH(\Hilb^{\rho_{\reg}}, \Q^{\vir}) \simeq   \Q[2] \oplus \Q^{K}[0]\]

Since the element $\gamma_{\delta}$ is of cohomological degree $-2$, we have $\gamma_{\delta}\ket{0} = \lambda^{\prime} [S]$ for some $\lambda^{\prime}$. We claim that $\lambda^{\prime} \neq 0$.

By Theorem \ref{dhsm1}, the action of $\mathfrak{g}^{\BPS}_{\tilde{Q^K},\tilde{W^K}}$ lifts to an action of double $\mathfrak{g}^{\GKM}_{Q^K}$. By Example \ref{peiceofdhsm}, $\Nak^{\zeta^{\infty}}_{\delta_0}(Q^K)$ contains the Lowest weight module $L_{\mathfrak{g}^{\GKM}_{Q^K},\hat{\mathbf{0}}}$ of $\mathfrak{g}^{\GKM}_{Q^K}$ where $\hat{\mathbf{0}}: \bd^{\prime} \mapsto - \delta_0 \cdot \bd^{\prime}$. In Example \ref{dhsmcyclic}, we calculated $\mathfrak{g}^{\GKM}_{Q^K}$ and in particular we can identity $\gamma_{\delta}$ with Chevalley generator $e_{\delta}$. We also have annihilation operator $f_{\delta}$ and element $h_{\delta}$ in the Cartan. We can identify the vaccum element $\ket{0}$ as $1 \otimes 1 \in L_{\mathfrak{g}^{\GKM}_{Q^K},\hat{\mathbf{0}}}$. Since $L_{\mathfrak{g}^{\GKM}_{Q^K},\hat{\mathbf{0}}}$ is a Lowest weight module, by definition $f_{\delta} \ket{0} = 0$ and $h_{\delta} \ket{0}= - \delta_i \cdot \delta  \ket{0} = - \ket{0}$. But then $e_{\delta}  \ket{0}$ can't be $0$, since otherwise $h_{\delta}\ket{0} = [e_{\delta},f_{\delta}] \ket{0} = 0$. Thus $e_{\delta} \ket{0} \neq 0$ and hence $\gamma_{\delta} \ket{0} \neq 0$ and thus $\lambda^{\prime} \neq 0$. 

Next, by Proposition \ref{uactionnakajima}, it follows that $(u \cdot \gamma_{\delta}) \star [0] = u \cdot ([S]) = c_1(\mathcal{R})= \sum_{i=1}^{K} c_1(\mathcal{R}_i)$. Then, the claim follows from Proposition \ref{duality}. 

\end{proof}

\subsection{Hilbert schemes}
For the minimal resolution $S_{K} \rightarrow \C^2/\Z_{K+1}$, we consider the $\N \times \mathbb{Z}$ graded vector space 
\[ V_K := \bigoplus_{n \in \mathbb{N}} \HBM(\Hilb^{n}(S_K),\Q^{\vir}) \] and introduce a similarly defined action of a subalgebra of $\mathcal{A}_{\QTC{K},\WTC{K}}$ and study their relationship with Nakajima's operators. Note that since $\Hilb^n(S_K)$ is smooth, there is an isomorphism between the Borel-Moore homology and the usual cohomology $\HBM(\Hilb^{n}(S_K),\Q^{\vir}) \simeq \HH(\Hilb^{n}(S_K),\Q^{\vir})$.
\subsubsection{Nakajima operators} \label{Nakajimaoperatorhilbertscheme}
In \cite{nakajima1995heisenberg}, for any quasi-projective surface $S$, the author constructs the action of certain operators $p_m(\alpha)$ for $m \in \Z $ and $\alpha \in \HH(S,\Q)$ on \[ V(S) := \bigoplus_{n \in \N} \HBM(\Hilb^n(S),\Q^{\vir}) \] such that they satisfy infinite dimensional Heisenberg Lie algebra type relations. We explain the construction for $m>0$, since that's what we will be using later. Let \[ \Hilb^{n,n+m}(S) = \{ I \subset J, \Supp(I/J) = \pt \in  S  \}  \subset S \times \Hilb^{n+m}(S) \times \Hilb^{n}(S).  \] Note that, we have a map $s: \Hilb^{n,n+m}(S) \rightarrow S$ given by taking $s(I \subset J) = \Supp(I/J)$. For any $I \subset \{1,2,3 \}$, let $\pi_I$ be the projection maps from $S \times \Hilb^{n+m}(S)  \times \Hilb^{n}(S) $ to components in $I$. Then \[ p_m(\alpha): \HBM(\Hilb^{n}(S),\Q^{\vir}) \rightarrow \HBM(\Hilb^{n+m}(S),\Q^{\vir}) \] is defined to be $p_m(\alpha)(c) = (\pi_2)_{*}((\pi_{1,3})^{*}((PD([S]) \cap \alpha) \otimes c) \cap [\Hilb^{n,n+m}(S)])$ where $PD: \HH(S,\Q^{\vir}) \simeq \HBM(S,\Q^{\vir})$ is the Poincare duality map. The action of $p_{m}(\alpha)$ for $m <0$ is defined similarly. Furthermore, the action of operators $p_n(\alpha)$ for any $n$ and $\alpha$ is faithful on $V(S)$ \cite{nakajima1995heisenberg}. 

\subsubsection{Lehn and Lin-Qing-Wang relations}

In \cite{Lehn_1999}, the author constructs operators on the Hilbert schemes of points on surfaces by combining Nakajima's operators and the action of the Chern classes of tautological bundles. 

\begin{defn}[Tautological bundles on Hilbert scheme of points] \label{hilbertschemetautological}

Let $F$ be any locally free sheaf on the smooth quasi-projective surface $S$. For any $n \geq 0$, let $Z^{n}_S \subset \Hilb^{n}(S) \times S$ be the universal family. The associated tautological bundle $F^{[n]}$ is defined to be $(\pi_1)_{*}(\O_{Z^{n}_S} \otimes (\pi_2)^{*}F)$ where $\pi_1,\pi_2$ are the projection maps from $\Hilb^{n}(S) \times S$. It is a locally free sheaf of rank $rn$ on $\Hilb^{n}(S)$ where the rank of $F$ is $r$. By definition, the fiber $F^{[n]}(Z)$ for $Z$ a point in $\Hilb^{n}(S)$ can be naturally identified with $\HH^{0}(Z,F \otimes \O_Z)$. When the context is clear, we sometimes abuse notation and refer to the associated tautological bundle $F^{[n]}$ as $F$ itself. 
\end{defn}

\begin{defn}[Lehn operators]
    Given cohomology class $\alpha \in \HH(S,Q)$, let $p_n(\alpha)$ be the Nakajima operator. Then we define an operator $p_n^{1}(\alpha) = [\partial, p_n(\alpha)]$ on $V(S)$ where $\partial \cdot v = c_1(\O_{S}^{[m]}) \cup v$ for any $v \in \HBM(\Hilb^m(S),\Q^{\vir}) \subset V(S)$ and more generally we define \[p_n^{k}(\alpha) := \textrm{ad}^{k}_{\partial}(p_n(\alpha)) = [\underbrace{\partial, [\partial,[ \cdots , [\partial}_{k \textrm{ times }}, p_n(\alpha)]]]]. \] 
\end{defn}

For a smooth projective surface $S$, the relations between the operators $[p_n^{k_n}(\alpha),p_m^{k_m}(\beta)]$ have been computed in \cite{Lehn_1999}[Theorem 3.10] and \cite{li2002hilbert}[Theorem 5.5]. The formula involves contributions of the first Chern of the canonical bundle $c_1(\mathcal{K}_S)$ and the Euler class $c_2(\mathcal{T}_S)$.
However, the surface $S_K$ is only \textit{quasi}-projective in our setting. However, due to the result of \cite{McGerty_2017}, the cohomology of $S_K$ is still pure. Furthermore, $S_K$ is Calabi-Yau, i.e. $\mathcal{K}_{S_K} = \O_{S_K}$. Let $i: S_{K} \hookrightarrow \overline{S_K}$ be a smooth compactification. Then the pullback $\HH^{*}(\Hilb^{n}(\overline{S_K}),\Q) \rightarrow \HH^{*}(\Hilb^{n}(S_K),\Q)$ is a surjection of cohomology rings. Furthermore, the Chern classes of tautological bundles are supported on the pure cohomology, so that $c_i(V) \cup \alpha = c_i(V)|_{S_K} \cup \alpha$. Since $i^{*}(c_1(\mathcal{K}_{\overline{S_K}})) = c_1(\mathcal{K}_{S_K}) = c_1(\mathcal{O}_{S_K})= 0$ and $i^{*}(c_2(\mathcal{T}_{\overline{S_K}})) = 0$ by cohomological degree reasons, we conclude from \cite{Lehn_1999}[Theorem 4.2] and \cite{li2002hilbert}[Theorem 5.5, 4.2, Definition 5.1] that:

\begin{prop}\label{Lehnweicommute}
For any $v \in V_K$, we have:  
\begin{enumerate}
    \item For any line bundle $F$ on $S_K$,  we have:  \[ [c_1(F), p_1(\alpha)] v = p_1^{1}(\alpha) v+ p_1(c_1(F)\alpha) v. \]
    \item  For any $n_1,n_2 \geq 1$, $a_1,a_2 \geq 0$ and $\alpha,\beta \in \HH(S_K,\Q)$, we have: 
    \begin{align*}
    [p^{a_1}_{n_1}(\alpha),p^{a_2}_{n_2}(\beta)] v = n_1^{a_1}n_2^{a_2}\frac{a_2 n_1-n_2 a_1}{(n_1+n_2)^{a_1+a_2-1} } p^{a_1+a_2-1}_{n_1+n_2}(\alpha \beta) v.
    \end{align*}
\end{enumerate}
\end{prop}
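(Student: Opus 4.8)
The plan is to deduce both relations from the corresponding statements for \emph{smooth projective} surfaces, namely \cite{Lehn_1999}[Theorem 4.2] and \cite{li2002hilbert}[Theorem 5.5], by transferring them along the chosen smooth compactification $i \colon S_K \hookrightarrow \overline{S_K}$. On a projective surface $\overline{S_K}$ the universal commutation formula expresses $[p^{a_1}_{n_1}(\bar\alpha),p^{a_2}_{n_2}(\bar\beta)]$ as the clean term appearing in the statement \emph{plus} correction terms, each of which is assembled by cupping with the classes $c_1(\mathcal{K}_{\overline{S_K}})$ and $c_2(\mathcal{T}_{\overline{S_K}})$. Since $S_K$ is Calabi--Yau we have $i^{*}c_1(\mathcal{K}_{\overline{S_K}})=c_1(\O_{S_K})=0$, and $i^{*}c_2(\mathcal{T}_{\overline{S_K}})=0$ because $\HH^{4}(S_K,\Q)=0$ for the non-compact surface $S_K$; hence every correction term dies upon restriction. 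The substance of the proof is therefore not a computation but the bookkeeping needed to pass these operator identities from $V(\overline{S_K})$ down to $V_K$.

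First I would record the geometric compatibilities. The locus of length-$n$ subschemes whose support lies in the open set $S_K$ is open in $\Hilb^{n}(\overline{S_K})$ and coincides with $\Hilb^{n}(S_K)$, giving an open immersion $j \colon \Hilb^{n}(S_K)\hookrightarrow \Hilb^{n}(\overline{S_K})$ and hence a restriction map $r \colon V(\overline{S_K})\to V_K$, which is surjective by the stated surjectivity of $\HH^{*}(\Hilb^{n}(\overline{S_K}),\Q)\to \HH^{*}(\Hilb^{n}(S_K),\Q)$. I would then check that the defining correspondences are compatible with $j$: if a subscheme $I$ is supported in $S_K$, then every sub- and quotient-scheme occurring in the Nakajima incidence variety $\Hilb^{n,n+m}$ is automatically supported in $S_K$ as well, so the square relating $\Hilb^{n,n+m}(S_K)$ to $\Hilb^{n,n+m}(\overline{S_K})$ is Cartesian with proper vertical maps. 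Base change then yields the intertwining $r\circ p_m(\bar\gamma)=p_m(i^{*}\bar\gamma)\circ r$ for the creation operators, while the analogous statement for multiplication by $c_1(F^{[n]})$ follows because tautological bundles restrict to tautological bundles, $j^{*}c_1(F^{[n]})=c_1\!\left((F|_{S_K})^{[n]}\right)$. Consequently the boundary operator $\partial=c_1(\O^{[\bullet]})$ and therefore all Lehn operators $p^{k}_n(\gamma)=\mathrm{ad}^{k}_{\partial}(p_n(\gamma))$ also intertwine $r$.

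With these in hand the argument is immediate. Given $\alpha,\beta\in\HH(S_K,\Q)$, lift them (using surjectivity in the case $n=1$, where $\Hilb^{1}=S_K$) to $\bar\alpha,\bar\beta\in\HH(\overline{S_K},\Q)$, apply the projective formula on $V(\overline{S_K})$, and compose on the left with $r$. The intertwining carries the clean term to the clean term (note $i^{*}(\bar\alpha\bar\beta)=\alpha\beta$) and kills every correction term, each being an operator whose argument has been cupped with $i^{*}c_1(\mathcal{K}_{\overline{S_K}})=0$ or $i^{*}c_2(\mathcal{T}_{\overline{S_K}})=0$. This yields $[p^{a_1}_{n_1}(\alpha),p^{a_2}_{n_2}(\beta)]\circ r=(\mathrm{clean})\circ r$ as operators $V(\overline{S_K})\to V_K$, and surjectivity of $r$ lets me cancel it on the right to obtain the identity on $V_K$. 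Part (1) is proved identically, applying \cite{Lehn_1999}[Theorem 4.2] to $[c_1(F^{[m]}),p_1(\alpha)]$.

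The main obstacle I anticipate is exactly the compatibility established in the second paragraph: unlike cup products, the Nakajima creation operators involve a proper pushforward, which commutes with the open restriction $j^{*}$ only after one verifies that the incidence loci form Cartesian squares and that properness survives on the open part. I would spend most of the care there. Once the intertwining $r\circ p^{k}_n(\bar\gamma)=p^{k}_n(i^{*}\bar\gamma)\circ r$ is secured, the vanishing of $c_1(\mathcal{K}_{S_K})$ and $c_2(\mathcal{T}_{S_K})$, together with the purity of $\HH(S_K)$ (so that Chern classes act through their restrictions, $c_i(V)\cup\gamma=c_i(V)|_{S_K}\cup\gamma$), finishes the proof with no further input.
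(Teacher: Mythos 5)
Your proposal is correct and follows essentially the same route as the paper: the paper likewise passes to a smooth compactification $\overline{S_K}$, invokes the projective formulas of Lehn and Li--Qin--Wang, and kills the correction terms using $c_1(\mathcal{K}_{S_K})=0$ (Calabi--Yau) and $i^{*}c_2(\mathcal{T}_{\overline{S_K}})=0$ (degree reasons on the non-compact surface), together with purity and the surjectivity of $\HH^{*}(\Hilb^{n}(\overline{S_K}))\to\HH^{*}(\Hilb^{n}(S_K))$. The only difference is that you make explicit the intertwining of the Nakajima/Lehn operators with the restriction map, a compatibility the paper leaves implicit.
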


\subsubsection{CoHA subalgebra action}

We now consider a subalgebra \[\mathcal{A}_{\QTC{K},\WTC{K}}^{\Im} := \bigoplus_{n \in \Z_{\geq 0} \cdot \delta} \mathcal{A}_{\QTC{K},\WTC{K},n \cdot \delta}\] of  $\mathcal{A}_{\QTC{K},\WTC{K}}$. 
We show that there is an action of $\mathcal{A}_{\QTC{K},\WTC{K}}^{\Im}$ on $V_K$. 
To do so, we use the work in \cite{kapranov1999kleinian}, which describes $\Hilb^n(S_K)$ as the Nakajima quiver variety on a cyclic quiver with a different stability condition than what we considered before.

\begin{defn}[Kuznetsov's stability condition] \label{kapranovstability}
For any $n \geq  1$, given a dimension vector $n \cdot \delta$, we consider the stability condition $\zeta^{n}$ on the cyclic quiver $Q^{K}$ defined by $\zeta^{n}_i = -2n$ for all $i=1, \cdots,K$ and $\zeta^{n}_0 = 2Kn-1$.    
\end{defn}

Note that this stability condition lies in a chamber different from $\zeta^{-}$ \cite{craw2021introduction}[Theorem 4.2]. 
We now show that $\zeta^{n}$ stability fits the criterion to apply the Proposition \ref{criticallocusnakajima}. 

\begin{prop}
The stability condition $\hat{\zeta^{n}}$ is generic for the dimension $(n \cdot \delta,1)$. 
\end{prop}
\begin{proof}
We have to show that $\mu_{\hat{\zeta^n}}(\bd^{\prime}) \neq \mu_{\hat{\zeta^n}}((n\delta,1))$ for any $\bd^{\prime} < (n \delta,1)$.
    \begin{itemize}
        \item Assume that $\bd^{\prime} = (\bd,0)$ for some $\bd < n \delta$. Then we have $\sum_{i=0}^{K} \zeta_i^{n} \bd_i=0$. Thus \begin{equation} \label{equationstability}
        (2Kn-1) \bd_0 = 2n \left( \sum_{i=1}^{K} \bd_i \right).
        \end{equation} This implies that $2n$ divides $\bd_0$, which is not possible since $\bd_0 \leq n$ unless $\bd_0=0$. But if $\bd_0=0$ then by the equation \ref{equationstability}, $\sum_{i=1}^{K} \bd_i=0$ and so each of $\bd_i=0$. \item Now assume $\dim(\rho^{\prime}) = (\bd,1)$ for some $\bd < n \delta$. Then again, by the same argument, we see that it implies $\bd= n \cdot \delta$, which contradicts that $\rho^{\prime}$ is a subrepresentation. 
    \end{itemize}
\end{proof}

The reason why we consider this peculiar stability condition is due to the following theorem.
\begin{prop}{\cite{kuznetsov2001quiver} [Theorem 4.3]} \label{Kuznetsovtheorem} 
For any $n \geq 0$, 
there is a canonical isomorphism of varieties \[g_n: \Hilb^{n}(S_{K}) \simeq \Nak^{\zeta^{n}}_{n \cdot \delta, \delta_0}(Q^{K}). \] 
\end{prop}

Let $\mathcal{V}_i^{\zeta^n}$ be the tautological bundles on the Nakajima quiver variety $\Nak^{\zeta^{n}}_{n \cdot \delta,\delta_0}(Q^K)$. Note that since $S_K \simeq \Nak^{\zeta^{\infty}}_{\delta, \delta_0}(Q^K)$ there are tautological bundles $\mathcal{V}_i$ on $S_K$, which by Definition \ref{hilbertschemetautological} gives tautological bundles $[\mathcal{V}_i]^{[n]}$ on $\Hilb^{n}(S_K)$. We then have 
\begin{prop} \label{pullbackofdeterminantlinebundleonhilbertscheme}
Under the above isomorphism, the tautological vector bundles $\Vcal^{\zeta^n}_i$ on $\Nak^{\zeta^{n}}_{n \cdot \delta,\delta_0}(Q^K)$ are pulled back to vector bundles $[\Vcal_i]^{[n]}$ on $\Hilb^n(S_K)$. 
\end{prop}

\begin{proof}
This is really in the construction of the isomorphism. Let $Z$ be a closed point in $\Hilb^n(S_K)$. It is a length $n$ subscheme in $S_K$. Let $\O_Z$ be its structure sheaf. Then the map $g_n$ associates a closed point on $\Nak^{\zeta^{n}}_{n \cdot \delta,\delta_0}(Q^K)$ by assigning vector space $V_i(Z) := \HH^{0}(S_K, \Vcal_i \otimes \O_Z)$ to the vertex $i$ where $i=0,\cdots,K$, $\HH^{0}(S_K,\mathcal{O}_{S_K})$ to the vertex $\infty$. Since the fiber of $(\mathcal{V}_i)^{[n]}$ on any point $Z$ is exactly $\HH^{0}(S_K, \mathcal{V}_i \otimes \O_Z)$ the statement follows.  
\end{proof}

By Proposition \ref{criticallocusnakajima}, there is an isomorphism of graded vector spaces: 

\[ \bigoplus_{ n \geq 0}  \mathrm{H}(\mathcal{M}^{\zeta^{n},3d}_{n \cdot \delta,\delta_0}(Q^K), \pPhi_{\Tr^{\zeta^n}(\tilde{W}_{\delta_0})} \mathcal{IC}_{\mathcal{M}^{\zeta^{n},3d}_{n \cdot \delta, \delta_0}(Q^K)}[-1])  \simeq \bigoplus_{n \geq 0}  \text{H}^{\text{BM}}(\Nak^{\zeta^{n}}_{(n \cdot \delta,\delta_0)}(Q^K),\Q^{\vir}) \simeq V_K. \]

We now mimic the construction as in Section \ref{cohaactionequivarianthilbert}. For any $n,m \geq 0$, we consider the stack of short exact sequences \[\mathcal{M}^{\zeta^{n+m},3d}_{n \cdot \delta, m \cdot \delta,\delta_0}(Q^K) = \{ 0 \rightarrow (\rho_1)_{(n \cdot \delta,0)} \rightarrow (\rho_2)^{\zeta^{n+m}-\sss}_{((m+n) \cdot \delta,1)} \rightarrow (\rho_3)_{(m \cdot \delta,0)} \rightarrow 0 \}, \]
where $\rho_1,\rho_2,\rho_3$ are representations of $\tilde{Q}_{\delta_0}$ of dimension $(n\cdot \delta,0),((m+n) \cdot \delta,1), (m \cdot \delta,1)$ respectively such that $\rho_2$ is $\zeta^{n+m}$-semistable. We then have \begin{lemma} \label{stablesurjective}
Let $\rho$ be a representation of dimension $((n+m) \cdot \delta,1)$ such that $\rho$ is $\zeta^{n+m}$-semistable then any quotient $\rho^{\prime}$ of dimension $(m \cdot \delta,1)$ is $\zeta^m$-semistable.  
\end{lemma}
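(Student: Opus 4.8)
The plan is to verify $\zeta^m$-semistability of $\rho'$ straight from the definition, by bounding the slope numerator of an arbitrary subrepresentation of $\rho'$ using the semistability of $\rho$. First I would record a convenient reformulation of the numerators. Writing $\hat{\zeta}^N$ for the extension of $\zeta^N$ to the framing vertex of the size-$N\delta$ variety, one computes $\hat{\zeta}^N_\infty = -\sum_i (N\delta)_i\,\zeta^N_i = N$, and for a class $(\be,\epsilon)$ (cyclic part $\be$, framing $\epsilon\in\{0,1\}$),
\[ \hat{\zeta}^N\cdot(\be,\epsilon) = N\,(2\phi(\be)+\epsilon) - \be_0, \qquad \phi(\be) := (K+1)\be_0 - |\be| = K\be_0 - \sum_{i=1}^K\be_i. \]
Since $\phi(\delta)=0$, both $\rho$ and $\rho'$ have numerator $0$ (hence slope $0$), so semistability of $\rho$ amounts to $\hat{\zeta}^{n+m}\cdot(\bd,\epsilon)\le 0$ for every nonzero proper subrepresentation of dimension $(\bd,\epsilon)$, and the goal is to show $\hat{\zeta}^m\cdot(\be,\epsilon)\le 0$ for every nonzero proper subrepresentation $\sigma'\subseteq\rho'$ of dimension $(\be,\epsilon)$.

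Next I would pull $\sigma'$ back along the quotient map $q\colon\rho\twoheadrightarrow\rho'$ to $\sigma := q^{-1}(\sigma')$, a subrepresentation of $\rho$ that is nonzero (it surjects onto $\sigma'\ne 0$) and proper (as $\sigma'\ne\rho'$), of dimension $(n\delta+\be,\epsilon)$. Because $\phi(n\delta+\be)=\phi(\be)$ and $(n\delta+\be)_0 = n+\be_0$, semistability of $\rho$ gives
\[ (n+m)\,(2\phi(\be)+\epsilon) - (n+\be_0) \le 0, \]
which rearranges to $m(2\phi(\be)+\epsilon) - \be_0 \le n\,(1 - (2\phi(\be)+\epsilon))$. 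The main obstacle is visible exactly here: the parameter $\zeta^N$ genuinely depends on the size $N$, so the pullback does \emph{not} preserve semistability on the nose and instead produces the error term $n(1-(2\phi(\be)+\epsilon))$, whose sign is not controlled a priori. This is why a naive pullback (or, symmetrically, a pushforward of quotients) fails.

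The observation that resolves this is that $2\phi(\be)+\epsilon$ is an integer, so it is either $\ge 1$ or $\le 0$, and the two regimes are handled oppositely. If $2\phi(\be)+\epsilon\ge 1$, the error term $n(1-(2\phi(\be)+\epsilon))$ is $\le 0$, and the displayed inequality directly yields $\hat{\zeta}^m\cdot(\be,\epsilon) = m(2\phi(\be)+\epsilon)-\be_0\le 0$. If instead $2\phi(\be)+\epsilon\le 0$, then $m(2\phi(\be)+\epsilon)\le 0$ and $-\be_0\le 0$, so $\hat{\zeta}^m\cdot(\be,\epsilon)\le 0$ holds without invoking $\rho$ at all. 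In either case the numerator is nonpositive, so $\rho'$ is $\zeta^m$-semistable; the degenerate case $n=0$ is immediate since then $\rho'=\rho$. I expect the only delicate point to be bookkeeping the framing contribution $\epsilon$ correctly in the numerator formula and checking that $\sigma$ is genuinely proper and nonzero; once the reformulation above and the integrality dichotomy are in place, the argument is short.
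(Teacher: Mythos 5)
Your proof is correct and follows the same route as the paper's: pull the subrepresentation back along the quotient map, apply $\zeta^{n+m}$-semistability of $\rho$, and exploit integrality of the slope numerator (your $2\phi(\be)+\epsilon$ is, up to sign and the framing contribution, the paper's $\sum_{i=1}^{K}(\bd_i-\bd_0)$). Your sign dichotomy on the integer $2\phi(\be)+\epsilon$ merely packages the paper's two cases (framed and unframed subrepresentations) more uniformly — one branch does not even need the semistability of $\rho$ — but the underlying argument is the same.
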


\begin{proof}
Let $\pi: \rho \rightarrow \rho^{\prime}$ be the surjection. Let $V \subset \rho^{\prime}$ be a non-zero proper subrepresentation such that $V \neq \rho^{\prime}$.  Define $\chi_{m}((\bd,a)) = (\sum_{i=0}^{K} \zeta^m_i \bd_i) + \zeta^{m}_{\infty} a$. Recall(Section \ref{semistability}) that the slope of the representation $V$ is defined as $\mu_{\zeta^m}(V) = \chi_{m}((\bd,a))/(\sum_{i=1}^{i=K} \bd_i + a)$. Thus, it suffices to show that $\chi_{m}(V) \leq  0$. We consider the representation $\pi^{-1}(V)$. 
\begin{itemize}
    \item Assume that $\dim(V) = (\bd, 1)$. Then $\dim(\pi^{-1}(V)) = ( \textbf{d}+n \cdot \delta,1)$ and is a non-zero proper subrepresentation of $\rho$. Since $\rho$ is $\zeta^{n+m}$-semistable, it follows that $\chi_{n+m}(\pi^{-1}(V)) <0$. So 
    \begin{align*}
    0 > \chi_{n+m}(\pi^{-1}(V)) &= \sum_{i=0}^{K} \left( \zeta^{n+m}_i(\bd_i+n) - \zeta^{m+n}_i (m+n) \right) \\& = \sum_{i=1}^{K} \left( -2(n+m)(\bd_i-m) \right) + (2K(m+n)-1)(\bd_0-m)  \\&=  -2(n+m) \left(\sum_{i =1}^{K}(\bd_i-\bd_0) \right)  +m - \bd_0
    \end{align*} 
    But then since $V \subset \rho^{\prime}$ is a proper subrepresentation, $\bd < m \cdot \delta$. The above inequality implies that 
    \[ \sum_{i =1}^{K} (\bd_i-\bd_0) > (m-\bd_0)/2(n+m) >0 \] But since $\sum_{i=1}^{K} (\bd_i-\bd_0)$ is an integer, above inequality implies that in fact $\sum_{i =1}^{K} (\bd_i-\bd_0) \geq 1$. 
     So \begin{align*}
    \chi_{m}(V) &= \sum_{i=0}^{K} \zeta_i^{m}(\bd_i-m) \\ &= (-2m) \sum_{i=1}^{K}(\bd_i-\bd_0) + m-\bd_0  \leq  -m-\bd_0 <0. 
    \end{align*} 
    \item Now assume that $\dim(V) = ( \textbf{d},0)$. Then again  
    \begin{align*}
    0 > \chi_{n+m}(\pi^{-1}(V)) &= \sum_{i=0}^{K} \zeta^{n+m}_i(\bd_i+n) \\&= \sum_{i=1}^{K} \left( -2(n+m)(\bd_i+n) \right) + (2K(n+m)-1)(\bd_0+n) \\& = -2(n+m)(\sum_{i=1}^{K} (\bd_i-\bd_0)) -n- \bd_0
    \end{align*} and so $\sum_{i=1}^{K} (\bd_i-\bd_0) > -(n+\bd_0)/(2(n+m)) \geq -(n+m)(2(n+m))> -1/2$, since $\bd_0 \leq n$.  But then since $\sum_{i=1}^{K} (\bd_i-\bd_0) $ is an integer, in fact we have $\sum_{i=1}^{K} (\bd_i-\bd_0) \geq 0$. Now 
    \begin{align*}
    \chi_{m}(V) = \sum_{i=0}^{K} \zeta_i^{m}(\bd_i) =  -2m( \sum_{i =1}^{K}(\bd_i-\bd_0))- \bd_0 \leq 0.
    \end{align*} 
\end{itemize}
So for every non-zero proper subrepresentation $V \subset \rho^{\prime}$, $\chi_m(V) \leq 0$ and so $\rho^{\prime}$ is $\zeta^{m}$-semistable. 
\end{proof}
 Thus we have the usual correspondence \[\begin{tikzcd}[ampersand replacement=\&]
	{} \& {\mathfrak{M}_{n \cdot \delta}(Q^K) \times \mathcal{M}^{\zeta^{m},3d}_{m \cdot \delta,\delta_0}(Q^K)} \& {\mathcal{M}^{\zeta^{n+m},3d}_{n \cdot \delta, m \cdot \delta,\delta_0}(Q^K) } \& {\mathcal{M}_{(n+m) \cdot \delta, \delta_0}^{\zeta^{n+m},3d}(Q^K)}
	\arrow["{\pi_1 \times \pi_3 }", from=1-3, to=1-2]
	\arrow["{\pi_2}"', from=1-3, to=1-4]
\end{tikzcd}.\]

The map $\pi_1 \times \pi_3$ is smooth since it is a fibre bundle. Unlike the $\zeta^{-}$ stability condition, it is not entirely obvious why the morphism $\pi_2$ is a proper map. However, by the previous lemma, fibres of the map $\pi_2$ are Quot schemes, and so, in particular, the morphism $\pi_2$ is proper. Thus again, just as in the definition of cohomological Hall algebra, we do the pushforward (\ref{Pushforward}) along $\pi_2$ and the pullback (\ref{Pullback}) along $\pi_1 \times \pi_3$ in the vanishing cycle cohomology and finally apply the Thom-Sebastiani isomorphism (\ref{ThomSebastini}) to get that 




\begin{prop}\label{leftactionhilbertscheme}
There is a left action $\sqbullet$ of $\mathcal{A}^{\Im}_{\QTC{K},\WTC{K}}$ on the cohomologically graded vector space $V_K$. It satisfies $(u \cdot \alpha) \sqbullet v = u \cdot (\alpha \sqbullet v) - \alpha \sqbullet (u \cdot v)$ where action of $u$ on $v \in \HBM(\Hilb^n(S_K),\Q^{\vir}) \subset V_K$ is given by cup product with $\sum_{i=0}^{K} c_1(\mathcal{V}^{[n]}_i)$. 
\end{prop}

\begin{proof}
The associativity of the action can be proven in the same way as the associativity of the cohomological Hall algebra in  \cite{kontsevich2011cohomological}. The action of $u$ is induced by the determinant line on the Nakajima quiver variety realization in Proposition \ref{Kuznetsovtheorem}. Thus, the claim follows from Proposition \ref{pullbackofdeterminantlinebundleonhilbertscheme}. 
\end{proof}
Thus we have an induced left action of $\mathfrak{g}^{\Im}_{\QTC{K},\WTC{K}}$ on $V_K$ where \[ \mathfrak{g}^{\Im}_{\QTC{K},\WTC{K}} := \bigoplus_{n \geq 1} \mathfrak{g}^{\BPS}_{\QTC{K},\WTC{K}, n \cdot \delta}.\]
\begin{prop}\label{faithfulhilbertscheme}
The action of $ \mathfrak{g}^{\Im}_{\QTC{K},\WTC{K}}$ on the graded vector space $V_K$ is faithful. 
\end{prop}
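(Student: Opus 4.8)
The plan is to identify the induced action of $\mathfrak{g}^{\Im}_{\QTC{K},\WTC{K}}$ with the creation part of Nakajima's Heisenberg algebra on $V_K = V(S_K)$ and then invoke the faithfulness of the latter (\cite{nakajima1995heisenberg}). Since every element of $\mathfrak{g}^{\BPS}_{\QTC{K},\WTC{K},n\cdot\delta}$ raises the $\N$-grading (Hilbert-scheme degree) of $V_K$ by exactly $n$, an element of $\mathfrak{g}^{\Im}_{\QTC{K},\WTC{K}}$ annihilates $V_K$ if and only if each of its graded components does. So it suffices to prove, for each fixed $n \geq 1$, that the linear action of the finite-dimensional space $\mathfrak{g}^{\BPS}_{\QTC{K},\WTC{K},n\cdot\delta}$ on $V_K$ is injective.

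First I would pin down the two sides dimension- and degree-wise. By the Kac polynomial of the cyclic quiver (\ref{kacpolynomialcyclicquiver}) and its relation to the BPS Lie algebra (\ref{kacpolynomialandrelationwithbpsliealgebra}), we have $\dim\HH^{-2}(\mathfrak{g}^{\BPS}_{\QTC{K},\WTC{K},n\cdot\delta}) = 1$ and $\dim\HH^{0}(\mathfrak{g}^{\BPS}_{\QTC{K},\WTC{K},n\cdot\delta}) = K$, with all other cohomological degrees vanishing, so this space has total dimension $K+1$. On the geometric side, $\HH^{*}(S_K,\Q)$ is concentrated in degrees $0$ and $2$, with $\HH^0 = \Q\cdot[S_K]$ and $\HH^2 = \Q^K$ spanned by the exceptional classes $[\mathfrak{B}_i(\rho_{\reg})]$ (equivalently the $c_1(\mathcal{R}_i)$, by Proposition \ref{duality}), so $\dim\HH^{*}(S_K) = K+1$ as well, and for fixed $n$ the creation operators $\{p_n(\alpha) : \alpha \in \HH^{*}(S_K,\Q)\}$ form a $(K+1)$-dimensional space.

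The heart of the argument is to construct a cohomological-degree-preserving linear map $\Phi_n \colon \mathfrak{g}^{\BPS}_{\QTC{K},\WTC{K},n\cdot\delta} \to \HH^{*}(S_K,\Q)$ with $x \sqbullet (\hyphen) = p_n(\Phi_n(x))$ as operators on $V_K$, matching the degree-$0$ cohomology of $\mathfrak{g}^{\BPS}$ with $\HH^2(S_K)$ and the degree-$(-2)$ line with $\HH^0(S_K)$. The point is that elements of $\mathfrak{g}^{\BPS}$ are primitive for the factorization coproduct of Section \ref{factorizationcoproduct}, and a primitive operator should act on the Fock space $V_K$ through a single Nakajima creation operator rather than through a product of such; making this precise requires checking that the geometric action $\sqbullet$ is compatible with the factorization coproduct on $\mathcal{A}^{\chi}_{\QTC{K},\WTC{K}}$ and the natural comultiplication on $V_K$, which is where the tools of Section \ref{HeisLiealgebra} enter. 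I expect this compatibility, rather than any subsequent computation, to be the main obstacle.

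Granting the existence of $\Phi_n$, faithfulness reduces to its injectivity, and since both sides have dimension $K+1$ it is enough to prove surjectivity in each cohomological degree. On the degree-$(-2)$ line I would show $\Phi_n(\gamma_{n\cdot\delta})$ is a nonzero multiple of $[S_K]$: for $n=1$ this is the computation $\gamma_\delta\ket{0} = \lambda'[S_K]$ with $\lambda'\neq 0$ from Proposition \ref{imaginaryactiononequivarianthilbert}, and for general $n$ the same $\mathfrak{sl}_2$-triple nonvanishing argument via the generalized Kac–Moody realization of \cite{davison2024bps} applies. On the degree-$0$ part I would show that the image of $\HH^0(\mathfrak{g}^{\BPS}_{\QTC{K},\WTC{K},n\cdot\delta})$ hits all of $p_n(\HH^2(S_K))$, using Proposition \ref{duality} to express the exceptional classes through the tautological Chern classes $c_1(\mathcal{R}_i)$ together with the $u$-action of Proposition \ref{leftactionhilbertscheme}. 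Injectivity of $\Phi_n$ combined with the faithfulness of Nakajima's Heisenberg action, namely $p_n(\alpha)=0 \iff \alpha = 0$, then yields injectivity of the $\mathfrak{g}^{\BPS}_{\QTC{K},\WTC{K},n\cdot\delta}$-action for every $n$, completing the proof.
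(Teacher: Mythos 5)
Your reduction to a fixed $n$ and your dimension counts are fine, but the whole proof hinges on the existence of the map $\Phi_n$ --- the assertion that every element of $\mathfrak{g}^{\BPS}_{\QTC{K},\WTC{K},n\cdot\delta}$ acts on $V_K$ through a single Nakajima creation operator $p_n(\alpha)$ --- and this is precisely the step you leave open. It is not a routine compatibility check. The degree-$(-2)$ line is accessible (Proposition \ref{actionforn=1} for $n=1$, then the iterated-bracket definition of $\gamma_{n\cdot\delta}$ together with Proposition \ref{Lehnweicommute}), but the $K$-dimensional degree-$0$ part of $\mathfrak{g}^{\BPS}_{n\cdot\delta}$ sits inside $\mathfrak{n}^{-}_{Q^K}$ as $\mathfrak{h}\otimes z^{n}$, and its elements are iterated brackets of the $\alpha_i$ supported on the $\delta_i$ --- which do \emph{not} act on $V_K$, since $\sqbullet$ is only defined for the subalgebra $\mathcal{A}^{\Im}$ supported on multiples of $\delta$. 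The only degree-$0$ operators you can compute directly are the $u$-translates $u\cdot\gamma_{n\cdot\delta}$, and by Proposition \ref{leftactionhilbertscheme} together with Lehn's relations these act as $(K+1)p_n^{1}([S_K]) + n\sum_i p_n(c_1(\mathcal{R}_i))$, a Lehn operator rather than a pure $p_n(\alpha)$; in any case $u\cdot\gamma_{n\cdot\delta}$ lies in $u\cdot\mathfrak{g}^{\BPS}$, not in $\mathfrak{g}^{\BPS}_{n\cdot\delta}$, so it cannot be used to establish surjectivity of $\Phi_n$ in degree $0$. The alternative route you propose (primitivity for the factorization coproduct forces the action to be a single creation operator) amounts to identifying the CoHA of zero-dimensional sheaves on $S_K$ with a Heisenberg-type algebra, which in this paper is a \emph{consequence} of Theorem \ref{Theorem1} (Corollary \ref{W_sLiealgebra}) and hence of the present proposition; as written, your argument is circular within the paper's architecture.

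The paper's proof sidesteps $\Phi_n$ entirely. Using the $\zeta^n$-stability analysis, Toda's theorem and Kuznetsov's isomorphism, it identifies each graded piece $\HBM(\Hilb^n(S_K),\Q^{\vir})$ with the BPS cohomology $\mathfrak{g}^{\BPS}_{\tilde{Q^K_{\fr}},\tilde{W^K_{\fr}},(n\cdot\delta,1)}$ of the \emph{framed} quiver, so that the $\sqbullet$-action becomes the adjoint action of $\mathfrak{g}^{\Im}$ on the framed part of the generalized Kac--Moody Lie algebra of \cite{davison2024bps}; faithfulness is then the purely Lie-theoretic statement of \cite{davison2023bps}[Corollary 10.6] (bracketing with the Chevalley generator $e_{\infty}$ at the framing vertex detects every nonzero element). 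You already invoke exactly this $\mathfrak{sl}_2$-triple argument for the degree-$(-2)$ line; the point you are missing is that it applies uniformly to all of $\mathfrak{g}^{\Im}$ and makes any comparison with Nakajima's Heisenberg operators unnecessary for faithfulness.
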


\begin{proof}
We first claim that there is isomorphism of graded vector spaces \[\HBM(\Hilb^n(S_K),\Q^{\vir}) \simeq \mathfrak{g}^{\BPS}_{\tilde{Q^K_{\delta_0}},\tilde{W^K_{\delta_0}},(n \cdot \delta,1)}.\]
We have a commutative diagram
\[\begin{tikzcd}[ampersand replacement=\&]
	{\mathfrak{M}^{\zeta^n-\sss}_{(n \cdot \delta,1)}(\tilde{Q^K_{\delta_0}})} \& {\mathfrak{M}_{(n \cdot \delta,1)}(\tilde{Q^K_{\delta_0}})} \\
	{\mathcal{M}^{\zeta_n,3d}_{n \cdot \delta,\delta_0}(Q^K)} \& {\mathcal{M}_{(n \cdot \delta,1)}(\tilde{Q^K_{\delta_0}})}
	\arrow["{\textrm{JH}^{\zeta^n}_{(n \cdot \delta,1)}}", from=1-1, to=2-1]
	\arrow[hook, from=1-1, to=1-2]
	\arrow["{q_{(n\cdot \delta,1)}^{\zeta^n}}", from=2-1, to=2-2]
	\arrow["{\textrm{JH}_{(n \cdot \delta,1)}}", from=1-2, to=2-2],
\end{tikzcd}\]

where $q$ is the affinization map and $\JH$ are the Jordan-Holder maps. Note that $\JH^{\zeta^n}_{(n\cdot \delta,1)}$ is a $\pt/\C^{*}$ fibre bundle and thus we have
\[(\JH^{\zeta^n}_{(n \cdot \delta,1)})_{*}\pPhi_{\Tr^{\zeta^n}(\tilde{W^K_{\delta_0}})} \mathcal{IC}_{\mathfrak{M}^{\zeta^n-\sss}_{(n \cdot \delta,1)}(\tilde{Q^K_{\delta_0}})} \simeq \pPhi_{\Tr^{\zeta^n}(\tilde{W^K_{\delta_0}})} \mathcal{IC}_{\mathcal{M}^{\zeta^n,3d}_{n \cdot \delta,\delta_0}(Q^K)}[-1] \otimes \HH(\BCu, \Q).\]

So, in particular, 
\begin{align*}\label{bpssheafsemistable}
\mathcal{BPS}^{\zeta^n}_{\tilde{Q^K_{\delta_0}},\tilde{W^K_{\delta_0}}} = {}^{\mathfrak{p'}}\!\tau^{\leq 1}((\JH^{\zeta^n}_{(n \cdot \delta,1)})_{*}\pPhi_{\Tr^{\zeta^n}(\tilde{W^K_{\delta_0}})} \mathcal{IC}_{\mathfrak{M}^{\zeta^n-\sss}_{(n \cdot \delta,1)}(\tilde{Q^K_{\delta_0}})}[1]) \simeq \pPhi_{\Tr^{\zeta^n}(\tilde{W^K_{\delta_0}})} \mathcal{IC}_{\mathcal{M}^{\zeta^n,3d}_{n \cdot \delta,\delta_0}(Q^K)}.
\end{align*}
By the theorem of Toda (Proposition \ref{todatheorem}), we have a natural isomorphism 
\begin{align*}
\HH(\mathcal{M}^{\zeta^n,3d}_{n \cdot \delta,\delta_0}(Q^K), \mathcal{BPS}^{\zeta^n}_{\tilde{Q^K_{\delta_0}},\tilde{W^K_{\delta_0}}}[-1])  \simeq \mathfrak{g}^{\BPS}_{\tilde{Q^K_{\delta_0}},\tilde{W^K_{\delta_0}},(n \cdot \delta,1)}. 
\end{align*}Finally, by Proposition \ref{criticallocusnakajima}, there is an isomorphism 
\[  \HH(\mathcal{M}^{\zeta^n,3d}_{n \cdot \delta,\delta_0}(Q^K), \pPhi_{\Tr^{\zeta_n}(\tilde{W^K_{\delta_0}})} \mathcal{IC}_{\mathcal{M}^{\zeta^n}_{n \cdot \delta,\delta_0}(Q^K)}[-1]) \simeq \HBM(\Nak^{\zeta^n}_{(n \cdot \delta,\delta_0)}(Q^K),\Q^{\vir}). \] and thus the claim follows from Proposition \ref{Kuznetsovtheorem}.

So we may identify the action of $\mathfrak{g}^{\Im}_{\QTC{K},\WTC{K}}$ on $V_K \simeq \oplus_{n \geq 0} \mathfrak{g}^{\BPS}_{\tilde{Q^K_{\delta_0}},\tilde{W^K_{\delta_0}},(n \cdot \delta,1)}$ by the natural identification $\mathfrak{g}^{\Im}_{\QTC{K},\WTC{K}} \simeq  \oplus_{n \geq 0} \mathfrak{g}^{\BPS}_{\tilde{Q^K_{\delta_0}},\tilde{W^K_{\delta_0}},(n \cdot \delta,0)}$. Let $\alpha \in \mathfrak{g}^{\BPS}_{\tilde{Q^K_{\delta_0}},\tilde{W^K_{\delta_0}},(m \cdot \delta,0)}$ for some $m \geq 0$. By Theorem \ref{dhsm1}, we may identify $\mathfrak{g}^{\BPS}_{\tilde{Q^K_{\delta_0}},\tilde{W^K_{\delta_0}}}$ as a positive Half of the generalized Kac Moody Lie algebra $\mathfrak{g}^{\GKM}_{\mathrm{\Pi}_{Q^K_{\delta_0}}}$. Note that $(0 \cdot \delta,1)$ is a primitive positive root of $\mathfrak{g}^{\GKM}_{\mathrm{\Pi}_{Q^K_{\delta_0}}}$, since the Symmetrized Euler form $((0 \cdot \delta,1),(0 \cdot \delta,1))_{\mathrm{\Pi}_{Q^K_{\delta_0}}}=2$. Thus there exist Chevalley generators for dimension $(0 \cdot \delta,1)$ in the subspace $\oplus_{n \geq 0} \mathfrak{g}^{\BPS}_{\tilde{Q^K_{\delta_0}},\tilde{W^K_{\delta_0}},(n \cdot \delta,1)} \subset  \mathfrak{g}^{\GKM}_{\mathrm{\Pi}_{Q^K_{\delta_0}}}$. We denote them by $e_{\infty},f_{\infty}$ and $h_{\infty}$ where $e_\infty \in \mathfrak{g}^{\BPS}_{\tilde{Q^K_{\delta_0}},\tilde{W^K_{\delta_0}},(0\cdot \delta,1)}$. We claim that $[\alpha,e_{\infty}] \neq 0$, which proves the assertion. Note that $[\alpha,f_{\infty}] = 0$ as there are no elements in dimension $(m \cdot \delta,-1)$. Also by definition of Generalized Kac Moody Lie algebra, $[\alpha, h_{\infty}] = ((m \cdot \delta,0),(0 \cdot \delta,1))_{\mathrm{\Pi}_{Q^K_{\delta_0}}} = - h_{\infty} \neq 0$. But then if $[\alpha,e_{\infty}]=0$, then $[\alpha,[f_{\infty},e_{\infty}]]= 0 = [\alpha,h_{\infty}]=0$ which we checked is non-zero. 

\end{proof}

\begin{prop}\label{actionforn=1}
Up to a non-zero scalar, the operator $\gamma_{\delta}$ acts on $V_K$ as the Nakajima raising operator $p_1([S_K])$.  
\end{prop}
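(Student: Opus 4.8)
The plan is to match the two operators first on the vacuum and then propagate the equality over all of $V_K$ using the Fock space structure supplied by the Nakajima operators. On the bottom piece we already know, by Proposition \ref{imaginaryactiononequivarianthilbert}, that $\gamma_{\delta}\sqbullet\ket{0} = \lambda'[S_K]$, whereas $p_1([S_K])\ket{0}$ is by construction the fundamental class of $\Hilb^1(S_K)=S_K$; hence the two operators agree on $\ket{0}$ up to the nonzero scalar $\lambda'$. Since both $\gamma_{\delta}\sqbullet-$ and $p_1([S_K])$ raise the number of points by exactly one, and must carry the same cohomological degree if they are to be proportional, the task reduces to showing they coincide on every vector of $V_K$.

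I would organise the propagation as follows. By \cite{nakajima1995heisenberg} the operators $p_m(\alpha)$ with $m\geq 1$ generate $V_K$ from $\ket{0}$, the full Heisenberg action is faithful, and the creation operators pairwise commute; in particular $p_1([S_K])$ commutes with every $p_m(\alpha)$, $m\geq 1$. It therefore suffices to prove that $\gamma_{\delta}\sqbullet-$ likewise commutes with all creation operators $p_m(\alpha)$. Granting this, the difference $C:=(\gamma_{\delta}\sqbullet-)-\lambda'\,p_1([S_K])$ commutes with every creation operator and annihilates $\ket{0}$; writing an arbitrary $v\in V_K$ as $v=P\ket{0}$ for $P$ a polynomial in the $p_m(\alpha)$ gives $Cv=PC\ket{0}=0$, so $C=0$ and the proposition follows.

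The heart of the matter is thus the vanishing $[\gamma_{\delta}\sqbullet-,\,p_m(\alpha)]=0$ for $m\geq 1$, and here I would combine two inputs. First, the relation $(u\cdot\gamma_{\delta})\sqbullet v = u\cdot(\gamma_{\delta}\sqbullet v)-\gamma_{\delta}\sqbullet(u\cdot v)$ of Proposition \ref{leftactionhilbertscheme}, together with the Lehn and Li--Qin--Wang commutation relations of Proposition \ref{Lehnweicommute}, controls the commutators of $\gamma_{\delta}\sqbullet-$ with cup product by Chern classes of tautological bundles, hence with the Lehn operators $p_m^k(\alpha)$. Second, for the decisive $m=1$ case I would use the explicit Hecke-correspondence presentation of $\gamma_{\delta}\sqbullet-$ underlying Proposition \ref{leftactionhilbertscheme} to identify its leading cycle with Nakajima's correspondence $\Hilb^{n,n+1}(S_K)$, the two cycles agreeing up to the scalar $\lambda'$ already fixed on the vacuum. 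A cohomological degree count — using that $\gamma_{\delta}$ spans the bottom piece $\HH^{-2}(\mathfrak{g}^{\BPS}_{\QTC{K},\WTC{K},\delta})$ and that $\mathfrak{g}^{\Im}_{\QTC{K},\WTC{K}}$ is abelian by Proposition \ref{bpsLiealgebracyclic}, so that $\gamma_{\delta}\sqbullet-$ automatically commutes with the image of all of $\mathfrak{g}^{\Im}$ — then constrains the remaining contributions, which I would kill by invoking the faithfulness of the action in Proposition \ref{faithfulhilbertscheme}.

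I expect the \emph{main obstacle} to be precisely this last step. The class $\gamma_{\delta}$ is defined only abstractly, as a generator of the top perverse-truncation piece of the BPS sheaf, so the delicate point is to realise it as a genuine algebraic cycle on the Hecke correspondence and to exclude the mixed terms $p_{m'}(\gamma)\,p_{-m''}(\beta)$ with $m'-m''=1$, which also raise the point number by one and a priori contribute to the commutator. Ruling these out cannot rest on a single clean vanishing; it requires the degree bookkeeping for the $\Q^{\vir}$-normalised grading together with the faithfulness input above, and it is the interaction of the abstractly-defined BPS generator with Nakajima's explicit geometric operators that makes this the technically demanding part of the argument.
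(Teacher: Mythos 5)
Your proposal does not close the key step, and the machinery you build around it is partly circular. The heart of your argument is the claim that $[\gamma_{\delta}\sqbullet -,\,p_m(\alpha)]=0$ for all $m\geq 1$ and all $\alpha\in\HH(S_K,\Q)$, so that the difference with $\lambda' p_1([S_K])$ can be propagated from the vacuum. But the inputs you invoke for this do not deliver it: the abelianness of $\mathfrak{g}^{\Im}_{\QTC{K},\WTC{K}}$ only gives commutation with operators already known to lie in the image of the CoHA action, and identifying that image with combinations of $p_m^k(\alpha)$'s (the content of the claim inside Proposition \ref{imaginaryLiealgebra}) is established \emph{later} and uses the present proposition as its base case. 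Likewise, your decisive sub-step --- ``identify the leading cycle of the Hecke correspondence for $\gamma_{\delta}$ with Nakajima's correspondence $\Hilb^{n,n+1}(S_K)$, up to the scalar fixed on the vacuum'' --- is essentially the full statement you are trying to prove; once you have it there is nothing left to propagate, and without it your commutator scheme has no anchor. The worry about mixed terms $p_{m'}(\gamma)p_{-m''}(\beta)$ is an artifact of this indirect characterisation: the operator is given by push-pull along a single correspondence, so no such terms can appear. A smaller slip: you quote Proposition \ref{imaginaryactiononequivarianthilbert} for the vacuum computation, but that concerns the action $\bullet$ on $\mathrm{M}^{\zeta^-}_K$ (equivariant Hilbert schemes of $\C^2$), not the action $\sqbullet$ on $V_K$; for $V_K$ the degree count gives $\gamma_{\delta}\sqbullet\ket{0}=\lambda'[S_K]$ only up to a possibly zero scalar.

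The paper's proof is direct and avoids all of this. It realises $\gamma_{\delta}$ as an honest class by choosing the stability condition $\zeta^1$, for which $\mathfrak{M}^{\zeta^1}_{\delta}(\mathrm{\Pi}_{Q^K})\simeq S_K/\C^{*}$; Toda's theorem (Proposition \ref{todatheorem}) then identifies $\mathfrak{g}^{\BPS}_{\QTC{K},\WTC{K},\delta}\simeq\HBM(S_K,\Q^{\vir})$ and a degree count pins $\gamma_{\delta}$ to $[S_K]$. It then shows, using Kuznetsov's isomorphism $g_n\colon\Hilb^n(S_K)\simeq M^{\zeta^n}_{n\cdot\delta,\fr}(Q^K)$, that the stack of short exact sequences defining the CoHA action in dimension $\delta$ is \emph{isomorphic as a variety} to $\Hilb^{n,n+1}(S_K)$, because nested ideals $Z_1\subset Z_2$ correspond exactly to short exact sequences of representations with sub of dimension $\delta$. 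Compatibility of dimension reduction with the quiver-variety action then identifies the two push-pull formulas on the nose. If you want to salvage your write-up, replace the propagation argument by this direct identification of the class and of the correspondence; that is both necessary and sufficient.
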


\begin{proof}
Applying dimension reduction (\ref{dimensionreduction}) by forgetting loops, i.e. the morphism of representation spaces $\Rep_{\delta}(\tilde{Q}) = \Rep_{\delta}(\overline{Q^K}) \times \Rep_{\delta}(L^K) \rightarrow  \Rep_{\delta}(\overline{Q^K})$, where $L^K$ is the quiver obtained by removing all the arrows $a$ and $a^{*}$ from the tripled quiver $\tilde{Q^K}$, The action of $\mathfrak{g}^{\BPS}_{\QTC{K},\WTC{K},\delta}$ on $V_K$ gives a map of cohomologically graded vector spaces \[ a_{1,n}: \HBM(\mathfrak{M}_{\delta}(\mathrm{\Pi}_{Q^K}),\Q^{\vir}) \times \HBM(\Hilb^n(S_K),\Q^{\vir}) \rightarrow \HBM(\Hilb^{n+1}(S_K),\Q^{\vir}). \]  We consider the stability condition $\zeta^{1}$(Definition \ref{kapranovstability}) so that $\mathfrak{M}_{\delta}^{\zeta^1}(\mathrm{\Pi}_{Q^K}) \simeq \Coh_{\pt}(S) \simeq S/\C^{*}$. Then by Toda's theorem (Proposition \ref{todatheorem}),  $\mathfrak{g}^{\BPS,\zeta^1}_{\QTC{K}, \WTC{K}, \delta} \simeq \mathfrak{g}^{\BPS}_{\QTC{K},\WTC{K}\delta}$, we have an isomorphism of vector spaces $\mathfrak{g}^{\BPS}_{\QTC{K},\WTC{K},\delta} \simeq \HBM(S_K,\Q^{\vir})$. By observing the cohomological degree, we can identify $\gamma_{\delta}$ with $[S_K] \in \HBM(S_K,\Q^{\vir})$. Let  \[\mathcal{M}_{(n,n+1)}^{\text{st}}(\mathrm{\Pi}_Q) : = \{ 0 \rightarrow (\rho_1)_{\delta} \rightarrow (\rho_2)_{(n+1) \cdot \delta,1} \rightarrow (\rho_3)_{(n \cdot \delta,1)} \rightarrow 0 \} \]  be the variety of short exact sequences on representations of $\mathrm{\Pi}_{Q_{\delta_0}}$ such that  $\rho_1$ is $\zeta^1$-semistable, $\rho_2$ is $\zeta^{n+1}$-semistable and $\rho_3$ is $\zeta^n$ stable. Given a point $(Z_1 \subset Z_2) \in \Hilb^{n,n+1}(S_K)$, it induces a short exact sequence of sheaves  \[ 0 \rightarrow \O_{s} \rightarrow \O_{Z_2} \rightarrow \O_{Z_1} \rightarrow 0\] for some $s \in S_K$ which equivalently after applying $g$ from Proposition \ref{Kuznetsovtheorem}, gives an exact sequence of representations
\[ 0 \rightarrow \widehat{g_1(s)} \rightarrow g_{n+1}(Z_2) \rightarrow g_n(Z_1) \rightarrow 0\] where $\widehat{g_1(s)}$ is the kernel of the surjection $g_1(s) \rightarrow \rho_0$ where $\rho_0$ is unique $(\textbf{0},1)$ dimensional representation. The same can be done the other way, i.e., given a short exact sequence $0 \rightarrow \rho_1 \rightarrow \rho_2 \rightarrow \rho_3 \rightarrow 0$ in $\mathcal{M}^{\mathrm{st}}_{(n,n+1)}(\mathrm{\Pi}_{Q^K})$, we can apply the inverse of isomorphism $g$ to get a closed point $g_n^{-1}(\rho_3) \subset g_{n+1}^{-1}(\rho_2)$ of $\Hilb^{n,n+1}(S_K)$. Thus, it follows that $\Hilb^{n,n+1}(S_K) \simeq \mathcal{M}^{\text{st}}_{(n,n+1)}(\mathrm{\Pi}_Q)$. We then consider the commutative diagram  \[\begin{tikzcd}[ampersand replacement=\&]
	{S_K \times \Hilb^n(S_K) } \& {\mathcal{M}^{\text{st}}_{(n,n+1)}(\mathrm{\Pi}_Q) \simeq \Hilb^{n,n+1}(S_K) } \& {\Hilb^{n+1}(S_K)} \\
	\& {S_K \times \Hilb^{n+1}(S_K) \times \Hilb^{n}(S_K)}
	\arrow["{p_1 \times p_3}"', from=1-2, to=1-1]
	\arrow["{p_2}", from=1-2, to=1-3]
	\arrow["i"', hook, from=1-2, to=2-2]
	\arrow["{\pi_{1,3}}"{pos=0.6}, from=2-2, to=1-1]
	\arrow["{\pi_2}"'{pos=0.6}, from=2-2, to=1-3]
\end{tikzcd} \] where the morphism $p_1 \times p_3$ is projection to $(\rho_1,\rho_3)$ and $p_2$ is projection to $\rho_2$. In particular, $p_1 \times p_3$ is already quasi-smooth, and $p_2$ is proper. However, since dimension reduction is compatible with the action of the cohomological Hall algebra of preprojective algebra on Nakajima quiver varieties \cite{Yang_2019}\footnote{In particular, since $\pi_1 \times \pi_3$ is smooth, we use the compatibility between dimension reduction with pullback and pushforward.}), we have 
\begin{align*}a_{1,n}([S_K] \otimes \alpha) = (p_2)_{*}(p_1 \times p_3)^{!}([S_K] \otimes \alpha) &= (\pi_2)_{*}(\pi_{1,3}^{*}([S_K] \otimes \alpha) \cap [\Hilb^{n,n+1}(S_K)]) \\ &=  p_1([S_K]) \alpha, \end{align*} as we wanted.





\end{proof}

\section{Integral Matrix $\mathcal{W}_{1+\infty}$ Lie algebras} \label{algebras}

In the introduction, we defined the associative algebras $D_{\hbar}(\C^{*})$  and $D_{\hbar}(\C^{*}) \otimes \mathfrak{gl}_K$ for any $K \geq 1$ (Definition \ref{matrixwLiealgebra}). We further defined a subspace $\mathcal{W}_K \subset (D_{\hbar}(\C^{*}) \otimes \mathfrak{gl}_{K}) \otimes_{\mathbb{C}[\hbar]} \C[\hbar^{\pm 1}]$. We check that 
\begin{prop}\label{integralsubLiealgebra}
The subspace $\mathcal{W}_K \subset (D_{\hbar}(\C^{*}) \otimes \mathfrak{gl}_{K}) \otimes_{\mathbb{C}[\hbar]} \C[\hbar^{-1}]$ forms a Lie subalgebra.
\end{prop}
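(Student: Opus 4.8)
The plan is to check closure under the bracket directly on the spanning set, reducing everything to a single divisibility fact in $D_{\hbar}(\C^{*})$. First I would record the straightening identity coming from $Dz = z(D+\hbar)$, namely $D^a z^l = z^l(D+l\hbar)^a$, which yields the product formula
\[ (z^k D^a)(z^l D^b) = z^{k+l}(D+l\hbar)^a D^b. \]
Since $(D+l\hbar)^a D^b$ is a polynomial in $D$ with coefficients in $\C[\hbar]$, every product of two monomials $z^k D^a$ and $z^l D^b$ is a $\C[\hbar]$-linear combination of monomials $z^{k+l} D^c$; in particular the $z$-degree simply adds.

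The key lemma I would isolate is that the commutator $[z^k D^a, z^l D^b]$ is divisible by $\hbar$ inside $D_{\hbar}(\C^{*})$. This holds because the specialization $D_{\hbar}(\C^{*})/(\hbar) \simeq \C[z^{\pm1}, D]$ is commutative, so every commutator vanishes modulo $\hbar$; concretely, the product formula shows the commutator equals $z^{k+l}\bigl((D+l\hbar)^a D^b - (D+k\hbar)^b D^a\bigr)$, and the bracketed term reduces to $D^a D^b - D^b D^a = 0$ at $\hbar = 0$. Writing $[z^k D^a, z^l D^b] = \hbar R$ with $R \in D_{\hbar}(\C^{*})$ a $\C[\hbar]$-combination of $z^{k+l}D^c$, this divisibility is the engine that powers all remaining verifications.

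Then I would treat the three types of brackets that arise among the spanning elements. For two matrix-valued generators, $[T_{k,a}(X), T_{l,b}(Y)] = (z^kD^a z^l D^b)\otimes XY - (z^l D^b z^k D^a)\otimes YX$ expands via the product formula into a $\C[\hbar]$-combination of $T_{k+l,c}(XY)$ and $T_{k+l,c}(YX)$, hence lies in $\mathcal{W}_K$. For a $t$-element paired with a matrix generator, centrality of $\Id$ collapses the bracket to $[t_{k,a}, T_{l,b}(Y)] = \hbar^{-1}[z^kD^a, z^lD^b]\otimes Y = R\otimes Y$, where the single factor $\hbar^{-1}$ is absorbed by the divisibility lemma, again landing in the $T$-span. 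The delicate case, which I expect to be the main obstacle, is the bracket of two $t$-elements: here $[t_{k,a}, t_{l,b}] = \hbar^{-2}[z^kD^a, z^lD^b]\otimes \Id$ carries two factors of $\hbar^{-1}$. One is cancelled by the $\hbar$ in $[z^kD^a, z^lD^b] = \hbar R$, and the second is exactly absorbed into the definition $t_{k+l,c} = T_{k+l,c}(\Id)/\hbar$, so that $[t_{k,a}, t_{l,b}] = \sum_c r_c(\hbar)\, t_{k+l,c} \in \mathcal{W}_K$. Confirming that no residual $\hbar^{-1}$ survives in this last case is the crux of the argument; once the divisibility lemma is established the remaining bookkeeping is routine, and since these three cases exhaust all brackets of spanning elements, this shows that $\mathcal{W}_K$ is a Lie subalgebra.
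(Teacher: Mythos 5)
Your proposal is correct and follows essentially the same route as the paper: both rest on the straightening identity $D^az^l=z^l(D+l\hbar)^a$, the resulting formula $[z^kD^a,z^lD^b]=z^{k+l}\bigl((D+l\hbar)^aD^b-D^a(D+k\hbar)^b\bigr)$, and the observation that this vanishes at $\hbar=0$ and is therefore divisible by $\hbar$, which absorbs the inverse powers of $\hbar$ in the $[t,T]$ and $[t,t]$ brackets. Your write-up is just a more explicitly organized version of the paper's case analysis (isolating the divisibility lemma and the three bracket types), so there is nothing further to add.
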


\begin{proof}
We recall that $\mathcal{W}_K$ is the $\C[\hbar]$ linear subspace spanned by $T_{m,a}(X)= z^m D^a X$ where $m \in \mathbb{Z}, a \geq 0$, and $t_{m,a} = T_{m,a}(1)/\hbar$ where $1 \in \mathfrak{gl}_K$ is the identity matrix. 
Since $[D, z] = \hbar z$ and $[D,z^{-1}]= -\hbar z^{-1}$ it follows inductively that 
\begin{align}\label{commutor1}
[z^{m}D^a,z^{n}D^{b}] = z^{m+n}((D+n \hbar)^{a}D^b - D^a(D+m \hbar)^b).
\end{align} Since the expression on the right is of the form $\hbar \cdot (\textrm{some linear expression in } t_{p,q})$, it shows that the Lie brackets $[t_{m,a},t_{n,b}]$ are closed. We can similarly check for the Lie brackets of type $[t_{m,a},T_{n,b}(X)]$ since
\begin{align}\label{commutor2}
[z^{m}D^a,z^{n}D^{b}X] = z^{m+n}((D+n \hbar)^{a}D^b - D^a(D+m \hbar)^b)X.
\end{align}
The other Lie brackets are clearly closed since $D_{\hbar}(\C^{*}) \otimes \mathfrak{gl}_{K}$ forms a Lie algebra. 
\end{proof}

We now define the positive half of $\mathcal{W}_K$, by mimicking the definition of standard positive half of the affine Lie algebra. 

\begin{defn}[Positive half of $\mathcal{W}_K$] \label{positivehalfofdeformedmatrixdifferentialoperators}

The positive half $(\mathcal{W}_K)^{+}$ is defined to be $\C[\hbar]$ linear subalgebra generated by $T_{k,a}(X)$ where $k \geq 1, a \geq 0, X \in \mathfrak{gl}_{K}$ or $k=0, a \geq 0, X \in \mathfrak{n}_{K}$ where $\mathfrak{n}_{K} \subset \mathfrak{gl}_{K}$ is the Lie subalgebra generated by upper diagonal matrices and $t_{k,a}$ where $k \geq 1, a \geq 0$. 
\end{defn}

We will also consider the classical limit of $\mathcal{W}_K$. Note, however, that the way it is defined, we cannot directly set $\hbar=0$ since $t_{n,a} = T_{n,a}(1)/\hbar$. But using relations (\ref{commutor1}) and (\ref{commutor2}) and more generally
\begin{align}\label{commutor3}
[z^{m}D^aX,z^{n}D^{b}Y] = z^{m+n}(D+n \hbar)^{a}D^b XY - z^{m+n}D^a(D+m \hbar)^bYX,
\end{align}
for any $X,Y \in \mathfrak{gl}_K$, it follows that 
\begin{prop} \label{generatorrelationswkliealgebra}
Let $\tilde{W}_{K}$ be the Lie algebra generated by $T_{k,a}(X)$ where $X \in \mathfrak{sl}_K$ and $t_{k,a}$ where $k \in \mathbb{Z}$ and $a \geq 0$ with the relations 
\begin{align}\label{Relations_full_Liealgebra}
[t_{m,a},T_{n,b}(X)] &= (na-mb)T_{m+n,a+b-1}(X) \\ [t_{m,a},t_{n,b}] &= (na-mb)t_{m+n,a+b-1} \\ [T_{m,a}(X),T_{n,b}(Y)] &= T_{m+n,a+b}([X,Y])
\end{align} 
Then there is an isomorphism of Lie algebras \[ (\mathcal{W}_{K})/(\hbar=0) \simeq \tilde{W}_K \]

\end{prop}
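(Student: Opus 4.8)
The plan is to identify the classical limit with the quotient $\mathcal{W}_K/\hbar\mathcal{W}_K$ and then to build an explicit surjective Lie algebra homomorphism $\tilde{W}_K \to \mathcal{W}_K/\hbar\mathcal{W}_K$ which a dimension count forces to be an isomorphism. The computations are already essentially prepared by the commutator formulas (\ref{commutor1}), (\ref{commutor2}) and (\ref{commutor3}); the only structural input needed beyond these is a freeness statement.

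First I would record that $\mathcal{W}_K$ is a free $\C[\hbar]$-module. Fixing a basis $\{X_\alpha\}$ of $\mathfrak{sl}_K$, the elements $z^k D^a \otimes X_\alpha$ and $z^k D^a \otimes \mathrm{Id}$ form a $\C[\hbar^{\pm 1}]$-basis of $(D_{\hbar}(\C^{*}) \otimes \mathfrak{gl}_K)\otimes_{\C[\hbar]}\C[\hbar^{\pm 1}]$, and since $t_{k,a} = (z^k D^a \otimes \mathrm{Id})/\hbar$ is merely a rescaling of the last family, the set $\{T_{k,a}(X_\alpha)\} \cup \{t_{k,a}\}$ is again a $\C[\hbar^{\pm 1}]$-basis, hence $\C[\hbar]$-linearly independent. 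As every $T_{k,a}(X)$ with $X \in \mathfrak{gl}_K$ is a $\C[\hbar]$-combination of the $T_{k,a}(X_\alpha)$ and of $T_{k,a}(\mathrm{Id}) = \hbar\, t_{k,a}$, this set also spans $\mathcal{W}_K$, so it is a $\C[\hbar]$-basis. Consequently $\mathcal{W}_K/\hbar\mathcal{W}_K$ has as a $\C$-basis the images $\bar t_{k,a}$ and $\bar T_{k,a}(X_\alpha)$, and since $\hbar\mathcal{W}_K$ is a Lie ideal the commutator descends to a bracket on the quotient.

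Next I would compute the brackets in the classical limit by expanding (\ref{commutor1})--(\ref{commutor3}) to first order in $\hbar$. Writing $(D+n\hbar)^a = D^a + a n \hbar\, D^{a-1} + O(\hbar^2)$ gives $(D+n\hbar)^a D^b - D^a(D+m\hbar)^b = (na - mb)\hbar\, D^{a+b-1} + O(\hbar^2)$; dividing by the powers of $\hbar$ supplied by the $t$'s then yields, modulo $\hbar$, the relations $[t_{m,a}, t_{n,b}] = (na-mb)\, t_{m+n,a+b-1}$, $[t_{m,a}, T_{n,b}(X)] = (na-mb)\, T_{m+n,a+b-1}(X)$ and $[T_{m,a}(X), T_{n,b}(Y)] = T_{m+n,a+b}([X,Y])$. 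In the last bracket the order-$\hbar$ correction is $\hbar\, z^{m+n} D^{a+b-1}(na\, XY - mb\, YX)$, whose $\mathfrak{sl}_K$-part is manifestly $O(\hbar)$ and whose trace part becomes $O(\hbar^2)$ once rewritten through $T_{\ast}(\mathrm{Id}) = \hbar\, t_{\ast}$, so it drops out. These are exactly the defining relations of $\tilde{W}_K$, so $t_{m,a} \mapsto \bar t_{m,a}$, $T_{n,b}(X) \mapsto \bar T_{n,b}(X)$ defines a homomorphism $\phi \colon \tilde{W}_K \to \mathcal{W}_K/\hbar\mathcal{W}_K$.

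Finally I would show $\phi$ is an isomorphism. It is surjective since its image contains the basis found in the first step. For injectivity, the relations of $\tilde{W}_K$ express every bracket of two generators as a linear combination of generators, so the span of the generators is already a Lie subalgebra and hence all of $\tilde{W}_K$; thus $\tilde{W}_K$ is spanned over $\C$ by $\{t_{k,a}\} \cup \{T_{k,a}(X_\alpha)\}$. Since $\phi$ carries this spanning set to a basis of $\mathcal{W}_K/\hbar\mathcal{W}_K$, any relation $\sum c_i g_i = 0$ in $\tilde{W}_K$ maps to a relation among basis vectors and so forces all $c_i = 0$; hence the spanning set is itself a basis and $\phi$ sends a basis to a basis. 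I expect the genuinely delicate point to be the freeness claim of the first paragraph, namely verifying that specializing to the $\hbar = 0$ fibre introduces no collapse among the $t_{k,a}$, which are defined with a denominator $\hbar$; this is precisely what makes the final dimension count legitimate, everything else reducing to the routine first-order expansions already recorded in (\ref{commutor1})--(\ref{commutor3}).
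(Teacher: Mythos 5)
Your proof is correct and follows essentially the same route the paper takes (the paper simply asserts that the isomorphism follows from the commutator formulas (\ref{commutor1})--(\ref{commutor3}), i.e.\ from the first-order expansion $(D+n\hbar)^aD^b - D^a(D+m\hbar)^b = (na-mb)\hbar D^{a+b-1}+O(\hbar^2)$, which is exactly your computation). You additionally supply the freeness of $\mathcal{W}_K$ over $\C[\hbar]$ and the spanning-set-to-basis argument for injectivity, details the paper leaves implicit; these are correct and complete the argument.
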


Recall the Lie algebra $\mathfrak{po}(\TT^{*}(\C^{*})) \ltimes (\O(\TT^{*}(\C^{*}))$ defined in Section \ref{matrixwLiealgebra}. We note that \[\mathfrak{po}(\TT^{*}(\C^{*})) \ltimes (\O(\TT^{*}(\C^{*})) \otimes \mathfrak{sl}_{K})) \simeq \tilde{W}_K, \] since $\mathfrak{po}(\TT^{*}(\C^{*}))$, the Lie algebra of the functions on the cotangent bundle $\TT^{*}(\C^{*})$ is generated by $D = z \frac{\partial}{\partial z}$ and $z^{\pm 1}$ with the Poisson bracket, determined by $\{D,z \}=z$. It is spanned by $z^kD^a$ where $k \in \mathbb{Z}, a \geq 0$ and the morphism \[\mathrm{A} \colon \mathfrak{po}(\TT^{*}(\C^{*})) \ltimes (\O(\TT^{*}(\C^{*})) \rightarrow \tilde{W}_K, \] given by $\mathrm{A}(z^{k_1}D^{a_1}, z^{k_2}D^{a_2}X) \mapsto t_{k_1,a_1} + T_{k_2,a_2}(X)$ is an isomorphism of Lie algebras. We now define the positive half of $\tilde{W}_K$ in the same way as before. 

\begin{defn}[Positive half of $\tilde{W}_K$] \label{positivehalfofclassicallimit}
Let $\tilde{W}_K^{+} \subset \tilde{W}_K$ be the Lie subalgebra spanned by $T_{k,a}(X)$ where $k \geq 1, a \geq 0, X \in \mathfrak{sl}_{K}$ or $k=0, a \geq 0, X \in \mathfrak{n}_{K}$ where $\mathfrak{n}_{K} \subset \mathfrak{sl}_{K}$ is the Lie subalgebra generated by upper diagonal matrices and $t_{k,a}$ where $k \geq 1, a \geq 0$. 
\end{defn}

We now show that $\tilde{W}_K$ is, in fact, a Lie algebra in the category of $\textbf{Heis}$ modules. Let \[H_K = \sum (i-1/2-K/2)E_{i,i} \] be a diagonal element inside $ \mathfrak{sl}_{K}$. We defined it so that $[H_{K},E_{i,j}] = (i-j)E_{i,j}$ for $E_{i,j} \in \mathfrak{sl}_K$.

\begin{defn}[Grading on $\tilde{W}_K^{+}$]
The Lie algebra $\tilde{W}_K^{+}$ has two gradings. We define the \textit{cohomological grading} by $\mathrm{CG}(T_{m,a}(X)) = 2a, \mathrm{CG}(t_{m,a}) = 2a-2$ and \textit{dimension grading} induced by $\dim (T_{m,a}(E_{ij})) = Km+j-i$ and $\dim (t_{m,a}) = Km$. 
\end{defn}

\begin{prop} \label{wKnegativelydetermined}
    There is an action of $\Heis$ on $\tilde{W}_{K}^{+}$ by derivations, given by 
    \begin{align*}
        p & \rightarrow \left[\frac{t_{0,2}}{2} - \frac{ T_{0,1}(H_K)}{K}, - \right] \\ 
        q & \rightarrow  \partial_{D}
    \end{align*}
    where $\partial_{D}(T_{m,a}(X))= aK T_{m,a-1}(X), \partial_{D}(t_{m,a})= aK t_{m,a-1}$. For every graded piece of fixed dimension $\dim$, the central charge is the dimension $\dim$ , and the Lie algebra $\tilde{W}_K^{+}$ is negatively determined. 
\end{prop}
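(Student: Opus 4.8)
The plan is to exhibit the two operators explicitly, check that each is a derivation preserving $\tilde{W}_K^{+}$, verify the single Heisenberg relation $[q,p]=c$ together with the value of the central charge, and then read off negative determinacy from the cohomological grading. First I would record that $q=\partial_D$ is a derivation. Since $\partial_D$ is $K$ times ordinary differentiation in $D$, and a scalar multiple of a derivation is again a derivation, it suffices to verify the Leibniz rule against the three defining relations (\ref{Relations_full_Liealgebra}); for instance, applying $\partial_D$ to $[t_{m,a},T_{n,b}(X)]=(na-mb)T_{m+n,a+b-1}(X)$ produces the coefficient $K(na-mb)(a+b-1)$, which one checks equals $aK\bigl(n(a-1)-mb\bigr)+bK\bigl(na-m(b-1)\bigr)$ coming from the Leibniz expansion of the right-hand side, and the $[t,t]$ and $[T,T]$ cases are the same computation. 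Clearly $\partial_D$ preserves $\tilde{W}_K^{+}$ since it lowers the $D$-degree without touching $(m,X)$. For $p$ I set $w=\tfrac{t_{0,2}}{2}-\tfrac{T_{0,1}(H_K)}{K}\in\tilde{W}_K$ and take $p=\mathrm{ad}(w)$, which is a derivation automatically; using (\ref{Relations_full_Liealgebra}) and $[H_K,E_{ij}]=(i-j)E_{ij}$ I compute
\[
p\bigl(T_{n,b}(E_{ij})\bigr)=\frac{Kn+j-i}{K}\,T_{n,b+1}(E_{ij})=\frac{\dim}{K}\,T_{n,b+1}(E_{ij}),\qquad p(t_{n,b})=n\,t_{n,b+1}-\frac{n}{K}\,T_{n,b}(H_K),
\]
which shows $p$ preserves $\tilde{W}_K^{+}$ (here $H_K\in\mathfrak{sl}_K$ since $\mathrm{tr}\,H_K=0$, and $t_{n,b}\in\tilde{W}_K^{+}$ forces $n\ge 1$) and raises $\mathrm{CG}$ by $2$.

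Next I would verify the Heisenberg relation and compute the central charge. Because $p$ and $q$ both preserve the dimension grading, so does $c:=[q,p]$, and $c$ then automatically commutes with $p$ and $q$, yielding a genuine $\Heis$-action by derivations; it remains to identify $c$. On $T_{n,b}(E_{ij})$ one gets $qp-pq=\dim\,(b+1)-\dim\,b=\dim$ times the identity. The delicate point is $t_{n,b}$: here $qp(t_{n,b})=nK(b+1)\,t_{n,b}-nb\,T_{n,b-1}(H_K)$ while $pq(t_{n,b})=bKn\,t_{n,b}-bn\,T_{n,b-1}(H_K)$, so the spurious $T_{n,b-1}(H_K)$ terms cancel and $c(t_{n,b})=Kn\,t_{n,b}=\dim\,t_{n,b}$. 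Hence $c$ acts on the dimension-$\dim$ graded piece by the scalar $\dim$, i.e. the central charge equals the dimension. This cancellation is the crux of the argument, and it is exactly what dictates both the coefficient $-1/K$ and the normalisation $[H_K,E_{ij}]=(i-j)E_{ij}$: with any other correction term the $H_K$-contributions on the $t_{n,b}$ would survive and $c$ would fail to be a central scalar.

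Finally, negative determinacy is a formal consequence. Every generator has even cohomological degree, since $\mathrm{CG}(T_{m,a}(X))=2a$ and $\mathrm{CG}(t_{m,a})=2a-2$, so the odd graded pieces vanish. For $i>0$ the space of cohomological degree $2i$ has basis $\{T_{m,i}(X)\}\cup\{t_{m,i+1}\}$, and $q$ sends these to $iK\,T_{m,i-1}(X)$ and $(i+1)K\,t_{m,i}$ respectively; as $iK,(i+1)K\neq 0$ and the images are (nonzero multiples of) linearly independent basis vectors in degree $2i-2$, the map $q$ is injective there, so $\tilde{W}_K^{+}$ is negatively determined. The main obstacle in the whole argument is precisely the cancellation of the $H_K$-terms in the $[q,p]$-computation on $t_{n,b}$; everything else is a routine check of the Leibniz rule and bookkeeping of the two gradings.
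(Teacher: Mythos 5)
Your proposal is correct and follows essentially the same route as the paper: $p$ is a derivation because it is an inner derivation, $q$ is checked against the relations (\ref{Relations_full_Liealgebra}), the commutator $[q,p]$ is computed to act by the scalar $\dim$ on each dimension-graded piece, and negative determinacy follows from the injectivity of $\partial_D$ in positive cohomological degree. The only difference is that you spell out the details the paper leaves implicit — in particular the cancellation of the $T_{n,b-1}(H_K)$ terms in $[q,p](t_{n,b})$ and the verification that $\mathrm{ad}(w)$ preserves the positive half — and these checks are all accurate.
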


\begin{proof}
Since $p$ acts as a commutator, it is a derivation. The same can be checked for $q$ using Lie algebra relations (\ref{Relations_full_Liealgebra}). On the other hand, 
    \begin{align*}
    [q,p](T_{m,a}(E_{i,j})) &= (mK+(j-i))T_{m,a}(E_{i,j}) \\ [q,p](t_{m,a}) &= mKt_{m,a}
    \end{align*} 

The negatively determined assumption holds since the $\Heis$ grading is the cohomological grading and $\partial_{D}$ is injective for $a \geq 1$. 
    
\end{proof}

Next, we see that $\tilde{W}_{K}^{+}$ is generated by a much smaller set of generators.

\begin{prop} \label{sphericalgenerationofLK+1Liealgebra}
The Lie algebra $\tilde{W}_K^{+}$ is generated as a Lie algebra by the elements $t_{1,a}$ for $a \geq 0$, $T_{0,a}(E_{i,i+1})$ for $i=1,2, \cdots, K-1; a \geq 0$ and $T_{1,a}(E_{1,K}), a \geq 0$. 

\end{prop}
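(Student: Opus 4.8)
The plan is to argue directly from the three defining relations \eqref{Relations_full_Liealgebra}, building up the three families $t_{k,a}$ ($k\geq 1$), $T_{0,a}(X)$ ($X\in\mathfrak{n}_K$) and $T_{k,a}(X)$ ($k\geq 1$, $X\in\mathfrak{sl}_K$) that span $\tilde{W}_K^+$. A single remark makes every step uniform: in each bracket the cohomological index is additive, so an arbitrary target index $a=a'+a''$ can be distributed between the two factors; the only constraint is that the scalar $(na-mb)$ appearing in the first two relations be nonzero, which one always arranges by choosing the split appropriately (typically placing the whole index on one factor and taking index $0$ on the other).

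First I would generate the two families supported in $\mathfrak{n}_K$ and in the scalars. Since $[T_{0,a}(E_{i,i+1}),T_{0,b}(E_{i+1,i+2})]=T_{0,a+b}(E_{i,i+2})$, induction on $j-i$ yields $T_{0,a}(E_{i,j})$ for all $i<j$ and all $a\geq 0$, hence all of $T_{0,a}(\mathfrak{n}_K)$ by linearity. For the scalars, $[t_{1,c+1},t_{k,0}]=k(c+1)\,t_{k+1,c}$ together with induction on $k$ shows that the single family $t_{1,a}$ generates every $t_{k,a}$ with $k\geq 1$, $a\geq 0$.

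The crux is to leave the upper-triangular part and reach all of $\mathfrak{sl}_K$ at level $k\geq 1$; this is precisely where the third generator is forced into the list. In the classical limit the matrix part of \eqref{Relations_full_Liealgebra} is the bare bracket $[X,Y]$, with no $z,D$ contribution, so from $\{E_{i,i+1}\}$ alone one never escapes $\mathfrak{n}_K$, and one genuinely new extremal root vector is required. I would use that $E_{1,K}$ is the extremal root vector of $\mathfrak{sl}_K$, so that the (irreducible) adjoint representation is generated from it by the simple-root operators $\operatorname{ad}(E_{i,i+1})$. Concretely, for each fixed $a$ one brackets $T_{1,a}(E_{1,K})$ successively with the index-$0$ elements $T_{0,0}(E_{i,i+1})$ (which leaves the index $a$ unchanged), sweeping out $T_{1,a}(E_{p,q})$ for the remaining roots and, through brackets of the form $[T_{1,a}(E_{p,p-1}),T_{0,0}(E_{p-1,p})]$, the Cartan directions $T_{1,a}(H)$ as well. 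By linearity this produces all $T_{1,a}(X)$, $X\in\mathfrak{sl}_K$, $a\geq 0$. I expect this escape to be the \emph{main obstacle}: it is the only place where the precise choice of the third generator matters, and it rests on checking that $\{E_{i,i+1}\}$ together with the extremal vector really generate $\mathfrak{sl}_K$ as a Lie algebra, equivalently that the adjoint orbit of the extremal vector under the raising operators is all of $\mathfrak{sl}_K$.

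Finally I would raise the level. From $[t_{k,a},T_{1,0}(X)]=a\,T_{k+1,a-1}(X)$, taking $a=c+1$ gives $T_{k+1,c}(X)$ for every $c\geq 0$ and every $X\in\mathfrak{sl}_K$; inducting on $k$ yields all $T_{k,a}(X)$ with $k\geq 1$. Combined with the families of the second paragraph this exhausts the spanning set of $\tilde{W}_K^+$, so the listed elements generate. As an independent check one can route the index bookkeeping through the $\Heis$-structure of Proposition \ref{wKnegativelydetermined}: since $\tilde{W}_K^+$ is negatively determined and the raising operator $p$ sends each dimension-homogeneous string $T_{m,a}(E_{i,j})$, $a\geq 0$, by $a\mapsto a+1$ up to the nonzero scalar $\dim/K$, it suffices to generate the $a=0$ slice, which is exactly what the splitting argument achieves at each stage, after which Proposition \ref{negativelydetermined} fills in the rest.
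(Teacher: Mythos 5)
The crux step of your argument fails as written. You claim that the adjoint representation of $\mathfrak{sl}_K$ is swept out from $E_{1,K}$ by the simple-root operators $\operatorname{ad}(E_{i,i+1})$, but $E_{1,K}$ is the \emph{highest} root vector and is therefore annihilated by every raising operator: $[E_{i,i+1},E_{1,K}]=\delta_{i+1,1}E_{i,K}-\delta_{K,i}E_{1,i+1}=0$ for all $i\in[1,K-1]$. Consequently the Lie algebra generated by $t_{1,a}$, $T_{0,a}(E_{i,i+1})$ and $T_{1,a}(E_{1,K})$ contains at level $k=1$ only the line spanned by the $T_{1,a}(E_{1,K})$ themselves; it never reaches $T_{1,0}(E_{2,1})$, say, so your sweep produces nothing and the subsequent level-raising has nothing to act on. The verification you yourself single out as the main obstacle --- that the adjoint orbit of the extremal vector under the raising operators is all of $\mathfrak{sl}_K$ --- is false for $E_{1,K}$, and this is exactly where the proof breaks.

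The generator actually needed is $T_{1,a}(E_{K,1})$: $zE_{K,1}$ is the affine Chevalley generator $e_0$, and the paper's proof is a one-line appeal to the standard fact that $E_{1,2},\dots,E_{K-1,K}$ together with $zE_{K,1}$ generate $\mathfrak{n}_K\oplus z\Q[z]\mathfrak{sl}_K$, the positive half of $\widehat{\mathfrak{sl}}_K$ (the statement of the proposition has the indices of this generator transposed; compare the proof itself and the assignment $\alpha_0^{(0)}\mapsto -T_{1,0}(E_{K+1,1})$ in Theorem \ref{Theorem1}). With $E_{K,1}$, the lowest root vector, the raising operators do carry it onto all of $\mathfrak{sl}_K$, and then your remaining steps --- producing all $t_{k,a}$ from the $t_{1,a}$, and level-raising via $[t_{k,a},T_{1,0}(X)]=a\,T_{k+1,a-1}(X)$ --- are correct and amount to an expanded version of the paper's argument. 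A separate caveat on your ``independent check'': the operator $p$ of Proposition \ref{wKnegativelydetermined} is the bracket with $t_{0,2}/2-T_{0,1}(H_K)/K$, which does not lie in $\tilde{W}_K^{+}$, so it is an outer derivation of the positive half and cannot be used to reduce the generation question to the $a=0$ slice from inside the Lie algebra.
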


\begin{proof}
Since $[t_{1,0},t_{n,b}]= (-b)t_{n+1,b-1}$, the elements $t_{n,a}, n \geq 1; a \geq 0$ are in the subalgebra generated by $t_{1,a}$ where $a \geq 0$. The elements $E_{i,i+1}$ for $i \in [1,K-1]$ and $zE_{K,1}$ are the generators of the positive half of the affine Lie algebra $\tilde{\mathfrak{sl}}_{K}$, which is  $\mathfrak{n}_{K} \oplus z \Q[z] \mathfrak{sl}_{K}$. Since $[T_{m,a}(X),T_{n,b}(Y)] = T_{m+n,a+b}([X,Y])$, it follows that all $T_{k,a}(X)$ where $k \geq 1, a \geq 0, X \in \mathfrak{sl}_{K}$ or $k=0, a \geq 0, X \in \mathfrak{n}_{K}$ are in the algebra generated by $T_{0,a}(E_{i,i+1})$ for $i=1,2, \cdots, K-1; a \geq 0$ and $T_{1,a}(E_{1,K}), a \geq 0$ and so we are done. 
\end{proof}

\section{Affinized BPS Lie algebra of Cyclic Quivers} \label{sectionaffinized}
We are ready to calculate the affinized BPS Lie algebra $\abps{K}$ for $K \geq 1$. The affinized BPS Lie algebra for the cyclic quiver $Q^K$ is spanned by $\alpha_{\bd}^{(n)}$ where $\alpha_{\bd} \in \bps{K}$. It suffices to understand the Lie bracket between these generators. By Proposition \ref{bpsLiealgebracyclic}, we have an isomorphism of Lie algebras $\bps{K} \simeq \mathfrak{n}_{Q^{K}}^{+} \oplus s \Q[s]$. Let $\mathfrak{re}_{K} \subset \abps{K}$ be the subvector space spanned by the elements $\alpha_{\bd}^{(n)}$ for $n \geq 0$, where $\alpha_{\bd} \in \mathfrak{g}^{\BPS,0}_{\QTC{K},\WTC{K}} \simeq \mathfrak{n}^{+}_{Q^{K}}$. It follows from Theorem \ref{sphericalsubalgebra} that 
\begin{prop} \label{realLiesubalgebra}
    We have an isomorphism of Lie algebras \[ \mathfrak{re}_{K} \simeq \mathfrak{n}^{+}_{Q^{K}}[D]. \]
\end{prop}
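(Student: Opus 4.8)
The plan is to recognise $\mathfrak{re}_K$ as exactly the subspace treated in Theorem \ref{sphericalsubalgebra}(1) and then quote that theorem; no new geometric input is required, so the work is purely a matter of matching up definitions.

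First I would unwind the definition of $\mathfrak{re}_K$. By the convention $\alpha^{(n)} := u^n \cdot \alpha$ together with the identification $\HH(\BCu,\Q) \simeq \Q[u]$, the elements $\alpha_{\bd}^{(n)}$ with $n \geq 0$ and $\alpha_{\bd} \in \HH^{0}(\bps{K})$ form a basis of $\HH^{0}(\bps{K}) \otimes \HH(\BCu,\Q)$. Hence $\mathfrak{re}_K = \HH^{0}(\bps{K}) \otimes \HH(\BCu,\Q)$ as graded subspaces of $\abps{K}$, equipped with the Lie bracket induced from the associative product on $\mathcal{A}^{\chi}_{\QTC{K},\WTC{K}}$.

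Next I would record that the cyclic quiver $Q^{K} = \tilde{A}_{K}$ carries no edge loops: its arrows are $i \to i+1$ for $0 \leq i \leq K-1$ together with $K \to 0$. Consequently the real subquiver obtained by deleting all loop-supporting vertices is $Q^{K}$ itself, so $\mathfrak{n}^{-}_{(Q^{K})'} = \mathfrak{n}^{-}_{Q^{K}}$; here the underlying identification $\HH^{0}(\bps{K}) \simeq \mathfrak{n}^{-}_{Q^{K}}$ is Proposition \ref{zerocohomologybps}.

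Finally I would specialise Theorem \ref{sphericalsubalgebra}(1) to $Q = Q^{K}$. That statement furnishes both the closure of $\HH^{0}(\bps{K}) \otimes \HH(\BCu,\Q)$ under the Lie bracket and a Lie algebra isomorphism with $\mathfrak{n}^{-}_{(Q^{K})'}[D]$. Combining this with the two identifications above yields $\mathfrak{re}_K \simeq \mathfrak{n}^{-}_{Q^{K}}[D]$, as claimed. The only thing left to verify is the bookkeeping of these identifications; the substantive content—the vanishing of the higher $\gamma_i$ terms in the bracket \eqref{summation0}, controlled by the negatively-determined $\Heis$ structure of $\abps{K}$—has already been carried out in the proof of Theorem \ref{sphericalsubalgebra}, so no genuine obstacle remains at this stage.
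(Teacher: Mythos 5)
Your proposal is correct and matches the paper exactly: the paper derives Proposition \ref{realLiesubalgebra} as an immediate consequence of Theorem \ref{sphericalsubalgebra}, precisely as you do, with the only bookkeeping being that $Q^{K}$ has no edge loops so its real subquiver is $Q^{K}$ itself. The substantive work (closure under the bracket via the vanishing of the higher $\gamma_i$ in the bracket expansion) indeed lives entirely in the proof of that theorem, as you note.
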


Next, let $\mathfrak{im}_K \subset \abps{K}$ be the subvector space generated by the elements $\alpha^{(n)}_{m \cdot \delta}$ where $\alpha_{m \cdot \delta}$ spans $ \mathfrak{g}^{\BPS,-2}_{\tilde{Q},\tilde{W}, m \cdot \delta} \simeq \C$. Since $ \mathfrak{g}^{\BPS,-2}_{\tilde{Q},\tilde{W}, m \cdot \delta}$  is one dimensional, $\alpha_{m \cdot \delta}$ is uniquely determined up to a non-zero scalar. 

\begin{prop} \label{relation1}
For any $n \geq 1$, there exist some non-zero scalar $\lambda_n \in \Q$ such that we have an equality \[[\alpha_{\delta}^{(1)},\alpha_{n \cdot \delta}] = \lambda_n \alpha_{(n+1) \cdot \delta}. \]
\end{prop}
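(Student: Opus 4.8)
The plan is to first pin down the target of the bracket by a weight count, and then prove that the scalar is nonzero by transporting the identity to the faithful geometric action on $V_K$.

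First I would observe that $[\alpha_\delta^{(1)},\alpha_{n\delta}]$ is forced to be a multiple of $\gamma_{(n+1)\delta}$. Here $\alpha_\delta^{(1)} = u\cdot\gamma_\delta$ has dimension $\delta$ and cohomological degree $-2+2=0$, while $\alpha_{n\delta}=\gamma_{n\delta}$ (the generator of $\HH^{-2}$) has dimension $n\delta$ and cohomological degree $-2$, so their bracket lies in the piece of $\abps{K}$ of dimension $(n+1)\delta$ and cohomological degree $-2$. By the Kac polynomial (\ref{kacpolynomialcyclicquiver}) together with (\ref{kacpolynomialandrelationwithbpsliealgebra}), $\mathfrak{g}^{\BPS}_{\QTC{K},\WTC{K},(n+1)\delta}$ is concentrated in cohomological degrees $0$ and $-2$, with $\HH^{-2}$ one-dimensional and spanned by $\gamma_{(n+1)\delta}$. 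Since $\abps{K}\simeq \mathfrak{g}^{\BPS}_{\QTC{K},\WTC{K}}[u]$ as graded vector spaces and $u$ raises cohomological degree by $2$, the degree $-2$ part in dimension $(n+1)\delta$ is exactly $\Q\cdot\gamma_{(n+1)\delta}$. Hence $[\alpha_\delta^{(1)},\alpha_{n\delta}]=\lambda_n\gamma_{(n+1)\delta}$ for a unique $\lambda_n\in\Q$, and the content is that $\lambda_n\neq 0$.

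For the nonvanishing I would use that $\mathfrak{g}^{\Im}_{\QTC{K},\WTC{K}}$ acts faithfully on $V_K$ (Proposition \ref{faithfulhilbertscheme}); as $\gamma_{(n+1)\delta}\neq 0$, it suffices to show that the operator attached to $[\alpha_\delta^{(1)},\alpha_{n\delta}]$ is nonzero. Both $\alpha_\delta^{(1)}$ and $\alpha_{n\delta}$ lie in $\mathcal{A}^{\Im}_{\QTC{K},\WTC{K}}$, so their Lie bracket acts as the commutator of the corresponding $\sqbullet$-operators. By Proposition \ref{actionforn=1} the operator of $\gamma_\delta$ is $G:=\lambda\,p_1([S_K])$ with $\lambda\neq 0$, and by Proposition \ref{leftactionhilbertscheme} the operator of $u\cdot(-)$ is $U:=\big(\sum_{i=0}^{K}c_1(\mathcal{R}_i^{[n]})\big)\cup(-)$, with $(u\cdot\alpha)\sqbullet=[U,\alpha\sqbullet]$. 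Thus $\alpha_\delta^{(1)}$ acts by $A:=[U,G]$, and since each $\mathcal{R}_i$ is a line bundle on $S_K$, the Lehn-type relation Proposition \ref{Lehnweicommute}(1) gives $A=\lambda\big((K+1)\,p_1^1([S_K])+p_1(c)\big)$ with $c=\sum_{i=0}^{K}c_1(\mathcal{R}_i)\in \HH^2(S_K)$. I would then induct on $n$, proving simultaneously that $\lambda_n\neq 0$ and that $\alpha_{n\delta}$ acts as $c_n\,p_n([S_K])$ with $c_n\neq 0$, the base case being Proposition \ref{actionforn=1}. The representation property gives $[A,\,c_n p_n([S_K])]=\lambda_n\cdot(\text{operator of }\alpha_{(n+1)\delta})$, while Proposition \ref{Lehnweicommute}(2) yields $[p_1^1([S_K]),p_n([S_K])]=-n\,p_{n+1}([S_K])$ and $[p_1(c),p_n([S_K])]=0$ (the structure constant vanishes because both superscripts are $0$). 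Hence $[A,\,c_np_n([S_K])]=-\lambda(K+1)\,n\,c_n\,p_{n+1}([S_K])$, which is nonzero because $p_{n+1}([S_K])$ is a nonzero operator (Nakajima operators act faithfully) and $\lambda,c_n\neq 0$, $n\geq 1$. Therefore $\lambda_n\neq 0$, and dividing shows $\alpha_{(n+1)\delta}$ acts as $c_{n+1}p_{n+1}([S_K])$ with $c_{n+1}\neq 0$, closing the induction.

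The only genuinely delicate point is the identification of $A$ as a Lehn operator: $U$ is built from $c_1(\mathcal{R}_i^{[n]})$ rather than from Lehn's universal class $c_1(\mathcal{O}_{S_K}^{[n]})$. This causes no harm because the correction term $p_1(c)$ produced by Proposition \ref{Lehnweicommute}(1) carries superscript $0$ and hence commutes with every $p_n([S_K])$, by the vanishing of the relevant structure constant in Proposition \ref{Lehnweicommute}(2); only the $p_1^1([S_K])$ contribution survives. The applicability of these commutation relations on the quasi-projective Calabi--Yau surface $S_K$ is precisely what Proposition \ref{Lehnweicommute} supplies, so the main obstacle is organizational—setting up the operator dictionary correctly—rather than computational.
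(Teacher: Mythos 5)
Your proposal is correct and follows essentially the same route as the paper: a cohomological-degree and dimension count pins the bracket down to a multiple of $\gamma_{(n+1)\cdot\delta}$, and the nonvanishing of the scalar is established by transporting everything to the $\sqbullet$-action on $V_K$, identifying $\alpha_{\delta}^{(1)}$ with $(K+1)p_1^1([S_K])+p_1(c)$ via Proposition \ref{Lehnweicommute}(1), running the same induction $\alpha_{n\cdot\delta}\mapsto c_n p_n([S_K])$, and using Proposition \ref{Lehnweicommute}(2) together with faithfulness of the Nakajima operators. Your explicit remark that $[p_1(c),p_n([S_K])]=0$ because the structure constant vanishes when both superscripts are zero is exactly the (implicit) cancellation in the paper's computation, so nothing is missing.
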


\begin{proof}
Note that the Lie bracket $[\alpha_{\delta}^{(1)}, \alpha_{n \cdot \delta}]$ is an element of cohomological degree $-2$ in the dimension $(n+1) \cdot \delta$. Since $\hat{\mathfrak{g}}^{\BPS,-2}_{\QTC{K},\WTC{K},(n+1) \cdot \delta} \simeq \Q$ is a one dimensional vector space, it is either $0$ or up to a non-zero scalar, it is $\alpha_{(n+1) \cdot \delta}$. We claim that it is non-zero. We consider the action $\sqbullet$ on the cohomology of Hilbert scheme of $S_{K}$ constructed in Proposition \ref{leftactionhilbertscheme} and show that the Lie bracket $[\alpha_{\delta}^{(1)}, \alpha_{n \cdot \delta}]$ acts non-trivially. We claim that for any $n \geq 1$, up to a factor of a non-zero scalar, there is an equality \[\alpha_{n \cdot \delta} \sqbullet v = p_n([S_K]) v \] where $p_n([S_K)])$ is Nakajima's operator (See Section \ref{Nakajimaoperatorhilbertscheme}) and $v \in V_K$. The case when $n=1$ is proved in Proposition \ref{actionforn=1}. Then by Proposition \ref{Lehnweicommute}
\begin{align} \label{delta 1}
\alpha_{\delta}^{(1)} \sqbullet v = \sum_{i=0}^{K} [c_1(\Rcal_i), p_1([S_K])]  v &= \sum_{i=0}^{K}(p_1^{1}([S_K])v+p_1(c_1(\Rcal_i))v)  \\ &= (K+1)p_1^{1}([S_K])v + \sum_{i=0}^{K} p_1(c_1(\Rcal_i))v \nonumber
\end{align}
where for each $i$, $\mathcal{R}_i$ is the tautological bundle on $S_K$. But then, assuming the claim for $n=N$, we have
\begin{align*}
[\alpha_{\delta}^{(1)}, \alpha_{N \cdot \delta}] \sqbullet v &= (K+1)[p_1^{1}([S_K]),p_{N}([S_K])]v + \sum_{i=0}^{K} [p_1(c_1(\mathcal{R}_i)),p_{N}([S_K])] v \\ &=
-(N)(K+1)p_{N+1}([S_K]) v, 
\end{align*}
The second equality is again a consequence of Proposition \ref{Lehnweicommute}. But then for any $n$, there exists some $v \in V_K$ such that $p_{N+1}([S_K]) v \neq 0$, since Nakajima's operators act faithfully (see Section \ref{Nakajimaoperatorhilbertscheme}), and thus $[\alpha_{\delta}^{(1)}, \alpha_{N \cdot \delta}] \neq 0$ and thus up to a factor of non-zero scalar $[\alpha_{\delta}^{(1)}, \alpha_{N \cdot \delta}] = \alpha_{(n+1)\cdot \delta}$ and both the claim and proposition follows by induction. 
\end{proof}

From now on, we fix $\gamma_{\delta}$ as some renormalization of $\alpha_{\delta}$, so that for the vacuum vector $\ket{0} = [0] \in \HBM(\Hilb^{0},\Q_0) \in \Nak^{\zeta^{\infty}}_{\delta_0}(Q^K)$ we have $\gamma_{\delta} \bullet \ket{0} = [S_K]$. This is allowed by Proposition \ref{imaginaryactiononequivarianthilbert}. Then we have $\gamma_{\delta}^{(r)} = u^r \cdot \gamma_{\delta}$. Finally for all $k \geq 1$,  we fix 
\begin{equation}
\gamma_{k \cdot \delta} :=   \ad^{k-1}_{\gamma_{\delta}^{(1)}}(\gamma_{\delta})
\label{defnofimaginaryelements}
\end{equation}
which by the previous propositon, is just the renormalization of $\alpha_{k \cdot \delta}$ and it spans $\mathfrak{g}^{\BPS,-2}_{\QTC{K},\WTC{K}}$. Similarly, we have \[\gamma^{(r)}_{k \cdot \delta} := u^r \ad^{k-1}_{\gamma_{\delta}^{(1)}}(\gamma_{\delta}^{(0)}). \] 
Now we study how an element of $\mathfrak{im}_{K}$ interacts with an element of $\mathfrak{re}_{K}$. By $[a_1,a_2,a_3,\cdots,a_N]$ we mean the iterated Lie bracket $[a_1,[a_2,[a_3,[\cdots, a_N]]]]$. 

\begin{prop} \label{strongrationality}
For any $i \in [0,K]$, the Lie bracket $[\alpha_i, \gamma_{\delta}^{(1)}] \neq 0$. 
\end{prop}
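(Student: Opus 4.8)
The plan is to detect the nonvanishing of $[\alpha_i,\gamma_{\delta}^{(1)}]$ through its action on the vacuum $\ket{0}\in \HH(\Hilb^{0}(\C^2),\Q^{\vir})\subset \mathrm{M}^{\zeta^{-}}_{K}$ under the geometric action $\bullet$ of Section \ref{cohaactionequivarianthilbert}; since $\bullet$ is a module structure, it suffices to exhibit one vector on which the bracket acts nontrivially. First I would compute $v:=\gamma_{\delta}^{(1)}\bullet\ket{0}$. Applying Proposition \ref{uactionnakajima} with $\alpha=\gamma_{\delta}$, and using that the tautological bundles, and hence $u$, act by zero on $\HH(\Hilb^{0}(\C^2))$, gives $\gamma_{\delta}^{(1)}\bullet\ket{0}=u\cdot(\gamma_{\delta}\bullet\ket{0})$; with the normalization $\gamma_{\delta}\bullet\ket{0}=[S_K]$ and Proposition \ref{imaginaryactiononequivarianthilbert} this equals $v=\sum_{j=1}^{K}c_1(\Rcal_j)\in\HH^{2}(S_K)$. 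Next, for $i\neq 0$ the second term of the commutator vanishes, because $\HH(\Hilb^{\rho_i}(\C^2))=0$: a one-dimensional $\Z_{K+1}$-equivariant quotient of $\C[x,y]$ must be supported at the fixed point $0$ and hence be the trivial representation $\rho_0$, so $\Hilb^{\rho_i}(\C^2)=\emptyset$ and $\alpha_i\bullet\ket{0}=0$. Using Proposition \ref{relationwithnakajimaoperators} to identify $\alpha_i$ with the Nakajima operator $f_i$, we therefore obtain
\[
[\alpha_i,\gamma_{\delta}^{(1)}]\bullet\ket{0}=\alpha_i\bullet(\gamma_{\delta}^{(1)}\bullet\ket{0})=f_i\cdot v,
\]
so the statement reduces to proving $f_i\cdot v\neq 0$ for a single vertex $i\neq 0$.

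To prove $f_i\cdot v\neq 0$ I would use the full geometric action of the affine Lie algebra $\tilde{\mathfrak{sl}}_{K+1}$ on $\mathrm{M}^{\zeta^{-}}_{K}$ from \cite{Nak_quivers_kac_moody}, in particular the raising operators $e_i$ transpose to the $f_i$ and the Chevalley relation $[e_i,f_i]=h_i$. The class $v$ lies in the weight space $\HH(\Hilb^{\rho_{\reg}}(\C^2))$ of weight $\Lambda_0-\delta$, and $\langle\Lambda_0-\delta,\alpha_i^{\vee}\rangle=\delta_{i0}=0$ for $i\neq 0$, so $h_i v=0$ and hence $e_i f_i v=f_i e_i v$. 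I would then compute $e_i v$ geometrically from the Hecke correspondence $\mathfrak{B}_i(\rho_{\reg})\subset \Hilb^{\rho_{\reg}-\rho_i}(\C^2)\times S_K$, whose first factor is the point $\pt_i$ and whose image $C_i:=\mathfrak{B}_i(\rho_{\reg})\cong\P^1$ is the exceptional curve; pushing $v$ down gives $e_i v=\pm\langle v,[C_i]\rangle\,[\pt_i]$. By the geometric McKay duality underlying Proposition \ref{duality}, $\langle c_1(\Rcal_j),[C_i]\rangle=\delta_{ij}$, so $\langle v,[C_i]\rangle=1$ and $e_i v=\pm[\pt_i]\neq 0$. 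Finally, by the identification $f_i\cdot[\pt_i]=\pm[\mathfrak{B}_i(\rho_{\reg})]$ of Example \ref{examplelocus}, we get $e_i f_i v=f_i e_i v=\pm[\mathfrak{B}_i(\rho_{\reg})]\neq 0$, which forces $f_i v\neq 0$.

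To pass from one vertex to all $i\in[0,K]$---in particular to $i=0$, where $\alpha_0\bullet\ket{0}\neq 0$ and the direct computation is more delicate---I would invoke the cyclic symmetry of $Q^{K}$. The rotation $i\mapsto i+1\bmod(K+1)$ is an automorphism of $(\QTC{K},\WTC{K})$, so it induces an automorphism of $\mathcal{A}^{\chi}_{\QTC{K},\WTC{K}}$ and of $\abps{K}$ sending $\alpha_i$ to a nonzero multiple of $\alpha_{i+1}$ and fixing $\gamma_{\delta}^{(1)}$ up to a nonzero scalar, since $\delta$ is rotation-invariant and $\HH^{-2}(\mathfrak{g}^{\BPS}_{\QTC{K},\WTC{K},\delta})$ is one-dimensional. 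Hence nonvanishing of $[\alpha_i,\gamma_{\delta}^{(1)}]$ for one $i$ propagates to all. I expect the main obstacle to be the nonvanishing $f_i v\neq 0$ of the second paragraph: it is the only step that genuinely uses the module structure and the intersection geometry of the exceptional curves, and the transpose/commutator bookkeeping (ensuring $e_i v$ is paired against the correct curve class) is the real crux, whereas the reduction and the symmetry argument are comparatively formal.
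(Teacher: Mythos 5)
Your proof is correct, and it shares the paper's overall strategy --- both detect the nonvanishing by letting the bracket act on the vacuum in $\mathrm{M}^{\zeta^{-}}_{K}$, both use $\alpha_i\bullet\ket{0}=0$ for $i\neq 0$ to reduce to a single term, and both start from the same computation $\gamma_{\delta}^{(1)}\bullet\ket{0}=\lambda'\sum_j c_1(\Rcal_j)$ of Proposition \ref{imaginaryactiononequivarianthilbert} --- but the crux is handled by a genuinely different argument. The paper realizes each class $[\mathfrak{B}_j(\rho_{\reg})]$ as an iterated commutator of the $\alpha_i$'s applied to $\ket{0}$ (equation (\ref{commutatoractiononvaccum})), rewrites $\gamma_{\delta}^{(1)}\ket{0}$ in that basis using the explicit inverse of the Cartan matrix of $A_K$, and identifies the surviving term inside $\mathfrak{n}^{-}_{Q^K}\simeq(\mathfrak{sl}_{K+1}[z,z^{-1}])^{-}$, concluding by faithfulness. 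You instead import the raising operators $e_i$ of the full $\tilde{\mathfrak{sl}}_{K+1}$-action of \cite{Nak_quivers_kac_moody}: since $\HH(\Hilb^{\rho_{\reg}}(\C^2))$ is the weight space of weight $\Lambda_0-\delta$ and $\langle\Lambda_0-\delta,\alpha_i^{\vee}\rangle=0$ for $i\neq 0$, the Chevalley relation gives $e_if_iv=f_ie_iv$, and the McKay pairing $\langle c_1(\Rcal_j),[C_i]\rangle=\pm\delta_{ij}$ (the sign is convention-dependent and immaterial here) gives $e_iv=\pm[\pt_i]$, whence $f_ie_iv=\pm[\mathfrak{B}_i(\rho_{\reg})]\neq 0$ and so $f_iv\neq 0$. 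This is shorter and avoids both the inverse-Cartan computation and the iterated-commutator bookkeeping, at the cost of relying on the $e_i$'s and the weight structure of the basic representation, which the paper never sets up (Section \ref{nakajimliealgebraction} only recalls the $f_i$'s). Your treatment of $i=0$ via the rotation automorphism of $(\QTC{K},\WTC{K})$ is valid and arguably cleaner than the paper's ``move the framing to vertex $1$'' symmetry. Two small caveats: the identity $f_i[\pt_i]=\pm[\mathfrak{B}_i(\rho_{\reg})]$ follows from the definition (\ref{nakajimaformula}) rather than from Example \ref{examplelocus}; and your argument yields only nonvanishing, whereas the paper's computation also produces the precise scalar appearing in Equation (\ref{relationsbetweenrealandimaginary}) and used in the proof of Theorem \ref{Theorem1}, so on your route that constant would still need to be pinned down separately.
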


\begin{proof}
We consider the action of $\mathcal{A}_{\tilde{Q^{K}},W_K}$ on $\Nak^{\zeta^{\infty}}_{\delta_0}(Q^K)$ as constructed in Section \ref{cohaactionequivarianthilbert}. We show that $[\alpha_i,\gamma_{\delta}^{(1)}] \bullet \ket{0} \neq 0$. By Proposition \ref{relationwithnakajimaoperators}, we can identify the action of $\alpha_i$ with the action of the generator $e_i$ of the affine Lie algebra $\tilde{\mathfrak{sl}}_{K+1}$ on $\Nak^{\zeta^{\infty}}_{\delta_0}(Q^K)$, due to Nakajima (Section \ref{nakajimliealgebraction}).
\par
Note that $\Hilb^{\rho_0}(\C^2) = \pt$ as $I = \langle x,y \rangle$ is the only $\mathbb{Z}_{K+1}$ equivariant ideal with $\C[x,y]/I \simeq \rho_0$. Thus $\mathfrak{B}_{0}(\rho_{\reg}) = \Hilb^{\rho_0}(\C^2)$. Let $[0_{[0,1)}] \in \HH(\Hilb^{\rho_0}(\C^2),\Q^{\vir})$ be the fundamental class of the point. Thus by (\ref{nakajimaformula}), we have \[\alpha_0 \bullet \ket{0} = [0_{[0,1)}]. \]

Similarly, since the closed points of $\Hilb^{1}(\C^2)$ are just the maximial ideals $(x-t_1,y-t_2)$, only $\mathbb{Z}_{K+1}$ equivariant point is the ideal $(x,y)$. It follows that $\Hilb^{\rho_i}(\C^2) =0$ for all $i \neq 0$ and so by (\ref{nakajimaformula}), for all $i \neq 0 $ we have \[ \alpha_i \bullet \ket{0} = 0. \] 

Now for any $i \neq K$, we consider the equivariant Hilbert scheme $\Hilb^{\oplus_{j=0}^{i} \rho_j}(\C^2)$. By Proposition \ref{dimensionofnkajimaquiver}, $\Hilb^{\oplus_{j=0}^{i} \rho_j}(\C^2)$ is $0$ dimensional connected scheme and thus just a point. Let $[0_{[0,i)}] \in \HH(\Hilb^{\oplus_{j=0}^{i} \rho_j}(\C^2),\Q^{\vir})$ be the fundamental class of the point. Consider $\alpha_{1} \bullet (\alpha_0 \bullet \ket{0})$. Again since each of the schemes $\Hilb^{\rho_0}(\C^2)$ and $\Hilb^{\rho_0 \oplus \rho_1}(\C^2)$ are points and $\rho_0 \subset \rho_0 \oplus \rho_1$, it follows that $\mathfrak{B}_{1}(\rho_0) = \Hilb^{\rho_0 \oplus \rho_1}(\C^2)$. We thus have by (\ref{nakajimaformula}) that \[ \alpha_{1} \bullet (\alpha_0 \bullet \ket{0}) = [0_{[0,2)}].\] We may continue this argument succesively to calculate $\alpha_{i-1} \bullet (\alpha_{i-2} \bullet( \cdots \alpha_0 \sqbullet \ket{0}))$ for any $i \in [1,K]$. By associativity of the action of the $\mathfrak{n}_{Q^K}^{+}$, we conclude that for any $i \in [1,K]$, 
\[ (\alpha_{i-1} \star \alpha_{i-2} \star \cdots \star \alpha_{0}) \bullet \ket{0} = [0_{[0,i)}]  \in \HH(\Hilb^{(\oplus_{j=0}^{i} \rho_j)}(\C^2),\Q). \] 

Now consider the equivariant Hilbert scheme $\Hilb^{(\oplus_{j=0}^{i} \rho_j) \oplus \rho_K}(\C^2)$. Again, by Proposition \ref{dimensionofnkajimaquiver}, it is a $0$-dimensional connected scheme and hence a point. Let $[0_{[K,i+1)}] \in \HH(\Hilb^{(\oplus_{j=0}^{i} \rho_j)\oplus \rho_K}(\C^2),\Q^{\vir})$ be the fundamental class of the point. Then again by (\ref{nakajimaformula}), we have 
\[ \alpha_{K} \bullet ((\alpha_{i-1} \star \alpha_{i-2} \star \cdots \star \alpha_{0}) \bullet \ket{0})= (-1)[0_{[K,i+1)}] \in \HH(\Hilb^{(\oplus_{j=0}^{i} \rho_j) \oplus \rho_K}(\C^2),\Q^{\vir}). \] Note that the sign comes from the sign in the formula (\ref{nakajimaformula}). For any $i \in [1,K]$, we now consider $\Hilb^{(\oplus_{j=0, j \neq i}^{K} \rho_j)}(\C^2)$. Again, $\Hilb^{(\oplus_{j=0, j \neq i}^{K} \rho_j)}(\C^2)$ is a point by Proposition \ref{dimensionofnkajimaquiver}. Let $[0_{[i+1,K)}] \in \HH(\Hilb^{(\oplus_{j=0, j \neq i}^{K} \rho_j)}(\C^2),\Q^{\vir})$ be the fundamental class of the point. Then by the same argument as above, we conclude for any $i \in [1, K]$, that 
\[ (\alpha_{i+1} \star \alpha_{i+2} \star \cdots \alpha_K \star \alpha_{i-1} \star \cdots \alpha_0) \bullet \ket{0} = (-1)^{K-i}[0_{[i+1,K)}] \in \HH(\Hilb^{(\oplus_{j=0, j \neq i}^{K} \rho_j)}(\C^2),\Q^{\vir}). \] Note that $\rho_{\reg}- \rho_i = \oplus_{j=0,j \neq i}^{K} \rho_j$. By (\ref{nakajimaformula}), for $i \in [1,K]$, we  have $\alpha_i \bullet [0_{[i+1,K)}] = (-1) [\mathfrak{B}_{i}(\rho_{\reg})]$, where by \ref{identification}, we may view $\mathfrak{B}_i(\rho_{\reg})$ as a Lagrangian inside the surface $S_K$. In particular $[\mathfrak{B}_{i}(\rho_{\reg})] \in \HH(S_K,\Q^{\vir})$. We thus have for $i \in [1,K]$, 
\[
(\alpha_i \star \alpha_{i+1} \cdots \star \alpha_K \star \alpha_{i-1} \cdots \star \alpha_0) \bullet \ket{0} = (-1)^{K+1-i}[\mathfrak{B}_i(\rho_{\reg})]. \]
But since $\alpha_i \bullet\ket{0} = 0$ for $i \neq 0 $, we can write above as 
\begin{align} \label{commutatoractiononvaccum}
[\alpha_i, \alpha_{i+1} \cdots , \alpha_K ,  \alpha_{i-1} \cdots, \alpha_0] \bullet \ket{0} =  (-1)^{K+1-i}[\mathfrak{B}_i(\rho_{\reg})].
\end{align}

Now for $i \neq 0$, we consider the commutator $[\alpha_i, \gamma_{\delta}^{(1)}]$. Again since $\alpha_i \bullet \ket{0}=0$, we have by Proposition \ref{imaginaryactiononequivarianthilbert} that 
\begin{align*}
[\alpha_i, \gamma_{\delta}^{(1)}] \ket{0} =\alpha_i \bullet \gamma_{\delta}^{(1)} \ket{0} = \alpha_i \star (\sum_{j} (\sum_{k} A_{kj}) [\mathfrak{B}_j(\rho_{\reg})]).  
\end{align*}
But by the Equation \ref{commutatoractiononvaccum}, this is the same as 
\[[\alpha_i, \gamma_{\delta}^{(1)}] \ket{0} =  \sum_{j} (-1)^{K+1-j} \sum_k A_{kj} [\alpha_i;\mathcal{B}_j] \bullet \ket{0}, \]
where $[\alpha_i, \mathcal{B}_j]$ represents the commutator 
\[[\alpha_i, \alpha_j , \alpha_{j+1} \cdots,  \alpha_K, \alpha_{j-1} \cdots , \alpha_0].\]
and $A_{k,j}$ are the entries of the inverse of the Cartan matrix of $A_K$. It is easy to calculate the inverse of the Cartan matrix of $A_K$. We have $A_{kj} = \min(k,j) - \frac{kj}{K+1}$, which shows that $S_j= \sum_{k} A_{kj} = j(K+1-j)/2$. Similarly, since $\mathfrak{n}_{Q^K}^{+} \simeq (\mathfrak{sl}_{K+1}[z,z^{-1}])^{+}$ after identifying $e_i$ with $E_{i,i+1}$ and $e_0$ with $zE_{K+1,1}$; we may identify $[\alpha_i,\mathcal{B}_i] = 2 ((-1)^{K-i}zE_{i,i+1})$ while $[\alpha_i,\mathcal{B}_{i-1}] = [\alpha_i,\mathcal{B}_{i+1}] = (-1)^{K-i} zE_{i,i+1}$ and $[\alpha_i,\mathcal{B}_j]=0$, otherwise. In particular $[\alpha_i,\mathcal{B}_i] = 2[\alpha_i,\mathcal{B}_{i-1}] = 2[\alpha_i, \mathcal{B}_{i+1}]$. Thus we have
\begin{align*}
[\alpha_i, \gamma_{\delta}^{(1)}] \bullet \ket{0} = (-1)^{K+1-i} (S_{i}-\frac{S_{i-1}+S_{i+1}}{2})[\alpha_i, \mathcal{B}_i] \bullet \ket{0} = (-1)^{(K+1-i)}/2 [\alpha_i,\mathcal{B}_i] \bullet \ket{0}. 
\end{align*}

But $[\alpha_i,\mathcal{B}_i] \bullet \ket{0} \neq 0$ by the same argument for faithfulness as in Proposition \ref{imaginaryactiononequivarianthilbert}.  This in particular means that $[\alpha_i, \gamma_{\delta}^{(1)}] \neq 0$. By symmetry, we also get the same relation for $i=0$, since we can construct the representation by adding the framing to vertex $1$ instead of $0$. Thus we have 
\begin{align*}
[\alpha_{0}, \gamma_{\delta}^{(1)}] \bullet \ket{0} = \frac{(-1)^{K}}{2} [\alpha_0, \alpha_0, \alpha_1, \cdots, \alpha_K] \bullet\ket{0} \neq 0
\end{align*} 
and so we are done. 
\end{proof}

Note that $[\alpha_{i},\gamma_{\delta}^{(1)}]$ is an element of cohomological degree $0$, in dimension $\delta+\delta_i$. From proposition \ref{strongrationality}, it follows that $[\alpha_{i},\gamma_{\delta}^{(1)}] = \lambda  [\alpha_i, \mathcal{B}_i]$ for some non-zero $\lambda$, since $\widehat{\mathfrak{g}}^{\BPS,0}_{\QTC{K},\WTC{K},\delta+\delta_i}$ is $1$ dimensional. But then, the above calculation even precisely determines $\lambda$, and so finally, we have
\begin{equation}
[\alpha_i, \gamma_{\delta}^{(1)}] = \frac{(-1)^{K+1-i}}{2} [\alpha_i, \mathcal{B}_i]. 
\label{relationsbetweenrealandimaginary}
\end{equation} Furthermore, since $[\alpha_{i},\gamma_{\delta}^{(0)}]=0$, we have $[\gamma_{\delta}^{(0)}, \alpha_i^{(1)}]= \frac{(-1)^{K+1-i}}{2} [\alpha_i,\mathcal{B}_i]$.

\begin{corollary}[Strong rationality for Kleinian  singularity] \label{strongrationalityconjectureproof}
For any $\bd \in \N^{Q_{0}}$, the commutator\[[\gamma_{\delta}^{(1)}, \hyphen]: \mathcal{A}^{\chi}_{\QTC{K},\WTC{K}} \rightarrow \mathcal{A}^{\chi}_{\QTC{K},\WTC{K}} \] maps $\mathfrak{g}^{\BPS}_{\QTC{K},\WTC{K},\bd}$ isomorphically to $\mathfrak{g}^{\BPS}_{\QTC{K},\WTC{K},\bd+ \delta}$. 
\end{corollary}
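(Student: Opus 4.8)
The plan is to study the operator $D:=[\gamma_{\delta}^{(1)},\hyphen]$ through the single structural fact that $\gamma_{\delta}^{(1)}=u\cdot\gamma_{\delta}$ has cohomological degree $0$, so that $D$ preserves the cohomological grading while raising the dimension grading by $\delta$. The first step is to check that $D$ carries $\mathfrak{g}^{\BPS}_{\QTC{K},\WTC{K},\bd}$ into $\mathfrak{g}^{\BPS}_{\QTC{K},\WTC{K},\bd+\delta}$, and not merely into $\abps{K}$. Using the vector space identification $\abps{K}\simeq\bps{K}\otimes\C[u]$, write $D(\alpha)=\sum_{m\geq 0}u^{m}\beta_{m}$ with $\beta_{m}\in\mathfrak{g}^{\BPS}_{\QTC{K},\WTC{K},\bd+\delta}$. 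Applying the derivation $\partial$ (Proposition \ref{heisliealgebraction}) and using $\partial\gamma_{\delta}^{(1)}=(K+1)\gamma_{\delta}$ together with $\partial\alpha=0$ gives $\partial D(\alpha)=(K+1)[\gamma_{\delta},\alpha]$; since $\gamma_{\delta}$ spans $\HH^{-2}(\bps{K})$, which is central in $\bps{K}$ by Proposition \ref{bpsLiealgebracyclic}, this vanishes. Comparing with $\partial(\sum_{m}u^{m}\beta_{m})=\sum_{m\geq 1}m|\bd+\delta|\,u^{m-1}\beta_{m}$ (Proposition \ref{derivationbyp}) and using $|\bd+\delta|\neq 0$ forces $\beta_{m}=0$ for $m\geq 1$, so $D(\alpha)=\beta_{0}$ lands in $\mathfrak{g}^{\BPS}_{\QTC{K},\WTC{K},\bd+\delta}$.

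Next I would reduce the statement to injectivity. Since $\mathfrak{g}^{\BPS}_{\QTC{K},\WTC{K},\bd}$ vanishes unless $\bd$ is a positive root, and then $\bd+\delta$ is again a positive root of the same type (real roots to real roots, $n\delta$ to $(n+1)\delta$), both source and target are nonzero precisely for $\bd$ a root; by the Kac polynomial of the cyclic quiver (\ref{kacpolynomialcyclicquiver}) and its relation (\ref{kacpolynomialandrelationwithbpsliealgebra}) to BPS cohomology, source and target then have equal dimension in each cohomological degree. Hence $D$ is an isomorphism if and only if it is injective, and because $D$ respects the cohomological grading, injectivity can be tested separately in degrees $-2$ and $0$. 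In degree $-2$ the source is spanned by $\gamma_{n\delta}$, and by the definition (\ref{defnofimaginaryelements}) together with the non-vanishing in Proposition \ref{relation1} one has $D(\gamma_{n\delta})=\gamma_{(n+1)\delta}$, an isomorphism of one-dimensional spaces.

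The essential case is the degree-$0$ part $\HH^{0}(\bps{K})\simeq\mathfrak{n}^{-}_{Q^{K}}$. Under the loop identification $\mathfrak{n}^{-}_{Q^{K}}\simeq(\tilde{\mathfrak{sl}}_{K+1})^{-}$ with $\alpha_{i}\mapsto E_{i+1,i}$ and $\alpha_{0}\mapsto -zE_{K+1,1}$, I expect $D$ to be exactly multiplication by $z$, which is manifestly bijective on every dimension-graded piece; on the Chevalley generators this is precisely the computation (\ref{relationsbetweenrealandimaginary}), which yields $D(\alpha_{i})=z\alpha_{i}$. The main obstacle is that multiplication by $z$ obeys $z[\alpha,\beta]=[z\alpha,\beta]=[\alpha,z\beta]$ rather than the Leibniz rule, so it is not a derivation; correspondingly the affinized bracket on $\abps{K}$ differs from the honest bracket of $\bps{K}$ by central corrections valued in $u\cdot\HH^{-2}(\bps{K})$ supported on multiples of $\delta$, and $D$ is genuinely \emph{not} a derivation of $\mathfrak{n}^{-}_{Q^{K}}$. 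Consequently the generator-level identity $D(\alpha_i)=z\alpha_i$ cannot be propagated to arbitrary root vectors by a naive induction over the bracket.

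To overcome this I would pass to the faithful geometric action of $\bps{K}$ on $\mathrm{M}^{\zeta^{-}}_{K}$, under which $\mathfrak{n}^{-}_{Q^{K}}$ acts by Nakajima's operators for $\tilde{\mathfrak{sl}}_{K+1}$ on the basic representation (Proposition \ref{relationwithnakajimaoperators}). Writing $D=[\,u\cdot\,,\,[\gamma_{\delta},\hyphen]\,]$, which follows from $u\cdot$ being a derivation (Proposition \ref{heisliealgebraction}), and inserting the explicit descriptions of the operators $u\cdot$ and $\gamma_{\delta}^{(1)}\bullet$ on $\mathrm{M}^{\zeta^{-}}_{K}$ furnished by Propositions \ref{uactionnakajima} and \ref{imaginaryactiononequivarianthilbert}, one matches the commutator $[\gamma_{\delta}^{(1)}\bullet,\alpha\bullet]$ with $z$-multiplication in the basic representation. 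Faithfulness of the action then lifts the non-vanishing of these operators back to $\mathfrak{n}^{-}_{Q^{K}}$, yielding injectivity of $D$ in cohomological degree $0$ and completing the proof. The hard part is exactly this last matching, which is where the explicit computations of Propositions \ref{strongrationality} and \ref{imaginaryactiononequivarianthilbert} are indispensable.
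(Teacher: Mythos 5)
Your first three steps are sound, and the $\partial$-argument showing that $[\gamma_{\delta}^{(1)},\alpha]$ has no component in $u^{m}\cdot\bps{K}$ for $m\geq 1$ is a clean justification of a point the paper leaves implicit; the reduction to injectivity via the Kac polynomial and the degree $-2$ case (Proposition \ref{relation1} together with (\ref{defnofimaginaryelements})) agree with the paper.

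The gap is in cohomological degree $0$. Your stated reason for abandoning the direct induction is not valid: $D=\mathrm{ad}_{\gamma_{\delta}^{(1)}}$ is automatically a derivation of $\abps{K}$ by the Jacobi identity, so the only thing to verify is that all brackets occurring in the Leibniz expansion stay inside $\HH^{0}(\bps{K})$ and agree there with the bracket of $\mathfrak{n}^{-}_{Q^{K}}$ — and this is exactly Theorem \ref{sphericalsubalgebra}(1) (equivalently, the same $\partial$-argument from your first step kills the putative central corrections in $u\cdot\HH^{-2}$, which in any case can only occur in dimensions that are multiples of $\delta$). What fails is only your expectation that $D$ \emph{equals} multiplication by $z$: the derivation determined by $D(f_{i})=zf_{i}$ on the Chevalley generators multiplies a root vector of height $\ell$ by $\ell z$, not by $z$. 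Since all heights are positive, this degree-$\delta$ derivation is injective on every root space, and since the target spaces have the matching dimensions you already computed, this is precisely the paper's ``inductively, up to some non-zero scalar'' step. Your substitute geometric argument, by contrast, is not actually carried out: Propositions \ref{uactionnakajima} and \ref{imaginaryactiononequivarianthilbert} (and the computation in Proposition \ref{strongrationality}) determine $\gamma_{\delta}\bullet$, $u\cdot$ and the relevant commutators only on the vacuum vector $\ket{0}$ and on specific vectors obtained from it, not the full operator $\gamma_{\delta}^{(1)}\bullet$ on $\mathrm{M}^{\zeta^{-}}_{K}$, so the claimed matching of $[\gamma_{\delta}^{(1)}\bullet,\alpha\bullet]$ with $z$-multiplication in the basic representation is unsubstantiated; and even granted on generators, you would still need to propagate it to arbitrary root vectors, which returns you to the induction you set aside. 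The fix is simply to run that induction, using your own step 1 (or Theorem \ref{sphericalsubalgebra}(1)) to legitimize the Leibniz rule inside $\mathfrak{n}^{-}_{Q^{K}}$.
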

\begin{proof}
The Lie algebra generated by $\alpha_i$ where $i \in [0,K]$ is isomorphic to the Lie algebra $\mathfrak{n}^{+}_{Q^K} \simeq (\mathfrak{sl}_{K+1}[z,z^{-1}])^{+}$ by Proposition \ref{zerocohomologybps}. The isomorphism is given by the identification $\alpha_i$ with $E_{i,i+1}$ for $i \neq 0$ and $\alpha_0$ with $zE_{K+1,1}$. Thus after this identification, equation (\ref{relationsbetweenrealandimaginary}) implies that \[ [\gamma_{\delta}^{(1)}, E_{i,i+1}] = zE_{i,i+1}, \  [\gamma_{\delta}^{(1)}, zE_{K+1,1}] = z^2E_{K+1,1}\]
But then one can write $\alpha_{\bd} \in \mathfrak{g}^{\BPS,0}_{\tilde{Q},\tilde{W},\bd}$ as linear combination of commutators of $E_{i,i+1}$ and $zE_{K+1}$. Thus if $\alpha_{\bd} \neq 0$ then $[\gamma_{\delta}^{(1)},\alpha_{\bd}] \neq 0$. While for $\alpha_{\bd} \in \mathfrak{g}^{\BPS,-2}_{\tilde{Q},\tilde{W}}$, $[\gamma_{\delta}^{(1)},\alpha_{\bd}] \neq 0$ is Proposition \ref{relation1}. Since $\gamma_{\delta}^{(1)}$ is an element of cohomological degree $0$, for any $\alpha_{\bd} \in \mathfrak{g}^{\BPS}_{\tilde{Q},\tilde{W},\bd}$, $[\gamma_{\delta}^{(1)},\alpha_{\bd}] \in \mathfrak{g}^{\BPS}_{\tilde{Q},\tilde{W},\bd+\delta}$. So we have checked that the morphism \[ [\gamma_{\delta}^{(1)}, \hyphen]: \mathfrak{g}^{\BPS}_{\tilde{Q},\tilde{W},\bd} \rightarrow \mathfrak{g}^{\BPS}_{\tilde{Q},\tilde{W},\bd+\delta} \] is injective. Since the dimensions are the same, it is an isomorphism. 

\end{proof}




\begin{prop} \label{imaginaryLiealgebra}
    The subspace $\mathfrak{im}_K$ forms a Lie subalgebra of $\abps{K}$ and is generated by $\gamma_{\delta}^{(r)}$ for $r \geq 0$. 
\end{prop}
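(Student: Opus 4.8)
\emph{Strategy.} I would deduce both statements from the rotational symmetry of the cyclic quiver together with the faithful geometric action $\sqbullet$ on $V_K$. Let $\sigma\colon i\mapsto i+1$ be the order $K+1$ rotation automorphism of $Q^K$. It preserves $\WTC{K}$ and the sign twist $\chi$, hence induces an automorphism of $\abps{K}$ which rotates the dimension grading, preserves the cohomological grading, and therefore preserves the decomposition $\abps{K}=\mathfrak{re}_K\oplus\mathfrak{im}_K$ coming from $\mathfrak{g}^{\BPS}=\HH^{0}\oplus\HH^{-2}$. The first point to check is that $\sigma$ fixes the generators of $\mathfrak{im}_K$: since $\gamma_{\delta}$ is normalised by $\gamma_{\delta}\bullet\ket{0}=[S_K]$ (Proposition \ref{imaginaryactiononequivarianthilbert}), and since $\sigma$ fixes $\ket{0}$ and the fundamental class $[S_K]$ while acting equivariantly on $\mathrm{M}^{\zeta^{-}}_K$, we get $\sigma\gamma_{\delta}=\gamma_{\delta}$; as the operator $u$ is cup product with the $\sigma$-invariant class $\sum_i c_1(\mathcal{R}_i)$ and $\gamma_{m\cdot\delta}=\ad^{m-1}_{\gamma^{(1)}_{\delta}}(\gamma_{\delta})$, every $\gamma^{(n)}_{m\cdot\delta}$ is $\sigma$-fixed, so $\mathfrak{im}_K$ lies in the $\sigma$-invariant subspace.

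\emph{Closure.} For imaginary elements $x,y$, both supported on multiples of $\delta$, the bracket $[x,y]$ is supported in the single, $\sigma$-fixed dimension $(m+m')\cdot\delta$ and is itself $\sigma$-fixed. Its $\mathfrak{re}_K$-component therefore lies in the $\sigma$-invariant part of $\HH^{0}(\mathfrak{g}^{\BPS}_{\QTC{K},\WTC{K},(m+m')\cdot\delta})\otimes\Q[u]$. I would show this invariant part vanishes: by Proposition \ref{faithfulhilbertscheme} the degree $0$ BPS elements lie in $\mathfrak{g}^{\Im}$, act faithfully on $V_K$ through Nakajima operators labelled by the exceptional curve classes $[\mathfrak{B}_i(\rho_{\reg})]\in H^2(S_K)$, and $\sigma$ acts on them through its geometric action on $H^2(S_K)$. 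That action cyclically permutes the $A_K$-configuration of exceptional curves, i.e. it is the Coxeter transformation, whose eigenvalues are the primitive $(K+1)$-th roots of unity and in particular omit $1$. Hence the $\sigma$-fixed subspace of the real part is zero, the $\mathfrak{re}_K$-component of $[x,y]$ vanishes, and $[x,y]\in\mathfrak{im}_K$; this proves $\mathfrak{im}_K$ is a Lie subalgebra.

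\emph{Generation.} Set $\mathfrak{L}=\langle\gamma^{(r)}_{\delta}\mid r\ge 0\rangle$. By (\ref{defnofimaginaryelements}) each $\gamma_{m\cdot\delta}$ lies in $\mathfrak{L}$. Using the closure just established together with a cohomological-degree count, $[\gamma^{(a)}_{\delta},\gamma_{m\cdot\delta}]$ must be a scalar multiple of $\gamma^{(a-1)}_{(m+1)\cdot\delta}$; applying the Li--Qin--Wang/Lehn relations (Proposition \ref{Lehnweicommute}) to its action on $V_K$, its leading term is a nonzero multiple of $p^{a-1}_{m+1}([S_K])$, with coefficient proportional to $-ma/(m+1)^{a-1}\neq 0$, exactly as in the proof of Proposition \ref{relation1}, so the scalar is nonzero. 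Inducting on $m$ then places every $\gamma^{(n)}_{m\cdot\delta}$ in $\mathfrak{L}$, giving $\mathfrak{L}=\mathfrak{im}_K$.

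\emph{Main obstacle.} The delicate point is the closure step, namely pinning down the $\sigma$-action on the real part $\HH^{0}(\mathfrak{g}^{\BPS})$ precisely enough to see it has no invariants. Doing this cleanly relies on the faithfulness of the degree $0$ BPS Lie algebra on $V_K$ (Proposition \ref{faithfulhilbertscheme}) and on transporting the abstract rotation to the geometric Coxeter action on $H^2(S_K)$, thereby avoiding sign ambiguities in the Kac--Moody realisation. One must also track that the normalisation identifying $u\cdot$ with the Lehn derivation $\partial$ differs by cup product with an $H^2(S_K)$-class, controlled by Proposition \ref{Lehnweicommute}(1); this same correction must be handled when extracting the nonzero leading coefficient in the generation step.
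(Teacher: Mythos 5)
Your closure argument is a genuinely different route from the paper's. The paper proves closure by computing the action of $[\gamma_{\delta}^{(n)},\gamma_{k\cdot\delta}^{(m)}]$ on $V_K$ explicitly through the Lehn/Li--Qin--Wang commutation relations and then invoking faithfulness of $\mathfrak{g}^{\Im}_{\QTC{K},\WTC{K}}$ to kill the putative real component; you instead observe that the bracket of two rotation-invariant elements is rotation-invariant, while the real part of $\abps{K}$ in dimensions $n\cdot\delta$ carries the diagram-rotation action on the imaginary root spaces of $\widehat{\mathfrak{sl}}_{K+1}$, which has no invariants. This is an attractive simplification: it replaces a delicate operator computation by a representation-theoretic eigenvalue count, and it isolates exactly why the real contamination vanishes. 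Your generation step is essentially the paper's (iterated brackets with $\gamma^{(1)}_{\delta}$, the derivation property of $u$, and nonvanishing via Nakajima operators).

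However, there is a concrete gap in how you pin down the $\sigma$-action. The modules $\mathrm{M}^{\zeta^{-}}_K$ and $V_K$ are built from the \emph{framed} quiver with framing attached to the vertex $0$, and the Kuznetsov stability conditions $\zeta^{n}$ likewise single out the vertex $0$; the rotation $\sigma$ does not act on these moduli spaces compatibly with the $\mathcal{A}^{\chi}_{\QTC{K},\WTC{K}}$-action, so you cannot conclude $\sigma\gamma_{\delta}=\gamma_{\delta}$ from ``$\sigma$ fixes the vacuum and $[S_K]$ and acts equivariantly on $\mathrm{M}^{\zeta^{-}}_K$,'' nor can you read off the $\sigma$-action on $\HH^{0}(\mathfrak{g}^{\BPS})$ from a geometric Coxeter action on $\HH^{2}(S_K)$ via that module. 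This matters: since $\HH^{-2}(\mathfrak{g}^{\BPS}_{\QTC{K},\WTC{K},\delta})$ is one-dimensional, a priori $\sigma\gamma_{\delta}=c\,\gamma_{\delta}$ only for some $(K+1)$-st root of unity $c$, and if $c\neq 1$ the bracket of imaginary elements lands in a nontrivial $\sigma$-eigenspace of the real part, which is \emph{not} zero, and the whole argument collapses. Both facts you need are true and can be obtained without the framed modules: $\sigma\alpha_i=\alpha_{i+1}$ is immediate from the construction of the $\alpha_i$, so by Proposition \ref{zerocohomologybps} the action on $\HH^{0}(\mathfrak{g}^{\BPS}_{\QTC{K},\WTC{K},n\cdot\delta})$ is the cyclic permutation of the coroots $h_{\alpha_0},\dots,h_{\alpha_K}$ modulo their sum, whose eigenvalues are the nontrivial $(K+1)$-st roots of unity (not the primitive ones, as you wrote, but still omitting $1$); and $\sigma$ fixes $\gamma_{\delta}$ because $\HH^{-2}(\mathfrak{g}^{\BPS}_{\QTC{K},\WTC{K},\delta})$ is spanned by the fundamental class of the irreducible coarse moduli space supporting $\mathcal{BPS}_{\QTC{K},\WTC{K},\delta}$, and an algebraic automorphism preserving an irreducible variety fixes its fundamental class. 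With these substitutions, and after checking that the sign twist $\chi_{Q^K}$ is rotation-invariant so that $\sigma$ really is an algebra automorphism commuting with $u$, your argument goes through.
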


\begin{proof}
The claim about generation follows from the definition of $\gamma_{k \cdot \delta}^{(r)}$ and the fact that $u$ acts by derivation. It suffices to show that for $k \geq 1, n,m \geq 0$, the Lie bracket $[\gamma_{\delta}^{(n)}, \gamma_{k \cdot \delta}^{(m)}] \in \mathfrak{im}_{K} $. Since for all $r \geq 1$,  $\gamma_{r \cdot \delta}^{(n)}$ is of cohomological degree $2n-2$, there must exist $\lambda^{n,m}_{k} \in \Q$ and $\mu^{n,m}_k \in \mathbb{Q}$ such that 
\begin{equation}\label{equalityclaim} [\gamma_{\delta}^{(n)}, \gamma_{k \cdot  \delta}^{(m)}]  = \lambda^{n,m}_{k} \gamma_{(1+k) \cdot \delta}^{(n+m-1)}+ \mu_{k}^{n,m}\beta^{(n+m-2)}_{(1+k)\cdot \delta} \end{equation} where $\beta_{(1+k) \cdot \delta} \in \mathfrak{g}^{\BPS,0}_{\QTC{K},\WTC{K}}$. It suffices to show that $\mu^{n,m}_{k} =0$. Note that showing this for $m=0, n \geq 0$, i.e. to show that $\mu^{n,0}_k=0$ for all $n \geq 0$ is enough, since if for some $i \geq 0$, $[\gamma_{\delta}^{(n)},\gamma_{k \cdot \delta}^{(i)}] \in \mathfrak{im}_{K}$ then applying $u$ gives that $[\gamma_{\delta}^{(n+1)},\gamma_{k \cdot \delta}^{(i)}] + [\gamma_{\delta}^{(n)},\gamma_{k \cdot \delta}^{(i+1)}] \in \mathfrak{im}_K$. So if $[\gamma_{\delta}^{(n+1)},\gamma_{k \cdot \delta}^{(i)}] \in \mathfrak{im}_K$ then so does $[\gamma_{\delta}^{(n)},\gamma_{k \cdot \delta}^{(i+1)}]$. We consider the action $\sqbullet$ of $\mathcal{A}^{\Im}_{\tilde{Q^{K}},\tilde{W}}$ on $V_K$ as constructed in Proposition \ref{leftactionhilbertscheme}. We then have 
\begin{claim}
For any $m \geq 1$, there exist a non-zero scalar $\lambda^{\prime \prime}_m \in \Q$, such that for any $v \in V_K$, we have the equality
\begin{align} 
\gamma_{m \cdot \delta}^{(n)} \sqbullet v = \lambda^{\prime \prime}_{m} \left( (K+1) p_{m}^{n}([S_K])v + mn \sum p_m^{n-1}(c_1(\mathcal{R}_i)) v \right)
\label{formulaactiongeneral}
\end{align}
\end{claim}

\begin{claimproof}
We use the same argument as in Proposition \ref{relation1}. We identify the action $\gamma_{\delta} \sqbullet v$ with $p_1([S_K])v$, and then the result follows from the repetitive use of formulas in Proposition \ref{Lehnweicommute} applied to the definition of $\gamma_{m \cdot \delta}^{(n)}$ (equation \ref{defnofimaginaryelements}). 
\end{claimproof}

It follows from the above claim (\ref{formulaactiongeneral}), equation (\ref{equalityclaim}), and Proposition \ref{Lehnweicommute}, that up to a factor of a non-zero scalar, for all $v \in V_K$, we have
\[ [\gamma_{\delta}^{(n)}, \gamma_{k \cdot \delta}^{(0)}] \sqbullet  v = \gamma_{(1+k) \cdot \delta}^{(n-1)} \sqbullet  v\] 
To connlude, tt suffices to show that for some $v \in V_K$, $u^{n-2}\alpha_{(1+k)\cdot \delta} \sqbullet v \neq 0$. By Proposition \ref{faithfulhilbertscheme}, the action of $\mathfrak{g}^{\Im}_{\QTC{K},\WTC{K}}$ on $V_K$ is faithful, thus the statement that there exists $v \in V_K$ such that $u^{n-2}\alpha_{(1+k)\cdot \delta} \sqbullet v \neq 0$ is true for $n=2$. Now, assume otherwise for some $n>2$. Then for all $v$, $[\gamma_{\delta}^{0}, u^{n-2}\alpha_{n \cdot \delta}] \sqbullet v =0$. But by Proposition \ref{strongrationalityconjectureproof}, $[\gamma_{\delta}, u^{n-2}\alpha_{k \cdot \delta}] = u^{n-3} \alpha^{\prime}_{(k+1) \cdot \delta}$ for some $\alpha^{\prime}_{(k+1) \cdot \delta}$. Thus action of $u^{n-3} \alpha^{\prime}_{(k+1) \cdot \delta}$ isn't faithful either. Thus, we inductively conclude that action of some non-zero $\beta_{(k+n-2)\cdot \delta} \in \mathfrak{g}^{\Im}_{\tilde{Q^{K}},\tilde{W}}$ on $V_K$ isn't faithful, which is a contradiction to Proposition \ref{faithfulhilbertscheme}.   
\end{proof}

Note that as vector spaces $\mathfrak{im}_K + \mathfrak{re}_{K} = \abps{K}$. Thus it remains to understand $[\mathfrak{im}_K,\mathfrak{re}_K]$. 

\begin{prop}
We have
\[ [\mathfrak{im}_K,\mathfrak{re}_K] \subset \mathfrak{re}_K\] and is determined by $[\gamma_{\delta}^{(n)}, \alpha_i^{(m)}] $, $n \geq 0$ and $i \in [0,K], m \geq 0$. 
\end{prop}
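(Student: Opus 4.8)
The plan is to prove the two assertions separately: first the containment $[\mathfrak{im}_K,\mathfrak{re}_K]\subset\mathfrak{re}_K$, and then the structural claim that this bracket is determined by the brackets $[\gamma_\delta^{(n)},\alpha_i^{(m)}]$ (where $\alpha_i=f_i$ are the Chevalley generators). Throughout I will use that $\mathfrak{g}^{\BPS}_{\QTC{K},\WTC{K}}$ is concentrated in cohomological degrees $0$ and $-2$, which follows from the Kac polynomial (\ref{kacpolynomialcyclicquiver}) and its relation (\ref{kacpolynomialandrelationwithbpsliealgebra}). Consequently $\abps{K}=\mathfrak{re}_K\oplus\mathfrak{im}_K$ is a \emph{direct} sum of graded subspaces: $\mathfrak{re}_K$ is the span of the $u$-multiples of $\HH^{0}(\bps{K})\simeq\mathfrak{n}^-_{Q^K}$, while $\mathfrak{im}_K$ is the span of the $\gamma_{k\cdot\delta}^{(n)}$, i.e. the $u$-multiples of $\HH^{-2}$, which is supported only on the imaginary dimensions $\mathbb{Z}_{>0}\delta$. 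By bilinearity it suffices to control $[\gamma_{k\cdot\delta}^{(n)},\alpha_\bd^{(m)}]$ for $\alpha_\bd\in\HH^{0}(\bps{K})$; this lies in $\abps{K,k\delta+\bd}$ in cohomological degree $2(n+m-1)$.

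First I would treat the case where $\bd$ is a real positive root. Then $k\delta+\bd$ is again a real positive root, so by (\ref{kacpolynomialcyclicquiver}) and (\ref{kacpolynomialandrelationwithbpsliealgebra}) the space $\mathfrak{g}^{\BPS}_{\QTC{K},\WTC{K},k\delta+\bd}$ is one-dimensional and concentrated in $\HH^{0}$. Hence the entire graded piece $\abps{K,k\delta+\bd}$ lies in $\mathfrak{re}_K$, and the bracket lands there automatically (it is zero when $k\delta+\bd$ is not a root). The main obstacle is the imaginary case $\bd=l\delta$, where the target $(k+l)\delta$ carries a nonzero $\HH^{-2}$-part, so $\abps{K,(k+l)\delta}$ meets $\mathfrak{im}_K$ and one must rule out an $\mathfrak{im}_K$-component. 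I would write
\[ [\gamma_{k\cdot\delta}^{(n)},\alpha_{l\delta}^{(m)}] = A + c\,\gamma_{(k+l)\cdot\delta}^{(n+m)},\qquad A\in\mathfrak{re}_K, \]
and prove $c=0$ by induction on $n+m$. The base case $n+m=0$ is the centrality of $\HH^{-2}$ inside $\mathfrak{g}^{\BPS}_{\QTC{K},\WTC{K}}$ established in the proof of Proposition \ref{bpsLiealgebracyclic}, which gives $[\gamma_{k\cdot\delta},\alpha_{l\delta}]=0$. For the inductive step I would apply the derivation $\partial$ (Propositions \ref{heisliealgebraction} and \ref{derivationbyp}): since $\partial\gamma_{k\cdot\delta}^{(n)}=nk(K+1)\gamma_{k\cdot\delta}^{(n-1)}$ and $\partial\alpha_{l\delta}^{(m)}=ml(K+1)\alpha_{l\delta}^{(m-1)}$, the Leibniz rule expresses $\partial[\gamma_{k\cdot\delta}^{(n)},\alpha_{l\delta}^{(m)}]$ as a combination of brackets of total $u$-degree $n+m-1$, which lie in $\mathfrak{re}_K$ by the inductive hypothesis. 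On the other hand $\partial$ preserves $\mathfrak{re}_K$ and sends $c\,\gamma_{(k+l)\cdot\delta}^{(n+m)}$ to $c(n+m)(k+l)(K+1)\gamma_{(k+l)\cdot\delta}^{(n+m-1)}$; comparing $\mathfrak{im}_K$-components across the direct sum forces $c(n+m)(k+l)(K+1)=0$, whence $c=0$ since $n+m\geq1$. This establishes the containment.

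For the determination I would use that $\mathfrak{im}_K$ is generated as a Lie algebra by $\{\gamma_\delta^{(n)}\}_{n\geq0}$ (Proposition \ref{imaginaryLiealgebra}) and that $\mathfrak{re}_K\simeq\mathfrak{n}^-_{Q^K}[D]$ (Proposition \ref{realLiesubalgebra}) is generated by $\{\alpha_i^{(m)}\}$, since the $f_i=\alpha_i$ generate $\mathfrak{n}^-_{Q^K}$ and all $D$-powers arise as $u$-multiples of iterated brackets. Combined with the three now-known containments $[\mathfrak{im}_K,\mathfrak{im}_K]\subset\mathfrak{im}_K$, $[\mathfrak{re}_K,\mathfrak{re}_K]\subset\mathfrak{re}_K$ and $[\mathfrak{im}_K,\mathfrak{re}_K]\subset\mathfrak{re}_K$, a double induction on the word-lengths of $x\in\mathfrak{im}_K$ and $y\in\mathfrak{re}_K$ via the Jacobi identity reduces any $[x,y]$ to the atomic brackets $[\gamma_\delta^{(n)},\alpha_i^{(m)}]$ together with the internal structure constants of $\mathfrak{re}_K$ and $\mathfrak{im}_K$. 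Concretely, peeling off a generator from $y$ through $[x,[\alpha_i^{(m)},y']]=[[x,\alpha_i^{(m)}],y']+[\alpha_i^{(m)},[x,y']]$ and then from $x$ through $[[\gamma_\delta^{(n)},x'],\alpha_i^{(m)}]=[\gamma_\delta^{(n)},[x',\alpha_i^{(m)}]]-[x',[\gamma_\delta^{(n)},\alpha_i^{(m)}]]$ keeps every intermediate bracket inside the two subalgebras, so the values of $[\gamma_\delta^{(n)},\alpha_i^{(m)}]$ indeed determine the whole pairing. I expect the imaginary case of the containment, and in particular the $\partial$-argument forcing the vanishing of the $\mathfrak{im}_K$-component, to be the crux of the proof.
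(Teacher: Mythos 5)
Your proof is correct, but it takes a longer route to the containment than the paper does. The paper first reduces to the generators on both sides: $\mathfrak{im}_K$ is generated by the $\gamma_{\delta}^{(r)}$ and $\mathfrak{re}_K$ by the $f_i^{(n)}$, so via the Jacobi identity everything comes down to the single bracket $[\gamma_{\delta}^{(r)},f_i^{(n)}]$, which lives in dimension $\delta+\delta_i$; since $\mathfrak{im}_K$ is supported only on $\mathbb{Z}_{>0}\delta$, that bracket is forced into $\mathfrak{re}_K$ by the support argument alone, and the imaginary-target case never arises. You instead prove the containment for arbitrary homogeneous elements $[\gamma_{k\delta}^{(n)},\alpha_{\bd}^{(m)}]$, which obliges you to confront the case $\bd=l\delta$ where the target dimension $(k+l)\delta$ does meet $\mathfrak{im}_K$; your $\partial$-induction killing the coefficient $c$ of $\gamma_{(k+l)\delta}^{(n+m)}$ is sound (it is the same Heisenberg-derivation trick the paper deploys in the proof of Theorem \ref{sphericalsubalgebra}(1)), and the base case correctly invokes the centrality of $\HH^{-2}(\bps{K})$ from Proposition \ref{bpsLiealgebracyclic}. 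What your route buys is a pointwise statement — the vanishing of the $\mathfrak{im}_K$-component of every cross-bracket, proved without appealing to generation of $\mathfrak{re}_K$ — at the cost of extra work that the paper's choice of generators renders unnecessary. Your second paragraph, establishing that the pairing is determined by the atomic brackets $[\gamma_{\delta}^{(n)},\alpha_i^{(m)}]$ via double induction on word length, is exactly the (implicit) argument in the paper.
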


\begin{proof}
Since $\mathfrak{im}$ is generated by $\gamma_{\delta}^{r}, r \geq 0$ it suffices to show that $[\gamma_{\delta}^{r}, \mathfrak{re}] \subset \mathfrak{re}$. But $\mathfrak{re}$ is also generated by $e_i^n$ by the Proposition \ref{realLiesubalgebra} so, in fact we need to consider the commutators between $[\gamma_{\delta}^{r}, e_i^{n}]$. The commutator $[\gamma_{\delta}^{r}, f_i^{n}]$ is an element of dimension $\delta+\delta_i$ in $\abps{K}$. Since $\mathfrak{im}_K$ only contains elements whose dimension vector is a multiple of $\delta$, the commutator must be in $\mathfrak{re}_K$. 
\end{proof}

We now sieve back everything together. Recall the Lie algebra $\tilde{W}_{K+1}^{+}$ from Definition \ref{positivehalfofclassicallimit}. By Proposition \ref{wKnegativelydetermined}, $\tilde{W}_{K+1}^{+}$ is a negatively determined $\Heis$ Lie algebra. We consider the Lie subalgebra $\tilde{W}_{K+1}^{+,\leq 0} \subset \tilde{W}_{K+1}^{+}$. It is a Lie subalgebra spanned by $T_{0,0}(X)$ where $ X \in \mathfrak{n}^{+}_{K+1}$, $T_{m+1,0}(X)$ where $ \in \mathfrak{gl}_{K+1}, m \geq 0$ and $t_{n,1},t_{n,0}$ for all $n \geq 1$. Similarly, by Proposition \ref{negativelydeterminedaffinizedbps}, $\abps{K}$ is a negatively determined $\Heis$ Lie algebra. Let $(\abps{K})^{\leq 0} \subset \abps{K}$ be the non positive degree subspace of $\abps{K}$. Clearly it is generated by $\alpha_{i}^{(0)}, i \in [0,K]$ and $\gamma_{n \cdot \delta}^{(0)},\gamma_{n \cdot \delta}^{(1)}$ for $n \geq 1 $.

\begin{thm}\label{Theorem1}
For any $K \geq 1$, the morphism $F: (\abps{K})^{\leq 0} \rightarrow \tilde{W}_{K+1}^{+,\leq 0}$ given by 
    \begin{align*}
        \gamma_{n \cdot \delta}^{(0)} &\rightarrow (-1)^{n}(n-1)!(K+1)^{n}t_{n,0} \\ 
         \gamma_{n \cdot \delta}^{(1)} &\rightarrow (-1)^{n}(n)!(K+1)^{n-1} \left((K+1)t_{n,1} - T_{n,0}(H_{K+1}) \right) \\
        \alpha_{i}^{(0)} &\rightarrow T_{0,0}(E_{i,i+1}), \forall i \in [1,K]  \\ \alpha_{0}^{(0)} &\rightarrow T_{1,0}(E_{K+1,1})
    \end{align*}
    extends to an isomorphism of Lie algebras \[ \hat{F}: \abps{K} \rightarrow \tilde{W}^{+}_{K+1}.\]
\end{thm}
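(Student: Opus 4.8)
The plan is to apply Proposition \ref{mapofheisLiealgebras} to the pair $\mathfrak{L} = \abps{K}$ and $\mathfrak{H} = \tilde{W}^{+}_{K+1}$, both of which are negatively determined $\Heis$ Lie algebras by Propositions \ref{negativelydeterminedaffinizedbps} and \ref{wKnegativelydetermined}. By that proposition it suffices to produce a Lie algebra homomorphism $F \colon (\abps{K})^{\leq 0} \to \tilde{W}^{+,\leq 0}_{K+1}$ intertwining the two $\Heis$-actions; it then extends uniquely to a $\Heis$-module map $\hat F$, which is automatically a Lie algebra homomorphism once the identity $\hat F([a,b]) = [\hat F(a), \hat F(b)]$ is checked in cohomological bidegrees with $i+j \leq 0$, i.e. precisely on $(\abps{K})^{\leq 0}$. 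So the whole content splits into (a) verifying that $F$ respects brackets and the $\Heis$-action on the explicit generators, and (b) upgrading the resulting morphism to an isomorphism.

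For (a) I would organize the generators of $(\abps{K})^{\leq 0}$ according to the real/imaginary decomposition. The $\alpha_i^{(0)}$ generate $\mathfrak{re}_K \simeq \mathfrak{n}^{-}_{Q^{K}}[D]$ (Proposition \ref{realLiesubalgebra}), and their images $T_{0,0}(E_{i,i+1})$ and $-T_{1,0}(E_{K+1,1})$ generate the copy of the positive half of $\widehat{\mathfrak{sl}}_{K+1}$ sitting inside $\tilde{W}^{+}_{K+1}$ that is spanned by the $T_{m,a}(X)$; matching the affine relations on both sides is routine using $[T_{m,a}(X),T_{n,b}(Y)] = T_{m+n,a+b}([X,Y])$ from (\ref{Relations_full_Liealgebra}). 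The $\gamma^{(0)}_{n\delta}$ and $\gamma^{(1)}_{n\delta}$ generate the low part of $\mathfrak{im}_K$ (Proposition \ref{imaginaryLiealgebra}), mapping to $t_{n,0}$ and $(K+1)t_{n,1} - T_{n,0}(H_{K+1})$ with the stated scalars; here the vanishing $[\gamma^{(0)}_{n\delta},\gamma^{(0)}_{m\delta}]=0$ and the relation of Proposition \ref{relation1} match $[t_{n,0},t_{m,0}]=0$ and $[t_{n,1},t_{m,0}] = m\,t_{n+m,0}$ under (\ref{Relations_full_Liealgebra}). The genuinely new input is the mixed bracket $[\gamma^{(1)}_\delta,\alpha_i]$ computed geometrically in (\ref{relationsbetweenrealandimaginary}): under the identification of $\mathfrak{n}^{-}_{Q^{K}}$ with a loop algebra over $\mathfrak{sl}_{K+1}$ as in Corollary \ref{strongrationalityconjectureproof}, $[\gamma^{(1)}_\delta,-]$ raises the loop degree by one, which is exactly the image of the relation sending $[(K+1)t_{1,1}-T_{1,0}(H_{K+1}),\,T_{m,0}(X)] \mapsto T_{m+1,0}(X)$ in $\tilde{W}^{+}_{K+1}$.

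For the $\Heis$-compatibility I would verify $F(u\cdot x) = p\cdot F(x)$ and $F(\partial x) = q\cdot F(x)$ on the generators, where on the target $p = [\tfrac{t_{0,2}}{2} - \tfrac{T_{0,1}(H_{K+1})}{K+1}, -]$ and $q = \partial_D$ as in Proposition \ref{wKnegativelydetermined}. Here $\partial$ kills $\gamma^{(0)}_{n\delta}$ and $\alpha_i^{(0)}$ and sends $\gamma^{(1)}_\delta \mapsto (K+1)\gamma^{(0)}_\delta$ by Proposition \ref{derivationbyp}, matching $\partial_D$ on the $t_{n,a}$, while $p$-compatibility is forced by the way $u$ raises the $D$-degree (one checks e.g. that $p(T_{0,0}(E_{i,i+1})) = \tfrac{1}{K+1}T_{0,1}(E_{i,i+1})$ and $p(t_{1,0}) = t_{1,1}-\tfrac{1}{K+1}T_{1,0}(H_{K+1})$, consistent with the images of $\alpha_i^{(1)}$ and $\gamma^{(1)}_\delta$). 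Proposition \ref{mapofheisLiealgebras} then produces $\hat F \colon \abps{K} \to \tilde{W}^{+}_{K+1}$.

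Finally, for (b), surjectivity follows from Proposition \ref{sphericalgenerationofLK+1Liealgebra}: each listed generator of $\tilde{W}^{+}_{K+1}$ lies in the image, since $T_{0,a}(E_{i,i+1})$ and the wrap-around generator arise from $\alpha_i^{(a)}$ and from bracketing with $\hat F(\gamma^{(1)}_\delta)$ (which raises loop degree), while the $t_{1,a}$ are reached from $\gamma^{(a)}_\delta$ modulo $T$-terms already in the image. For injectivity I would compare bigraded dimensions: the vector-space identification $\abps{K} \simeq \mathfrak{g}^{\BPS}_{\QTC{K},\WTC{K}}[u]$ together with Proposition \ref{bpsLiealgebracyclic} and the Kac polynomial (\ref{kacpolynomialcyclicquiver}) gives a Hilbert series matching, bidegree by bidegree, the evident spanning set $\{T_{m,a}(X),\,t_{m,a}\}$ of $\tilde{W}^{+}_{K+1}$, and a graded surjection between spaces with equal finite-dimensional graded pieces is an isomorphism. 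The main obstacle is step (a): assembling the mixed real--imaginary bracket computations into a clean verification carrying the correct normalizing scalars $(-1)^n(n-1)!(K+1)^n$ and $(-1)^n n!(K+1)^{n-1}$, since this is where the geometric relations of Section \ref{sectionaffinized} must be matched precisely against the Poisson relations (\ref{Relations_full_Liealgebra}) of $\tilde{W}^{+}_{K+1}$.
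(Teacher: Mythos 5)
Your proposal follows essentially the same route as the paper: both reduce the problem to Proposition \ref{mapofheisLiealgebras} applied to the two negatively determined $\Heis$ Lie algebras, verify the bracket and $\Heis$-compatibility of $F$ on the explicit generators using Propositions \ref{realLiesubalgebra}, \ref{relation1}, \ref{imaginaryLiealgebra} and Equation (\ref{relationsbetweenrealandimaginary}), obtain surjectivity of $\hat F$ from Proposition \ref{sphericalgenerationofLK+1Liealgebra}, and conclude injectivity by matching bigraded dimensions via Proposition \ref{bpsLiealgebracyclic}.

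There is, however, one point where your reduction is too quick. Proposition \ref{mapofheisLiealgebras} requires the identity $\hat F([\alpha_{2i},\alpha_{2j}]) = [\hat F(\alpha_{2i}),\hat F(\alpha_{2j})]$ for all pairs with $i+j \leq 0$, which is strictly weaker than requiring both arguments to lie in $(\abps{K})^{\leq 0}$: one of the two elements may have positive cohomological degree as long as the bracket lands in nonpositive degree. Your phrase ``i.e.\ precisely on $(\abps{K})^{\leq 0}$'' conflates these two conditions and thereby skips the brackets of the form $[\gamma_{m\cdot\delta}^{(2)},\gamma_{n\cdot\delta}^{(0)}]$, $[\gamma_{n\cdot\delta}^{(1)},\gamma_{m\cdot\delta}^{(1)}]$ and $[\gamma_{n\cdot\delta}^{(0)},\alpha_i^{(1)}]$, where the first (or one) argument sits in degree $2$. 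The paper handles exactly this residual case: using the $\Heis$-module structure it reduces these to the single relation $[\gamma_{\delta}^{(2)},\gamma_{n\cdot\delta}]$, computes $\hat F(\gamma_{\delta}^{(2)}) = (-1)((K+1)t_{1,2}-2T_{1,1}(H_{K+1}))$, and checks compatibility against the structure constant $[\gamma_{\delta}^{(1)},\gamma_{n\cdot\delta}^{(1)}] = \tfrac{n-1}{n+1}\gamma_{(n+1)\cdot\delta}^{(1)}$ coming from Proposition \ref{imaginaryLiealgebra}. This extra verification is needed for the argument to close, so you should add it; everything else in your outline matches the paper's proof.
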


\begin{proof}
We check that $F$ satisfies the conditions in Proposition \ref{mapofheisLiealgebras}. We first check the compatibility with $\Heis$ actions. We have 
\begin{align*}
F(u \cdot \gamma^{(0)}_{n \cdot \delta}) &= F(\gamma_{n \cdot \delta}^{(1)}) \\ &= (-1)^{n}(n)!(K+1)^{n-1} \left((K+1)t_{n,1} - T_{n,0}(H_{K+1}) \right) \\ &= \left[ \frac{t_{0,2}}{2} - \frac{T_{0,1}(H_{K+1})}{K+1}, (-1)^{n}(n-1)!(K+1)^{n}t_{n,0} \right] \\ &= p F(\gamma^{(0)}_{n \cdot \delta}),
\end{align*} where the last equality is the consequence of the definition of $p$ in Proposition \ref{wKnegativelydetermined} and similarly $F(\partial \cdot \gamma_{n \cdot \delta}^{(1)}) = n F(\gamma_{n \cdot \delta}^{(0)}) = q F(\gamma_{n \cdot \delta}^{(1)}) $ by Proposition \ref{wKnegativelydetermined}. 
We now check that $F$ is a morphism of  Lie algebras. Since the  structure constants  between $[\gamma_{n \cdot \delta}^{(1)}, \gamma^{(0)}_{m \cdot \delta}]$ and $[\gamma_{n \cdot \delta}^{(1)}, \gamma_{m \cdot \delta}^{(1)}]$ for $m,n>1$ are determined by relations of $[\gamma_{\delta}^{(1)},\gamma^{(0)}_{m \cdot \delta}]$ and $[\gamma_{\delta}^{(1)},\gamma_{m \cdot \delta}^{(1)}]$, we only consider the latter.  Clearly $F([\gamma^{(0)}_{\delta},\gamma^{(0)}_{n \cdot \delta}]) = 0 = -(-1)^{n+1}[(K+1)t_{1,0},(n-1)!(K+1)^{n}t_{n,0}]=0$. Next, we see that 
\begin{align*}
[F(\gamma_{\delta}^{(1)}),F(\gamma^{(0)}_{n \cdot \delta})] &= [(-1)((K+1)t_{1,1}-T_{1,0}(H_{K+1})), (-1)^{n}(n-1)!(K+1)^{n}t_{n,0}]  \\ &= (-1)^{n+1}n!(K+1)^{n+1}t_{n+1,0} \\ &=  F(\gamma_{(n+1) \cdot \delta}^{(0)})\\ &= F([\gamma_{\delta}^{(1)},\gamma^{(0)}_{n \cdot \delta}])
\end{align*} where the last equality is Equation (\ref{defnofimaginaryelements}). The same argument holds for the relation $[\gamma_{n \cdot \delta}^{(1)},\gamma^{(0)}_{\delta}] = \gamma_{(n+1)\cdot \delta}$.  On the other hand via Proposition \ref{imaginaryLiealgebra} we have relation $[\gamma_{\delta}^{(1)},\gamma_{n \cdot \delta}^{(1)}] = (n-1)/(n+1) \gamma_{(n+1) \cdot \delta}^{(1)}$ whose compatibility can be checked in the same way. We now consider relations among $\alpha_i^{(0)}$ and $\gamma_{n \cdot \delta}$. For $i \neq 0$, we have 
\begin{align*}
[F(\gamma_{\delta}^{(1)}), &F(\alpha^{(0)}_{i})] = [(-1)((K+1)t_{1,1}-T_{1,0}(H_{K+1})), T_{0,0}(E_{i,i+1})] \\ &= T_{1,0}(E_{i,i+1}) \\ &= \frac{(-1)^{K-i}}{2} [T_{0,0}(E_{i,i+1}),T_{0,0}(E_{i,i+1}), \cdots, T_{0,0}(E_{K,K+1}), T_{0,0}(E_{i-1,i}), \cdots, T_{1,0}(E_{K+1,1})] \\ &= F([\gamma_{\delta}^{(1)},\alpha^{(0)}_{i}]) 
\end{align*} via Equation \ref{relationsbetweenrealandimaginary}. The same argument holds when $n>1$ and when $i=0$. In the same way, for $i \neq 0$
\begin{align*}
[F(\gamma^{(0)}_{n \cdot \delta}),F(\alpha^{(0)}_{i})] &=  [ (-1)^n(n-1)!(K+1)^n t_{n,0}, T_{0,0}(E_{i,i+1})] \\
&= 0 = F([\gamma^{(0)}_{n \cdot \delta},\alpha^{(0)}_{i}])
\end{align*}
and for $i=0$, we have
\begin{align*}
[F(\gamma^{(0)}_{n \cdot \delta}),F(\alpha^{(0)}_{0})] &=  [ (-1)^n(n-1)!(K+1)^n t_{n,0}, T_{1,0}(E_{K+1,1})] \\
&= 0 = F([\gamma^{(0)}_{n \cdot \delta},\alpha^{(0)}_{0}])
\end{align*}
where the last equalities are because of Proposition \ref{bpsLiealgebracyclic}. Finally, the relations between $[\alpha_{i}^{(0)},\alpha_{j}^{(0)}]$ are also satisfied since by Proposition \ref{bpsLiealgebracyclic}, the Lie algebra generated by $\alpha^{(0)}_{i}$ is exactly the positive half $\mathfrak{n}^{+}_{K+1}$ and the above map is just the identification of  $\mathfrak{n}^{+}_{K+1}$ with $\tilde{\mathfrak{sl}}^{+}_{K+1}$.

By Propositon \ref{mapofheisLiealgebras}, $F$ extends to a map of $\Heis$ modules $\hat{F}$. To check if the map extends to the whole Lie algebra, it suffices to check the relations for $[\gamma_{n \cdot \delta}, \gamma_{m \cdot \delta}^{(2)}]$, $[\gamma_{n \cdot \delta}^{(1)}, \gamma_{m \cdot \delta}^{(1)}]$, $[\gamma_{n \cdot \delta}, \alpha_i^{(1)}]$. Since by definition, it is a map of $\textbf{Heis}$ modules, this boils down to checking the relation for $[\gamma_{\delta}^{(2)}, \gamma_{n \cdot \delta}]$. But again it can be checked by definition that \[\hat{F}(\gamma_{\delta}^{(2)}) = (-1)((K+1)t_{1,2}-2T_{1,1}(H_{K+1})) \] and so 
\begin{align*}
[\hat{F}(\gamma_{\delta}^{(2)}),\hat{F}(\gamma_{n \cdot \delta})] &= [(-1)((K+1)t_{1,2}-2T_{1,1}(H_{K+1})), (-1)^{n}(n-1)!(K+1)^{n}t_{n,0}]  \\ &= (-1)^{n+1}(n)!(K+1)^{n}((K+1)t_{n+1,1}-T_{n+1,0}(H_{K+1})) \\ &= 2/(n+1) F(\gamma_{(n+1) \cdot \delta}^{0})\\ &= \hat{F}([\gamma_{\delta}^{(2)},\gamma_{n \cdot \delta}]).
\end{align*}
where the last equality follows from Proposition \ref{imaginaryLiealgebra}. Thus $\hat{F}$ is a map of Lie algebras. By Proposition \ref{sphericalgenerationofLK+1Liealgebra}, the map $\hat{F}$ is surjective. The map $\hat{F}$ preserves cohomological grading and total dimensions. We note by Proposition \ref{bpsLiealgebracyclic}, that 

\begin{align*}
\dim((\widehat{\mathfrak{g}}^{\BPS,i}_{\tilde{Q^{K}},\tilde{W^{K}}})_{|\bd|}) = \begin{cases}
1  & \textrm{if } i=-2, |\bd| \equiv 0 \mod{K+1} \\ K+1 & \textrm{if } i \in 2 \mathbb{Z}_{\geq 0}, |\bd| >0 
\\ 0 & \textrm{ otherwise } 
\end{cases}
\end{align*}

and 
\begin{align*}
\dim((\tilde{W}^{+}_{K+1})^{\mathrm{CG}}_{\dim}) = \begin{cases}
1  & \textrm{if } \mathrm{CG}=-2, \dim \equiv 0 \mod{K+1} \\ K+1 & \textrm{if } \mathrm{CG} \in 2 \mathbb{Z}_{\geq 0}, \dim  >0 
\\ 0 & \textrm{ otherwise } 
\end{cases}
\end{align*}

Since they are the same, it follows that the morphism $\hat{F}$ is, in fact, an isomorphism of Lie algebras. 
\end{proof}

\subsection{Application: CoHA of Kleinian surface} We recall $\pi: S_K \rightarrow \C^2/\mathbb{Z}_{K+1}$ is the minimial resolution of the Kleinian singularity $\C^2/\mathbb{Z}_{K+1}$. By \cite{kapranov1999kleinian}[1.4,3.4], there is an equivalence of derived categories
\[ \mathrm{\Psi} \colon \Db(\Coh(S_K)) \rightarrow \Db(\mathrm{\Pi}_{Q^{K}} \hyphen \textrm{Mod}) \] which restricts to an equivalence 
\[ \mathrm{\Psi} \colon \Db(\Coh_{\ps}(S_K)) \rightarrow \Db({\mathrm{\Pi}}_{Q^{K}} \hyphen \textrm{mod}) \] where $\Coh_{\ps}(S_K)$ is the abelian category of compactly supported coherent sheaves on $S_K$. This equivalence doesn't preserve the heart given by the usual bounded derived $t$ structures on the derived category. It is known that the abelian subcategory on the left, which is mapped to the representation category of the finite dimensional representations of the preprojective algebra of the cyclic quiver $\mathrm{\Pi}_{Q^K}$ is the abelian category of compactly supported perverse coherent sheaves associated to $\pi \colon S_K \rightarrow \C^2/\mathbb{Z}_{K+1}$ \cite{10.1215/S0012-7094-04-12231-6}[3.1]. 

Recall that for $i \in [1,K]$, $\mathfrak{B}_{i}(\rho_{\text{reg}})$ are the irreducible components of the exceptional fiber $\pi^{-1}(0)$ (Section \ref{componentsoffiber}).  Let $\omega \in \textrm{Pic}_{\Q}(S_K):= \textrm{Pic}(S_K) \otimes_{\Z} \mathbb{Q}$ be a $\Q$ polarization of $S_K$. By polarization, we mean that $\langle \omega, \mathfrak{B}_{i}(\rho_{\text{reg}}) \rangle >0$ for all $i \in [1,K]$, where $\langle \hyphen, \hyphen \rangle$ is the intersection pairing on the Picard group of $S_K$. The slope of a sheaf $\mathcal{F} \in \Coh_{\textrm{ps}}(S_K)$ is defined to be 
\begin{align*}
    \mu_{\omega}(\mathcal{F}) := \begin{cases}
        \frac{\chi(F)}{\text{ch}_1(\mathcal{F}) \cdot \omega} \textrm{ if } \text{ch}_1(\mathcal{F}) \cdot \omega \neq 0 \\ 
        \infty \textrm{ otherwise }
    \end{cases}
\end{align*}

For any $\mu \in \Q_{>} \cup \infty$, let $\Coh^{\omega}_{\mu}(S_K)$ be the classical truncation of the derived stack of $\omega$-semistable properly supported coherent sheaves of $S_K$ of slope $\mu$. Then in \cite{porta2022twodimensional}[2.3.2] and \cite{diaconescu2020mckay}[3.1], the authors define a cohomological Hall algebra $\mathcal{A}^{\zeta}_{\mu}(S_K)$ on the Borel-Moore homology $\HBM(\Coh^{\omega}_{\mu}(S_K),\Q)$ of the stack $\Coh^{\omega}_{\mu}(S_K)$. Furthermore, it is shown that under the above derived equivalence, this algebra is isomorphic to the semistable cohomological Hall algebra of the preprojective algebra of the cyclic quiver. More precisely, 
\begin{prop}\cite{diaconescu2020mckay} \label{salatheorem}[Corollary 3.4] Let $\zeta_i = \langle \omega, \mathfrak{B}_i(\rho_{\reg}) \rangle$ for $1 \leq i \leq K$ and \[ \zeta_0 := \begin{cases}
\frac{1}{\mu} - \sum_{i=1}^{K} \zeta_i &, \textrm{if } \mu \neq \infty \\ -\sum_{i=1}^{K} \zeta_i &, \textrm{if } \mu = \infty
\end{cases}. \]
The derived equivalence $\mathrm{\Psi}$ induces an algebra isomorphism \[\mathcal{A}^{\omega}_{\mu}(S_K) \simeq \mathcal{A}^{\zeta}_{\mathrm{\Pi}_{Q^{K}},0}.\] where $\mathcal{A}^{\zeta}_{\mathrm{\Pi}_{Q^{K}},0}$ is the $\zeta$-semistable, slope $0$ preprojective cohomological Hall algebra of the cyclic quiver $Q^K$. 
\end{prop}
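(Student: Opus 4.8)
The plan is to reduce the statement to the derived equivalence $\mathrm{\Psi}$ of \cite{kapranov1999kleinian} together with a matching of the two stability structures; indeed this is precisely \cite{diaconescu2020mckay}[Corollary 3.4] specialised to $G = \mathbb{Z}_{K+1}$, and I would reconstruct its proof in three steps. First I would invoke the fact, recalled above, that $\mathrm{\Psi}$ restricts to an exact equivalence between the abelian category of compactly supported perverse coherent sheaves on $S_K$ (with respect to $\pi \colon S_K \to \mathbb{C}^2/\mathbb{Z}_{K+1}$) and the category of finite-dimensional modules over $\mathrm{\Pi}_{Q^K}$, by \cite{10.1215/S0012-7094-04-12231-6}[3.1]. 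This already identifies the two underlying abelian categories; it then remains to match (a) the stability conditions and (b) the Hall-algebra products.

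For step (a), the matching of stability, I would compute the induced map on numerical Grothendieck groups. Under $\mathrm{\Psi}$ the simple perverse sheaves supported on the exceptional components $\mathfrak{B}_i(\rho_{\mathrm{reg}})$, $i \in [1,K]$, correspond to the vertex simples of dimension $\delta_i$, while the remaining simple corresponds to that of dimension $\delta_0$; hence a class with $\mathrm{\Psi}(\mathcal{F})$ of dimension vector $\mathbf{d}$ has its Euler characteristic $\chi(\mathcal{F})$ and curve-class pairing $\mathrm{ch}_1(\mathcal{F})\cdot\omega$ expressed as fixed integral linear combinations of the $\mathbf{d}_i$, governed by the duality between exceptional curves and tautological classes (Proposition \ref{duality}) and by the shift built into the perverse $t$-structure. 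Feeding $\zeta_i = \langle \omega, \mathfrak{B}_i(\rho_{\mathrm{reg}})\rangle$ and $\zeta_0$ as in the statement into $\zeta\cdot\mathbf{d}$, a direct computation then identifies $\zeta\cdot\mathbf{d}$, up to a fixed nonzero scalar, with the quantity whose vanishing is the equation $\mu_\omega(\mathcal{F}) = \mu$ and whose sign on subobjects reproduces the $\omega$-slope comparison; the degenerate case $\mu=\infty$ (where $\mathrm{ch}_1(\mathcal{F})\cdot\omega = 0$) is handled by the second branch of the formula for $\zeta_0$. Consequently $\omega$-semistability of slope $\mu$ is equivalent to $\zeta$-semistability of slope $0$, and the two monoids of admissible classes coincide.

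For step (b), the compatibility of products, I would use that both Hall products are defined by push--pull along the stack of short exact sequences of the respective heart: on the sheaf side this is the construction of \cite{porta2022twodimensional} and \cite{diaconescu2020mckay}, on the quiver side it is the preprojective cohomological Hall algebra recalled in Section \ref{hallalgebra}. Since $\mathrm{\Psi}$ is an equivalence of abelian categories it induces an isomorphism of the corresponding stacks of filtered objects intertwining the three structure maps $\pi_1\times\pi_3$ and $\pi_2$, and it is compatible with the Borel--Moore homology functoriality used to define $\star$; hence the two products are identified, giving the claimed algebra isomorphism.

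The main obstacle is step (a): one must carefully track the effect of $\mathrm{\Psi}$ on numerical classes through the non-standard perverse heart, in particular the way $\mathbf{d}_0$ enters both $\chi(\mathcal{F})$ and the curve-class pairing, and correctly fix the signs introduced by the cohomological shifts of the simple objects $\mathrm{\Psi}^{-1}(S_i)$. Once the dictionary between $\bigl(\chi(\mathcal{F}),\,\mathrm{ch}_1(\mathcal{F})\cdot\omega\bigr)$ and the linear data $(\mathbf{d}_0,\ \sum_{i=1}^{K}(\mathbf{d}_i-\mathbf{d}_0)\zeta_i)$ is pinned down, the equivalence of semistability notions and the transport of the cohomological Hall algebra product are formal.
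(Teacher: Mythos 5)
The paper offers no proof of this proposition: it is imported verbatim as \cite{diaconescu2020mckay}[Corollary 3.4], so the only ``proof'' in the text is the citation. Your three-step reconstruction (exact equivalence of hearts via the perverse coherent $t$-structure of \cite{10.1215/S0012-7094-04-12231-6}, matching of $\omega$-slope-$\mu$ semistability with $\zeta$-slope-$0$ semistability through the dictionary $\bigl(\chi(\mathcal{F}),\,\mathrm{ch}_1(\mathcal{F})\cdot\omega\bigr)\leftrightarrow\bigl(\mathbf{d}_0,\ \sum_{i=1}^{K}(\mathbf{d}_i-\mathbf{d}_0)\zeta_i\bigr)$, which indeed reproduces the paper's equation $\sum_{i=1}^{K}\zeta_i(\mathbf{d}_i-\mathbf{d}_0)+\mathbf{d}_0/\mu=0$, and transport of the push--pull product along the identified stacks of short exact sequences) is a faithful outline of the argument in the cited reference, so it is consistent with the paper's treatment.
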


In what follows, we will calculate the algebra $\mathcal{A}^{\zeta}_{\mathrm{\Pi}_{Q^{K}},0}$, which by the dimension reduction theorem, \cite{davison2022affine}[Section 3.2.2] is isomorphic to the algebra $\mathcal{A}^{\zeta,\chi}_{\QTC{K},\WTC{K},0}$. We first consider a more general setting. Given any symmetric quiver $Q$, stability condition $\zeta$ and slope $\mu$, consider the Lie subalgebra \[ \widehat{\mathfrak{g}}^{\BPS, \zeta}_{\mathrm{\Pi}_Q,\mu} := \bigoplus_{\bd \in \Lambda^{\zeta}_{\mu} \backslash \{ \textbf{0} \}}  \mathfrak{g}^{\BPS}_{\tilde{Q},\tilde{W},\bd}[u]  \subset \abps{}.\]
We then consider its universal enveloping algebra
 \[\mathcal{C}^{\zeta}_{\tilde{Q},\tilde{W},\mu} := \bU(\widehat{\mathfrak{g}}^{\BPS, \zeta}_{\mathrm{\Pi}_Q,\mu}) \subset \mathcal{A}^{\chi}_{\tilde{Q},\tilde{W}}. \] 

The following proposition gives a general way to realize the algebra $\mathcal{A}^{\zeta, \chi}_{\tilde{Q^K},\tilde{W^K},\mu}$ as a subalgebra of $\mathcal{A}^{\chi}_{\tilde{Q^K},\tilde{W^K},\mu}$ explicitly.
\begin{prop}\label{semistableaffinebps}
There is an isomorphism of algebras 
\[\mathcal{C}^{\zeta}_{\tilde{Q},\tilde{W},\mu}  \simeq \mathcal{A}^{\zeta, \chi}_{\tilde{Q},\tilde{W},\mu} . \]
\end{prop}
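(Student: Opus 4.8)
The plan is to realize both sides as universal enveloping algebras of isomorphic Lie algebras and then apply $\bU$. Since $\mathcal{C}^{\zeta}_{\tilde{Q},\tilde{W},\mu}=\bU(\widehat{\mathfrak{g}}^{\BPS,\zeta}_{\mathrm{\Pi}_Q,\mu})$ holds by definition, it suffices to (i) show that the semistable CoHA $\mathcal{A}^{\zeta,\chi}_{\tilde{Q},\tilde{W},\mu}$ is itself the universal enveloping algebra of its affinized semistable BPS Lie algebra $\widehat{\mathfrak{g}}^{\BPS,\zeta}_{\tilde{Q},\tilde{W},\mu}:=\mathfrak{g}^{\BPS,\zeta}_{\tilde{Q},\tilde{W},\mu}\otimes\C[u]$, and then (ii) produce an isomorphism of Lie algebras $\widehat{\mathfrak{g}}^{\BPS,\zeta}_{\tilde{Q},\tilde{W},\mu}\simeq\widehat{\mathfrak{g}}^{\BPS,\zeta}_{\mathrm{\Pi}_Q,\mu}$.

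For (i), I would run the factorization coproduct argument of Section \ref{factorizationcoproduct} verbatim in the $\zeta$-semistable setting. The one new point to verify is that the decomposition of a $\Jac(\tilde{Q},\tilde{W})$-module by the generalized eigenvalues of $\omega$ is compatible with $\zeta$-semistability: because $\mu_{\zeta}$ depends only on the dimension vector and every $\Jac$-submodule respects the canonical generalized eigenspace decomposition of $\omega$, a representation supported on two disjoint balls $U_1\sqcup U_2$ is $\zeta$-semistable of slope $\mu$ if and only if each eigenvalue block is. This gives the semistable analogues of the stack isomorphism (\ref{step1}) and the sheaf factorization (\ref{step2}), hence a cocommutative coproduct $\mathrm{\Delta}$ compatible with the CoHA product and landing in $\mathcal{A}^{\zeta,\chi}_{\tilde{Q},\tilde{W},\mu}\otimes\mathcal{A}^{\zeta,\chi}_{\tilde{Q},\tilde{W},\mu}$. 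Together with the support lemma \cite{davison2022integrality}[Lemma 4.1], which shows $\mathfrak{g}^{\BPS,\zeta}_{\tilde{Q},\tilde{W},\mu}\otimes\C[u]$ is primitive, and the semistable PBW isomorphism (Proposition \ref{PBWtheorem}), a size count identifies the primitives exactly with $\mathfrak{g}^{\BPS,\zeta}_{\tilde{Q},\tilde{W},\mu}\otimes\C[u]$, and the Milnor–Moore theorem yields $\mathcal{A}^{\zeta,\chi}_{\tilde{Q},\tilde{W},\mu}\simeq\bU(\widehat{\mathfrak{g}}^{\BPS,\zeta}_{\tilde{Q},\tilde{W},\mu})$.

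For (ii), the non-affinized part is precisely the content of Toda's theorem: the isomorphism (\ref{todaLiealgebras}) identifies $\mathfrak{g}^{\BPS,\zeta}_{\tilde{Q},\tilde{W},\mu}$ with the Lie subalgebra $\bigoplus_{\bd\in\Lambda^{\zeta}_{\mu}}\mathfrak{g}^{\BPS}_{\tilde{Q},\tilde{W},\bd}\subset\mathfrak{g}^{\BPS}_{\tilde{Q},\tilde{W}}$, which is closed under the bracket since $\mu_{\zeta}(\bd+\be)=\mu$ whenever $\mu_{\zeta}(\bd)=\mu_{\zeta}(\be)=\mu$, and which is exactly the $u^0$-part of $\widehat{\mathfrak{g}}^{\BPS,\zeta}_{\mathrm{\Pi}_Q,\mu}$. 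To promote this to the affinized level I would invoke the $\Heis$ formalism of Section \ref{HeisLiealgebra}. Both affinized Lie algebras carry commuting actions of $u$ (the tautological/determinant bundle) and $\partial$, and since by (\ref{todaLiealgebras}) their graded pieces have non-positive even cohomological degree, both are negatively determined exactly as in Proposition \ref{negativelydeterminedaffinizedbps}. The $u$-action is induced by the determinant of the universal family, which is compatible with the affinization map $q^{\zeta}_{\bd}$ and hence with Toda's pushforward, so the isomorphism of non-affinized Lie algebras extends to an isomorphism of the negative halves commuting with the $\Heis$-action. Proposition \ref{mapofheisLiealgebras} then upgrades this to a full isomorphism of graded Lie algebras, and applying $\bU$ finishes the proof.

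The hard part will be step (ii): the brackets on the two affinized Lie algebras a priori come from two different associative algebras—the $\zeta$-semistable CoHA and the unstable CoHA $\mathcal{A}^{\chi}_{\tilde{Q},\tilde{W}}$ in which $\widehat{\mathfrak{g}}^{\BPS,\zeta}_{\mathrm{\Pi}_Q,\mu}$ sits—so they cannot be compared by a naive restriction. The essential input making the comparison possible is that Toda's isomorphism (\ref{todaLiealgebras}) is established at the level of the BPS sheaves and of the bracket morphism lifting them, and that the tautological $u$-action is defined compatibly on both sides; reducing via the negatively-determined structure to the low-cohomological-degree brackets, where Toda's theorem applies directly, is what lets Proposition \ref{mapofheisLiealgebras} close the gap.
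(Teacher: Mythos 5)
Your step (ii) is where the argument breaks down, and the gap is exactly at the point you flag as "the hard part." The negative halves of the two affinized Lie algebras are \emph{not} exhausted by the non-affinized BPS Lie algebras: they contain $u$-multiples such as $u\cdot\gamma$ for $\gamma$ of cohomological degree $-2$, and brackets like $[u\cdot\gamma,\beta]$ (cohomological degree $0$) can have components both in $\mathfrak{P}^{1}$ and in $u\cdot\mathfrak{P}^{1}$. Toda's theorem, as invoked via (\ref{todaLiealgebras}), only identifies the brackets on the $\mathfrak{P}^{1}$ parts — it is a statement about the BPS sheaves and the bracket morphism between them, and says nothing about commutators involving the tautological class. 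Compatibility of the $u$-action with $q^{\zeta}_{\bd}$ tells you the two $\Heis$-module structures match, but it does not tell you that $[u\cdot\gamma,\beta]$ computed in $\mathcal{A}^{\chi}_{\tilde{Q},\tilde{W}}$ agrees with the corresponding bracket computed in $\mathcal{A}^{\zeta,\chi}_{\tilde{Q},\tilde{W},\mu}$. So the hypothesis of Proposition \ref{mapofheisLiealgebras} on the degree-$\leq 0$ parts cannot be verified from the inputs you list, and the $\Heis$ formalism cannot close the gap on its own.

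The ingredient you are missing — and the entirety of the paper's proof — is that the open inclusion $\mathfrak{M}^{\zeta}_{\bd}(\tilde{Q})\hookrightarrow\mathfrak{M}_{\bd}(\tilde{Q})$ of the semistable locus induces a \emph{single algebra homomorphism} $(i^{\zeta}_{\mu})^{*}\colon\mathcal{C}^{\zeta}_{\tilde{Q},\tilde{W},\mu}\to\mathcal{A}^{\zeta,\chi}_{\tilde{Q},\tilde{W},\mu}$ by restriction. Once you have a map intertwining the two associative products, every bracket comparison is automatic; Toda's theorem identifies its restriction to $\bigoplus_{\bd}\mathfrak{g}^{\BPS}_{\tilde{Q},\tilde{W},\bd}$ with $\mathfrak{g}^{\BPS,\zeta}_{\tilde{Q},\tilde{W},\mu}$, compatibility of the cup product with the tautological bundle under open restriction extends this to $\mathfrak{g}^{\BPS}[u]$, and the PBW isomorphism (Proposition \ref{PBWtheorem}) on both sides then forces $(i^{\zeta}_{\mu})^{*}$ to be an isomorphism. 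Note that this also makes your step (i) unnecessary: one does not need to re-run the factorization coproduct and Milnor--Moore in the semistable setting, since the PBW theorem already controls the size of $\mathcal{A}^{\zeta,\chi}_{\tilde{Q},\tilde{W},\mu}$ as a graded vector space. If you insist on your route, you would still need the restriction map to be an algebra homomorphism in order to verify the bracket compatibility in step (ii) — at which point the shorter argument is already available.
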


\begin{proof}
For every $\bd \in \Lambda^{\zeta}_{\mu}$, we have open inclusion $\mathfrak{M}_{\bd}^{\zeta}(\tilde{Q}) \hookrightarrow \mathfrak{M}_{\bd}(\tilde{Q})$,  which after restriction, induces a map of algebras $(i^{\zeta}_{\mu})^{*} \colon \mathcal{C}^{\zeta}_{\tilde{Q},\tilde{W},\mu} \rightarrow \mathcal{A}^{\zeta}_{\tilde{Q},\tilde{W}}$ such that \[(i^{\zeta}_{\mu})^{*}_{|\mathfrak{g}^{\BPS}} \colon \bigoplus_{\bd \in \Lambda^{\zeta}_{\mu} \backslash \{ \textbf{0} \}} \mathfrak{g}^{\BPS}_{\tilde{Q},\tilde{W},\bd} \rightarrow  \bigoplus_{\bd \in \Lambda^{\zeta}_{\mu} \backslash \{ \textbf{0} \}} \mathfrak{g}^{\BPS,\zeta}_{\tilde{Q},\tilde{W},\bd}\] is an isomorphism by Toda's theorem (Equation \ref{todaLiealgebras}).  Now since the cup product with the tautological bundle commutes with the open inclusion, it follows that the morphism \[(i^{\zeta}_{\mu})^{*}_{|}: \bigoplus_{\bd \in \Lambda^{\zeta}_{\theta} \backslash \{ \textbf{0} \}} \mathfrak{g}^{\BPS}_{\tilde{Q},\tilde{W},\bd}[u] \rightarrow  \bigoplus_{\bd \in \Lambda^{\zeta}_{\mu} \backslash \{ \textbf{0} \}} \mathfrak{g}^{\BPS,\zeta}_{\tilde{Q},\tilde{W},\bd}[u]\] is an isomorphism of Lie algebras. But then by the PBW theorem (\ref{PBWtheorem}) for $\mathcal{A}^{\zeta, \chi}_{\tilde{Q},\tilde{W},\mu}$, it follows that in fact $(i^{\zeta}_{\mu})^{*}$ is an isomorphism. 
\end{proof}
We now consider the algebras $\mathcal{A}^{\omega}_{\mu}(S_K) \simeq \mathcal{A}^{\zeta,\chi}_{\QTC{K},\WTC{K},0}$. When $\mu= \infty$, the only $\omega$-semistable properly supported coherent sheaves on $S_K$ are the $0$ dimensional sheaves. The cohomological Hall algebra $\mathcal{A}(S)$ of $0$-dimensional sheaves on any quasi-projective surface $S$ is studied and explicitly calculated in \cite{mellit2023coherent}. We calculate this algebra for $S = S_K$ using Proposition \ref{semistableaffinebps}. Note that for $\zeta_i >0$ and $\zeta_0= - \sum \zeta_i$, having the slope $\mu_{\zeta}(\bd)=0$ implies that $\bd_i=\bd_0 = n$ for some $n \geq 0$. Thus the cohomolgical Hall algebra is the universal enveloping algebra of the Lie subalgebra $\oplus_{n \geq 1} \mathfrak{g}^{\BPS}_{\QTC{K},\WTC{K},n \cdot \delta}[u] \subset \widehat{\mathfrak{g}}^{\BPS}_{\mathrm{\Pi}_Q^K}$. So by Theorem \ref{Theorem1}, it follows that

\begin{corollary} \label{W_sLiealgebra}
Let $W(S_K) \subset \tilde{W}_{K+1}$ be the Lie subalgebra on the subspace spanned by $T_{n,a}(H_i)$ where $ i \in [1,K], n \geq 1, a \geq 0$ and $t_{n,a}$ where $n \geq 1, a \geq 0$ with the relations
\begin{align*} 
    [t_{m,a},t_{n,b}] &= (na-mb)t_{m+n,a+b-1} \\ 
    [t_{m,a}, T_{n,b}(H_i)] &= (na-mb) T_{m+n,a+b-1}(H_i) \\
    [T_{m,a}(H_i),T_{n,b}(H_j)] &= T_{m+n,a+b}([H_i,H_j]) = 0
\end{align*}
Then we have an isomorphism of algebras 
\[\mathcal{A}(S_K) \simeq \bU(W(S_K)). \]
\end{corollary}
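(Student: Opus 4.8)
The plan is to obtain the statement by composing the identifications already in place and then transporting a Lie subalgebra across the isomorphism $\hat F$ of Theorem \ref{Theorem1}. Since every $0$-dimensional sheaf has slope $\infty$ for any polarization, $\mathcal{A}(S_K)$ is the $\mu = \infty$ instance of $\mathcal{A}^{\omega}_{\mu}(S_K)$, and I am free to take $\omega$ generic. Proposition \ref{salatheorem} with $\mu = \infty$, so that $\zeta_i = \langle \omega, \mathfrak{B}_i(\rho_{\reg}) \rangle > 0$ for $i \in [1,K]$ and $\zeta_0 = -\sum_{i=1}^{K}\zeta_i$, gives an algebra isomorphism $\mathcal{A}(S_K) \simeq \mathcal{A}^{\zeta}_{\mathrm{\Pi}_{Q^K},0}$. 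Composing with the dimension reduction isomorphism of Section \ref{dimensionreductiontripled} and with Proposition \ref{semistableaffinebps} yields $\mathcal{A}(S_K) \simeq \mathcal{C}^{\zeta}_{\QTC{K},\WTC{K},0} = \bU\bigl(\widehat{\mathfrak{g}}^{\BPS,\zeta}_{\mathrm{\Pi}_{Q^K},0}\bigr)$, so that everything reduces to computing the Lie algebra $\widehat{\mathfrak{g}}^{\BPS,\zeta}_{\mathrm{\Pi}_{Q^K},0} = \bigoplus_{\bd \in \Lambda^{\zeta}_{0}\setminus\{0\}} \mathfrak{g}^{\BPS}_{\QTC{K},\WTC{K},\bd}[u]$ and its image under $\hat F$.

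Next I would pin down the indexing set. For $n \geq 1$ one has $\zeta \cdot (n\delta) = n\bigl(\zeta_0 + \sum_{i\geq 1}\zeta_i\bigr) = 0$, so every positive multiple of $\delta$ has slope $0$. Conversely, a dimension vector $\bd$ with $\mathfrak{g}^{\BPS}_{\QTC{K},\WTC{K},\bd} \neq 0$ must be a positive root of $\widehat{\mathfrak{sl}}_{K+1}$, by the Kac polynomial identity (\ref{kacpolynomialandrelationwithbpsliealgebra}) together with Proposition \ref{bpsLiealgebracyclic}; the positive roots are the real ones and the imaginary ones $\Z_{>0}\cdot\delta$, and for generic $\omega$ no real positive root $\alpha$ satisfies $\zeta \cdot \alpha = 0$. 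Hence the only slope-$0$ dimension vectors contributing to the direct sum are the $n\delta$, giving $\widehat{\mathfrak{g}}^{\BPS,\zeta}_{\mathrm{\Pi}_{Q^K},0} = \bigoplus_{n \geq 1} \mathfrak{g}^{\BPS}_{\QTC{K},\WTC{K},n\cdot\delta}[u]$, which is exactly the graded Lie subalgebra of $\abps{K}$ supported in positive multiples of $\delta$.

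Finally I would transport this subalgebra through the isomorphism $\hat F \colon \abps{K} \xrightarrow{\sim} \tilde{W}^{+}_{K+1}$ of Theorem \ref{Theorem1}. As $\hat F$ respects the dimension grading, an element of $\abps{K}$ is supported in a positive multiple of $\delta$ precisely when its image has dimension grading a positive multiple of $K+1$. Reading off the basis of $\tilde{W}^{+}_{K+1}$ from Definition \ref{positivehalfofclassicallimit}, with $\dim(T_{m,a}(E_{ij})) = (K+1)m + (j-i)$ and $\dim(t_{m,a}) = (K+1)m$, the basis vectors whose dimension is a positive multiple of $K+1$ are exactly $t_{n,a}$ with $n \geq 1$ and $T_{n,a}(E_{ii})$ with $n \geq 1$, i.e. $T_{n,a}(H)$ with $H$ in the Cartan of $\mathfrak{sl}_{K+1}$; the off-diagonal generators $T_{m,a}(E_{ij})$ with $i \neq j$ contribute $j-i \in [-K,K]\setminus\{0\}$ and the nilpotent generators $T_{0,a}$ have dimension in $[1,K]$, so all of these are excluded. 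Fixing a basis $H_1,\dots,H_K$ of the Cartan, this span is precisely $W(S_K)$, and its bracket relations are just the restriction of the relations (\ref{Relations_full_Liealgebra}) of $\tilde{W}_{K+1}$ to these generators, with $[H_i,H_j]=0$. Thus $\hat F$ restricts to a Lie algebra isomorphism $\bigoplus_{n\geq 1}\mathfrak{g}^{\BPS}_{\QTC{K},\WTC{K},n\cdot\delta}[u] \xrightarrow{\sim} W(S_K)$, and applying $\bU$ gives $\mathcal{A}(S_K) \simeq \bU(W(S_K))$. The step requiring the most care is the second one: confirming that only the imaginary roots survive in $\Lambda^{\zeta}_0$ (equivalently, that a generic $\omega$ makes $\zeta\cdot\alpha$ nonzero for every real positive root $\alpha$), since the remaining grading computation and the transport of structure are formal once $\hat F$ is in hand.
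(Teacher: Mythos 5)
Your proposal is correct and follows essentially the same route as the paper: reduce via Proposition \ref{salatheorem}, dimension reduction and Proposition \ref{semistableaffinebps} to $\bU\bigl(\bigoplus_{\bd\in\Lambda^{\zeta}_{0}\setminus\{0\}}\mathfrak{g}^{\BPS}_{\QTC{K},\WTC{K},\bd}[u]\bigr)$, show only the imaginary roots $n\delta$ survive in $\Lambda^{\zeta}_{0}$, and transport through $\hat F$ of Theorem \ref{Theorem1}. The only (cosmetic) difference is in the root-exclusion step: the paper observes that since the real positive roots are cyclic strings, the differences $\bd_i-\bd_0$ all share one sign, so $\zeta_i>0$ already forces $\bd=n\delta$ for \emph{every} polarization, whereas you invoke genericity of $\omega$ — which is legitimate since $\mathcal{A}(S_K)$ is independent of $\omega$, but is not actually needed.
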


When $\mu \neq \infty$, then $\mu_{\zeta}(n \cdot \delta) \neq 0$ and in particular the Lie subalgebra $\oplus_{\bd \in \Lambda^{\zeta}_0} \mathfrak{g}^{\BPS}_{\tilde{Q},\tilde{W},\bd}[u]$ is always a Lie subalgebra of the real part $\mathfrak{re}_K \subset \abps{K}$. Given the stability condition $\zeta = \{ \zeta_i \mid i \in [0,K] \}$, we are interested in representations of slope $0$. We solve for dimension vectors $\bd$, where $\bd$ is a real positive root of the affine Lie algebra $\tilde{\mathfrak{sl}}_{K+1}$ which satisfies the equality \begin{align}\label{equation}
\mu_{\zeta}(\bd)= \sum_{i=1}^{K} \zeta_i (\bd_i -\bd_0) + \bd_0/\mu = 0.
\end{align} Since $\zeta_i>0$, any solution would be of type 
\[ n \cdot \delta + [K+1-j,i+j) = (\underbrace{n+1,\cdots,n+1}_{\text{$i$ times}},n,\cdots,n,\underbrace{n+1,\cdots,n+1}_{\text{$j$ times}}) \] for $i> 0$. Note that if $\bd^{\prime},\bd^{\prime \prime}$ satisfies the equation (\ref{equation}) then so does $\bd^{\prime}+\bd^{\prime \prime}$. Since $\mathfrak{g}^{\BPS}_{\tilde{Q},\tilde{W},n \cdot \delta + [K+1-j,i+j)}$ is 1 dimensional, we can identify the generator with $T_{n+1,0}(E_{K+2-j,i+1})$ when $j >0$ and with $T_{n,0}(E_{1,i+1})$ when $j=0$. 

\begin{defn} \label{Womegamuliealgebra}
Given $\omega$ a $\Q\hyphen$polarization of $S_K$ and slope $\mu \in \Q_{>}$, let $\zeta_i = \langle \omega, \mathfrak{B}_{i}(\rho_{\reg}) \rangle$ for $1 \leq i \leq K$ and $\zeta_0 = 1/\mu - \sum_{i=1}^{K} \zeta_i$. Let $\mathbf{L} \subset \Z_{\geq 0} \times \Z_{>0} \times \Z_{\geq 0}$ be the subset of tuples $(n,i,j)$ such that $ \bd = n \cdot \delta + [K+1-j,i+j)$ satisfies equation (\ref{equation}). Let $W^{\omega}_{\mu}(S_{K}) \subset \tilde{W}_{K+1}$ be the Lie algebra spanned by $T_{n+1,a}(E_{K+2-j,i})$ for all $(n,i,j) \in \mathbf{L}, j>0, a \in \mathbb{Z}_{\geq 0}$ and $T_{n,0}(E_{1,i+1})$ for all $(n,i,0) \in \mathbf{L}, a \in \mathbb{Z}_{\geq 0}$ satisfying \[[T_{m,a}(X),T_{n,b}(Y)] = T_{m+n,a+b}([X,Y]).\] for any $X,Y \in \mathfrak{gl}_{K+1}$. 
\end{defn}

We then have by Theorem \ref{Theorem1}, Proposition \ref{semistableaffinebps} and Proposition \ref{salatheorem} that

\begin{corollary} \label{womegamuLiealgebra}
Let $\omega$ be any $\Q$ polarization of $S_{K}$ and let $\mu \in \Q_{> 0} $ be any slope.  Then there is an isomorphism of algebras 
\[ \mathcal{A}^{\omega}_{\mu}(S_{K}) \simeq \bU(W^{\omega}_{\mu}(S_{K})). \] 
\end{corollary}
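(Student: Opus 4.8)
The plan is to assemble the three ingredients that have been set up precisely for this purpose and chain the isomorphisms together. The target statement, Corollary \ref{womegamuLiealgebra}, asserts that $\mathcal{A}^{\omega}_{\mu}(S_K) \simeq \bU(W^{\omega}_{\mu}(S_K))$ for any $\Q$-polarization $\omega$ and any finite slope $\mu \in \Q_{>0}$. First I would invoke Proposition \ref{salatheorem} (the Diaconescu-McKay correspondence) to replace the geometric CoHA $\mathcal{A}^{\omega}_{\mu}(S_K)$ by the semistable preprojective CoHA $\mathcal{A}^{\zeta}_{\mathrm{\Pi}_{Q^K},0}$, where the stability $\zeta$ is the one read off from $\omega$ and $\mu$ via $\zeta_i = \langle \omega, \mathfrak{B}_i(\rho_{\reg})\rangle$ for $1 \leq i \leq K$ and $\zeta_0 = 1/\mu - \sum_{i=1}^K \zeta_i$. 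Then dimension reduction (Section \ref{dimensionreductiontripled}) identifies $\mathcal{A}^{\zeta}_{\mathrm{\Pi}_{Q^K},0} \simeq \mathcal{A}^{\zeta,\chi}_{\QTC{K},\WTC{K},0}$, and Proposition \ref{semistableaffinebps} realizes the latter as $\mathcal{C}^{\zeta}_{\QTC{K},\WTC{K},0} = \bU(\widehat{\mathfrak{g}}^{\BPS,\zeta}_{\mathrm{\Pi}_{Q^K},0})$, the universal enveloping algebra of the Lie subalgebra $\bigoplus_{\bd \in \Lambda^{\zeta}_0 \setminus \{\mathbf{0}\}} \mathfrak{g}^{\BPS}_{\QTC{K},\WTC{K},\bd}[u]$ of $\abps{K}$. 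So the whole problem reduces to identifying this Lie subalgebra explicitly inside $\abps{K}$.

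Next I would solve the slope-zero equation to pin down exactly which dimension vectors contribute. Since $\mu \neq \infty$, a multiple $n\cdot\delta$ of the imaginary root has $\mu_\zeta(n\cdot\delta) \neq 0$, so no imaginary generators appear; every $\bd \in \Lambda^{\zeta}_0 \setminus \{\mathbf{0}\}$ is a real positive root of $\tilde{\mathfrak{sl}}_{K+1}$, forcing the relevant part of the BPS Lie algebra to lie entirely in the real part $\mathfrak{re}_K \subset \abps{K}$. Writing out equation (\ref{equation}), the condition $\sum_{i=1}^K \zeta_i(\bd_i - \bd_0) + \bd_0/\mu = 0$ together with $\zeta_i > 0$ pins the admissible $\bd$ to the explicit form $n\cdot\delta + [K+1-j, i+j)$ for $(n,i,j) \in \mathbf{L}$, as recorded in Definition \ref{Womegamuliealgebra}. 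For each such $\bd$ the space $\mathfrak{g}^{\BPS}_{\QTC{K},\WTC{K},\bd}$ is one-dimensional (being $\HH^0$ of a real root space, by Proposition \ref{bpsLiealgebracyclic}), so I can identify its generator under the isomorphism $\hat F$ of Theorem \ref{Theorem1} with a specific matrix current $T_{n+1,0}(E_{K+2-j,i+1})$ when $j>0$ and $T_{n,0}(E_{1,i+1})$ when $j=0$. Applying the $\Heis$-action (multiplication by $u$, interpreted via $\hat F$ as $\partial_D$) to range over $a \geq 0$ produces the full spanning set $\{T_{\ast,a}(\ast)\}$ defining $W^{\omega}_{\mu}(S_K)$, and the bracket relation $[T_{m,a}(X),T_{n,b}(Y)] = T_{m+n,a+b}([X,Y])$ is inherited directly from the relations (\ref{Relations_full_Liealgebra}) of $\tilde{W}_{K+1}$.

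The main obstacle, and the step requiring genuine care, is verifying that the image $\hat F(\widehat{\mathfrak{g}}^{\BPS,\zeta}_{\mathrm{\Pi}_{Q^K},0})$ is \emph{closed under the bracket} and coincides on the nose with the Lie subalgebra $W^{\omega}_{\mu}(S_K)$ of $\tilde{W}_{K+1}$ spanned by the listed generators. Since $\Lambda^{\zeta}_0$ is a submonoid (if $\bd', \bd''$ satisfy (\ref{equation}) so does $\bd' + \bd''$), the set of admissible currents is closed under the operation $(X,Y) \mapsto [X,Y]$ at the level of matrix units whenever the sum of dimension vectors remains admissible; the content is checking that the $\mathfrak{gl}_{K+1}$-bracket $[E_{p,q}, E_{r,s}] = \delta_{qr}E_{p,s} - \delta_{sp}E_{r,q}$ never leaves the prescribed index set $\mathbf{L}$, equivalently that the monoid structure on $\Lambda^{\zeta}_0$ matches the combinatorics of composable matrix units. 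Once closure is confirmed, the isomorphism is immediate: $\hat F$ restricts to a Lie algebra isomorphism $\widehat{\mathfrak{g}}^{\BPS,\zeta}_{\mathrm{\Pi}_{Q^K},0} \xrightarrow{\sim} W^{\omega}_{\mu}(S_K)$, and passing to universal enveloping algebras and composing with the chain $\mathcal{A}^{\omega}_{\mu}(S_K) \simeq \mathcal{A}^{\zeta}_{\mathrm{\Pi}_{Q^K},0} \simeq \mathcal{A}^{\zeta,\chi}_{\QTC{K},\WTC{K},0} \simeq \bU(\widehat{\mathfrak{g}}^{\BPS,\zeta}_{\mathrm{\Pi}_{Q^K},0})$ yields the desired $\mathcal{A}^{\omega}_{\mu}(S_K) \simeq \bU(W^{\omega}_{\mu}(S_K))$. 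I expect the proof to be short in prose precisely because all the hard analytic and geometric work is packaged into Theorem \ref{Theorem1}, Proposition \ref{semistableaffinebps}, and Proposition \ref{salatheorem}; the only real labor is the root-combinatorics bookkeeping of the preceding paragraph.
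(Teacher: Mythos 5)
Your proposal is correct and follows essentially the same route as the paper: the paper's proof is exactly the chain Proposition \ref{salatheorem} $\to$ dimension reduction $\to$ Proposition \ref{semistableaffinebps} $\to$ Theorem \ref{Theorem1}, combined with the observation that the slope-zero dimension vectors are the real roots $n\cdot\delta + [K+1-j,i+j)$ and that $\Lambda^{\zeta}_{0}$ is a submonoid so the corresponding span of matrix currents is closed under the bracket. Your extra attention to verifying bracket-closure at the level of matrix units is the same bookkeeping the paper disposes of in the sentence preceding Definition \ref{Womegamuliealgebra}.
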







    

\section{Affine Yangians and deformed CoHA}

We will now consider cohomological Hall algebras $\mathcal{A}^{T}_{\tilde{Q},\tilde{W}}$ in the case when the torus $T$ doesn't preserve the symplectic form, i.e., in the case when the construction of the affinized BPS Lie algebra doesn't work. In this case,  we get a deformation of the universal enveloping algebra $\textbf{U}(\mathfrak{g}^{\BPS}_{\tilde{Q},\tilde{W}}[u])$, i.e., Yangians. We calculate these algebras by embedding them into cohomological Hall algebras $\mathcal{A}^{T}_{\tilde{Q}}$, which carry a description using the Shuffle product.

\subsection{Injection to shuffle algebra} \label{injectiontoshuffle}

Given any $T$, with $W=0$, the cohomological Hall algebra can be explicitly described in terms of the shuffle product. Since $\Rep_{\bd}(Q)$ is homotopic to a point, we have as a graded vector space \[ \mathcal{A}^{T}_{Q} = \bigoplus_{\bd \in \N^{Q_0}} \HH_{T}(\pt)[x_{i,n}| i \in Q_0, 1 \leq n \leq \bd_i]^{S_{\bd}}\]
where $S_{\bd} := \prod S_{\bd_i}$ is the product of symmetric groups and $S_{\bd_i}$ acts by permuting the variables $x_{i,1},\dots,x_{i,d_i}$. Then it is proved in \cite{kontsevich2011cohomological}[Theorem 2.2]) that\footnote{Originally this is proved only in the case when $T$ is trivial by localization formula, but then the same proof works in the case when there is $T$ equivariance.}
\begin{prop}
Let $f \in \mathcal{A}^{T}_{Q,\bd^{\prime}}$ and $g \in \mathcal{A}^{T}_{Q,\bd^{\prime \prime}}$ then the product is given by
\begin{align} \label{shuffleproductformula}
& f(x_{0,1},\dots, x_{|Q_0|-1,\bd^{\prime}_{|Q_0|-1}}) \star g(x_{0,1},\dots, x_{|Q_0|-1,\bd^{\prime \prime}_{|Q_0|-1}}) = \\ & \sum_{\sigma \in \Sh_{\bd^{\prime},\bd^{\prime \prime}}} \sigma( f(x_{0,1},\dots, x_{|Q_0|-1,\bd^{\prime}_{|Q|_0-1}})g(x_{0,\bd^{\prime}_0+1},\dots, x_{|Q_0|-1,\bd^{\prime}_{|Q_0|-1}+\bd^{\prime \prime}_{|Q_0|-1}})K_{Q}(\textbf{x}_{\bd^{\prime}}; \textbf{x}_{\bd^{\prime \prime}})) \nonumber, \end{align}

where $K_{Q}(\textbf{x}_{\bd^{\prime}}; \textbf{x}_{\bd^{\prime \prime}})$ is called the shuffle kernel, and it depends on the quiver precisely by
\begin{equation}
K_{Q}(\textbf{x}_{\bd^{\prime}}; \textbf{x}_{\bd^{\prime \prime}}) := \frac{\displaystyle \prod_{a \in Q_1} 
\prod_{\substack{1 \leq m \leq \bd^{\prime}_{s(a)} \\ \bd^{\prime}_{t(a)} < n \leq \bd^{\prime}_{t(a)}+ \bd^{\prime \prime}_{t(a)}}} (x_{t(a),n}-x_{s(a),m} + \bt(a))}{ \displaystyle \prod_{i \in Q_0} \prod_{\substack{1 \leq m \leq \bd^{\prime}_i \\ \bd^{\prime}_{i} < n \leq \bd^{\prime}_i+ \bd^{\prime \prime}_i} }(x_{i,n}-x_{i,m})} 
\label{shufflekernel}
\end{equation}
and $\Sh_{\bd^{\prime},\bd^{\prime \prime}} \subset S_{\bd^{\prime}+\bd^{\prime \prime}}$ is defined to be the subset of permutations $(\sigma_i) \in S_{\bd^{\prime}+\bd^{\prime \prime}}$ where $i \in Q_0$ such that 
\begin{align*}
\sigma_i(1) < \cdots< \sigma_i(\bd^{\prime}_i)  \textrm{    and   }  \sigma_i(\bd^{\prime}_{i}+1) < \cdots < \sigma_i(\bd^{\prime}_{i}+\bd^{\prime \prime}_i) 
\end{align*}
\end{prop}
This product is often referred to as the shuffle product. 
\begin{remark} \label{ontwoshufflealgebras}
For tripled cyclic quiver $\tilde{Q}_K$, also incorporating the twist $\chi$ in (\ref{signtwist}) the shuffle kernel can be written as 
\begin{equation}
K_{\QTC{K}}(x_{\bd^{\prime}}; x_{\bd^{\prime \prime}}):=  (-1)^{\sum_{i \in \mathbb{Z}_{K+1}} \bd^{\prime}_i \bd^{\prime \prime}_i - \bd^{\prime}_i \bd^{\prime \prime}_{i+1}}  \prod_{i,j \in \mathbb{Z}_{K+1}} \prod_{\substack{1 \leq m \leq  \bd^{\prime}_i \\ \bd^{\prime}_{j} <  n < \bd^{\prime}_{j} + \bd^{\prime  \prime}_{j}}} \overline{\omega}_{i,j}^{\text{CoHA}}(x_{i,m},x_{j,n})
\end{equation}
where 
\begin{equation}
    \overline{\omega}_{i,j}^{\text{CoHA}}(z,w) = \left( w-z+t_1 \right)^{\delta_{j,i+1}} \left( w-z+t_2 \right)^{\delta_{j,i-1}} \left( \frac{w-z-t_1-t_2}{w-z} \right)^{\delta_{j,i}}
\end{equation}
In \cite{Bershtein_2019}[Section 5.2], a similarly defined algebra structure on the ring of Laurent symmetric functions is considered as an additive shuffle algebra of cyclic type. Let $\Sh^{\BT}_{Q^K} = \oplus_{\bd \in \N^{K+1}} \Sh^{\BT}_{Q^K,\bd}$ be a $\N^{K+1}$ graded vector space, where 
\[ \Sh^{\BT}_{Q^K,\bd} := \C(t_1,t_2)(x_{i,n}| i \in \mathbb{Z}_{K+1}, 1 \leq n \leq \bd_i)^{S_{\bd}}\]

This vector space is endowed with a associative algebra product $\star_{\BT}$ given by 
\begin{align*}\label{shuffleproductformulaquiver}
& f(x_{0,1},\dots, x_{K,\bd^{\prime}_{K}}) \star_{\BT} g(x_{0,1},\dots, x_{K,\bd^{\prime \prime}_{K}}) := \\ & \Sym \left(  f(x_{0,1},\dots, x_{K,\bd^{\prime}_{K}})g(x_{0,\bd^{\prime}_0+1},\dots, x_{K,\bd^{\prime}_{K}+ \bd^{\prime \prime}_{K}}) \prod_{i,j \in \mathbb{Z}_{K+1}} \left( \prod_{\substack{1 \leq m \leq  \bd^{\prime}_i \\ \bd^{\prime}_{j} <  n < \bd^{\prime}_{j} + \bd^{\prime  \prime}_{j}}} \overline{\omega}_{i,j}(x_{i,m},x_{j,n}) \right) \right) \end{align*}
where\footnote{We do the substitution $\hbar_1 = -t_2,\hbar_2 = t_1+t_2,\hbar_3 = -t_1$}. 
\begin{equation*}
\overline{\omega}_{i,j}(z,w) = \left( \frac{w-z+t_1}{w-z} \right)^{\delta_{j,i+1}} \left(\frac{w-z+t_2}{w-z} \right)^{\delta_{j,i-1}} \left( \frac{w-z-t_1-t_2}{w-z} \right)^{\delta_{j,i}}.
\end{equation*} and for a function $f \in \C(t_1,t_2)(x_{i,n}| i \in \mathbb{Z}_{K+1}, 1 \leq n \leq \bd_i)$, it's symmetrization is defined to be \[\Sym(f) := \left(\prod_{i \in \mathbb{Z}_{K+1}} \frac{1}{\bd_i!} \right) \left( \sum_{\sigma \in S_{\bd}} \sigma(f(x_{0,1},\dots, x_{K,\bd_{K}})) \right). \]
Let $\mathcal{A}^{\T,\chi}_{\QTC{K},\textrm{Loc}}$ be the same vector space as $\Sh^{\BT}_{Q^K}$ but with the algebra product induced by the algebra structure on $\mathcal{A}^{\T,\chi}_{\QTC{K}}$. Then the morphism
\[ S: \mathcal{A}^{T}_{\QTC{K}, \textrm{Loc}} \rightarrow \Sh^{\BT}_{Q^K} \] given by
\[ S( f(x_{\bd})) = (-1)^{\sum_{i \in \mathbb{Z}_{K+1}} \binom{\bd_i}{2} + \bd_i \bd_{i+1}} f(x_{\bd}) \left( \prod_{i \in \mathbb{Z}_{K+1}} \frac{1}{\bd_i!} \left(\prod_{\substack{1 \leq n \leq \bd_i \\ 1 \leq m \leq \bd_{i+1}}} \frac{1}{x_{i,n}-x_{i+1,m}} \right) \right) \] is an algebra isomorphism and so in particular $\mathcal{A}^{\T,\chi}_{\QTC{K}} \hookrightarrow \mathcal{A}^{\T,\chi}_{\QTC{K},\textrm{Loc}} \rightarrow \Sh^{\BT}_{\QTC{K}}$ is an injection of algebras. 

\end{remark}

\subsubsection{Homomorphism to shuffle algebra} Given any quiver $Q$ with potential $W$ and a torus $T$ preserving the potential $W$, assume that there exist a weighting $\mathbf{w}^{\prime}: Q_1 \rightarrow \mathbb{Z}_{\geq 0}$, for which $W$ is homogeneous and of strictly positive weight. Let $Q^{\prime} \subset Q$ be the subquiver consiting only those edges $a$ for which $\textbf{w}^{\prime}(a)=0$. Then there is a homomorphism from the cohomological Hall algebra $\mathcal{A}^{T,\chi}_{Q,W}$ to $\mathcal{A}^{T,\chi}_{Q}$. 
Let $i_{\bd,T} \colon Z^{T}_{\bd}(Q) := \Tr(W)_{\bd}^{-1}(0) \rightarrow \mathfrak{M}^{T}_{\bd}(Q) $ be the inclusion where $\Tr(W)_{\bd} \colon \mathfrak{M}^{T}_{\bd}(Q) \rightarrow \C$. By considering the torus action associated with the weighting $\textbf{w}^{\prime}$, one can construct a homotopy between $Z^{T}_{\bd}(Q)$ and $\mathfrak{M}^{T}_{\bd}(Q^{\prime})$. 
The vanishing cycle functor comes with a natural map 
\[ \pPhi_{W} \Q^{\vir}_{\mathfrak{M}^{T}_{\bd}(Q)} \rightarrow (i_{\bd,T})_{*}(i_{\bd,T})^{*} \Q^{\vir}_{\mathfrak{M}^{T}_{\bd}(Q)}   \] which after taking global sections, gives map of graded vector spaces
\[ i_{\bd,T}: \mathcal{A}^{T}_{Q,W,\bd} \rightarrow \HH(Z^{T}_{\bd}(Q),(i_{\bd})^{*}\Q^{\vir}_{\mathfrak{M}^{T}_{\bd}(Q)}) \simeq  \mathcal{A}^{T}_{Q,\bd}. \]
Here, the second isomorphism is simply because both $\mathfrak{M}^{T}_{\bd}(Q^{\prime})$ and $\mathfrak{M}^{T}_{\bd}(Q)$ are homotopic to a point. This gives a map of $\N^{Q_0} \times \Z$ graded vector spaces
\[ i^{T}: \mathcal{A}^{T,\psi}_{Q,W} \rightarrow \mathcal{A}^{T,\psi}_{Q}.\]
Then it is proved in \cite{botta2023okounkovs}[Prop 4.4] that 

\begin{prop} \label{morphismshuffle}
The morphism \[ i^{T}: \mathcal{A}^{T,\psi}_{Q,W} \rightarrow \mathcal{A}^{T,\psi}_{Q}.\] is a homorphism of $\N^{Q_{0}} \times \Z$ graded algebras. 
\end{prop}

We now show that this morphism respects the action of $\Heis$ constructed in Proposition \ref{heisliealgebraction}. 

\begin{prop}\label{heisqinjection}
The morphism $i^{T}$ is a morphism of $\Heis$ modules.
\end{prop}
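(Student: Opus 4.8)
The plan is to check separately that $i^{T}$ commutes with the two operators $u$ and $\partial$ that generate the $\Heis$-action; compatibility with the central element $c$ is then automatic, since by Proposition \ref{heisliealgebraction} $c$ acts on the summand of dimension $\bd$ by the scalar $|\bd|$ and $i^{T}$ preserves the $\N^{Q_{0}}$-grading. The conceptual reason the statement should hold is that both $u$ and $\partial$ are manufactured out of the determinant map $\Det$ and the scaling map $\act$, both of which live on the ambient smooth stack $\mathfrak{M}^{T}_{\bd}(Q)$ and are insensitive to the potential. The potential enters only through the perverse sheaf, whereas $i^{T}$ is the map on global sections induced by the canonical morphism of complexes
\[ \eta_{\bd}\colon \pPhi_{\Tr(W)}\Q^{\vir}_{\mathfrak{M}^{T}_{\bd}(Q)} \longrightarrow (i_{\bd,T})_{*}(i_{\bd,T})^{*}\Q^{\vir}_{\mathfrak{M}^{T}_{\bd}(Q)} \]
in $\Db(\mathfrak{M}^{T}_{\bd}(Q))$. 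Thus the pullback operations defining $u$ and $\partial$ should pass through $\eta_{\bd}$ by naturality.

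For $u$ this is nearly formal. The operator acts by cup product with $c_{1}(\Det)\in\HH^{2}(\mathfrak{M}^{T}_{\bd}(Q))$, and $\eta_{\bd}$, being a morphism in the derived category of the ambient stack, is linear over $\HH(\mathfrak{M}^{T}_{\bd}(Q))$ acting by cup product; in particular it commutes with $-\cup c_{1}(\Det)$. Under the homotopy equivalence of $Z^{T}_{\bd}(Q)$ with a point that produces the identification $\HH(Z^{T}_{\bd}(Q),(i_{\bd,T})^{*}\Q^{\vir})\simeq\mathcal{A}^{T}_{Q,\bd}$, the class $c_{1}(\Det)$ is carried to the determinant class defining $u$ on $\mathcal{A}^{T}_{Q}$, whence $i^{T}(u\cdot\alpha)=u\cdot i^{T}(\alpha)$.

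For $\partial$ the geometric input is that $\Tr(W)\circ\act=\Tr(W)\circ\pi_{2}$, because tensoring a representation by a one-dimensional line bundle multiplies each cyclic word of $W$ by the identity on the line and hence leaves its trace unchanged. Consequently $\act$ preserves the zero locus $Z^{T}_{\bd}(Q)=\Tr(W)^{-1}_{\bd}(0)$ and the closed immersion $i_{\bd,T}$ is $\act$-equivariant. Using the pullback morphism for the smooth map $\act$ from Section \ref{Pullback} together with this invariance, I would produce a commutative square identifying $\act^{*}\circ\eta_{\bd}$ with $(\id_{\BCu}\times\eta_{\bd})\circ\act^{*}$; extracting the coefficient of $u^{1}$ in the expansion $\act^{*}(\alpha)=\sum_{n}u^{n}\otimes\alpha_{(n)}$ then yields $i^{T}(\partial\alpha)=\partial(i^{T}\alpha)$.

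The main obstacle is the sheaf-level bookkeeping in the $\partial$-step: one must check that $\eta_{\bd}$ genuinely commutes with $\act^{*}$ as a morphism of complexes, which reduces to verifying that the pullback squares for $\pPhi_{\Tr(W)}$ along $\act$ and along $i_{\bd,T}$ are mutually compatible, and that the deformation retraction of $Z^{T}_{\bd}(Q)$ onto $\mathfrak{M}^{T}_{\bd}(Q')$ can be chosen equivariantly for the scaling action, so that the identification with the shuffle algebra intertwines the two scaling operators. Since the $\mathbf{w}'$-weighting torus used to build the retraction commutes with the $\BCu$-scaling (both being subtori of the torus acting on $\Rep_{\bd}(Q)$), such an equivariant homotopy exists, and once these commutativities are recorded the proposition follows formally.
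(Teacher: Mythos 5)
Your proposal is correct and follows essentially the same route as the paper: both steps come down to the two commuting squares obtained by restricting $\Det\times\id$ and $\act$ to the zero locus $Z^{T}_{\bd}(Q)$ and invoking naturality of $\pPhi_{f}\to i_{*}i^{*}$, with your observation that $\Tr(W)\circ\act=\Tr(W)\circ\pi_{2}$ being the same geometric input the paper records (in the weaker form that $\act$ preserves $\Tr(W)^{-1}(0)$). Your phrasing of the $u$-step via $\HH(\mathfrak{M}^{T}_{\bd}(Q))$-linearity of $\eta_{\bd}$ is just a repackaging of the paper's explicit diagram, and your closing worry about equivariance of the retraction is harmless since both identifications reduce to equivariant cohomology of a point.
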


\begin{proof}
Since the diagram 
\[\begin{tikzcd}[ampersand replacement=\&]
	{\mathfrak{M}^{T}_{\bd}(Q)} \& {\BCu \times \mathfrak{M}^{T}_{\bd}(Q) } \\
	{Z^{T}_{\bd}(Q) } \& {\BC_u \times Z^{T}_{\bd}(Q) }
	\arrow["{\det \times \id }", from=1-1, to=1-2]
	\arrow["{\det \times \id }"', from=2-1, to=2-2]
	\arrow["{\id \times i^{T}_{\bd}}"', from=2-2, to=1-2]
	\arrow["{i^{T}_{\bd}}", from=2-1, to=1-1]
\end{tikzcd}\]
commutes, it follows from the naturality of the morphism (\ref{exacttriangle}) that the diagram
\[\begin{tikzcd}[ampersand replacement=\&]
	{\pPhi_{0 \boxplus W_{\bd}} \Q_{\BCu   \times \mathfrak{M}_{\bd}^{T}(Q)}} \& {(\Det \times \id)_{*}\pPhi_{W_{\bd}}\Q_{\mathfrak{M}_{\bd}^{T}(Q)}} \\
	{(\id \times i^{T}_{\bd})_{*}\Q_{\BCu \times Z^{T}_{\bd}(Q)} } \& {(i^{T}_{\bd})_{*}\Q_{Z^{T}_{\bd}(Q) }}
	\arrow[from=1-1, to=1-2]
	\arrow[from=2-1, to=2-2]
	\arrow[from=1-1, to=2-1]
	\arrow[from=1-2, to=2-2]
\end{tikzcd}\]
commutes. By taking global sections, it follows that the action by $u$ commutes. The compatibility with the $\partial_u$ action follows in the same way since the diagram 
\[\begin{tikzcd}[ampersand replacement=\&]
	{\BCu \times \mathfrak{M}_{\bd}^{T}(Q)} \& {\mathfrak{M}^{T}_{\bd}(Q)} \\
	{\BCu \times Z^{T}_{\bd}(Q) } \& {Z^{T}_{\bd}(Q)}
	\arrow["\act", from=1-1, to=1-2]
	\arrow["\act"', from=2-1, to=2-2]
	\arrow["{\id \times i^{T}_{\bd}}", from=2-1, to=1-1]
	\arrow["{i_{\bd}^{T}}"', from=2-2, to=1-2]
\end{tikzcd}\]
commutes as $\Tr(W)(\rho) =0 \implies \Tr(W)(\lambda \rho)=0$ for any $\lambda \in \C$. 

\end{proof}

The morphism $i^{T}$ is not, in general, an injection. We can already see this case for tripled cyclic quivers. 

\subsubsection{Not injection for small torus} \label{notinjection} Since $\bps{K}$ is non-trivial by Proposition \ref{bpsLiealgebra}, it follows that neither of algebras $\mathcal{A}^{\chi}_{\QTC{K},\WTC{K}}, \mathcal{A}^{\C^{*},\chi}_{\QTC{K},\WTC{K}}$ are commutative. However by the shuffle formula above, $\mathcal{A}^{\chi}_{\QTC{K}}$ and $\mathcal{A}^{\C^{*},\chi}_{\QTC{K}}$ are commutative. This, in particular, means that the algebra morphisms $i$ and $i^{\C^{*}}$ cannot be injective. 

\subsubsection{Injection for generic torus}

However for the torus $\T$, it is shown in \cite{davison2022integrality}[Theorem 10.2] and \cite{schiffmann2012cherednik} that $\HH(\mathfrak{M}_{\bd}^{\T}(\QTC{K}) \pPhi_{\Tr(\WTC{K})})$ is a torsion free $\HH_{\T \times \GL_{\bd}}$ module, which in particular implies that 
\begin{prop}\label{injectiontoshufflealgebra}
The natural morphism \[ i^{\T}: \mathcal{A}^{\T,\chi}_{\QTC{K},\WTC{K}} \rightarrow \mathcal{A}^{\T,\chi}_{\QTC{K}}\] is an injection of $\N^{Q_{0}} \times \Z$ graded algebras. 
\end{prop}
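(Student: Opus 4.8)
The statement to prove is Proposition \ref{injectiontoshufflealgebra}, which asserts that the natural algebra morphism
\[
i^{\T} \colon \mathcal{A}^{\T,\chi}_{\QTC{K},\WTC{K}} \rightarrow \mathcal{A}^{\T,\chi}_{\QTC{K}}
\]
is injective. The plan is to reduce the claim entirely to a torsion-freeness statement about the underlying modules, exactly as the surrounding text sets up. Let me think about what the morphism actually is and why torsion-freeness forces injectivity.

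The morphism $i^{\T}$ was constructed dimension-vector by dimension-vector from the natural map of sheaves $\pPhi_{W}\Q^{\vir}_{\mathfrak{M}^{\T}_{\bd}} \to (i_{\bd,\T})_{*}(i_{\bd,\T})^{*}\Q^{\vir}$ associated to the inclusion of the zero-fibre $Z^{\T}_{\bd} = \Tr(W)^{-1}_{\bd}(0)$, followed by taking global sections and using the homotopy equivalences $Z^{\T}_{\bd} \simeq \mathfrak{M}^{\T}_{\bd}(Q')$, both sides being homotopic to a point so that the target is the shuffle algebra $\mathcal{A}^{\T,\chi}_{\QTC{K}}$. By the cited results \cite{botta2023okounkovs}[Prop 4.4] this is already known to be a morphism of algebras; so the only thing left is injectivity in each graded piece.

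First I would recall the general mechanism by which the vanishing-cycle restriction map becomes injective. For the specialization map attached to the triangle \eqref{exacttriangle} defining $\pPhi$, the kernel of the restriction to the zero fibre is controlled by the nearby-cycle contribution, and injectivity follows once one knows the relevant cohomology is a torsion-free module over $\HH_{\T}(\pt)$ (or over $\HH_{\T\times\GL_{\bd}}$ before taking invariants). Concretely, the plan is: invoke \cite{davison2022integrality}[Theorem 10.2] together with \cite{schiffmann2012cherednik} to conclude that for every $\bd$ the space $\HH(\mathfrak{M}^{\T}_{\bd}(\QTC{K}),\pPhi_{\Tr(\WTC{K})})$ is a torsion-free $\HH_{\T\times\GL_{\bd}}$-module, hence a torsion-free $\HH_{\T}(\pt)=\C[t_1,t_2]$-module after passing to $\GL_{\bd}$-invariants. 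Torsion-freeness is exactly the input needed because the map $i^{\T}_{\bd}$ becomes an isomorphism after inverting the equivariant parameters: localizing at $\mathrm{Frac}(\HH_{\T}(\pt))$ kills the vanishing-cycle obstruction (the nearby cycles become acyclic away from the critical values once the torus acts with the required weights), so the localized map is invertible. A torsion-free module injects into its localization, and a map that is an isomorphism after localization must therefore be injective on a torsion-free source.

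The steps in order are therefore: (i) state that $i^{\T}$ is a morphism of $\N^{Q_0}\times\Z$-graded algebras, citing \cite{botta2023okounkovs}; (ii) it suffices to prove injectivity on each graded piece $\mathcal{A}^{\T,\chi}_{\QTC{K},\WTC{K},\bd}$, since $i^{\T}$ respects the grading; (iii) quote \cite{davison2022integrality}[Theorem 10.2] and \cite{schiffmann2012cherednik} for torsion-freeness of $\HH(\mathfrak{M}^{\T}_{\bd}(\QTC{K}),\pPhi_{\Tr(\WTC{K})})$ over $\HH_{\T\times\GL_{\bd}}$, and deduce torsion-freeness over $\C[t_1,t_2]$; (iv) observe that the composite with localization $i^{\T}_{\bd}\otimes_{\HH_{\T}(\pt)}\mathrm{Frac}(\HH_{\T}(\pt))$ is an isomorphism, using that the localized CoHA of the quiver with potential agrees with the localized shuffle algebra (this is the content of the shuffle description together with the fact that localized vanishing-cycle cohomology of the zero fibre coincides with that of the ambient space); (v) combine (iii) and (iv): a torsion-free module maps injectively into its localization, so $\ker(i^{\T}_{\bd}) \subset \ker(\text{localization}) = 0$. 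I expect the main obstacle to be step (iv): one must make precise why the localized restriction map is an isomorphism rather than merely injective, i.e. that no new classes appear on the shuffle side after localization. This is where the explicit shuffle formula \eqref{shuffleproductformula} and the torsion-freeness feed into a dimension/rank count over $\mathrm{Frac}(\C[t_1,t_2])$; since both localized algebras have the same graded dimensions (both computed by the PBW/shuffle description), the localized injection is forced to be an isomorphism, closing the argument.
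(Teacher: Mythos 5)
Your proposal follows essentially the same route as the paper, which derives the proposition in one line from the torsion-freeness of $\HH(\mathfrak{M}_{\bd}^{\T}(\QTC{K}),\pPhi_{\Tr(\WTC{K})})$ over $\HH_{\T\times\GL_{\bd}}$ proved in \cite{davison2022integrality}[Theorem 10.2] and \cite{schiffmann2012cherednik}, combined with the standard localization argument you spell out. The only remark is that your step (iv) asks for more than is needed: to conclude that the kernel of $i^{\T}_{\bd}$ is a torsion submodule it suffices that the localized map be \emph{injective}, which follows directly from equivariant localization because the $\T$-fixed locus lies in $\Tr(\WTC{K})^{-1}(0)$, so the rank count you flag as the main obstacle can be dispensed with entirely.
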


 By Proposition \ref{flatdeformationofcoha}, we have an isomorphism of cohomologically graded vector spaces \[\mathcal{A}_{\QTC{K},\WTC{K}}^{\T,\chi} \simeq \mathcal{A}^{\chi}_{\QTC{K},\WTC{K}} \otimes \C[t_1,t_2]\] This morphism restricts to an isomorphism of cohomologically graded $\HHT$ modules $\mathfrak{g}^{\BPS, \T}_{\QTC{K},\WTC{K}} \simeq \mathfrak{g}^{\BPS}_{\QTC{K},\WTC{K}} \otimes \C[t_1,t_2]$. Thus for every element $\alpha_{i} \in \mathfrak{g}^{\BPS}_{\QTC{K},\WTC{K}}, i \in [0,K]$ and $\gamma_{n \cdot \delta} \in \mathfrak{g}^{\BPS}_{\QTC{K},\WTC{K}}, n \geq 1$, we have a unique (up to scalar), non-canonical lift $\tilde{\alpha_i}:= \alpha_{i} \otimes 1$ and $\tilde{\gamma_{n \cdot \delta}}: = \gamma_{n \cdot \delta} \otimes 1$, since both $\HH^{-2}(\mathfrak{g}^{\BPS,\T}_{\QTC{K},\WTC{K},n \cdot \delta})$ and $\HH^{0}(\mathfrak{g}^{\BPS,\T}_{\QTC{K},\WTC{K},\delta_i})$ are $1$ dimensional vector spaces. We then have



\begin{prop}\label{imageofcoha}
 The $\C[t_1,t_2]$ algebra $\mathcal{A}_{\tilde{Q^{K}},W^{K}}^{\mathbb{T}, \chi}$ is generated as a $\C[t_1,t_2]$ algebra by $u^k \cdot \tilde{\alpha}_{i}$ for all $i \in [0,K], k \geq 0$ and $u^k \cdot \tilde{\gamma}_{\delta}$ for all $ k \geq 0$. 
   
\end{prop}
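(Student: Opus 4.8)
The strategy is to reduce the claim to the undeformed case, already settled by Theorem \ref{Theorem1}, and then lift along the flat deformation by a graded Nakayama argument over $\C[t_1,t_2]$. Let $r\colon \mathcal{A}^{\T,\chi}_{\QTC{K},\WTC{K}}\to\mathcal{A}^{\chi}_{\QTC{K},\WTC{K}}$ denote specialization at $t_1=t_2=0$. Applying Proposition \ref{flatdeformationofcoha} with $T'$ the trivial torus (legitimate since $\mathcal{A}_{\QTC{K},\WTC{K}}$ is pure) gives an algebra isomorphism $\mathcal{A}^{\T,\chi}_{\QTC{K},\WTC{K}}\otimes_{\C[t_1,t_2]}\C\simeq\mathcal{A}^{\chi}_{\QTC{K},\WTC{K}}$, so $r$ is surjective with kernel $(t_1,t_2)\mathcal{A}^{\T,\chi}_{\QTC{K},\WTC{K}}$. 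By construction of the lifts $\tilde\alpha_i,\tilde\gamma_\delta$ through the vector-space isomorphism of that proposition, and since the tautological $u$-action commutes with $r$, one has $r(u^{k}\cdot\tilde\alpha_i)=u^{k}\cdot\alpha_i$ and $r(u^{k}\cdot\tilde\gamma_\delta)=u^{k}\cdot\gamma_\delta$.

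Next I would record that the undeformed algebra is generated by these reductions. By Theorem \ref{Theorem1}, $\abps{K}\simeq\tilde W^{+}_{K+1}$, and under the explicit isomorphism $\hat F$ one checks that, up to nonzero scalars, $u^{a}\cdot\alpha_i\mapsto T_{0,a}(E_{i,i+1})$ for $i\in[1,K]$, $u^{a}\cdot\alpha_0\mapsto T_{1,a}(E_{K+1,1})$, while the elements $u^{a}\cdot\gamma_\delta$ supply the $t_{1,a}$ (modulo the part $T_{1,a}(H_{K+1})\in z\,\mathfrak{sl}_{K+1}$, which is itself a bracket of the $u^{b}\cdot\alpha_i$). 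Comparing with the generating set of Proposition \ref{sphericalgenerationofLK+1Liealgebra} shows that $\{u^{k}\cdot\alpha_i,\,u^{k}\cdot\gamma_\delta\}$ generates $\abps{K}$ as a Lie algebra; since $\mathcal{A}^{\chi}_{\QTC{K},\WTC{K}}\simeq\bU(\abps{K})$, these same elements generate $\mathcal{A}^{\chi}_{\QTC{K},\WTC{K}}$ as an algebra.

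Finally I would run the lifting argument. Let $B\subset\mathcal{A}^{\T,\chi}_{\QTC{K},\WTC{K}}$ be the $\C[t_1,t_2]$-subalgebra generated by $u^{k}\cdot\tilde\alpha_i$ and $u^{k}\cdot\tilde\gamma_\delta$; it is a graded $\C[t_1,t_2]$-submodule. Since $r$ is an algebra map carrying the generators of $B$ onto a generating set of $\mathcal{A}^{\chi}_{\QTC{K},\WTC{K}}$, we get $r(B)=\mathcal{A}^{\chi}_{\QTC{K},\WTC{K}}$, i.e. $B+(t_1,t_2)\mathcal{A}^{\T,\chi}_{\QTC{K},\WTC{K}}=\mathcal{A}^{\T,\chi}_{\QTC{K},\WTC{K}}$. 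Setting $M:=\mathcal{A}^{\T,\chi}_{\QTC{K},\WTC{K}}/B$, this says $M=(t_1,t_2)M$. The module $M$ is $\N^{Q_0}\times\Z$-graded with $t_1,t_2$ of cohomological degree $2$, and by the PBW isomorphism (Proposition \ref{PBWtheorem}) each fixed-dimension summand $\mathcal{A}^{\T,\chi}_{\QTC{K},\WTC{K},\bd}$ is bounded below in cohomological degree; hence so is $M_\bd$. Inspecting the lowest cohomological degree in each dimension $\bd$, where $(t_1,t_2)M_\bd$ vanishes, forces $M_\bd=0$ there, and induction upward in cohomological degree gives $M_\bd=0$ for all $\bd$. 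Thus $M=0$ and $B=\mathcal{A}^{\T,\chi}_{\QTC{K},\WTC{K}}$. The main obstacle is the bookkeeping underlying the graded Nakayama step: one must genuinely confirm the cohomological boundedness-below in each fixed dimension (guaranteed by Proposition \ref{PBWtheorem} together with the nonnegative grading of $u,t_1,t_2$) and the compatibility of the $u$-action and the chosen lifts with specialization, so that the reductions are exactly the undeformed generators of Theorem \ref{Theorem1}.
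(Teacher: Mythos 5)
Your proof is correct and follows essentially the same route as the paper: both arguments use Theorem \ref{Theorem1} to see that the reductions of the proposed generators generate the undeformed algebra, and then lift via the graded Nakayama lemma over $\C[t_1,t_2]$ using the PBW isomorphism. The only cosmetic difference is that the paper packages the Nakayama step as the surjectivity of an explicit map $\Psi$ from a free $\C[t_1,t_2]$-module built from fixed bracket expressions $g_{\alpha_{\bd}}$, whereas you apply Nakayama directly to the quotient module $M=\mathcal{A}^{\T,\chi}_{\QTC{K},\WTC{K}}/B$.
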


\begin{proof}
There is an isomorphism of Lie algebras 
\[ \mathfrak{g}^{\BPS,\T}_{\QTC{K},\WTC{K}} \simeq \mathfrak{g}^{\BPS} _{\QTC{K},\WTC{K}} \otimes \C[t_1,t_2]\] by \cite{davison2024bps}[Corollary 11.9]. For every $\alpha \in \mathfrak{g}^{\BPS}_{\QTC{K},\WTC{K}}$ we set $\tilde{\alpha} = \alpha \otimes 1 \in \mathfrak{g}^{\BPS,\T}_{\QTC{K},\WTC{K}}$ via this isomorphism and $\tilde{\alpha}^{(m_i)} := u^m \cdot \tilde{\alpha}$. 
By Theorem \ref{Theorem1}, we may identify $\mathfrak{g}^{\BPS}_{\QTC{K},\WTC{K}}$ as a Lie subalgebra of $\tilde{W}_{K+1}$, spanned by $T_{k,0}(X)$ where $k \geq 1, X \in \mathfrak{sl}_{K}$ or $k=0, X \in \mathfrak{n}_{K}$ where $\mathfrak{n}_{K} \subset \mathfrak{sl}_{K}$ is the Lie subalgebra generated by upper diagonal matrices and $t_{k,0}$ where $k \geq 1$. For any $k \geq 1$, up to a factor of non-zero scalar, $t_{k,0}$ can be written as $t_{k,0} = \ad^{k-1}_{t_{1,1}}(t_{1,0})$. Similarly, each of $T_{k,0}(X)$ where $k \geq 1, X \in \mathfrak{sl}_{K}$ or $k=0, X \in \mathfrak{n}_{K}$ can be written as succesive Lie brackets on $T_{0,0}(E_{i,i+1}), i \in [1,K]$ and $T_{1,0}(E_{K+1,1})$. Thus every basis element $\alpha_{\bd} \in \mathfrak{g}^{\BPS}_{\QTC{K},\WTC{K},\bd}$ can be written as a linear combination of products of $\alpha^{(0)}_{0}, \alpha^{(0)}_{1}, \cdots, \alpha^{(0)}_{K}, \gamma_{\delta}^{(1)}, \gamma_{\delta}^{(0)}$, i.e. there exists an expression $g_{\alpha_{\bd}}$ such that \[g_{\alpha_{\bd}}(\alpha^{(0)}_{0}, \alpha^{(0)}_{1}, \cdots, \alpha^{(0)}_{K}, \gamma_{\delta}^{(0)}, \gamma_{\delta}^{(1)}) = \alpha_{\bd}. \] We fix such an expression for every basis element $\alpha_{\bd} \in \mathfrak{g}^{\BPS}_{\QTC{K},\WTC{K},\bd}$. We then define a map of $\C[t_1,t_2]$ modules   
\[ \psi_{\bd}: \mathfrak{g}^{\BPS}_{\QTC{K},\WTC{K},\bd} \otimes \C[t_1,t_2] \otimes \C[u] \rightarrow \mathcal{A}^{\T,\chi}_{\QTC{K},\WTC{K}}\] given by \[\psi_{\bd}(\alpha_{\bd} \otimes f(t_1,t_2) \otimes u^m) = f(t_1,t_2) u^m \cdot g_{\alpha_{\bd}}(\tilde{\alpha}^{(0)}_{0}, \tilde{\alpha}^{(0)}_{1}, \cdots, \tilde{\alpha}^{(0)}_{K}, \tilde{\gamma}_{\delta}^{(0)}, \tilde{\gamma}_{\delta}^{(1)} ) \] and thus we have map of  free $\C[t_1,t_2]$ modules

\[ \Psi: \Sym \left(\bigoplus_{\bd} \mathfrak{g}^{\BPS}_{\QTC{K},\WTC{K},\bd} \otimes \C[t_1,t_2] \otimes \C[u] \right) \rightarrow \mathcal{A}^{\T}_{\QTC{K},\WTC{K}} \]

However, $\Psi \otimes_{\C[t_1,t_2]} \C[t_1,t_2]/(t_1,t_2)$ is an isomorphism by the PBW theorem (Theorem \ref{PBWtheorem}) and thus $\Psi$ is an isomorphism of $\C[t_1,t_2]$ modules by the graded Nakayama lemma and so the statement follows.

\end{proof}

\subsubsection{Image in the Shuffle algebra} 

We now determine the image of $\mathcal{A}^{\T,\chi}_{\QTC{K},\WTC{K}}$ under the map $i^{\T}$. By Proposition \ref{imageofcoha}, it suffices to understand the image of $\tilde{\alpha}_{i}^{(k)}, \tilde{\gamma}_{\delta}^{(k)}$ for all $ k \geq 0$.

\begin{prop} \label{imageimaginary}
The image $i^{\T}(\mathcal{A}^{\T,\chi}_{\QTC{K},\WTC{K}}) \subset \mathcal{A}^{\T,\chi}_{\QTC{K},\WTC{K}}$ is the $\C[t_1,t_2]$ subalgebra generated by $x_{i,1}^r \in \mathcal{A}^{\T,\chi}_{\QTC{K}, \delta_i}$ for all $ r \geq 0, i \in [0,\cdots,K]$ and $(t_1+t_2)^{K} (x_{0,1}+x_{1,1}+\cdots + x_{K,1})^r \in \mathcal{A}^{\T,\chi}_{\QTC{K}, \delta}$ for all $r \geq 0$. We have $i^{\T}(\tilde{\alpha}_{i}^{(r)}) = x_{i,1}^r$ and $i^{\T}(\tilde{\gamma}_{\delta}^{(r)}) = \lambda (t_1+t_2)^{K} (x_{0,1}+x_{1,1}+\cdots + x_{K,1})^r$ for some non-zero scalar $\lambda$. 
\end{prop}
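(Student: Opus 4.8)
The plan is to compute the image of the generators directly in the shuffle algebra, using the explicit shuffle product formula (\ref{shuffleproductformula}) and the structure of the generators established in Proposition \ref{imageofcoha}. Since $i^{\T}$ is an injective algebra homomorphism (Proposition \ref{injectiontoshufflealgebra}) and a morphism of $\Heis$ modules (Proposition \ref{heisqinjection}), it suffices to identify the images of $\tilde{\alpha}_i^{(r)}$ and $\tilde{\gamma}_\delta^{(r)}$, since by Proposition \ref{imageofcoha} these generate $\mathcal{A}^{\T,\chi}_{\QTC{K},\WTC{K}}$ as a $\C[t_1,t_2]$ algebra.

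First I would treat the \emph{real} generators $\tilde{\alpha}_i$. In dimension $\delta_i$ the potential $\Tr(\WTC{K})$ vanishes, so $\mathcal{A}^{\T,\chi}_{\QTC{K},\WTC{K},\delta_i} \simeq \HH_{\T}(\pt)[x_{i,1}]$, and the map $i^{\T}$ on this piece is the identity once we identify both sides with $\C[t_1,t_2][x_{i,1}]$. The element $\tilde{\alpha}_i = \alpha_i \otimes 1$ is (up to scalar) the generator $1 \in \HH^0(\mathfrak{g}^{\BPS,\T}_{\QTC{K},\WTC{K},\delta_i})$, which corresponds to the polynomial $1 = x_{i,1}^0$. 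Since $i^{\T}$ intertwines the $\Heis$ action, and the operator $u$ acts on $\mathcal{A}^{\T,\chi}_{\QTC{K}}$ by the tautological class — which in dimension $\delta_i$ is just cup product with $x_{i,1}$ — we get $i^{\T}(\tilde{\alpha}_i^{(r)}) = i^{\T}(u^r \cdot \tilde{\alpha}_i) = x_{i,1}^r$, establishing the first claimed formula.

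The main work, and the main obstacle, is the \emph{imaginary} generator $\tilde{\gamma}_\delta$, living in dimension $\delta = (1,\dots,1)$. Here $\gamma_\delta$ spans $\HH^{-2}(\mathfrak{g}^{\BPS,\T}_{\QTC{K},\WTC{K},\delta})$, and its image is an element of $\mathcal{A}^{\T,\chi}_{\QTC{K},\delta} = \C[t_1,t_2][x_{0,1},\dots,x_{K,1}]$. The plan is to pin this image down by combining three constraints: first, its cohomological degree $-2$ forces it to have the correct homogeneity in the $x_{i,1}$ and $t$-variables, and the symmetry of $\gamma_\delta$ under the factorization coproduct / cyclic structure forces it to be symmetric in the relevant sense; second, I would use the fact that $\gamma_\delta$ generates the one-dimensional imaginary piece and that the support lemma together with the shuffle kernel (\ref{shufflekernel}) governs how the off-diagonal factors $(t_1+t_2)$ appear. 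Concretely, because the torus weights give each original arrow weight $t_1$, each opposite arrow weight $t_2$, and the kernel contributes a factor $(w-z-t_1-t_2)$ on diagonal pairs together with $(w-z+t_1)$, $(w-z+t_2)$ on adjacent vertices, a dimension-$\delta$ element that is in the image must carry the predicted factor $(t_1+t_2)^K$; the cyclic product of the $K+1$ arrows around the quiver, evaluated on a single representation, produces exactly this power. I expect the cleanest route is to evaluate $i^{\T}(\tilde{\gamma}_\delta)$ by a restriction/localization argument: restrict to the torus fixed locus or use the $\Heis$-compatibility to reduce to computing $i^{\T}(\tilde{\gamma}_\delta^{(0)})$, then match against the one-dimensional space to conclude it equals $\lambda (t_1+t_2)^K$ for a nonzero scalar $\lambda$.

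Finally, once $i^{\T}(\tilde{\gamma}_\delta)$ is identified, the formula $i^{\T}(\tilde{\gamma}_\delta^{(r)}) = \lambda (t_1+t_2)^K (x_{0,1}+\cdots+x_{K,1})^r$ follows from the $\Heis$-module compatibility exactly as in the real case: the operator $u$ acts on $\mathcal{A}^{\T,\chi}_{\QTC{K},\delta}$ by cup product with the total tautological class $\sum_i x_{i,1}$, so applying $u^r$ multiplies by $(x_{0,1}+\cdots+x_{K,1})^r$. The hard part will be rigorously justifying that the image of $\tilde{\gamma}_\delta$ is precisely $\lambda(t_1+t_2)^K$ rather than some other degree-$(-2)$ symmetric element — this requires carefully tracking the shuffle kernel contributions and excluding the possibility that diagonal denominators survive, which is where the nonvanishing of $\lambda$ (and hence the injectivity statement) becomes delicate. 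I anticipate leaning on the explicit nonzero scalar computed via the geometric action on $\mathrm{M}^{\zeta^-}_K$ (Proposition \ref{imaginaryactiononequivarianthilbert}) as an independent check that $\lambda \neq 0$.
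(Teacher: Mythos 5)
Your treatment of the real generators is correct and matches the paper: in dimension $\delta_i$ the map $i^{\T}_{\delta_i}$ is an isomorphism onto $\C[t_1,t_2][x_{i,1}]$, and $\Heis$-compatibility (Proposition \ref{heisqinjection}) gives $i^{\T}(\tilde{\alpha}_i^{(r)})=x_{i,1}^r$. The reduction to generators via Proposition \ref{imageofcoha} and the final step $i^{\T}(\tilde{\gamma}_\delta^{(r)})=\lambda(t_1+t_2)^K(\sum_i x_{i,1})^r$ via the $u$-action are also fine. The gap is the central computation of $i^{\T}(\tilde{\gamma}_\delta^{(0)})$, which you acknowledge as ``the hard part'' but for which none of your proposed routes works. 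Cohomological degree $-2$ only forces $f:=i^{\T}(\tilde{\gamma}_\delta^{(0)})$ to be homogeneous of total degree $K$ in the variables $x_{0,1},\dots,x_{K,1},t_1,t_2$ jointly; it does not force $f$ to be independent of the $x$'s, and no ``symmetry under the factorization coproduct'' is available to do that job. Likewise you cannot read the factor $(t_1+t_2)^K$ off the shuffle kernel for a single dimension-$\delta$ class: the kernel only enters when two classes are multiplied, and in dimension $\delta$ there is one variable per vertex, so no diagonal factors $(w-z-t_1-t_2)$ or diagonal denominators occur at all in the description of $f$ itself. A torus-fixed-locus restriction is not set up anywhere in this framework either.

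The paper closes the gap in two steps you would need to supply. First, since $\HH^{-2}(\mathfrak{g}^{\BPS,\T}_{\QTC{K},\WTC{K},\delta+\delta_i})=0$, the bracket $[\tilde{\gamma}_\delta^{(0)},\tilde{\alpha}_i]$ must vanish for every $i$; writing out $\tilde{\gamma}_\delta^{(0)}\star\tilde{\alpha}_i$ and $\tilde{\alpha}_i\star\tilde{\gamma}_\delta^{(0)}$ with the explicit shuffle kernel and using a rational-function identity, this vanishing forces $f(\dots,x_{i,1},\dots)=f(\dots,x_{i,2},\dots)$ for each $i$, i.e.\ $f\in\C[t_1,t_2]$ is a degree-$K$ polynomial in $t_1,t_2$ alone. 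Second, to see that $f$ is proportional to $(t_1+t_2)^K$ and not some other degree-$K$ polynomial, the paper compares the iterated bracket $[\tilde{\alpha}_i,\tilde{\alpha}_i,\dots,\tilde{\alpha}_K,\tilde{\alpha}_{i-1},\dots,\tilde{\alpha}_0]$, computed explicitly in the shuffle algebra, with $[\tilde{\gamma}_\delta^{(0)},\tilde{\alpha}_i^{(1)}]$: the shuffle computation gives the relation (\ref{commutatordegree01}) with the factor $(t_1+t_2)^K/f(t_1,t_2)$, the geometric action on $\mathrm{M}^{\zeta^-}_K$ (Proposition \ref{strongrationality}) shows the two sides agree up to a nonzero scalar modulo $(t_1,t_2)$, and a PBW-basis argument excludes the iterated bracket from lying in $(t_1,t_2)\cdot i^{\T}(\mathcal{A}^{\T,\chi}_{\QTC{K},\WTC{K}})$, which together force $f=\lambda(t_1+t_2)^K$. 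Your appeal to Proposition \ref{imaginaryactiononequivarianthilbert} is in the right spirit (the nonvanishing input is indeed geometric), but without the two shuffle computations and the divisibility argument the identification of $f$ does not go through.
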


\begin{proof}
Since $i^{\T}_{\delta_i}: \CoHA^{T}_{(0,\cdots,1,\cdots,0)} \simeq \C[t_1,t_2][x_{i,1}]$ and $\tilde{\alpha}_i = (i^{\T})^{-1}(1)$, Proposition \ref{heisqinjection} implies that 
\[i^{\T}(\tilde{\alpha}_i^{(k)}) = x_{i,1}^{k} \] for all $k \geq 0$. Let $i^{\T}(\tilde{\gamma}_{\delta}^{0}) = f$ where $f \in \Q[t_1,t_2][x_{0,1},x_{1,1},x_{2,1},\cdots, x_{K,1}]$. Then by the shuffle formula (\ref{shuffleproductformula}), it follows that  

{ \small \begin{align} \label{com1}
\tilde{\gamma}_{\delta}^{0} \star \tilde{\alpha}_i &=  f(x_{0,1},x_{1,1},\cdots,x_{i,1},\cdots, x_{K,1}) \frac{(x_{i,2}-x_{i,1}-t_1-t_2)(x_{i,2}-x_{i-1,1}+t_1)(x_{i,2}-x_{i+1,1}+t_2)}{(x_{i,2}-x_{i,1})} + \\ & f(x_{0,1},x_{1,1},\cdots,x_{i,2},\cdots, x_{K,1}) \frac{(x_{i,1}-x_{i,2}-t_1-t_2)(x_{i,1}-x_{i-1,1}+t_1)(x_{i,1}-x_{i+1,1}+t_2)}{(x_{i,1}-x_{i,2})} \nonumber
\end{align}}
and 
{\small \begin{align} \label{com2}
\tilde{\alpha}_i \star \tilde{\gamma}_{\delta}^{0} &= f(x_{0,1},\cdots,x_{i,2},\cdots,x_{K,1}) \frac{(x_{i,2}-x_{i,1}-t_1-t_2)(x_{i+1,1}-x_{i,1}+t_1)(x_{i-1,1}-x_{i,1}+t_2)}{(x_{i,2}-x_{i,1})} + \\ &  f(x_{0,1},\cdots,x_{i,1},\cdots,x_{K,1}) \frac{(x_{i,1}-x_{i,2}-t_1-t_2)(x_{i+1,1}-x_{i,2}+t_1)(x_{i-1,1}-x_{i,2}+t_2)}{(x_{i,1}-x_{i,2})} \nonumber
\end{align}}
Since $\tilde{\gamma}_{\delta}^{(0)}$ is of cohomological degree $-2$ and $\tilde{\alpha}_{i}^{(0)}$ is of cohomological degree $0$, the Lie bracket $[\tilde{\gamma}_{\delta}^{0}, \tilde{\alpha}_i]=0$ is of cohomological degree $-2$. Since $\HH^{-2}(\mathfrak{g}^{\BPS,\T}_{\QTC{K},\WTC{K}, \delta+\delta_i})=0$, we must have $[\tilde{\gamma}_{\delta}^{0}, \tilde{\alpha}_i]=0$. Since {\tiny \begin{align*}
  \frac{(x_{i,2}-x_{i,1}+t_3)(x_{i,2}-x_{i-1,1}+t_1)(x_{i,2}-x_{i+1,1}+t_2)}{(x_{i,2}-x_{i,1})}  -  &   \frac{(x_{i,1}-x_{i,2}+t_3)(x_{i+1,1}-x_{i,2}+t_1)(x_{i-1,1}-x_{i,2}+t_2)}{(x_{i,1}-x_{i,2})} = \\  \frac{(x_{i,2}-x_{i,1}+t_3)(x_{i+1,1}-x_{i,1}+t_1)(x_{i-1,1}-x_{i,1}+t_2)}{(x_{i,2}-x_{i,1})} -  &\frac{(x_{i,1}-x_{i,2}+t_3)(x_{i,1}-x_{i-1,1}+t_1)(x_{i,1}-x_{i+1,1}+t_2)}{(x_{i,1}-x_{i,2})},
\end{align*}} equation (\ref{com1}), (\ref{com2}) and the condition that $\tilde{\alpha}_i \star \tilde{\gamma}_{\delta}^{0} =\tilde{\gamma}_{\delta}^{0} \star \tilde{\alpha}_i$ implies that for all $i \in \{ 0,1,\cdots,K \}$ we must have 
\begin{align*}
f(x_{0,1},\cdots, x_{i-1,1},x_{i,1}, x_{i+1,1}, \cdots,x_{K,1}) = f(x_{0,1},\cdots,x_{i-1,1}, x_{i,2}, x_{i+1,1}, \cdots,x_{K,1}) 
\end{align*}

This means that for any $i$, $f$ is independent of the variable $x_{i,1}$, i.e. $f \in \C[t_1,t_2]$. Since $\tilde{\gamma}_{\delta}^{(0)}$ is of cohomological degree $-2$, this implies that the degree of polynomial $f(t_1,t_2)$ must be $K$. Now $[\gamma_{\delta}^{0},\alpha_i^{(1)}]$ is an element of cohomological degree $0$ in the dimension vector $\delta_i + \delta$. Since the $\C[t_1,t_2]$ action only increases the cohomological degree, it means that up to a scalar, the only non-zero term of cohomological degree $0$ in dimension $\delta_i+ \delta$ in $\mathfrak{g}^{\BPS,\T}_{\QTC{K},\WTC{K}}$ is $[\tilde{\alpha}_i,\tilde{\alpha}_i,\cdots,\tilde{\alpha}_K,\tilde{\alpha}_{i-1},\cdots,\tilde{\alpha}_0]$ (Proposition \ref{bpsLiealgebracyclic}). Again by the shuffle product description of $\mathcal{A}^{\T,\chi}_{\QTC{K}}$, we can explictly calculate $[i^{\T}(\tilde{\alpha}_i),i^{\T}(\tilde{\alpha}_i),\cdots,i^{\T}(\tilde{\alpha}_K),i^{\T}(\tilde{\alpha}_{i-1}),\cdots,i^{\T}(\tilde{\alpha}_0)] $. We see that for some non-zero constant $\lambda_i \in \Q$ we have 

\begin{equation}
[i^{\T}(\tilde{\alpha}_i),i^{\T}(\tilde{\alpha}_i),\cdots,i^{\T}(\tilde{\alpha}_K),i^{\T}(\tilde{\alpha}_{i-1}),\cdots,i^{\T}(\tilde{\alpha}_0)] = \lambda_i \frac{(t_0+t_1)^K [i^{\T}(\tilde{\gamma}_{\delta}^{(0)}),i^{\T}(\tilde{\alpha}_{i}^{(1)})]}{f(t_1,t_2)}
 \label{commutatordegree01}
\end{equation}

Now Proposition \ref{strongrationality} implies that there  exist some $\mu_i \neq 0$ such that in $\mathcal{A}^{\T,\chi}_{\QTC{K},\WTC{K}}$, 

\begin{equation*}
 [\tilde{\gamma_{\delta}}^{(0)},\tilde{\alpha}_{i}^{(1)}] = \mu_i [\tilde{\alpha}_i,\tilde{\alpha}_i,\cdots,\tilde{\alpha}_K,\tilde{\alpha}_{i-1},\cdots,\tilde{\alpha}_0] + (t_1,t_2) i(\mathcal{A}^{\T,\chi}_{\QTC{K},\WTC{K}}).  
\end{equation*}

This implies 

\[ \left( \frac{f(t_1,t_2)}{\lambda_i (t_0+t_1)^K} - \mu_i \right) [\tilde{\alpha}_i,\tilde{\alpha}_i,\cdots,\tilde{\alpha}_K,\tilde{\alpha}_{i-1},\cdots,\tilde{\alpha}_0] \in (t_1,t_2) i(\mathcal{A}^{\T,\chi}_{\QTC{K},\WTC{K}}). \]

By the PBW theorem (Proposition \ref{PBWtheorem}), the only element of cohomological degree $-2$ in dimension vector $\delta+\delta_i$ could be $\tilde{\gamma}_{\delta}^{(0)} \star \tilde{\alpha}_i^{(0)}$. We again apply the shuffle product formula to calculate $i^{\T}(\tilde{\gamma}_{\delta}^{(0)}) \star i^{\T}(\tilde{\alpha}_i^{(0)})$. We see that \[[i^{\T}(\tilde{\alpha}_i),i^{\T}(\tilde{\alpha}_i),\cdots,i^{\T}(\tilde{\alpha}_K),i^{\T}(\tilde{\alpha}_{i-1}),\cdots,i^{\T}(\tilde{\alpha}_0)] \notin (t_1,t_2) \cdot i^{\T}(\tilde{\gamma}_{\delta}^{(0)}) \star i^{\T}(\tilde{\alpha}_i^{(0)})\] and thus $[\tilde{\alpha}_i,\tilde{\alpha}_i,\cdots,\tilde{\alpha}_K,\tilde{\alpha}_{i-1},\cdots,\tilde{\alpha}_0] \notin (t_1,t_2) i(\mathcal{A}^{\T,\chi}_{\QTC{K},\WTC{K}})$. But $[\tilde{\alpha}_i,\tilde{\alpha}_i,\cdots,\tilde{\alpha}_K,\tilde{\alpha}_{i-1},\cdots,\tilde{\alpha}_0] \neq 0 $ by the calculation of $\mathfrak{g}^{\BPS,\T}_{\QTC{K},\WTC{K}}$ in Proposition \ref{bpsLiealgebracyclic}. Thus, we must have 
\[ f(t_1,t_2) = \lambda_i \mu_i (t_0+t_1)^K .\]

So $i^{\T}(\gamma_{\delta}^{(0)}) = \lambda (t_0+t_1)^{K}$ up to a non-zero scalar $\lambda$. Then by Proposition \ref{heisqinjection}, $i^{\T}(\gamma_{\delta}^{(r)}) = \lambda (t_0+t_1)^{K}(x_{01}+x_{11}+\cdots + x_{K,1})^r$ and we are done.  


\end{proof}

This already allows us to identify a commutative subalgebra inside the image $i^{\T}(\mathcal{A}^{\T}_{\QTC{K},\WTC{K}}) \subset \mathcal{A}^{\T}_{\QTC{K}}$. 

\begin{corollary} \label{elementsLn}
Let $e = \sum_{i \in [0,K]} x_{i,1} \in \mathcal{A}^{\T,\chi}_{\QTC{K},  \delta}$. For each $n \geq 1$, we define  \[L_n := [\underbrace{e,[ e, [\cdots[e,}_{n-1 \text{ times }}, 1]]]] \in \mathcal{A}^{\T,\chi}_{\QTC{K}, n \cdot \delta}\] 
Then, the subalgebra generated by $L_n, n \geq 1$ is a polynomial subalgebra isomorphic to $\C[L_1,L_2, \cdots]$.

\end{corollary}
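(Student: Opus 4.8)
The plan is to show that the subalgebra generated by the elements $L_n$ is a free polynomial algebra, using the shuffle realization of $\mathcal{A}^{\T,\chi}_{\QTC{K}}$ from Section \ref{injectiontoshuffle} together with the commutative subalgebra of $\tilde{W}_{K+1}$ identified in Theorem \ref{Theorem1}. The central observation is that $e = \sum_{i \in [0,K]} x_{i,1}$ is, up to the nonzero scalar $\lambda$ and the factor $(t_1+t_2)^K$ appearing in Proposition \ref{imageimaginary}, nothing but the image $i^{\T}(\tilde\gamma_\delta^{(0)})$. More precisely, I expect that $e$ lies in the localized image, so that iterated brackets $[e,[e,\dots,[e,1]]]$ compute, after pushing through the injection $i^{\T}$ (Proposition \ref{injectiontoshufflealgebra}), the iterated brackets of the corresponding elements $\tilde\gamma_\delta^{(r)}$ inside the deformed affinized BPS Lie algebra.

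First I would make precise the relationship between $e$ and $\tilde\gamma_\delta^{(0)}$. By Proposition \ref{imageimaginary} we have $i^{\T}(\tilde\gamma_\delta^{(0)}) = \lambda (t_1+t_2)^K e_0$, where $e_0 = \sum_i x_{i,1}$ is exactly $e$ sitting in degree $\delta$. The action of $u$ corresponds under $i^{\T}$ (via Proposition \ref{heisqinjection}, since $i^{\T}$ is a map of $\Heis$ modules) to multiplication by $\sum_i x_{i,1}$; hence the bracket $[e,-]$ on the shuffle side corresponds, up to the scalar $\lambda(t_1+t_2)^K$, to the operator $[\tilde\gamma_\delta^{(1)}, -]$ on the CoHA side. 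Iterating, I would identify $L_n$ with a nonzero scalar multiple of $i^{\T}(\tilde\gamma_{n\cdot\delta})$, using the recursive definition $\gamma_{k\cdot\delta} = \ad^{k-1}_{\gamma_\delta^{(1)}}(\gamma_\delta)$ from equation (\ref{defnofimaginaryelements}) and Proposition \ref{relation1}, which guarantees these iterated brackets are nonzero and span $\HH^{-2}$. The essential input is that the $\tilde\gamma_{n\cdot\delta}$ form a commuting family: under the isomorphism $\hat F$ of Theorem \ref{Theorem1} they map to the elements $t_{n,0}$ of $\tilde W_{K+1}$, and by the relations (\ref{Relations_full_Liealgebra}) we have $[t_{m,0},t_{n,0}] = (n\cdot 0 - m \cdot 0)t_{m+n,-1} = 0$, so the $t_{n,0}$ commute and generate a polynomial algebra $\C[t_{1,0},t_{2,0},\dots]$.

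The remaining step is to upgrade commutativity and nonvanishing into algebraic independence. Since the $L_n$ are the images (up to scalars $(t_1+t_2)^{K(\,\cdot\,)}$) of the $\tilde\gamma_{n\cdot\delta}$, and these generate a subalgebra isomorphic to $\bU$ of the abelian Lie algebra spanned by the commuting $t_{n,0}$, the PBW theorem (Proposition \ref{PBWtheorem}) applied to $\mathcal{A}^{\T,\chi}_{\QTC{K},\WTC{K}}$ ensures that distinct ordered monomials in the $\tilde\gamma_{n\cdot\delta}$ are linearly independent over $\C[t_1,t_2]$. Because $i^{\T}$ is injective (Proposition \ref{injectiontoshufflealgebra}) and multiplication by the nonzerodivisor $(t_1+t_2)$ in the torsion-free module $\mathcal{A}^{\T,\chi}_{\QTC{K}}$ preserves linear independence, the monomials in the $L_n$ remain linearly independent, giving the desired isomorphism $A^{\T}_K \simeq \C[L_1,L_2,\dots]$.

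The main obstacle I anticipate is bookkeeping the scalar and $(t_1+t_2)^K$ factors consistently through the iterated bracket: each application of $[e,-]$ multiplies by $(t_1+t_2)^K$ on one factor but the bracket structure of the shuffle product mixes degrees, so I must verify that $L_n$ is genuinely proportional to $i^{\T}(\tilde\gamma_{n\cdot\delta})$ rather than to some combination involving lower $\gamma_{m\cdot\delta}$ or real-root contributions. Checking that no real-root part $\beta_{n\cdot\delta} \in \HH^0$ appears is exactly controlled by the cohomological grading: $L_n$ has cohomological degree $-2$ in dimension $n\cdot\delta$, and by Proposition \ref{bpsLiealgebracyclic} the space $\HH^{-2}(\mathfrak{g}^{\BPS,\T}_{\QTC{K},\WTC{K},n\cdot\delta})$ is one-dimensional, so $L_n$ is forced to be a scalar multiple of $\tilde\gamma_{n\cdot\delta}$, and Proposition \ref{relation1} shows this scalar is nonzero. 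Once this identification is secure, the polynomiality follows formally.
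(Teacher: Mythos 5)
Your outline tracks the paper's strategy---identify the $L_n$ with images of the commuting imaginary classes $\tilde{\gamma}_{n\cdot\delta}$, then deduce polynomiality from the PBW theorem, the injectivity of $i^{\T}$, and torsion-freeness over $\C[t_1,t_2]$---but the step you yourself flag as the main obstacle is exactly where there is a genuine gap, and the resolution you offer does not close it. You argue that $L_n$ must be proportional to $i^{\T}(\tilde{\gamma}_{n\cdot\delta})$ because $\HH^{-2}(\mathfrak{g}^{\BPS,\T}_{\QTC{K},\WTC{K},n\cdot\delta})$ is one-dimensional. That would suffice only if you already knew that the iterated commutator lands inside $\mathfrak{g}^{\BPS,\T}_{\QTC{K},\WTC{K}}[u]$. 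For the full two-torus $\T$ the loops $\omega_i$ carry nonzero weight, so the factorization coproduct of Section \ref{factorizationcoproduct}---the only mechanism in the paper for proving that the deformed affinized BPS Lie algebra is closed under the commutator---is not available. Hence $[\tilde{\gamma}^{(1)}_{\delta},\tilde{\gamma}_{n\cdot\delta}]$ a priori lives in the associative algebra $\mathcal{A}^{\T,\chi}_{\QTC{K},\WTC{K},(n+1)\cdot\delta}$, whose cohomological degree $-2$ piece is far from one-dimensional: by the PBW theorem it contains decomposable classes such as $\tilde{\gamma}_{n_i\cdot\delta}\star\tilde{\alpha}_{n_j\cdot\delta}$ with $\tilde{\alpha}_{n_j\cdot\delta}\in\HH^{0}(\mathfrak{g}^{\BPS}_{\QTC{K},\WTC{K},n_j\cdot\delta})$, and $f(t_1,t_2)\,\tilde{\gamma}_{k_i\cdot\delta}\star\tilde{\gamma}_{k_j\cdot\delta}$ with $f$ linear, all of cohomological degree $-2$ and dimension $(n+1)\cdot\delta$. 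A grading argument cannot see these. This is precisely what the paper's proof spends most of its length on: it bounds the commutator in the perverse filtration ($\mathfrak{P}^{\leq 3}$), lists the possible terms as in equation (\ref{equationdelta1n}), kills the $\tilde{\gamma}\star\tilde{\alpha}$ contributions by bracketing against a suitable $\tilde{\alpha}_j$ and invoking PBW linear independence, and then observes that the surviving $\tilde{\gamma}\star\tilde{\gamma}$ terms are harmless because they already lie in the commutative subalgebra $A^{K}$ generated by the $\tilde{\gamma}_{m\cdot\delta}$. Note that the paper's conclusion is accordingly weaker than yours and is all that is needed: $\tilde{L}_n$ is not shown to be proportional to $\tilde{\gamma}_{n\cdot\delta}$, only to equal a nonzero multiple of it plus products of lower imaginary classes, which still forces the $\tilde{L}_n$ and the $\tilde{\gamma}_{n\cdot\delta}$ to generate the same polynomial subalgebra.

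Two smaller points. Proposition \ref{imageimaginary} gives $i^{\T}(\tilde{\gamma}^{(0)}_{\delta})=\lambda(t_1+t_2)^{K}$ (the constant) and $i^{\T}(\tilde{\gamma}^{(1)}_{\delta})=\lambda(t_1+t_2)^{K}e$; your displayed formula $i^{\T}(\tilde{\gamma}_{\delta}^{(0)})=\lambda(t_1+t_2)^{K}e_0$ is off by one in the superscript, although you use the correct correspondence in the sentence that follows, so this is cosmetic. Your closing independence argument---PBW linear independence of ordered monomials in the commuting $\tilde{\gamma}_{n\cdot\delta}$, transported through the injective $i^{\T}$ and divided by powers of the nonzerodivisor $t_1+t_2$---is sound and matches the final step the paper leaves implicit.
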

\begin{proof}
Consider the Lie subalgebra generated by $\tilde{\gamma}_{n \cdot \delta}, n \geq 1$. Then, by Proposition \ref{bpsLiealgebracyclic}, it is a trivial Lie algebra, and the subalgebra generated by this Lie algebra is a polynomial subalgebra inside $\mathcal{A}^{\T,\chi}_{\QTC{K},\WTC{K}}$. Let us call this algebra $A^K$. We show that the image of this under $i^{\T}$ up to a factor of polynomials in $\C[t_1,t_2]$ is exactly the subalgebra generated by the above generators. For any $n \geq 1$ and $\tilde{\gamma}_{n \cdot \delta} \in \HH^{-2}(\mathfrak{g}^{\BPS,\T}_{\QTC{K},\WTC{K},n \cdot \delta})$ consider the commutator $[\tilde{\gamma}_{\delta}^{(1)},\tilde{\gamma}_{n \cdot \delta}]$. Then since $\tilde{\gamma}_{\delta}^{(1)} \in \mathfrak{P}^{\leq 3}$ and $\tilde{\gamma}_{n \cdot \delta} \in \mathfrak{P}^{\leq 1}$, it follows that $[\tilde{\gamma}_{\delta}^{(1)},\tilde{\gamma}_{n \cdot \delta}] \in \mathfrak{P}^{\leq 3}$ where we recall that $\mathfrak{P}^{\leq i}$ denote the perverse filtration defined in Section \ref{perversefiltrationandbps}. Thus, it should be a linear combination of elements of the form $\mathfrak{P}^{\leq 1}, \mathfrak{P}^{\leq 1} \star \mathfrak{P}^{\leq 1}$ or $u \cdot \mathfrak{P}^{\leq 1}$. However minimal cohomological degree of $u \cdot \mathfrak{P}^{\leq 1}$ is $0$ and thus it follows that we must have 
\begin{align}\label{equationdelta1n}
[\tilde{\gamma}_{\delta}^{(1)},\tilde{\gamma}_{n \cdot \delta}] = \lambda \tilde{\gamma}_{(n+1)\cdot \delta} + \sum_{n_i+n_j=n+1} \lambda_{n_i,n_j} \tilde{\gamma}_{n_i \cdot \delta} \star \tilde{\alpha}_{n_j \cdot \delta} + \sum_{k_i+k_j=n+1} f_{k_i,k_j}(t_1,t_2) \tilde{\gamma}_{k_i \cdot \delta} \star \tilde{\gamma}_{k_j \cdot \delta}
\end{align} where $\tilde{\alpha}_{n_j \cdot \delta} \in \mathfrak{n}^{-}_{Q^{K}} \subset \HH^{0}(\mathfrak{g}^{\BPS}_{\QTC{K},\WTC{K},n_j \cdot \delta})$ for some $\lambda, \lambda_{n_i,n_j} \in \C$ and linear functions $f_{k_i,k_j}(t_1,t_2)$. We claim that each of $\lambda_{n_i,n_j}=0$. For any $\tilde{\alpha}_{n_j \cdot \delta}$, by Proposition \ref{bpsLiealgebracyclic}, there exist $\tilde{\alpha}_j$ in dimension $\delta_j$ such that $[\tilde{\alpha}_{n_j \cdot \delta}, \tilde{\alpha}_j] \neq 0$. But then since $\tilde{\gamma}_{n \cdot \delta}$ commutes with $\mathfrak{n}^{-}_{Q^K}$, applying $[-, \tilde{\alpha}_j]$ to the  above equation (\ref{equationdelta1n}) gives that \[ 0 = \sum_{n_i+n_j=n+1} \lambda_{n_i,n_j} \tilde{\gamma}_{n_i \cdot \delta} \star [\tilde{\alpha}_{n_j \cdot \delta}, \tilde{\alpha}_j ]. \] But each of non-zero $\tilde{\gamma}_{n_i \cdot \delta} \star [\tilde{\alpha}_{n_j \cdot \delta}, \tilde{\alpha}_j ]$ can be extended to a basis of $\mathcal{A}^{\T}_{\QTC{K},\WTC{K}}$ by the PBW theorem (Proposition \ref{PBWtheorem}). Since the above equation gives a relation, it implies that $\lambda_{n_i,n_j}=0$. Furthemore $\lambda \neq 0$ by Proposition \ref{relation1}. Thus it follows that we must have $[\tilde{\gamma}_{\delta}^{(1)},\tilde{\gamma}_{n \cdot \delta}]\in A^K$. By equation \ref{equationdelta1n}, it follows that the algebra generated by $[\tilde{\gamma}_{\delta}^{(1)},\tilde{\gamma}_{n \cdot \delta}], n \geq 1$ and $\tilde{\gamma}_{\delta}$ contains all the generators $\tilde{\gamma}_{n \cdot \delta}$ of $A^K$. But then, so does 
\begin{align}
\tilde{L}_n : = [\underbrace{\tilde{\gamma}_{\delta}^{(1)},[ \tilde{\gamma}_{\delta}^{(1)}, [\cdots [\tilde{\gamma}_{\delta}^{(1)}}_{n-1 \text{ times }}, \tilde{\gamma}_{\delta}]]]]\in A^K \subset \mathcal{A}^{\T}_{\QTC{K},\WTC{K}}
\end{align} Thus algebra generated by $\tilde{L}_n, n \geq 1$ is $A^K$, the commutative subalgebra. Thus, we are done from Proposition \ref{imageimaginary}.

\end{proof}

Proposition \ref{imageimaginary} also gives a slight refinement of a particular case of \cite{schiffmann2017cohomological} and \cite{neguţ2023generators} about spherical generation of localized cohomological Hall algebra. We note it down as the argument will be helpful later. 

\begin{corollary}  \label{sphgeneration}
The $\C[t_1,t_2]$ linear algebra $\mathcal{A}^{\T,\chi}_{\QTC{K},\WTC{K}}$ is not a spherically generated $\C[t_1,t_2]$ algebra, while the localized algebra $\mathcal{A}^{\T,\chi}_{\QTC{K},\WTC{K}} \otimes \C[t_1^{-1},t_2^{-1}]$ is spherically generated. 
\end{corollary}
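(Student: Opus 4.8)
The plan is to argue entirely inside the shuffle algebra $\mathcal{A}^{\T,\chi}_{\QTC{K}}$, using the injection $i^{\T}$ of Proposition \ref{injectiontoshufflealgebra}, the explicit image in Proposition \ref{imageimaginary}, and the generation statement of Proposition \ref{imageofcoha}. By Proposition \ref{imageofcoha} the algebra $\mathcal{A}^{\T,\chi}_{\QTC{K},\WTC{K}}$ is generated over $\C[t_1,t_2]$ by the spherical classes $\tilde\alpha_i^{(k)}$ ($i\in[0,K]$, $k\geq 0$) together with the imaginary classes $\tilde\gamma_\delta^{(k)}$ ($k\geq 0$), whereas the spherical subalgebra $\mathcal{SA}^{\T,\chi}_{\QTC{K},\WTC{K}}$ is generated by the $\tilde\alpha_i^{(k)}$ alone. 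So both halves of the statement reduce to locating the $\tilde\gamma_\delta^{(k)}$ relative to $\mathcal{SA}^{\T,\chi}_{\QTC{K},\WTC{K}}$.

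For the failure of spherical generation over $\C[t_1,t_2]$ I would use a cohomological-degree obstruction. Each generating space $\mathcal{A}^{\T,\chi}_{\QTC{K},\delta_i}\cong\C[t_1,t_2][x_{i,1}]$ is concentrated in non-negative cohomological degrees: the equivariant parameters $t_1,t_2$ and the Chern root $x_{i,1}$ all have degree $2$, and there is no virtual shift since $\chi_{\QTC{K}}(\delta_i,\delta_i)=0$. Because the Hall product is additive for the cohomological grading and $\C[t_1,t_2]$ carries no negative degrees, the subalgebra $\mathcal{SA}^{\T,\chi}_{\QTC{K},\WTC{K}}$ lies in non-negative cohomological degrees. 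But $\tilde\gamma_\delta\in\HH^{-2}(\mathfrak{g}^{\BPS,\T}_{\QTC{K},\WTC{K},\delta})$ is a nonzero class of cohomological degree $-2$ (Proposition \ref{bpsLiealgebracyclic}); hence $\tilde\gamma_\delta\notin\mathcal{SA}^{\T,\chi}_{\QTC{K},\WTC{K}}$, so the algebra is not spherically generated. On the shuffle side this is mirrored by $i^{\T}(\tilde\gamma_\delta)=\lambda(t_1+t_2)^K$ not being a $\C[t_1,t_2]$-multiple of $t_1t_2$.

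After localizing to $\C[t_1^{-1},t_2^{-1}]$ the degree obstruction evaporates, since $t_1^{-1},t_2^{-1}$ now carry cohomological degree $-2$. It suffices to show $\tilde\gamma_\delta^{(0)}\in\mathcal{SA}^{\T,\chi}_{\QTC{K},\WTC{K}}\otimes\C[t_1^{-1},t_2^{-1}]$: the operator $u$ of Proposition \ref{heisliealgebraction} is a derivation preserving $\mathcal{SA}^{\T,\chi}_{\QTC{K},\WTC{K}}$ with $u\cdot\tilde\gamma_\delta^{(k)}=\tilde\gamma_\delta^{(k+1)}$, so all higher $\tilde\gamma_\delta^{(k)}$ follow, and by Proposition \ref{imageofcoha} the localized algebra is then spherically generated. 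To place $\tilde\gamma_\delta^{(0)}$ there I would exhibit a genuinely spherical element $\Gamma$ — an iterated bracket of the $\tilde\alpha_i$ with a single $u$-translate inserted, i.e. the classical shadow of the defining relation $(\hbar_1)(\hbar_1+\hbar_2)\mathfrak{K}^{(0)}_+=\sum_i[X^+_{i,0},[X^+_{i+1,1},[\cdots]]]$ of Definition \ref{cohayangiandefn} — lying in dimension $\delta$ and cohomological degree $2$, and prove $i^{\T}(\Gamma)=c\,t_1t_2(t_1+t_2)^K$ for a nonzero scalar $c$. Since $i^{\T}(t_1t_2\,\tilde\gamma_\delta^{(0)})=\lambda\,t_1t_2(t_1+t_2)^K$, injectivity of $i^{\T}$ gives $t_1t_2\,\tilde\gamma_\delta^{(0)}=\tfrac{\lambda}{c}\Gamma\in\mathcal{SA}^{\T,\chi}_{\QTC{K},\WTC{K}}$, and dividing by $t_1t_2$ in the localized ring finishes the argument.

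The main obstacle is precisely this last shuffle computation. Evaluating $i^{\T}(\Gamma)$ through the shuffle product \eqref{shuffleproductformula} requires summing over the orderings of the $K+1$ one-dimensional factors and checking that every term of positive $x$-degree cancels, leaving the constant $c\,t_1t_2(t_1+t_2)^K$. The delicate point is the nonvanishing $c\neq 0$: it is exactly the quantity that degenerates on the locus $t_1t_2=0$, which explains why inverting $t_1,t_2$ is the right localization, and it is forced by the nondegeneracy already extracted in the proof of Proposition \ref{imageimaginary} — namely the relation \eqref{relationsbetweenrealandimaginary} and the identification $i^{\T}(\tilde\gamma_\delta^{(0)})=\lambda(t_1+t_2)^K$. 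Carrying out this bookkeeping by hand, rather than invoking the full Yangian presentation of Theorem \ref{equivcoha} (which is proved later and itself relies on this corollary), is what keeps the statement self-contained.
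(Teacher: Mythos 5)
Your proposal follows the paper's own proof essentially verbatim: the non-generation half is the identical cohomological-degree obstruction ($\tilde{\gamma}_{\delta}^{(0)}$ lives in degree $-2$ while the spherical generators and $\C[t_1,t_2]$ are non-negatively graded and the product preserves the grading), and the localized half is the paper's exhibition of $t_1t_2\,\tilde{\gamma}_{\delta}^{(0)}$ as an explicit spherical expression, verified through $i^{\T}$ and the shuffle formula and then propagated to all $\tilde{\gamma}_{\delta}^{(r)}$ by the derivation $u$. The one correction: your $\Gamma$ cannot be a single iterated bracket with one $u$-translate inserted — that element ($\mathcal{Z}_{K+1}$ in the paper's notation) does not map to a multiple of $t_1t_2(t_1+t_2)^K$ on its own; you need the two-term combination $(t_1+t_2)\mathcal{Y}_{K+1}-\mathcal{Z}_{K+1}$ from the full relation of Definition \ref{cohayangiandefn}, where $\mathcal{Y}_{K+1}$ is a sum of CoHA \emph{products} of a spherical generator with an iterated bracket; since both terms are manifestly spherical this does not change your conclusion.
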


\begin{proof}
The fact that $\mathcal{A}^{\T,\chi}_{\QTC{K},\WTC{K}}$ is not spherically generated follows from the fact that the cohomological degree of $\tilde{\alpha}_{i}^{(r)} \geq 0$, while cohomological degree of $\tilde{\gamma}_{\delta}^{0} = -2$. Since the product structure preserves cohomological grading, it is not possible to reach this element by taking products of $\tilde{\alpha}_{i}^{(r)}$, nor by multiplication with any element of $\C[t_1,t_2]$, since that would also increase cohomological degree. 

On the other hand by Proposition \ref{imageimaginary}, up to a factor of non-zero scalar, $i^{\T}(\gamma_{\delta}^{(0)}) = (t_1+t_2)^K \in \mathcal{A}^{\T,\chi}_{\QTC{K},\WTC{K},\delta}$. By the argument above $i^{\T}(\gamma_{\delta}^{(0)})$ cannot be written as the $\C[t_1,t_2] \hyphen$linear combination of products of $i^{\T}(\tilde{\alpha}_{i}^{(r)})$. However, we can write it as a $\C[t_1^{\pm 1},t_2^{\pm 1}]\hyphen$linear combination of the products of $i^{\T}(\tilde{\alpha}_{i}^{(r)})$. Let \begin{align}
\mathcal{Y}_2 &=  \tilde{\alpha}^{(0)}_{0}  \talpha^{(0)}_1 + \talpha^{(0)}_1  \tilde{\alpha}^{(0)}_{0} \\ \mathcal{Z}_{2} &= [\talpha^{(0)}_{0},\talpha^{(1)}_{1}] + [\talpha^{(0)}_{1},\talpha^{(1)}_{0}]
\end{align} for $K=1$ and 

\begin{align}
\mathcal{Y}_{K+1} &= \sum_{i \in \mathbb{Z}_{K+1}} \tilde{\alpha}_{i}^{(0)} \cdot \left( [\tilde{\alpha}_{i+1}^{(0)},[\tilde{\alpha}_{i+2}^{(0)},\cdots, [ \tilde{\alpha}^{(0)}_{i+K-1}, \tilde{\alpha}^{(0)}_{i+K}]]] \right) \\ \mathcal{Z}_{K+1} &= \sum_{i \in \mathbb{Z}_{K+1}} [\tilde{\alpha}^{(0)}_{i},[\tilde{\alpha}^{(1)}_{i+1}, [\tilde{\alpha}^{(0)}_{i+2},[ \cdots, [\tilde{\alpha}^{(0)}_{i+K-1},\tilde{\alpha}^{(0)}_{i+K}]]]] 
\end{align} for $K \geq 2$.

Note that second term in $\mathcal{Z}_{K+1}$ is $\tilde{\alpha}^{(1)}_{i+1}$. Using the shuffle product formula (\ref{shuffleproductformula}), it follows that for \[i^{\mathbb{T}}(\mathcal{Y}_{2}) = (-2)((x_{1,1}-x_{0,1})^2+t_1t_2) \] and \[ i^{\mathbb{T}}(\mathcal{Y}_{K+1}) = (-1)^K (t_1+t_2)^{K-1} \sum_{i \in \mathbb{Z}_{K+1}} \left(x_{i+1,1}x_{i-1,1}-2x_{i+1,1}x_{i,1}+x_{i,1}^2 + t_1t_2 \right) \]  for $K \geq 2$. Similarly, we calculate that \[ i^{\mathbb{T}}(\mathcal{Z}_2) = (-2)(t_1+t_2)(x_{1,1}-x_{0,1})^2\] and 
\[ i^{\mathbb{T}}(\mathcal{Z}_{K+1}) = (-1)^{K}(t_1+t_2)^K \sum_{i \in \mathbb{Z}_{K+1}} \left( x_{i+1,1}x_{i-1,1} - 2x_{i+1,1}x_{i,1} + x_{i,1}^2 \right)\] for $K \geq 2$.

Thus for all $K \geq 1$, it follows that 

\[ (t_1+t_2)i^{\mathbb{T}}(\mathcal{Y}_{K+1}) - i^{\mathbb{T}}(\mathcal{Z}_{K+1}) = (-1)^{K}(K+1)(t_1+t_2)^{K}(t_1t_2)\] and thus it follows that

\[\frac{1}{(-1)^K(K+1)} \left(\left(\frac{t_1+t_2}{t_1t_2} \right) i^{\T}(\mathcal{Y}_{K+1})- \left(\frac{1}{t_1t_2} \right) i^{\T}(\mathcal{Z}_{K+1}) \right) = (t_1+t_2)^K \]

Thus by Proposition \ref{heisqinjection}, it follows that for any $K \geq 1$, we can write $\tilde{\gamma}_{\delta}^{r}$ as a non-zero scalar multiple of 
\begin{equation} \label{imageofydelta}
\mathcal{K}^{r}_{K+1}:= u^r \cdot (\frac{(t_1+t_2)\mathcal{Y}_{K+1} - \mathcal{Z}_{K+1}}{t_1t_2})
\end{equation} 

\end{proof}

Now that we know the image of $\mathcal{A}^{\T,\chi}_{\QTC{K},\WTC{K}}$ in the shuffle algebra, we show that the image is an integral form of an affine Yangian.
\subsection{Affine Yangian} \label{mapyangian}
Let $n \geq 2$. Let $\ddot{\mathcal{Y}}_{\hbar_1,\hbar_2}(\mathfrak{gl}(n))$ be the affine Yangian of $\mathfrak{gl}_n$ considered in \cite{Bershtein_2019}, which is a slight reparametrization of the affine Yangian defined by \cite{GUAY2007436}. The definition of the affine Yangian for $n =2$ was first given in \cite{koderayangian} and \cite{Bershtein_2019}. Recall $\mathcal{Y}^{(n),\CoHA}_{\hbar_1,\hbar_2}$ from the Definition \ref{cohayangiandefn}, which is slight modification of the affine Yangian.

\begin{defn}\label{usualyangianrelations}
For any $n \geq 2$, let the positive half $\ddot{\mathcal{Y}}^{+}_{\hbar_1,\hbar_2}(\mathfrak{gl}(n))$ of the affine Yangian $\ddot{\mathcal{Y}}_{\hbar_1,\hbar_2}(\mathfrak{gl}(n))$ to be the $\C[\hbar_1,\hbar_2]$ sub algebra of $\mathcal{Y}^{(n),\CoHA}_{\hbar_1,\hbar_2}$ generated by the elements $X^{+}_{i,r}$ where $i=0,\dots,n-1$. 

\end{defn}

Note that we can give a \textit{dimension} grading to the algebra $\ddot{\mathcal{Y}}^{+}_{\hbar_1,\hbar_2}(\mathfrak{gl}(n))$ by declaring $\textbf{dim}(X^{+}_{i,r}) = \delta_i$ where $\delta_i = (0,\cdots,1,\cdots,0) \in \N^{n}$. We also have a \textit{cohomological} grading given by declaring $\CG(X^{+}_{i,r}) = 2r$ and $\CG(\hbar_1)=\CG(\hbar_2)=2$. Then the size of this algebra is known due to earlier work. We have

\begin{prop}\label{sizeofaffineyangian}
There is an isomorphism of free $\C[\hbar_1,\hbar_2]$ modules
\[ \ddot{\mathcal{Y}}^{+}_{\hbar_1,\hbar_2}(\mathfrak{gl}(n)) \simeq \Sym(\mathcal{L}_n) \otimes\mathbb{C}[\hbar_1,\hbar_2]\] where $\mathcal{L}_n$ is $\N^{n} \times \mathbb{Z}$ graded vector space of graded dimension
\begin{align} \label{sizeofbpsu}
\dim((\mathcal{L}_n)^{\CG}_{\bdim}) = \begin{cases}
n-1  & \textrm{if } \CG = 0, \bdim \in \mathbb{Z}_{\geq 1} \cdot \delta \\
n  & \textrm{if } \CG \geq 1, \bdim \in \mathbb{Z}_{\geq 1} \cdot \delta \\ 1 & \textrm{if } \CG  \in \mathbb{Z}_{\geq 0}, \bdim \text{ is a positive real root of } \tilde{\mathfrak{sl}}_{n}
\\ 0 & \textrm{ otherwise } 
\end{cases}
\end{align}
\end{prop}

\begin{proof}
We define a filtration $F$ of  $\ddot{\mathcal{Y}}^{+}_{\hbar_1,\hbar_2}(\mathfrak{gl}(n))$ by assigning $X^{+}_{i,r}$ degree $2r$ and rest $0$. Then the associated graded algebra $\mathrm{Gr}_{F}( \ddot{\mathcal{Y}}^{+}_{\hbar_1,\hbar_2}(\mathfrak{gl}(n)) ) \simeq \mathbf{U}(\mathcal{L}_n) \otimes \mathbb{C}[\hbar_1,\hbar_2]$ where $\mathcal{L}$ is Lie algebra generated by relations  $\mathcal{L}_n$ is a Lie algebra over $\mathbb{C}$, generated by $X^{+}_{i,r}, r \geq 0, i \in [0,n-1]$ with the defining relations  
\begin{align*}
[X^{+}_{i,r+1},X^{+}_{i,s}] - [X^{+}_{i,r},X^{+}_{i,s+1}] &= 0 \\
[X^+_{i,r+2},X^{+}_{i+1,s}] - 2[X^{+}_{i,r+1},X^{+}_{i+1,s+1}]+[X^{+}_{i,r},X^{+}_{i+1,s+2}] &=  0  \\
\mathrm{Sym}_{r_1,r_2,r_3} [X^{+}_{i,r_1},[X^{+}_{i,r_2},[X^{+}_{i,r_3},X^{+}_{i+1,s}]]] &= 0 
\end{align*} when $n=2$
and 
\begin{align*}
[X_{i,r+1}^{+},X^{+}_{i+1,s}]  &= [X^{+}_{i,r},X^{+}_{i+1,s+1}] \\ 
[X_{i,r+1}^{+},X^{+}_{i-1,s}]  &= [X^{+}_{i,r},X^{+}_{i-1,s+1}] \\ 
[X_{i,r+1}^{+},X^{+}_{i,s}] &= [X^{+}_{i,r},X^{+}_{i,s+1}] \\ 
[X^{+}_{i,r},X^{+}_{j,s}] &= 0 \ \forall |i-j|> 1\\
\Sym_{r_1,r_2}[X^{+}_{i,r_1},[X^{+}_{i,r_2},X^{+}_{i\pm 1,s}]] &= 0
\end{align*} when $n>2$. 

Given any commutative algebra $A$, we can consider the Lie algebra $\mathfrak{sl}_n(A)$, defined by the Lie bracket $[X \otimes a, Y \otimes b] = [X,Y] \otimes ab$ where $X,Y \in \mathfrak{sl}_n$ and $a,b \in A$ are arbitrary elements. Let $\widehat{\mathfrak{sl}_n}(A)$ be the universal central extension of $\mathfrak{sl}_n(A)$ which is as vector spaces $\mathfrak{sl}_n(A) \oplus \mathrm{\Omega}^{1}(A)/dA$, where $\mathrm{\Omega}^{1}(A)/dA$ is the space of $1$ forms on the affine variety $\Spec(A)$, modulo the exact forms (We refer to \cite{MR1066569} and \cite{GUAY2007436} for details). The Lie algebra is defined by 
\[ [z_1 \otimes a_1,z_2 \otimes a_2] = [z_1,z_2] \otimes a_1 \cdot a_2 + \langle z_1,z_2 \rangle  a_2 da_1\] where $\langle \cdot, \cdot \rangle$ is the Killing form.
We consider the setting when $A = \C[u^{\pm 1},v]$. Let $\mathfrak{b}^{+}_n \subset \widehat{\mathfrak{sl}_{n}}[u^{\pm 1},v]$ be the positive half. It can be identified as the Lie algebra spanned by $Xu^av^b$ where $X \in \mathfrak{sl}_{n}, a \geq 1, b \geq 0$ and $Xv^b$ where $X \in \mathfrak{n}^{+}_{n}$ and $K_{b}[a]$ where $a,b \geq 1$, such that they satisfy the relations
\[ [Xu^{m_1}v^{n_1}, Yu^{m_2}v^{n_2}] = [X,Y] \otimes u^{m_1+m_2}v^{n_1+n_2} + \langle X,Y \rangle (m_2n_1-m_1 n_2) K_{m_1+m_2}[n_1+n_2], \] with $K_{b}[a]$ central.  

For $n>2$, by \cite{GUAY2007436}[Lemma 3.4] and for $n=2$, by \cite{salalong}[Proposition IV.3.10]; we have an isomoprhism of Lie algebras $\mathcal{L}_n \simeq \mathfrak{b}^{+}_n$ given by $X^{+}_{i,r} \mapsto E_{i,i+1}v^r$ and $X^{+}_{0,r} \mapsto E_{n,1} uv^r$. The advantage is that now we have an explicit spanning set of the Lie algebra $\mathcal{L}_n$. The $\dim$ and $\CG$ grading makes $\mathcal{L}_n$ a $\mathbb{N}^n \times \mathbb{Z}$ graded vector spaces. Computing these graded dimensions proves the above theorem. 
\end{proof}


\begin{prop}\label{Isomorphismloc}
Assume $K \geq 1$. Then we have a homomorphism of algebras\footnote{A 1-parameter version of this proposition is also proved in \cite{yang2017cohomological} for any loop-free quiver $Q$. }

\[ \mathrm{\Phi}: \ddot{\mathcal{Y}}^{+}_{-t_2,t_1+t_2}(\mathfrak{gl}(K+1)) \rightarrow \mathcal{A}^{\T,\chi}_{\QTC{K},\WTC{K}} \] given by the assignment $X^{+}_{i,s} \mapsto \tilde{\alpha}_{i}^{(s)}$. 
\end{prop}

\begin{proof}
By Proposition \ref{injectiontoshufflealgebra}, there is an injection of algebras $i^{\T}: \mathcal{A}^{\T,\chi}_{\QTC{K},\WTC{K}} \hookrightarrow \mathcal{A}^{\T,\chi}_{\QTC{K}}$. 
Since $i^{\T}(\mathrm{\Phi}(X^{+}_{i,s})) = x^{s}_{i,1}$. We can use the explicit shuffle product (Equation \ref{shufflekernel}) on $\mathcal{A}^{\T,\chi}_{\QTC{K}}$ to check that the defining relations (\ref{usualyangianrelations}) of the positive half of the affine Yangian are satisfied. We refer author's thesis \cite{mythesis} for details. 
\end{proof}

We now show that in fact the morphism $\mathrm{\Phi}$ is an injection\footnote{In an earlier version of the paper, for $K>1$, we proved this by comparing graded dimension after taking Perverse filtration.}.
\begin{prop}\label{Isomorphismloc}
Assume $K \geq 1$. Then we have an injection of algebras:

\[ \mathrm{\Phi}: \ddot{\mathcal{Y}}^{+}_{-t_2,t_1+t_2}(\mathfrak{gl}(K+1)) \rightarrow \mathcal{A}^{\T,\chi}_{\QTC{K},\WTC{K}} \] given by the assignment $X^{+}_{i,s} \mapsto \tilde{\alpha}_{i}^{(s)}$.  
\end{prop}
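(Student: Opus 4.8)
The plan is to realize $\Phi$ as a surjective algebra homomorphism and then deduce injectivity from a graded-dimension count, with all relation checks pushed into the shuffle algebra.

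\textbf{Step 1 (well-definedness).} First I would verify that the assignment $X^{+}_{i,s} \mapsto \tilde{\alpha}_{i}^{(s)}$ respects every defining relation of $\ddot{\mathcal{Y}}^{+}_{-t_2,t_1+t_2}(\mathfrak{gl}(K+1))$ listed in Definition \ref{usualyangianrelations}. The key reduction is that for the big torus $\T$ the map $i^{\T}$ is an injection of algebras (Proposition \ref{injectiontoshufflealgebra}); since localization at $t_1,t_2$ is flat, the induced map on $\C[t_1^{-1},t_2^{-1}]$-localizations remains injective, so it suffices to check each Yangian relation on the explicit images $i^{\T}(\tilde{\alpha}_{i}^{(s)}) = x_{i,1}^{s}$ (Proposition \ref{imageimaginary}). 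Using the shuffle product formula (\ref{shuffleproductformula}) with the tripled cyclic kernel $\overline{\omega}^{\text{CoHA}}_{i,j}$ of Remark \ref{ontwoshufflealgebras}, each relation becomes an identity of symmetric rational functions in two variables, which I would verify directly. The substitution $\hbar_1 = -t_2$, $\hbar_2 = t_1+t_2$ is exactly what matches the linear factors $w-z+t_1$, $w-z+t_2$, $w-z-t_1-t_2$ of the kernel to the coefficients $\hbar_1+\hbar_2/2$, $\hbar_2/2$, $\hbar_2$ appearing in the current and quadratic relations. This produces a well-defined $\C[t_1^{\pm 1},t_2^{\pm 1}]$-algebra homomorphism $\Phi$.

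\textbf{Step 2 (surjectivity).} By Corollary \ref{sphgeneration}, the localized algebra $\mathcal{A}^{\T,\chi}_{\QTC{K},\WTC{K}} \otimes \C[t_1^{-1},t_2^{-1}]$ is spherically generated, i.e.\ generated over $\C[t_1^{\pm 1},t_2^{\pm 1}]$ by its graded pieces in dimensions $\delta_i$, $i \in [0,K]$. After localization each such piece is $\C[t_1^{\pm 1},t_2^{\pm 1}][x_{i,1}]$, whose monomials $x_{i,1}^{s} = i^{\T}(\tilde{\alpha}_{i}^{(s)})$ are precisely the images of the generators $X^{+}_{i,s}$. Hence the $\tilde{\alpha}_{i}^{(s)}$ generate the target and $\Phi$ is surjective.

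\textbf{Step 3 (injectivity).} Both source and target are free graded $\C[t_1^{\pm 1},t_2^{\pm 1}]$-modules with finite-dimensional graded pieces, and $\Phi$ preserves the $\N^{Q_0}\times\Z$ grading. On the target, the PBW theorem (Proposition \ref{PBWtheorem}) together with the computation $\mathfrak{g}^{\BPS,\T}_{\QTC{K},\WTC{K}} \simeq (\mathfrak{n}^{-}_{Q^K}\oplus s\Q[s])\otimes \HH_{\T}(\pt)$ (Proposition \ref{bpsLiealgebracyclic}) fixes the graded dimensions as those of $\Sym$ of $\HH(\BCu,\Q)\otimes \mathfrak{g}^{\BPS,\T}_{\QTC{K},\WTC{K}}$. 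On the source, Guay's PBW theorem for $\ddot{\mathcal{Y}}_{\hbar_1,\hbar_2}(\mathfrak{gl}(K+1))$ (valid since $K+1>2$) yields the identical graded dimensions. A graded surjection of free modules of equal finite rank in each graded degree is an isomorphism, so $\Phi$ is an isomorphism. The main obstacle will be Step 1, and within it the cubic Serre relation $\sum_{\sigma\in S_2}[X^{+}_{i,r_{\sigma(1)}},[X^{+}_{i,r_{\sigma(2)}},X^{+}_{i+1,s}]]=0$, whose check requires a careful symmetrization computation against the shuffle kernel; the bookkeeping of the imaginary directions (the $s\Q[s]$ summand, dual to the center of $\ddot{\mathcal{Y}}$) is also what makes the dimension matching in Step 3 delicate, since one must confirm that Guay's basis and the BPS--PBW basis agree in every dimension that is a multiple of $\delta$.
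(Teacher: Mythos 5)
Your Steps 1 and 2 coincide with the paper's argument: well-definedness is checked on the images $x_{i,1}^{s}$ inside the shuffle algebra via $i^{\T}$, and surjectivity is exactly Corollary \ref{sphgeneration}. The gap is in Step 3. You assert that source and target are "free graded $\C[t_1^{\pm 1},t_2^{\pm 1}]$-modules with finite-dimensional graded pieces" for the $\N^{Q_0}\times\Z$ grading, but no such bigrading exists compatibly with the $\C[t_1^{\pm 1},t_2^{\pm 1}]$-linear structure. The loop index $r$ is not a grading on $\ddot{\mathcal{Y}}^{+}$ (the relation $[X_{i,r+1}^{+},X^{+}_{i+1,s}] - [X^{+}_{i,r},X^{+}_{i+1,s+1}] = (\hbar_1+\hbar_2/2)[X^{+}_{i,r},X^{+}_{i+1,s}] - \cdots$ mixes loop degrees $r+s+1$ and $r+s$); it only becomes homogeneous if one assigns $t_i$ cohomological degree $2$, and then the graded pieces are not $\C[t_1^{\pm 1},t_2^{\pm 1}]$-submodules, while over $\C$ they become infinite-dimensional after inverting $t_1,t_2$ (e.g.\ $t_1t_2^{-1}$ has degree $0$). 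If instead you grade only by dimension vector, each graded piece has infinite rank. So "a graded surjection of free modules of equal finite rank is an isomorphism" has nothing finite to apply to, and the injectivity argument does not close.

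The paper's remedy is precisely the device your setup is missing: filter the source by $\deg_F(X^{+}_{i,r})\leq r$ with $\deg_F(t_i)=0$, filter the target by $\deg_{F'}(\tilde{\alpha}_i^{(s)})\leq s$, and pass to associated graded algebras, whose pieces \emph{are} finite-rank free $\C[t_1^{\pm 1},t_2^{\pm 1}]$-modules. This costs two nontrivial inputs that your proposal does not supply. First, $\Gr_F$ of the source must be identified as $\bU_{\C[t_1^{\pm 1},t_2^{\pm 1}]}(\mathcal{L})$ with $\mathcal{L}\simeq\mathfrak{b}^{+}\subset\widehat{\mathfrak{sl}_{K+1}}[u^{\pm 1},v]$ via \cite{GUAY2007436}[Lemma 3.4]; this is how "Guay's PBW theorem" actually enters, and it is what produces the central directions $K_m[n]$ in imaginary degrees $n\cdot\delta$. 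Second, and this is the genuinely delicate point you flag but do not carry out, one must compute the induced filtration degree of the imaginary PBW generators on the CoHA side: the paper shows $\deg_{F'}(\tilde{\gamma}_{n\cdot\delta}^{(m)})=m+1$ (not $m$), using the explicit images from Proposition \ref{imageimaginary} and the commutators $[\tilde{\gamma}_{\delta},x_{i,1}^{r}]$. Only with that value do the ranks (\ref{sizeofbps}) match (\ref{sizeofL}) degree by degree — in particular the count of $K$ versus $K+1$ generators in dimension $n\cdot\delta$ at filtration degree $0$ versus $\geq 1$. Without this computation the rank comparison, and hence injectivity, is unproved.
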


\begin{proof}
We show that, in fact, we have an isomorphism 
\[ \mathrm{\Phi}: \ddot{\mathcal{Y}}^{+}_{-t_2,t_1+t_2}(\mathfrak{gl}(K+1)) \simeq \mathcal{S}\mathcal{A}^{\T,\chi}_{\QTC{K},\WTC{K}} \hookrightarrow \mathcal{A}^{\T,\chi}_{\QTC{K},\WTC{K}} \]

Clearly by definition, $\mathrm{\Phi}$ is a surjection on its image. The spherical subalgebra $\mathcal{S}\mathcal{A}^{\T,\chi}_{\QTC{K},\WTC{K}}$ is cohomologically and dimensionally graded. It is shown in \cite{schiffmanncohagenerators}[Theorem 5.18] that $\mathcal{S}\mathcal{A}^{\T,\chi}_{\QTC{K},\WTC{K}}$ is isomorphic to the nilpotent cohomological Hall algebra $\mathcal{A}^{\mathbb{T},\mathcal{N},\chi}_{\mathrm{\Pi}_Q} \simeq \mathcal{A}^{\mathbb{T},\tilde{\mathcal{N}},\chi}_{\tilde{Q},\tilde{W}} $. We now compute the graded dimension of nilpotent CoHA. By Theorem \ref{nilpotentfree}, we have an isomorphism of free $\mathbb{C}[t_1,t_2]$ modules 

\[ \mathcal{A}^{\mathbb{T},\tilde{\mathcal{N}}}_{\tilde{Q},\tilde{W}} \simeq \mathcal{A}^{\tilde{\mathcal{N}}}_{\tilde{Q},\tilde{W}} \otimes \mathbb{C}[t_1,t_2].\]

In Example \ref{nilpotentkacpolynomialandrelationwithbpsliealgebra}, we proved that there is an isomorphism of graded vector spaces:

\[ \mathcal{A}^{\tilde{\mathcal{N}}}_{\tilde{Q},\tilde{W}} \simeq \Sym(\HH(\mathcal{M}_{\bd}^{\mathcal{N}_{\overline{Q}}}(\overline{Q}), i^{!}\mathcal{BPS}_{\mathrm{\Pi}_Q})[u]). \]

and \[
 \sum_{i } (-1)^{i} \dim(\HH^{i}(\mathcal{M}_{\bd}^{\mathcal{N}_{\overline{Q}}}(\overline{Q}), i^{!}\mathcal{BPS}_{\mathrm{\Pi}_Q})  )q^{i/2} = a_{Q,\bd}(q).
\] and thus by calculation of Kac polynomial for cyclic quiver in Equation \ref{kacpolynomialcyclicquiver}, it follows that
\begin{align} \label{sizeofbpsu}
\dim(\HH(\mathcal{M}_{\bd}^{\mathcal{N}_{\overline{Q}}}(\overline{Q}), i^{!}\mathcal{BPS}_{\mathrm{\Pi}_Q}[u])^{i}) = \begin{cases}
K  & \textrm{if } i = 0, \bd \in \mathbb{Z}_{\geq 1} \cdot \delta \\
K+1  & \textrm{if } i \geq 1, \bd \in \mathbb{Z}_{\geq 1} \cdot \delta \\ 1 & \textrm{if } i  \in \mathbb{Z}_{\geq 0}, \bd \text{ is a positive real root of } \tilde{\mathfrak{sl}}_{K+1}
\\ 0 & \textrm{ otherwise } 
\end{cases}
\end{align}
which matches the graded dimensions of the affine Yangian in Proposition \ref{sizeofaffineyangian}. Thus, the morphism $\mathrm{\Phi}$ is an injection. 
\end{proof}

\begin{remark} \label{nilpotent}
From the above argument, it follows that there is an isomorphism of $\mathbb{C}[t_1,t_2]$ linear algebras $\ddot{\mathcal{Y}}^{+}_{-t_2,t_1+t_2}(\mathfrak{gl}(K+1)) \simeq \mathcal{A}^{\mathbb{T},\tilde{\mathcal{N}},\chi}_{\tilde{Q^K},\tilde{W^K}}$. This statement is now also proved in \cite{salalong}[Theorem IV.3.11]. 
\end{remark}


We now identify the integral form precisely.

\begin{lemma}
There exists a derivation $T: \ddot{\mathcal{Y}}^{+}_{\hbar_1,\hbar_2}(\mathfrak{gl}(n)) \rightarrow  \ddot{\mathcal{Y}}^{+}_{\hbar_1,\hbar_2}(\mathfrak{gl}(n))$ such that $T(X^{+}_{i,r}) = X^{+}_{i,r+1}$
\end{lemma}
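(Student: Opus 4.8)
The plan is to build $T$ first on the free algebra and then show that it descends. Write $F$ for the free $\C[\hbar_1,\hbar_2]$-algebra on the symbols $X^{+}_{i,r}$, $i \in \{0,\dots,n-1\}$, $r \geq 0$, so that $\ddot{\mathcal{Y}}^{+}_{\hbar_1,\hbar_2}(\mathfrak{gl}(n)) = F/I$, where $I$ is the two-sided ideal generated by the relations of Definition \ref{usualyangianrelations}. Since a derivation of a free algebra is uniquely determined by its values on generators, there is a unique $\C[\hbar_1,\hbar_2]$-linear derivation $\tilde{T}\colon F \to F$ with $\tilde{T}(X^{+}_{i,r}) = X^{+}_{i,r+1}$; explicitly $\tilde{T}$ acts on a monomial by the Leibniz rule, shifting the second index of one factor at a time and summing over the factors. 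It then suffices to verify that $\tilde{T}(I) \subseteq I$, for in that case $\tilde{T}$ induces a well-defined derivation $T$ on the quotient with the required value on generators. Because $\tilde{T}$ is a derivation, $\tilde{T}(arb) = \tilde{T}(a)\,r\,b + a\,\tilde{T}(r)\,b + a\,r\,\tilde{T}(b)$ lies in $I$ whenever $r \in I$ and $\tilde{T}(r)\in I$; hence it is enough to check that $\tilde{T}$ sends each \emph{generating} relation back into $I$.

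The core of the argument is to show that each of the five families of defining relations is carried by $\tilde{T}$ to a linear combination of relations of the same shape with indices shifted upward by one. Denoting by $R(r,s)$ the left-hand side of the first relation, and using that $\tilde{T}[a,b] = [\tilde{T}a,b]+[a,\tilde{T}b]$, that $\tilde{T}\{a,b\} = \{\tilde{T}a,b\}+\{a,\tilde{T}b\}$, and that $\hbar_1,\hbar_2$ are annihilated by $\tilde{T}$, a direct application of the Leibniz rule yields the identity $\tilde{T}(R(r,s)) = R(r+1,s) + R(r,s+1)$, which manifestly lies in $I$. The same pattern holds for the $[X^{+}_{i,\bullet},X^{+}_{i-1,\bullet}]$ and $[X^{+}_{i,\bullet},X^{+}_{i,\bullet}]$ relations, for the locality relation $[X^{+}_{i,r},X^{+}_{j,s}]=0$ with $|i-j|>1$ (where $\tilde{T}$ produces the sum of the two once-shifted locality relations), and for the Serre relation $\sum_{\sigma \in S_2}[X^{+}_{i,r_{\sigma(1)}},[X^{+}_{i,r_{\sigma(2)}},X^{+}_{i+1,s}]]$ (where $\tilde{T}$ produces the sum of the three relations obtained by incrementing $r_1$, $r_2$, or $s$). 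In every case the shape of the relation is preserved because each term is multilinear in the generators and $\tilde{T}$ simply distributes the index shift across the factors.

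The step I expect to require the most care is the bookkeeping in the first three relations, where the index shift is already present asymmetrically (one commutator carries $X^{+}_{i,r+1}$, the other $X^{+}_{i+1,s+1}$). There one must confirm that the doubly shifted terms such as $[X^{+}_{i,r+2},X^{+}_{i+1,s}]$ and $[X^{+}_{i,r},X^{+}_{i+1,s+2}]$ produced by $\tilde{T}$ are exactly those accounted for by $R(r+1,s)$ and $R(r,s+1)$, and that the $(\hbar_1+\hbar_2/2)$-weighted and $\hbar_2/2$-weighted pieces match on the nose. Since these are finite, purely mechanical identities among elements of $F$, they present no conceptual obstacle, and the existence of $T$ follows at once. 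I note in passing that $T$ is precisely the infinitesimal generator of the index shift, so that the derivation $\mathrm{T}$ appearing in Definition \ref{cohayangiandefn} is the restriction of this operator; for the present lemma, however, only the derivation property is needed.
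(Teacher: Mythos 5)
Your proposal is correct and is essentially the paper's argument (define $T$ on generators, extend by the Leibniz rule, and check that each defining relation is sent back into the relation ideal), carried out with more detail than the paper's one-line proof; in particular your key identity $\tilde{T}(R(r,s)) = R(r+1,s)+R(r,s+1)$ checks out, with the cross terms $[X^{+}_{i,r+1},X^{+}_{i+1,s+1}]$ cancelling exactly as you anticipate, and the Serre relation likewise maps to the sum of its three once-shifted versions.
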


\begin{proof}
We define $T(X^{+}_{i,r}) = X^{+}_{i,r+1}$ and extend it by $T(ab) := T(a)b+aT(b)$ for $a,b \in \ddot{\mathcal{Y}}^{+}_{\hbar_1,\hbar_2}(\mathfrak{gl}(n))$ and note that it satisfies the relations in Definition \ref{usualyangianrelations} and hence is well defined\footnote{Alternatively one can use \cite{Guay_2018}[Lemma 2.22] to construct such a operator in $\ddot{\mathcal{Y}}^{(0)}_{\hbar_1,\hbar_2}(\mathfrak{gl}(n))$.}. 

\end{proof}

\begin{defn}
Assume $n \geq 2$. Let $\mathcal{Y}^{(n),+, \CoHA}_{\hbar_1,\hbar_2} \subset \ddot{\mathcal{Y}}^{+}_{\hbar_1,\hbar_2}(\mathfrak{gl}(n)) \otimes_{\C[\hbar_1,\hbar_2]} \C[\hbar_{1}^{\pm 1},(\hbar_1+\hbar_2)^{\pm 1}]$ be a $\mathbb{C}[\hbar_1,\hbar_2]$ linear sub algebra generated by $X^{+}_{i,r}$ and $\mathfrak{K}_{n}^{(r)}$ for $r \geq 0$ such that for all $r \geq 0$, we have an extra relation 
\begin{align*}
(\hbar_1)(\hbar_1+\hbar_2) \mathfrak{K}^{(r)}_{n} &= \TT^r (\mathfrak{Z}_n-\hbar_2 \mathfrak{Y}_{n}) 
\end{align*}
where  
\begin{align*}
    \mathfrak{Y}_2 & := X^{+}_{0,0}X^{+}_{1,0}+ X^{+}_{1,0}X^{+}_{0,0} \\ \mathfrak{Z}_2 & := [X^{+}_{0,0},X^{+}_{1,1}]+ [X^{+}_{1,0},X^{+}_{0,1}]
\end{align*} and 
\begin{align*}
\mathfrak{Y}_n & := \sum_{i \in \mathbb{Z}/n\mathbb{Z}} X^{+}_{i,0} \cdot \left([X^{+}_{i+1,0},[X^{+}_{i+2,0},\cdots,[X^{+}_{i+n-2,0} X^{+}_{i+n-1,0}]]] \right) \\ \mathfrak{Z}_n & := \sum_{i \in \mathbb{Z}/n\mathbb{Z}} [X^{+}_{i,0},[X^{+}_{i+1,1}, [X^{+}_{i+2,0},[ \cdots,[X^{+}_{i+n-2,0},X^{+}_{i+n-1,0}]]]]. 
\end{align*} for $n \geq 2$. 
Note that second term in the sum defining $\mathfrak{Z}_n$ is $X^{+}_{i+1,1}$. 
\end{defn}

Note that these extra elements are defined purely via cohomological Hall algebras, but quite remarkably they satisfy interesting algebraic simplification when $\hbar_2=0$. See Proposition \ref{positivedegisom} in particular.

\begin{thm}\label{equivcoha}
We have an isomorphism of $\C[t_1,t_2]$ algebras
\[ \tilde{\mathrm{\Phi}} : \mathcal{Y}^{(K+1),+, \CoHA}_{-t_2,t_1+t_2} \rightarrow \mathcal{A}^{\T,\chi}_{\QTC{K},\WTC{K}}, \] given by the assignment $X^{+}_{i,s} \mapsto \tilde{\alpha}_{i}^{(s)}$ and $\mathfrak{K}^{(r)}_{K+1} \mapsto \tilde{\gamma}_{\delta}^{(r)}$. 
    
\end{thm}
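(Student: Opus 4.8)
The plan is to obtain $\tilde{\mathrm{\Phi}}$ by restricting the localized isomorphism $\mathrm{\Phi}$ of Proposition \ref{Isomorphismloc} to the integral subalgebra $\mathcal{Y}^{(K+1),+, \CoHA}_{-t_2,t_1+t_2}$, and then to verify separately that this restriction is well-defined, surjective and injective. Note first that under the substitution $\hbar_1 = -t_2$, $\hbar_2 = t_1+t_2$ one has $\hbar_1+\hbar_2 = t_1$, so the ring $\C[\hbar_1^{\pm 1},(\hbar_1+\hbar_2)^{\pm 1}]$ appearing in the definition of $\mathcal{Y}^{(K+1),+,\CoHA}$ is exactly $\C[t_1^{\pm 1},t_2^{\pm 1}]$, matching the localization of Proposition \ref{Isomorphismloc}. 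Since $\mathcal{Y}^{(K+1),+,\CoHA}_{-t_2,t_1+t_2}$ is by definition a subalgebra of the localized Yangian and $\mathrm{\Phi}$ is an algebra isomorphism there, the restriction $\tilde{\mathrm{\Phi}} := \mathrm{\Phi}|_{\mathcal{Y}^{(K+1),+,\CoHA}}$ is automatically a $\C[t_1,t_2]$-algebra homomorphism; the only content is to check that its image lies in the non-localized algebra $\mathcal{A}^{\T,\chi}_{\QTC{K},\WTC{K}}$.

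For well-definedness it suffices to check this on the generators $X^{+}_{i,r}$ and $\mathfrak{K}^{(r)}_{K+1}$. On $X^{+}_{i,r}$ it is immediate, as $\mathrm{\Phi}(X^{+}_{i,r}) = \tilde{\alpha}_i^{(r)} \in \mathcal{A}^{\T,\chi}_{\QTC{K},\WTC{K}}$. For $\mathfrak{K}^{(r)}_{K+1}$ the key point is the compatibility $\mathrm{\Phi}\circ \mathrm{T} = (u\,\cdot)\circ\mathrm{\Phi}$: both sides are $\mathrm{\Phi}$-twisted derivations (here $\mathrm{T}$ is the Yangian derivation and $u$ acts by a derivation on the CoHA by Proposition \ref{heisliealgebraction}) that agree on the generators $X^{+}_{i,r}$, hence on the whole subalgebra they generate, which contains $\mathfrak{Y}_{K+1}$ and $\mathfrak{Z}_{K+1}$. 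Since $\mathfrak{Y}_{K+1}$ and $\mathfrak{Z}_{K+1}$ are the same iterated-commutator expressions in the $X^{+}_{i,r}$ as $\mathcal{Y}_{K+1}$ and $\mathcal{Z}_{K+1}$ are in the images $\tilde{\alpha}_i^{(r)} = \mathrm{\Phi}(X^{+}_{i,r})$, we have $\mathrm{\Phi}(\mathfrak{Y}_{K+1}) = \mathcal{Y}_{K+1}$ and $\mathrm{\Phi}(\mathfrak{Z}_{K+1}) = \mathcal{Z}_{K+1}$. Applying $\mathrm{\Phi}$ to the defining relation $\hbar_1(\hbar_1+\hbar_2)\mathfrak{K}^{(r)}_{K+1} = \mathrm{T}^r(\mathfrak{Z}_{K+1} - \hbar_2\mathfrak{Y}_{K+1})$ and using $\hbar_1(\hbar_1+\hbar_2) = -t_1 t_2$ therefore yields
\[ -t_1 t_2\,\mathrm{\Phi}(\mathfrak{K}^{(r)}_{K+1}) = u^r\cdot\bigl(\mathcal{Z}_{K+1} - (t_1+t_2)\mathcal{Y}_{K+1}\bigr). \]
Comparing with Equation (\ref{imageofydelta}) from the proof of Corollary \ref{sphgeneration} identifies the right-hand side with $-t_1 t_2$ times a non-zero scalar multiple of $\tilde{\gamma}_{\delta}^{(r)}$; hence $\mathrm{\Phi}(\mathfrak{K}^{(r)}_{K+1})$ is a scalar multiple of $\tilde{\gamma}_{\delta}^{(r)} \in \mathcal{A}^{\T,\chi}_{\QTC{K},\WTC{K}}$, and after the normalization of $\gamma_{\delta}$ fixed earlier this gives $\tilde{\mathrm{\Phi}}(\mathfrak{K}^{(r)}_{K+1}) = \tilde{\gamma}_{\delta}^{(r)}$. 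Since the images of all generators lie in $\mathcal{A}^{\T,\chi}_{\QTC{K},\WTC{K}}$, so does the entire image.

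Surjectivity is then immediate from Proposition \ref{imageofcoha}, which asserts that $\mathcal{A}^{\T,\chi}_{\QTC{K},\WTC{K}}$ is generated over $\C[t_1,t_2]$ precisely by the elements $\tilde{\alpha}_i^{(k)} = \tilde{\mathrm{\Phi}}(X^{+}_{i,k})$ and $\tilde{\gamma}_{\delta}^{(k)} = \tilde{\mathrm{\Phi}}(\mathfrak{K}^{(k)}_{K+1})$. For injectivity, recall that by Proposition \ref{injectiontoshufflealgebra} the algebra $\mathcal{A}^{\T,\chi}_{\QTC{K},\WTC{K}}$ injects into the shuffle algebra $\mathcal{A}^{\T,\chi}_{\QTC{K}}$, which is torsion-free over $\C[t_1,t_2]$; hence $\mathcal{A}^{\T,\chi}_{\QTC{K},\WTC{K}}$ embeds into its own localization $\mathcal{A}^{\T,\chi}_{\QTC{K},\WTC{K}}\otimes_{\C[t_1,t_2]}\C[t_1^{\pm 1},t_2^{\pm 1}]$. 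Any element of $\ker\tilde{\mathrm{\Phi}}$ then maps to $0$ in this localization via the injective isomorphism $\mathrm{\Phi}$ of Proposition \ref{Isomorphismloc}, so it already vanishes in the localized Yangian and hence in the subalgebra $\mathcal{Y}^{(K+1),+,\CoHA}$. Thus $\tilde{\mathrm{\Phi}}$ is injective, and combined with surjectivity it is the desired isomorphism. The main obstacle I anticipate is the well-definedness step, namely establishing $\mathrm{\Phi}\circ\mathrm{T} = (u\,\cdot)\circ\mathrm{\Phi}$ cleanly and matching $\mathrm{\Phi}(\mathfrak{K}^{(r)}_{K+1})$ with $\tilde{\gamma}_{\delta}^{(r)}$ for all $r$ simultaneously with consistent normalization; however, its quantitative heart is precisely the identity (\ref{imageofydelta}) already proved in Corollary \ref{sphgeneration}, so the present theorem is essentially an assembly of that identity with the generation statement of Proposition \ref{imageofcoha} and the localized isomorphism of Proposition \ref{Isomorphismloc}.
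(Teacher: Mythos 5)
Your proposal is correct and follows essentially the same route as the paper's proof: well-definedness via Proposition \ref{Isomorphismloc} together with Equation (\ref{imageofydelta}), surjectivity from Proposition \ref{imageofcoha}, and injectivity by localizing and invoking torsion-freeness so that $\tilde{\mathrm{\Phi}}$ becomes the isomorphism $\mathrm{\Phi}$. You merely spell out the well-definedness step (the compatibility of $\mathrm{T}$ with the $u$-action and the matching of $\mathfrak{K}^{(r)}_{K+1}$ with $\tilde{\gamma}_{\delta}^{(r)}$) in more detail than the paper does.
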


\begin{proof}
The map is well defined by Proposition \ref{Isomorphismloc}, Equation \ref{imageofydelta}, and Proposition \ref{imageimaginary}. By Corollary \ref{imageofcoha}, the morphism $\tilde{\mathrm{\Phi}}$ is a surjection. The algebra $\mathcal{A}^{\T}_{\QTC{K},\WTC{K}}$ is a free $\C[t_1,t_2]$ module by Theorem \ref{free}. After localizing, $\tilde{\mathrm{\Phi}} \otimes_{\C[t_1,t_2]} \C(t_1,t_2)$ is just the morphism $\mathrm{\Phi} \otimes_{\C[t_1,t_2]} \C(t_1,t_2)$ in the Proposition \ref{Isomorphismloc}, which is an injection. So $\tilde{\mathrm{\Phi}}$ is an injection, and we are done. 
\end{proof}

\section{Application: Maulik-Okounkov Yangian for Cyclic Quiver} \label{moyangianconjecture}

In this section, we calculate $\HHT$ linear algebras Maulik-Okounkov Yangians $\mathbf{Y}^{T,\MO}_Q$ in the case of cyclic quivers. We note the emphasis on $\HHT$ linearity, as the Nakajima quiver varieties are not proper, so after localizing, i.e., $\mathbf{Y}^{T,\MO}_Q \otimes_{\HHT} \mathrm{Frac}(\HHT)$ forgets a lot of the geometry of Nakajima quiver varieties. 

\subsection{Maulik-Okounkov Yangian}

Following \cite{schiffmann2023cohomological}, we first quickly recall the definition of the MO Yangian. Let $Q$ be any quiver and Let $T$ be any torus, defined by weighting $\mathbf{w}: \overline{Q}_1 \rightarrow \mathbb{Z}^r$ such that $\rho= \sum_{a \in Q_1} [a,a^{*}]$ is homogenous. We recall that the torus associated to this is given by $\Hom(\mathbb{Z}^r,\mathbb{C}^{*})$. In particular for $\rho$, we get a homomorphism $T \rightarrow \mathbb{C}^{*}$ given by $t \mapsto t(\mathbf{w}(\rho))$, which by taking pullback, defines a cohomology class $\mathbf{t}(\rho) \in \HHT$. We define $\hbar:= \mathbf{t}(\rho)$ and we assume that $\hbar \neq 0$\footnote{The assumption $\hbar \neq 0$ is crucial as only in this case MO Yangians are defined.}.Given any framing $\bff \in \mathbb{N}^{Q_0}$, we can extend weighting $\mathbf{w}^{\prime}$ to a weighting $\mathbf{w}: \overline{Q_{\bff}}_1 \rightarrow \mathbb{Z}^r$ defined by $\mathbf{w}^{\prime}(a)=\mathbf{w}(a)$ for all $a \in \overline{Q}_1$ and $\mathbf{w}^{\prime}(\alpha_{i,m})=\mathbf{w}(\rho),\mathbf{w}^{\prime}(\alpha_{i,m}^{*})=0$. Let  \[\Nak^{\tilde{\mathrm{GL}}_{\bff},\zeta^{\infty}}_{\bff}(Q) := \bigoplus_{\bd \in \N^{Q_0}} \HH^{\tilde{\mathrm{GL}}_{\bff}}(\Nak^{\zeta^{\infty}}_{\bd,\bff}(Q),\mathbb{Q}^{\vir}).\] be the equivariant cohomology of Nakajima quiver varieties, where $\tilde{\mathrm{GL}}_{\bff} = \mathrm{GL}_{\bff} \times T$, and $\mathrm{GL}_{\bff}= \prod_{i \in Q_0} \mathrm{GL}_{\bff_i}$ is the gauge group which reparametrize the framing. Then in \cite{maulik2018quantum}, by developing theory of Stable envelopes, authors define certain morphisms \[R_{\bff_1,\bff_2}(Q) \in  \mathrm{End}_{\HH_{\tilde{\mathrm{GL}}_{\bff}}}(\Nak^{\tilde{\mathrm{GL}}_{\bff_1},\zeta^{\infty}}_{\bff_1}(Q) \otimes_{\HHT} \Nak^{\tilde{\mathrm{GL}}_{\bff_2},\zeta^{\infty}}_{\bff_2}(Q))\otimes_{\HH_{\tilde{\mathrm{GL}}_{\bff}}(\pt)} \mathrm{Frac}(\HH_{\tilde{\mathrm{GL}}_{\bff}}(\pt)) \]
for any choice of framing $\bff_1,\bff_2 \in \mathbb{N}^{Q_0}$, where $\bff= \bff_1+\bff_2$. 

By the way the theory is structured, these morphisms automatically satisfy the Yang-Baxter equations. They prove that for any $\bff=\bff_1+\bff_2+\bff_3$, we have

\[ R^{12}_{\bff_1,\bff_2}(Q) R^{13}_{\bff_1,\bff_3}(Q)  R^{23}_{\bff_2,\bff_3}(Q) = R^{23}_{\bff_2,\bff_3}(Q)  R^{13}_{\bff_1,\bff_3}(Q)  R^{12}_{\bff_1,\bff_2}(Q)  \]

Let $S$ be any ring containing $\mathbb{Q}$. Given a collection of $S$ modules $F_i$ and spectral R matrices $R_{ij} \in \mathrm{End}(F_i \otimes F_j)(u)$, satisfying the Yang-Baxter Equation, there is a general way called the FRT formalism, named after Faddeev, Reshetikhin, and Takhtadzhyan, which constructs a quasi-triangular Hopf algebra which naturally acts on $F_i \otimes S(u)$. This was exploited by Maulik and Okounkov to construct the Maulik-Okounkov Yangian.  

For any quiver $Q$, let \[A^{T,\zeta^{\infty}}_Q =  \bigoplus_{\bff \in \mathbb{N}^{Q_0}} \End_{\tilde{\mathrm{GL}_{\bff}}} \left(\Nak^{\tilde{\mathrm{GL}}_{\bff},\zeta^{\infty}}_{\bff}(Q) \right).\]

Then the MO Yangians are defined to be a subalgebra 
\[ \mathbf{Y}^{T,\MO}_{Q} \subset A^{T,\zeta^{\infty}}_Q \] defined by certain operators using above $R$ matrices, as explained in \cite{maulik2018quantum}[Section 5.2.6]. Due to the work of Botta-Davison in \cite{botta2023okounkovs} and Schiffmann-Vasserot in \cite{schiffmann2012cherednik}, there is now an understanding of this subalgebra in terms of cohomological Hall algebras, which we now recall. 

\subsection{Relation with Cohomological Hall algebras}

For any Nakajima quiver variety $\mathrm{N}^{\zeta^{\infty}}_{\bd,\bff}(Q)$, we have tautological bundles $\mathcal{V}_i$ and $\mathcal{W}_i$ as defined in Section \ref{tautologicalbundle}. These defines operators $\ch_l(\mathcal{V}_i), \ch_{l}(\mathcal{W}_i)$ acting by cup product on there cohomology. We see them as elements inside $A^{T}_Q$. In Section \ref{cohaactionnakajimainfinity}, we saw that there is action of $\mathcal{A}^{T}_{\tilde{Q},\tilde{W}}$ and $\mathcal{A}^{T,\tilde{\mathcal{N}}}_{\tilde{Q},\tilde{W}}$ on $\Nak^{\tilde{\mathrm{GL}}_{\bff}, \zeta^{\infty}}_{\bff}(Q)$, and so we have morphisms
\begin{align}
   \rho_Q \colon \mathcal{A}^{T,\chi}_{\tilde{Q},\tilde{W}} & \rightarrow  A^{T,\zeta^{\infty}}_Q \\
   \rho^{\mathcal{N}}_Q  \colon\mathcal{A}^{T,\tilde{\mathcal{N}},\chi}_{\tilde{Q},\tilde{W}} & \rightarrow A^{T,\zeta^{\infty}}_Q 
\end{align}

Assume that the weighting $\mathbf{w}: \overline{Q}_1 \rightarrow \mathbb{Z}^r$ is defined so that it factors through weighting $\mathbf{w}^{\prime \prime}: \overline{Q}_1 \rightarrow \mathbb{Z}^2$ given by $\mathbf{w}^{\prime \prime}(a)=(1,0), \mathbf{w}^{\prime \prime}(a^{*})=(0,1)$. Then it is proved in \cite{botta2023okounkovs}[Corollary 1.4] that the morphism $\rho_Q$ is an injection, while the same is proved in \cite{schiffmann2017cohomological} for $\rho^{\mathcal{N}}_{Q}$. Combining Theorem 4.13 and Lemma 3.18 of \cite{schiffmann2023cohomological}, and Corollary 1.4 of \cite{botta2023okounkovs}, it follows that
\begin{thm} \label{moyangiancoha}
The Maulik-Okounkov Yangian can be identified by the subalgebra of the cohomology ring of Nakajima quiver variety generated by the image of $\rho_Q,\rho^\mathcal{N}_Q$ and tautological operators $\mathrm{ch}_{k}(\mathcal{V}_i)$ and $\mathrm{ch}_k(\mathcal{W}_i)$. i.e we have \[\mathbf{Y}^{T, \MO}_{Q} = \langle \mathcal{A}^{T}_{\tilde{Q},\tilde{W}},\mathcal{A}^{T,\tilde{\Ncal}}_{\tilde{Q},\tilde{W}},\ch_k(\mathcal{V}_i), \ch_{k}(\mathcal{W}_i) \rangle \subset A^{T,\zeta^{\infty}}_Q  \]  as $\HH_{T}(\pt)$ algebras. 
\end{thm}

We now specialize to the case when $Q=Q^K$ is a cyclic quiver and torus $T=\mathbb{T}$. 

By Remark \ref{nilpotent}, we may identify $\mathcal{A}^{\mathbb{T},\tilde{\mathcal{N}},\chi}_{\tilde{Q^K},\tilde{W^K}}$ with the positive Half of the affine Yangian $\ddot{\mathcal{Y}}^{+}_{-t_2,t_1+t_2}(\mathfrak{gl}(K+1))$. However, in the view that the action on the Nakajima quiver variety is the right action, it is more natural to identify the nilpotent CoHA with the opposite algebra of the affine Yangian, i.e., with the negative half. This statement is also proved in \cite{salalong}[Theorem III.7.3]. We have

\begin{prop}
Let $\ddot{\mathcal{Y}}^{-}_{\hbar_1,\hbar_2}(\mathfrak{gl}(n))$ be the negative Half of the affine Yangian, i.e. the subalgebra $\langle X^{-}_{i,r} \mid i \in \mathbb{Z}/n\mathbb{Z}, r \geq 0 \rangle$ of $ \mathcal{Y}^{(n),\CoHA}_{\hbar_1,\hbar_2}$ defined in Definition \ref{cohayangiandefn}. Then for any $K \geq 1$, we have an isomorphism of algebras $\mathcal{A}^{\mathbb{T},\tilde{\mathcal{N}}}_{\QTC{K},\WTC{K}} \simeq \ddot{\mathcal{Y}}^{+}_{-t_2,t_1+t_2}(\mathfrak{gl}(K+1))$ such that for any framing $\bff$, this isomorphism induces left action of $\ddot{\mathcal{Y}}^{-}_{-t_2,t_1+t_2}(\mathfrak{gl}(K+1))$ on $\Nak^{\tilde{\mathrm{GL}}_{\bff},\zeta^{\infty}}_{\bff}(Q^K)$. 
\end{prop}

In Theorem \ref{equivcoha}, we proved that $\mathcal{A}^{\mathbb{T}}_{\tilde{Q^K},\tilde{W^K}}$ is isomorphic to an integral form $\mathcal{Y}^{+,(K+1),\CoHA}_{-t_2,t_1+t_2}$ of the affine Yangian and thus inducing action of the integral form on $\Nak^{\tilde{\mathrm{GL}}_{\bff},\zeta^{\infty}}_{\bff}(Q^K)$ for any framing $\bff$. It suffices to understand how the positive and negative halves glue, which we can do by the work of Varagnolo.

\subsection{Varagnolo Action of Yangian}

We consider the action of the full affine Yangian on the Nakajima quiver variety, due to Varagnolo \cite{varagnolo2000}, which is based on work of Nakajima in the context of equivariant $K$ theory. In the case of two-parameter deformation and $K=1$, this action was also considered in \cite{koderayangian} and in \cite{salalong}. The action is defined by convolution, in a similar way to Nakajima's action of Kac-Moody Lie algebras we considered earlier.

Let $\bff$ be any framing vector. Given two dimension vectors $\bd_1$ and $\bd_2$, 

Let $\bd_2=\bd_1+ \delta_i$ for some $i \in Q_0$. Then we define subvariety $C^{+}_i(\bd,\bff) \subset \Nak^{\zeta^{\infty}}_{\bd,\bff}(Q) \times \Nak^{\zeta^{\infty}}_{\bd+\delta_i,\bff}(Q) $ given by pairs $(\rho_1(a),\rho_1(a^{*}),\rho_1(\alpha_{i,l}),\rho_1(\alpha^{*}_{i,l}))$ and $(\rho_2(a),\rho_2(a^{*}),\rho_2(\alpha_{i,l}),\rho_2(\alpha^{*}_{i,l}))$ such that the representation $\rho_2$ is quotient of representation $\rho_1$. It is proved in \cite{nakajimaloop} that these subvarieties are Lagrangian and smooth. We can then define line bundle $\mathcal{L}_i$ on $C^{+}_i(\bd,\bff)$ by $p_2^{*}\mathcal{V}_{i}/p_1^{*}\mathcal{V}_{i}$ where $p_1: C^{+}_i(\bd,\bff) \rightarrow \Nak^{\zeta^{\infty}}_{\bd,\bff}(Q)$ and $p_2: C^{+}_i(\bd,\bff) \rightarrow \Nak^{\zeta^{\infty}}_{\bd+\delta_i,\bff}(Q) $ are the projection maps and $\mathcal{V}_i$ are the tautological bundles at vertex $i$.  Then for any $\bd$, one define morphisms $x^{+}_{i,r}: \HH^{\tilde{\mathrm{GL}}_{\bff}}(\Nak^{\zeta^{\infty}}_{\bd,\bff}(Q),\mathbb{Q}^{\vir}) \rightarrow \HH^{\tilde{\mathrm{GL}}_{\bff}}(\Nak^{\zeta^{\infty}}_{\bd+\delta_i,\bff}(Q),\mathbb{Q}^{\vir})$ by $(-1)^{(\delta_i,\bd)} (p_2)_{*}((c_1(\mathcal{L}_i)^r \cup p_1^{*}(\alpha))$ and $x^{-}_{i,r}: \HH^{\tilde{\mathrm{GL}}_{\bff}}(\Nak^{\zeta^{\infty}}_{\bd,\bff}(Q),\mathbb{Q}^{\vir}) \rightarrow \HH^{\tilde{\mathrm{GL}}_{\bff}}(\Nak^{\zeta^{\infty}}_{\bd-\delta_i,\bff}(Q),\mathbb{Q}^{\vir})$ by  $(-1)^{(\delta_i,\bd-\delta_i)} (p_1)_{*}((c_1(\mathcal{L}_i)^r \cup p_2^{*}(\alpha))$.

Given any $(\bd,\bff)$ and $i \in [0,K], s \in \mathbb{Z}_{\geq 0}$. let $h_{i,s}$ be the element in the tautological ring of $\Nak^{\zeta^{\infty}}_{\bd,\bff}(Q)$ defined by

\[ \mathrm{Res}_{z=0} \left( z^{s} \left(-1+  \frac{\lambda_{-1/z}(\mathcal{F}_i(\bd,\bff))}{\lambda_{-1/z}(q_1q_2\mathcal{F}_i(\bd,\bff))} \right) \right)/(t_1+t_2) \]  where $\mathcal{F}_{i}(\bd,\bff)$ is the element in the equivariant $K$ theory $K^{T \times \mathrm{GL}_{\bff}}(\Nak^{\zeta^{\infty}}_{\bd,\bff}(Q))$ given by \[\mathcal{F}_{i}(\bd,\bff) =  (q_1q_2)^{-1}\mathcal{W}_i-(1+(q_1q_2)^{-1})\mathcal{V}_i +  q_1^{-1}\mathcal{V}_{i-1} + q_2^{-1} \mathcal{V}_{i+1} \]

Here $q_1,q_2$ are line bundles associated to action of $\mathbb{T}$, i.e $c_1(q_1)=t_1$ and $c_1(q_2)=t_2$ and $\lambda_{1/z}(V)$ is the equivariant chern polynomial $\lambda_{1/z}(V) = \sum_{i \geq 0} c_i(V)/z^i$. Then we have 

\begin{thm}[\cite{varagnolo2000}]
For any framing $\bff$, there is an action of affine Yangian $\ddot{\mathcal{Y}}_{-t_2,t_1+t_2}(\mathfrak{gl}(n))$ on $\Nak^{\tilde{\mathrm{GL}}_{\bff},\zeta^{\infty}}_{\bff}(Q^K)$, given by $X^{\pm}_{i,r} \mapsto x^{\pm}_{i,r}$ and $H_{i,s} \mapsto h_{i,s} \cup $
\end{thm}

Thanks to Theorem \cite{yang2017cohomological}[Theorem 5.6], we can see the operators $x^{\pm}_{i,r}$ as an action of the spherical part of CoHA. 

\begin{prop}
Under the morphism $\ddot{\mathcal{Y}}^{+}_{-t_2,t_1+t_2}(\mathfrak{gl}(K+1)) \rightarrow \mathcal{A}^{\mathbb{T},\chi}_{\QTC{K},\WTC{K}}$, the Varagnolo's action of $\ddot{\mathcal{Y}}^{+}_{-t_2,t_1+t_2}(\mathfrak{gl}(K+1))$ on the $\Nak^{\tilde{\mathrm{GL}}_{\bff},\zeta^{\infty}}_{\bff}(Q^K)$ coincides with  $\mathcal{A}^{\mathbb{T},\chi}_{\QTC{K},\WTC{K}}$ action defined in Section \ref{cohaactionequivarianthilbert}. Similarly the Varagnolo's action of $\ddot{\mathcal{Y}}^{-}_{-t_2,t_1+t_2}(\mathfrak{gl}(K+1))$ can be identifed with the action of $\mathcal{A}^{\mathbb{T},\mathcal{N},\chi}_{\QTC{K},\WTC{K}}$ under the isomorphism  $\ddot{\mathcal{Y}}^{-}_{-t_2,t_1+t_2}(\mathfrak{gl}(K+1)) \rightarrow \mathcal{A}^{\mathbb{T},\tilde{\mathcal{N}},\chi}_{\QTC{K},\WTC{K}}$. 
\end{prop}

Furthemore, by \cite{salalong}[Theorem III.7.8], the subalgebra $\langle \ch_{k}(\mathcal{V}_i), \ch_k(\mathcal{W}_i) \rangle \subset A^{\mathbb{T},}_{Q^K }$, generated by tautological classes is isomoprhic to the subalgebra $\langle h_{i,s}, \ch_{k}(\mathcal{W}_i) \rangle \subset A^{\mathbb{T}}_{Q^K}$. This is simply because universal polynomials exist that relate the Chern polynomial and Chern characters.

Let $\overline{\mathcal{Y}}^{n, \CoHA}_{\hbar_1,\hbar_2}$ be the subalgebra of $ \mathcal{Y}^{n,\CoHA}_{\hbar_1,\hbar_2}$ which is generated by generators $\langle \mathfrak{K}^{(r)}_{+}, X^{\pm}_{i,r}, H_{i,r} \mid i \in \mathbb{Z}/n\mathbb{Z}, r \geq 0 \rangle$ (So we only exclude the generators $\mathfrak{K}^{(r)}_{-}$ for $r \geq 0$). Let $\overline{\mathcal{Y}}^{n, \CoHA,\mathfrak{e}}_{\hbar_1,\hbar_2} = \overline{\mathcal{Y}}^{n, \CoHA}_{\hbar_1,\hbar_2}[\langle k_{i,r} \rangle]$ be the extended algebra, where we have added central elements $\langle k_{i,r} \mid i \in \mathbb{Z}/n\mathbb{Z}, r \geq 0 \rangle$ to the list of generators. 

Since the cup product with tautolgical classes $\ch_k(\mathcal{W}_i)$ commutes, it follows from Theorem \ref{moyangiancoha} that there is a $\mathbb{C}[t_1,t_2]$ linear algebra surjection

\[\mathrm{\Psi}: \overline{\mathcal{Y}}^{K+1, \CoHA,\mathfrak{e}}_{-t_2,t_1+t_2}  \rightarrow \mathrm{Y}^{\MO,\mathbb{T}}_{Q^K} \subset A^{\mathbb{T}}_{Q^K}\] where the morphism sends $k_{i,r} \mapsto \ch_{r}(\mathcal{W}_i)$.  

\begin{thm}
    The morphism $\mathrm{\Psi}$ is an isomorphism of $\C[t_1,t_2]$ linear algebras. 
\end{thm}

\begin{proof}
It suffices to show that this morphism is an injection. The affine Yangian is a free $\mathbb{C}[t_1,t_2]$ module, and so does the Maulik-Okounkov Yangian by \cite{maulik2018quantum}[Theorem 5.5.1]. Thus, it suffices to show that:

\[ \mathrm{\Psi} \colon (\overline{\mathcal{Y}}^{K+1, \CoHA,\mathfrak{e}}_{-t_2,t_1+t_2} )_{\mathbf{K}} \rightarrow (\MOY^{\MO,\mathbb{T}}_{Q^K})_{\mathbf{K}} \] is an isomorphism, where for any any algebra $A$, $A_{\mathbf{K}}$ denote the localization $ A \otimes_{\mathbb{C}[t_1,t_2]}\mathbb{C}(t_1,t_2)$. Clearly $(\overline{\mathcal{Y}}^{K+1, \CoHA,\mathfrak{e}}_{-t_2,t_1+t_2} )_{\mathbf{K}}  = (\ddot{\mathcal{Y}^{\mathfrak{e}}}_{-t_2,t_1+t_2}(K+1))_{\mathbf{K}}$, where $\ddot{\mathcal{Y}^{\mathfrak{e}}}_{-t_2,t_1+t_2}(K+1))$ denote the extended affine Yangian (We add central elements $k_{i,r}, i \in [0,K], r \geq 0$). Let $(\ddot{\mathcal{Y}^{0,\mathfrak{e}}}_{-t_2,t_1+t_2}(K+1))_{\mathbf{K}}$ be the subalgebra generated by generators $H_{i,s}$ and $k_{i,l}$. Then it shown in  \cite{salalong}[Theorem III.5.6], that the extended affine Yangian $(\ddot{\mathcal{Y}}^{\mathfrak{e}}_{-t_2,t_1+t_2}(K+1)_{\mathbf{K}}$ has a triangular decomposition, i.e the morphism
{\small
\[ m: (\ddot{\mathcal{Y}}^{+}_{-t_2,t_1+t_2}(K+1)_{\mathbf{K}} \otimes (\ddot{\mathcal{Y}}^{0,\mathfrak{e}}_{-t_2,t_1+t_2}(K+1))_{\mathbf{K}} \otimes (\ddot{\mathcal{Y}}^{-}_{-t_2,t_1+t_2}(K+1))_{\mathbf{K}} \rightarrow (\ddot{\mathcal{Y}}^{\mathfrak{e}}_{-t_2,t_1+t_2}(K+1))_{\mathbf{K}} \]} is an isomorphism of vector spaces. It is proved in \cite{schiffmann2023cohomological} that the Maulik-Okounkov Yangian also admits a triangular decomposition, and so we have 

\[ m: (\MOY^{\MO,\mathbb{T},+}_{Q^K})_{\mathbf{K}} \otimes (\MOY^{\MO,\mathbb{T},0}_{Q^K})_{\mathbf{K}} \otimes (\MOY^{\MO,\mathbb{T},-}_{Q^K})_{\mathbf{K}} \rightarrow (\MOY^{\MO,\mathbb{T}}_{Q^K})_{\mathbf{K}},\]

such that there is isomorphism of algebras $(\MOY^{\MO,\mathbb{T},+}_{Q^K})_{\mathbf{K}} \simeq (\mathcal{A}^{\T}_{\QTC{K},\WTC{K}})_{\mathbf{K}}$ and $(\MOY^{\MO,\mathbb{T},-}_{Q^K})_{\mathbf{K}} \simeq (\mathcal{A}^{\T,\tilde{\mathcal{N}}}_{\QTC{K},\WTC{K}})_{\mathbf{K}}$ and $(\MOY^{\MO,\mathbb{T},0}_{Q^K})_{\mathbf{K}} \simeq \langle \ch_k(\mathcal{V}_i), \ch_{k}(\mathcal{W}_i) \rangle$. Thus, the morphism $\mathrm{\Psi}_{\mathbf{K}}$ restricted to each of the parts of the triangular decomposition is an isomorphism, and so we are done by the following lemma. 

\end{proof}

\begin{lemma} \label{triangulardecomisomor}
Suppose $F: A \rightarrow B$ a morphism of algebras where both $A$  and $B$ admits a triangular decomposition $A^{+} \otimes A^{0} \otimes A^{-} \xrightarrow[]{m} A$, $B^{+} \otimes B^{0} \otimes B^{-} \xrightarrow[]{m} B$. Suppose that $F$ restricts to isomorphism of algebras $F^{+}\colon A^{+} \rightarrow B^{+}$, $F^{0} \colon A^{0} \rightarrow B^{0}$ and $F^{-}: A^{-} \rightarrow B^{-}$ then $F$ is an isomorphism of algebras. 
\end{lemma}

\begin{proof}
We have commutative diagram \[\begin{tikzcd}[ampersand replacement=\&]
	A \& B \\
	{A^{+} \otimes A^{0} \otimes A^{-}} \& {B^{+} \otimes B^{0} \otimes B^{-}}
	\arrow["F", from=1-1, to=1-2]
	\arrow["{m_A}", from=2-1, to=1-1]
	\arrow["{F^{+}\otimes F^{0} \otimes F^{-}}"', from=2-1, to=2-2]
	\arrow["{m_B}"', from=2-2, to=1-2]
\end{tikzcd}\] and thus $F$ is isomorphism of vector spaces. Since $F$ is a morphism of algebras, we are done. 
\end{proof}




\section{Deformed Affinized BPS Lie algebra for Cyclic Quivers} \label{sectiondeformedaffinized}

We recall that in Section \ref{affinizedBPSliealgebra}, we gave $\mathcal{A}^{\C^{*},\chi} _{\QTC{K},\WTC{K}}$ a cocommutative coproduct by considering the eigenvalues of $\omega := \sum \omega_i$, and we concluded that there is an isomorphism of algebras \[ \mathcal{A}^{\C^{*},\chi}_{\QTC{K},\WTC{K}} \simeq \bU_{\C[t]}(\widehat{\mathfrak{g}}^{\BPS, \C^{*}}_{\QTC{K},\WTC{K}}), \] where $\widehat{\mathfrak{g}}^{\BPS,\C^{*}}_{\QTC{K},\WTC{K}}$ is a $\C[t]$ linear Lie algebra, called the $\C^{*}$ deformed affinized BPS Lie algebra. Previous results allows us to calculate the Lie algebra $\widehat{\mathfrak{g}}^{\BPS,\C^{*}}_{\QTC{K},\WTC{K}}$ explicitly. Since $\mathfrak{g}^{\BPS,\C^{*}}_{\QTC{K},\WTC{K}} \simeq \mathfrak{g}^{\BPS}_{\QTC{K},\WTC{K}} \otimes \C[t]$ as graded vector spaces, let $\hat{\alpha}_{i}$ and $\hat{\gamma}_{\delta}$ be unique non-canonical lift of $\alpha_i, \gamma_{\delta} \in \mathfrak{g}^{\BPS}_{\QTC{K},\WTC{K}}$. We then have

 
\begin{prop} \label{generationofdeformedaffinized}
The $\C[t]$ linear Lie algebra $\widehat{\mathfrak{g}}^{\BPS,\C^{*}}_{\QTC{K},\WTC{K}}$ is generated by $u^{k} \cdot \hat{\alpha}_{i} $ for all $i \in [0,K], k \geq 0$ and $u^k \cdot \hat{\gamma}_{\delta}$ for all $ k \geq 0$. 
\end{prop}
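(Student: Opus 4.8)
The plan is to deduce generation of the deformed Lie algebra from the already-established generation of its classical limit $\abps{K}$, via a graded Nakayama argument over $\C[t] = \HH_{\C^{*}}(\pt)$.

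First I would record the module-theoretic setup. By Proposition \ref{bpsLiealgebracyclic} (more precisely the trivial-deformation statement it invokes) there is an isomorphism of cohomologically graded $\C[t]$-modules $\mathfrak{g}^{\BPS,\C^{*}}_{\QTC{K},\WTC{K}} \simeq \mathfrak{g}^{\BPS}_{\QTC{K},\WTC{K}} \otimes \C[t]$, whence $\widehat{\mathfrak{g}}^{\BPS,\C^{*}}_{\QTC{K},\WTC{K}} \simeq \mathfrak{g}^{\BPS}_{\QTC{K},\WTC{K}} \otimes \C[t] \otimes \C[u]$ as $\N^{Q_0} \times \Z$ graded $\C[t]$-modules, where both $t$ and $u$ carry cohomological degree $2$. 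Since $\mathfrak{g}^{\BPS}_{\QTC{K},\WTC{K}}$ is concentrated in cohomological degrees $0$ and $-2$, for each fixed dimension $\bd$ the cohomological grading of $\widehat{\mathfrak{g}}^{\BPS,\C^{*}}_{\QTC{K},\WTC{K},\bd}$ is bounded below. Moreover, reduction modulo $t$ recovers $\abps{K}$ \emph{together with its Lie bracket}: this follows from the flatness of the deformation (Proposition \ref{flatdeformationofcoha}), which gives $\mathcal{A}^{\C^{*},\chi}_{\QTC{K},\WTC{K}} \otimes_{\C[t]} \C \simeq \mathcal{A}^{\chi}_{\QTC{K},\WTC{K}}$ compatibly with the inclusions of the primitive parts. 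By construction the lifts $\hat{\alpha}_i = \alpha_i \otimes 1$ and $\hat{\gamma}_\delta = \gamma_\delta \otimes 1$ reduce to $\alpha_i$ and $\gamma_\delta$ respectively.

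Next I would let $\mathfrak{L} \subseteq \widehat{\mathfrak{g}}^{\BPS,\C^{*}}_{\QTC{K},\WTC{K}}$ denote the $\C[t]$-linear Lie subalgebra generated by $\{u^k \hat{\alpha}_i\}$ and $\{u^k \hat{\gamma}_\delta\}$. Since these generators are homogeneous and the bracket, together with multiplication by $t$ and $u$, preserves the $\N^{Q_0} \times \Z$ grading, $\mathfrak{L}$ is a graded $\C[t]$-submodule. The crux is to show that the images of the generators modulo $t$ generate $\abps{K}$ as a Lie algebra. The elements $u^k \alpha_i$ generate $\mathfrak{re}_K \simeq \mathfrak{n}^{-}_{Q^{K}}[D]$ (Proposition \ref{realLiesubalgebra}), using that the current Lie algebra $\mathfrak{n}^{-}_{Q^{K}}[D]$ is generated by $\{f_i D^k\}$ — any $x D^m$ for $x = [f_{i_1},[f_{i_2},\dots,f_{i_r}]]$ is obtained as $[f_{i_1}D^{m_1},[f_{i_2}D^{m_2},\dots]]$ with $\sum m_j = m$. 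The elements $u^k \gamma_\delta$ generate $\mathfrak{im}_K$ (Proposition \ref{imaginaryLiealgebra}). Since $\mathfrak{re}_K + \mathfrak{im}_K = \abps{K}$ as vector spaces with $[\mathfrak{im}_K,\mathfrak{re}_K] \subseteq \mathfrak{re}_K$, the two families together generate all of $\abps{K}$. Consequently $\mathfrak{L} + t\,\widehat{\mathfrak{g}}^{\BPS,\C^{*}}_{\QTC{K},\WTC{K}} = \widehat{\mathfrak{g}}^{\BPS,\C^{*}}_{\QTC{K},\WTC{K}}$.

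Finally I would invoke the graded Nakayama lemma within each fixed dimension $\bd$: for a $\C[t]$-module bounded below in cohomological degree with $t$ of positive degree, a graded submodule $\mathfrak{L}$ satisfying $\mathfrak{L} + t M = M$ must equal $M$, by induction on the minimal degree in which $\mathfrak{L}$ and $M$ could differ (peeling off one factor of $t$ drops the degree by $2$ and lands in a lower degree where equality already holds). This forces $\mathfrak{L} = \widehat{\mathfrak{g}}^{\BPS,\C^{*}}_{\QTC{K},\WTC{K}}$, which is the claim. I expect the main obstacle to be the mod-$t$ generation step, i.e. confirming that reduction along the flat deformation carries the \emph{affinized} bracket faithfully, so that the generation of $\abps{K}$ already obtained in Section \ref{sectionaffinized} transfers to the classical limit of the deformed algebra; the Nakayama step itself is then routine.
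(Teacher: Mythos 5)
Your proposal is correct and follows essentially the same route as the paper: reduce modulo $t$, use the already-established generation of $\abps{K}$ by the classical generators, and conclude by a graded Nakayama argument exploiting that $t$ has positive cohomological degree and each $\N^{Q_0}\times\Z$-graded piece is finite-dimensional. The only (cosmetic) difference is that the paper packages this as the surjectivity of an explicit map $J\colon \abps{K}\otimes\C[t]\to \widehat{\mathfrak{g}}^{\BPS,\C^{*}}_{\QTC{K},\WTC{K}}$, whereas you work directly with the generated Lie subalgebra $\mathfrak{L}$, which cleanly sidesteps any well-definedness concerns for $J$.
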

\begin{proof}
Note that since $\mathcal{A}^{\C^{*},\chi}_{\tilde{Q},\tilde{W}}$ is a free $\C[t]$ module, the isomorphism of algebras  $\mathcal{A}^{\C^{*},\chi}_{\tilde{Q},\tilde{W}}\simeq \bU_{\C[t]} (\widehat{\mathfrak{g}}^{\BPS,\C^{*}}_{\QTC{K},\WTC{K}})$ implies that $\widehat{\mathfrak{g}}^{\BPS,\C^{*}}_{\QTC{K},\WTC{K}}$ is a free $\C[t]$ module. 
We define a map of free $\C[t]$ modules
\[J: \abps{K} \otimes \C[t] \rightarrow  \widehat{\mathfrak{g}}^{\BPS,\C^{*}}_{\QTC{K},\WTC{K}}, \]defined by $J(\alpha^{(k)}_i f(t)): =  u^k \cdot \hat{\alpha}_i$ and $J(\gamma^{(k)}_{\delta}) f(t))= f(t) u^k \cdot \hat{\gamma}_{\delta}$ and extending to $\widehat{\mathfrak{g}}^{\BPS,\C^{*}}_{\QTC{K},\WTC{K}}$ by the Lie algebra relations on $\abps{K}$ (Theorem \ref{Theorem1}). It suffices to show that the morphism $J$ is surjective. Let $m_{\C^{*}}= (t) \subset \C[t]$ be the maximal ideal. Then the morphism $J/(m_{\C^{*}})$ is an isomorphism of $\N^{Q_0} \times \Z$ graded modules, where each graded piece is finite-dimensional and thus $J$ is an isomorphism of vector spaces and so in particular surjective.
\end{proof}

We now calculate $\mathcal{A}^{\C^{*},\chi}_{\QTC{K},\WTC{K}}$. Since there is an isomorphism of algebras 
\[ \mathcal{A}^{\C^{*},\chi}_{\QTC{K},\WTC{K}} \simeq \mathcal{A}^{\T,\chi}_{\QTC{K},\WTC{K}} \otimes_{\C[t_1,t_2]} \C[t_1,t_2]/(t_1+t_2),\] it follows by Theorem \ref{equivcoha}, that there is an isomorphism of algebras 
\[(\mathcal{Y}^{(K+1),+,\CoHA}_{-t_2,t_1+t_2})_{t_1+t_2 =0 } \simeq \mathcal{A}^{\C^{*},\chi}_{\QTC{K},\WTC{K}}.\]
But setting $t_2=\hbar$, yields
\begin{align} 
(\mathcal{Y}^{(K+1),+,\CoHA}_{-t_2,t_1+t_2})_{t_1+t_2 =0 } & \simeq \bU_{\C[\hbar]}(S_{K+1})  \end{align} and an isomorphism of Lie algebras \begin{align}
\widehat{\mathfrak{g}}^{\BPS,\C^{*}}_{\QTC{K},\WTC{K}} & \simeq S_{K+1}, \label{eq1}
\end{align} where $S_{K+1}$ is a $\C[\hbar]$ linear Lie algebra generated by $X^{+}_{i,r}, r \geq 0, i \in [0,K]$ and $\mathfrak{K}^{r}_{K+1}, r \geq 0$ with the defining relations 
\begin{align}
[X_{i,r+1}^{+},X^{+}_{i+1,s}] - [X^{+}_{i,r},X^{+}_{i+1,s+1}] &= -\hbar [X^{+}_{i,r},X^{+}_{i+1,s}] \label{relationscLie1}   \\ 
[X_{i,r+1}^{+},X^{+}_{i-1,s}] - [X^{+}_{i,r},X^{+}_{i-1,s+1}] &= \hbar [X^{+}_{i,r},X^{+}_{i-1,s}]  \label{relationscLie2}\\ 
[X_{i,r+1}^{+},X^{+}_{i,s}] - [X^{+}_{i,r},X^{+}_{i,s+1}] &=  0 \label{relationscLie3} \\ 
[X^{+}_{i,r},X^{+}_{j,s}] &= 0 \ \forall |i-j|> 1 \label{relationscLie4}\\
 \sum_{\sigma \in S_2} [X^{+}_{i,r_{\sigma(1)}},[X^{+}_{i,r_{\sigma(2)}},X^{+}_{i+1,s}]] &= 0 \label{relationscLie5}\\
\TT^r(\mathfrak{Z}_{K+1}) &= \hbar^2 \mathfrak{K}_{K+1}^{(r)} \label{M1} 
\end{align}
when $K>1$ and
\begin{subequations}
\begin{align}
[X^{+}_{i,r+1},X^{+}_{i,s}] - [X^{+}_{i,r},X^{+}_{i,s+1}] &= 0 \label{r222}\\
[X^+_{i,r+2},X^{+}_{i+1,s}] - 2[X^{+}_{i,r+1},X^{+}_{i+1,s+1}]+[X^{+}_{i,r},X^{+}_{i+1,s+2}] &= \hbar^2[X^+_{i,r},X^{+}_{j,s}]  \label{ss}\\ 
\mathrm{Sym}_{r_1,r_2,r_3} [X^{+}_{i,r_1},[X^{+}_{i,r_2},[X^{+}_{i,r_3},X^{+}_{i+1,s}]]] &= 0  \\
\TT^r (\mathfrak{Z}_2) &=  \hbar^2  \mathfrak{K}^{(r)}_{2}  \label{M2}
\end{align} 
\end{subequations}
when $K=2$. 

The isomorphism of the $\C[\hbar]$ linear Lie algebras $\widehat{\mathfrak{g}}^{\BPS,\C^{*}}_{\QTC{K},\WTC{K}} \rightarrow S_{K+1}$ is given by $u^r \cdot \hat{\alpha}_i \mapsto X^{+}_{i,r}$ for all $i \in [0,K], r \geq 0$ and $u^r \cdot \hat{\gamma}_{\delta} \mapsto \mathfrak{K}_{K+1}^{(r)}, r \geq 0$. The Lie algebra $S_{K+1}$ can be described more explicitly. Recall the Lie algebra $(\mathcal{W}_{K+1})^{+}$ from Section \ref{algebras}. We consider a derivation of $(\mathcal{W}_{K+1})^{+}$, defined by $\ad_{P_{K+1}}:= [P_{K+1}, \cdot]$ where 
\[ P_{K+1}= \frac{(K+1)^2}{2} t_{0,2}  + (K+1)T_{0,1}(H^{\prime}_{K+1})+  (K+1) \hbar T_{0,0}(H^{\prime \prime}_{K+1})\]
such that $H^{\prime}_{K+1} = \sum_{i=1}^{K+1} (K+3/2-i)E_{i,i}$ and $H^{\prime \prime}_{K+1} = \sum_{i=1}^{K+1} \mu_i E_{i,i}$ for any $\mu_i$ satisfying  $\mu_i-\mu_{i+1} = (1-i/(K+1))$ and $\sum \mu_i =0$. They are chosen so that $[H^{\prime}_{K+1},E_{i,i+1}] = E_{i,i+1}$ for all $i \in [1,K]$, $[H^{\prime}_{K+1},E_{K+1,1}]=-K$ and $[H^{\prime \prime}_{K+1},E_{i,i+1}] = (1-i/(K+1))E_{i,i+1}$ for all $i \in [1,K].$ This might look adhoc, but important property is that they satisfy equations \ref{impproperty}. 
\begin{prop} \label{positivedegisom}
We have an isomorphism of $\C[\hbar]$ linear Lie algebras \[ G \colon S_{K+1}  \rightarrow (\mathcal{W}_{K+1})^{+} \] given by 
\begin{align*}
    X^{+}_{0,r} & \mapsto \ad_{P_{K+1}}^{r}(T_{1,0}(E_{K+1,1})) \\ 
    X^{+}_{i,r} & \mapsto \ad_{P_{K+1}}^{r}(T_{0,0}(E_{i,i+1}))  \\
    \mathfrak{K}^{(r)}_{K+1} & \mapsto \ad_{P_{K+1}}^{r}(t_{1,0})
\end{align*}
\end{prop}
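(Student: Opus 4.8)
The plan is to verify that the assignment $G$ is a well-defined homomorphism of $\C[\hbar]$-linear Lie algebras and then upgrade it to an isomorphism by a dimension count. First I would check that the images $G(X^{\pm}_{i,r})$ and $G(\mathfrak{K}^{(r)}_{K+1})$ satisfy all the defining relations \eqref{relationscLie1}--\eqref{relationscLie6} of $S_{K+1}$. The key observation is that $G$ is defined so that $X^{+}_{i,r} = \ad_{P_{K+1}}^r(X^{+}_{i,0})$, i.e.\ the operator $\ad_{P_{K+1}}$ on $(\mathcal{W}_{K+1})^{+}$ plays exactly the role that the raising operator $u\cdot(-)$, equivalently $\mathrm{T}$, plays on $S_{K+1}$. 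So the verification reduces to (a) checking the degree-zero relations, namely the $r=s=0$ instances of \eqref{relationscLie1}--\eqref{relationscLie5} together with the Serre relation, and (b) checking that $\ad_{P_{K+1}}$ intertwines the two sides, so that the relations in higher $r$ follow automatically by applying $\ad_{P_{K+1}}$ repeatedly.

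The first task is a direct computation inside the Lie algebra $(\mathcal{W}_{K+1})^{+}$ using the structure constants recorded in \eqref{Relations_full_Liealgebra}, namely $[T_{m,a}(X),T_{n,b}(Y)] = T_{m+n,a+b}([X,Y])$ together with the brackets involving the $t_{m,a}$. For instance, the Chevalley/Serre-type relations among the $X^{+}_{i,0} = T_{0,0}(E_{i,i+1})$ (and $X^{+}_{0,0} = T_{1,0}(E_{K+1,1})$) hold because these are precisely the positive Chevalley generators of the affine Lie algebra $\tilde{\mathfrak{sl}}_{K+1}$ sitting inside $(\mathcal{W}_{K+1})^{+}$, exactly as used in the proof of Theorem \ref{Theorem1}. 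The central task is to verify the intertwining property: I would establish by an explicit bracket computation that
\begin{align*}
\ad_{P_{K+1}}(T_{0,0}(E_{i,i+1})) &= T_{1,0}(E_{i,i+1}) + \hbar\,(\text{correction}), \\
\ad_{P_{K+1}}(T_{1,0}(E_{K+1,1})) &= T_{2,0}(E_{K+1,1}) + \hbar\,(\text{correction}),
\end{align*}
where the coefficients $H'_{K+1}, H''_{K+1}$ in $P_{K+1}$ are chosen precisely so that applying $\ad_{P_{K+1}}$ reproduces the $\hbar$-deformed relations \eqref{relationscLie1}--\eqref{relationscLie3}; the carefully prescribed eigenvalue conditions $[H'_{K+1},E_{i,i+1}]=E_{i,i+1}$, $[H'_{K+1},E_{K+1,1}]=-K$ and $[H''_{K+1},E_{i,i+1}]=(1-i/(K+1))E_{i,i+1}$ are exactly what make the $m_{ij}$ and $a_{ij}$ terms come out correctly. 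The term $T_{0,2}$ in $P_{K+1}$ supplies, through the bracket $[t_{m,a},T_{n,b}(X)]=(na-mb)T_{m+n,a+b-1}(X)$, the shift in the $z$-degree that matches $\mathrm{T}$.

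Once $G$ is known to be a well-defined Lie algebra homomorphism, surjectivity follows because the images of $X^{+}_{i,r}$ and $\mathfrak{K}^{(r)}_{K+1}$ generate $(\mathcal{W}_{K+1})^{+}$: indeed $\mathfrak{K}^{(r)}_{K+1}\mapsto \ad^r_{P_{K+1}}(t_{1,0})$ recovers the $t_{1,a}$, and the $X^{+}_{i,r}$ recover the matrix generators, so by Proposition \ref{sphericalgenerationofLK+1Liealgebra} the image is all of $(\mathcal{W}_{K+1})^{+}$. For injectivity I would argue by comparing graded dimensions over $\C[\hbar]$: both $S_{K+1}$ and $(\mathcal{W}_{K+1})^{+}$ are free $\C[\hbar]$-modules whose specializations at $\hbar=0$ are identified via the isomorphism $\widehat{\mathfrak{g}}^{\BPS,\C^{*}}_{\QTC{K},\WTC{K}}/(\hbar) \simeq \abps{K} \simeq \tilde{W}^{+}_{K+1}$ of Theorem \ref{Theorem1}, and whose graded pieces have the finite ranks computed there (cohomological/dimension bigrading). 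Since $G$ is a surjection of free $\C[\hbar]$-modules whose graded pieces have equal finite rank, it is an isomorphism by the graded Nakayama lemma. The main obstacle I anticipate is the intertwining computation of the previous paragraph: the precise bookkeeping of the $\hbar$-linear correction terms produced by $H''_{K+1}$, and confirming that they match the signs and coefficients $\pm(\hbar_1+\hbar_2/2)$, $\mp\hbar_2/2$ specialized at $t_1+t_2=0$ (i.e.\ the $\hbar$ appearing in \eqref{relationscLie1}--\eqref{relationscLie2}), is delicate and is where a sign or normalization error would most easily creep in.
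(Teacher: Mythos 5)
Your proposal follows essentially the same route as the paper's proof: verify the defining relations \eqref{relationscLie1}--\eqref{relationscLie6} directly inside $(\mathcal{W}_{K+1})^{+}$ (the paper does this via the closed forms $\ad_{P_{K+1}}^{r}(T_{0,0}(E_{i,i+1}))=(K+1)^r(D+(1-i/(K+1))\hbar)^{r}E_{i,i+1}$ and $\ad_{P_{K+1}}^{r}(T_{1,0}(E_{K+1,1}))=(K+1)^rD^rzE_{K+1,1}$), use the intertwining $G\circ\mathrm{T}=\ad_{P_{K+1}}\circ G$ for the $\mathfrak{K}$-relation, get surjectivity from the generation of $(\mathcal{W}_{K+1})^{+}$ by $t_{1,r}$, $T_{0,r}(E_{i,i+1})$, $T_{1,r}(E_{K+1,1})$, and conclude injectivity by matching graded ranks (the paper also offers the alternative of localizing at $\hbar$ and invoking Tsymbaliuk). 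One small caution on your reduction step: applying the derivation $\ad_{P_{K+1}}$ to the $(r,s)=(0,0)$ instance of \eqref{relationscLie1} only yields the \emph{sum} of the $(1,0)$ and $(0,1)$ instances, so the relations should be verified for all $(r,s)$ (or at least all $(r,0)$) directly from the explicit closed forms rather than propagated from $r=s=0$ alone.
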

\begin{proof}
We see that in terms of matrices 
\begin{align} \label{impproperty}
\ad_{P_{K+1}}^{r}(T_{1,0}(E_{K+1,1})) &=  (-1)^r(K+1)^r D^{r}z E_{K+1,1} \\
\ad_{P_{K+1}}^{r}(T_{0,0}(E_{i,i+1})) &= (-1)^r(K+1)^r(D + (1-i/(K+1))\hbar)^{r} E_{i,i+1} 
\end{align}

It can be checked explictly that the defining relations (\ref{relationscLie1})(\ref{relationscLie2})(\ref{relationscLie3}) (\ref{relationscLie4}) (\ref{relationscLie5}) and similarly the ones for $K=1$ of $S_{K+1}$ are satisfied. Recall that we defined 
\[ \mathfrak{Z}_{K+1}  = \sum_{i \in \mathbb{Z}/(K+1)\mathbb{Z}} [X^{+}_{i,0},[X^{+}_{i+1,1}, [X^{+}_{i+2,0},[ \cdots,[X^{+}_{i+K-1,0},X^{+}_{i+K,0}]]]]] \]

Let $R_i = [X^{+}_{i,0},[X^{+}_{i+1,1}, [X^{+}_{i+2,0},[ \cdots,[X^{+}_{i+K-1,0},X^{+}_{i+K,0}]]]]]$. Then
When $i=0,K$, we calculate that 

\[G(R_i) = (-1)(K+1) \left( zD+ (2-\frac{i+1}{K+1}) \hbar z\right) (E_{i,i}-E_{i+1,i+1})\]

When $i=0,K$, we calculate that
\begin{align*}
G(R_0) &= (-1)(K+1) \left( zD(E_{K+1,K+1}-E_{1,1}) + \frac{\hbar z K(E_{K+1,K+1}-E_{1,1}}{K+1}-\hbar z E_{1,1} \right) \\
G(R_K) &= (-1)(K+1) \left( zD(E_{K,K}-E_{K+1,K+1}) + \hbar z (E_{K,K}-E_{K+1,K+1}) \right) \\
\end{align*}
Then summing up them up gives \[ G(\mathfrak{Z}_{K+1}) = \sum_{i=0}^{K} G(R_i) = \hbar z  \mathrm{Id}\]
Thus $G(\mathfrak{Z}_{K+1})= \hbar^2 t_{1,0}$, thus relations \ref{M1} and \ref{M2} are satisfied for $r=0$. But note that $G(T(X^{+}_{i,r})) = G(X^{+}_{i,r+1})= \ad_{P_{K+1}}(G(X_{i,r}))$ thus these relations are also satisfied for $r>1$. It is a surjective map of Lie algebras since $(\mathcal{W}_{K+1})^{+}$ is generated by $t_{1,r}, r \geq 0$ and $T_{0,r}(E_{i,i+1}), T_{1,r}(E_{K+1,1})$, all of which are inductively in the image of $G$. To show the injection, we note that $G$ respects both cohomological and dimension grading on $S_{K+1}$, induced by the isomorphism $ \widehat{\mathfrak{g}}^{\BPS,\C^{*}}_{\QTC{K},\WTC{K}} \simeq S_{K+1}$. It is easy to check by Proposition $\ref{bpsLiealgebracyclic}$ that the graded dimensions of $S_{K+1}$ and $(\mathcal{W}_{K+1})^{+}$ are finite dimensional and are the same. Thus $G$ is, in fact, an isomorphism. Alternatively, it is an injective map after localizing with $\C[\hbar^{-1}]$ since then $G[\hbar^{-1}]$ is the positive half of the isomorphism of Lie algebras considered in \cite{Tsymbaliuk_2017}[Theorem 2.2]. 
\end{proof}
Combining Proposition \ref{positivedegisom} and Equation (\ref{eq1}) implies 





    

\begin{thm}\label{Theorem2}
There is an isomorphism of of $\C[\hbar]$ linear Lie algebras 
\[ \widehat{\mathfrak{g}}^{\BPS,\C^{*}}_{\QTC{K},\WTC{K}} \simeq  (\mathcal{W}_{K+1})^{+}   \] 
    
\end{thm}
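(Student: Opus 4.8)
The final statement, Theorem \ref{Theorem2}, asserts an isomorphism of $\C[\hbar]$ linear Lie algebras $\widehat{\mathfrak{g}}^{\BPS,\C^{*}}_{\QTC{K},\WTC{K}} \simeq (\mathcal{W}_{K+1})^{+}$. Looking at what immediately precedes it, the paper has already done essentially all the work: Equation (\ref{eq1}) establishes an isomorphism $\widehat{\mathfrak{g}}^{\BPS,\C^{*}}_{\QTC{K},\WTC{K}} \simeq S_{K+1}$ via the classical limit $t_1+t_2 = 0$ of Theorem \ref{equivcoha}, and Proposition \ref{positivedegisom} establishes $S_{K+1} \simeq (\mathcal{W}_{K+1})^{+}$. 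The plan is therefore simply to compose these two isomorphisms.

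\begin{proof}
We compose the two isomorphisms established above. By setting $t_1 + t_2 = 0$ (equivalently $t_2 = -\hbar$) in Theorem \ref{equivcoha} and passing to primitive elements of the factorization coproduct (Section \ref{factorizationcoproduct}), Equation (\ref{eq1}) gives an isomorphism of $\C[\hbar]$ linear Lie algebras $\widehat{\mathfrak{g}}^{\BPS,\C^{*}}_{\QTC{K},\WTC{K}} \simeq S_{K+1}$, sending $u^r \cdot \hat{\alpha}_i \mapsto X^{+}_{i,r}$ for all $i \in [0,K], r \geq 0$ and $u^r \cdot \hat{\gamma}_{\delta} \mapsto \mathfrak{K}^{(r)}_{K+1}$ for all $r \geq 0$. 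On the other hand, Proposition \ref{positivedegisom} provides an isomorphism of $\C[\hbar]$ linear Lie algebras $G \colon S_{K+1} \rightarrow (\mathcal{W}_{K+1})^{+}$. The composite $G \circ (\text{eq. }\ref{eq1})$ is then an isomorphism of $\C[\hbar]$ linear Lie algebras $\widehat{\mathfrak{g}}^{\BPS,\C^{*}}_{\QTC{K},\WTC{K}} \simeq (\mathcal{W}_{K+1})^{+}$, as desired.
\end{proof}

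Since this composition is immediate, the genuine content lies entirely in the two prior results being composed, and the main obstacle is really located upstream. The substantive step is Proposition \ref{positivedegisom}, whose proof requires (i) verifying that the explicit matrix-differential-operator images $\ad_{P_{K+1}}^r(T_{1,0}(E_{K+1,1})) = (K+1)^r D^r z E_{K+1,1}$ and $\ad_{P_{K+1}}^r(T_{0,0}(E_{i,i+1})) = (K+1)^r (D + (1-i/(K+1))\hbar)^r E_{i,i+1}$ satisfy the defining relations (\ref{relationscLie1})--(\ref{relationscLie6}) of $S_{K+1}$, and (ii) comparing graded dimensions using Proposition \ref{bpsLiealgebracyclic} to upgrade the surjection $G$ to an isomorphism. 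The relation (\ref{relationscLie6}), namely $\hbar^2 \mathfrak{K}^{(r)}_{K+1} = \TT^r(\mathfrak{Z}_{K+1})$, is the most delicate to check, since it encodes the non-spherically-generated imaginary element $\tilde{\gamma}_\delta$ whose image was pinned down in Proposition \ref{imageimaginary} and Equation (\ref{imageofydelta}). The dimension-counting argument closing Proposition \ref{positivedegisom} relies on both $S_{K+1}$ and $(\mathcal{W}_{K+1})^{+}$ having the finite graded dimensions dictated by the Kac polynomial computation (Proposition \ref{bpsLiealgebracyclic}), which match by construction. Given these two inputs, the final theorem follows by pure composition with no additional obstacle.
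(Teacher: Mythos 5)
Your proof is correct and is essentially identical to the paper's own argument: the paper derives Theorem \ref{Theorem2} precisely by combining Equation (\ref{eq1}) (the specialization $t_1+t_2=0$ of Theorem \ref{equivcoha}) with Proposition \ref{positivedegisom}. Your observation that the real content lives upstream in Proposition \ref{positivedegisom} matches how the paper distributes the work.
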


\subsection{Relation with algebras of Gaiotto-Rapčák-Zhou}\label{physics} \label{speculations}

In \cite{rapcak2023cohomological}, a procedure to do the Drinfeld double of the spherical part of CoHA is explained. Then, a certain quotient of the Drinfeld double gives rise to \textit{reduced} Drinfeld double \cite{yang2017cohomological}[Section 4.1]. Mimicking the same construction of reduced Drinfeld double to $\mathcal{Y}^{(n),+,\CoHA}_{\hbar_1,\hbar_2}$ yields the algebra $\mathcal{Y}^{(n),\CoHA}_{\hbar_1,\hbar_2}$ for $n>2$, defined in Definition \ref{cohayangiandefn}\footnote{More precisely, we apply the construction to the localized algebra (which is spherically generated due to Proposition \ref{sphgeneration} and \ref{Isomorphismloc}) and take the minimal subalgebra containing both $\mathcal{Y}^{(n),+,\CoHA}_{\hbar_1,\hbar_2}$ and $(\mathcal{Y}^{(n),-,\CoHA}_{\hbar_1,\hbar_2}):=(\mathcal{Y}^{(n),+,\CoHA}_{\hbar_1,\hbar_2})^{\textrm{op}}$.}. Then in an analogous way to \cite{GUAY2007436}, we define 

\begin{defn}[CoHA Loop Yangian]
Let $\mathcal{L}^{(n),\CoHA}_{\hbar_1,\hbar_2}$ be the $\C[\hbar_1,\hbar_2]$ algebra, defined to be the quotient  $\mathcal{Y}^{(n),\CoHA}_{\hbar_1,\hbar_2}/(\textbf{c})$ where $\textbf{c} := \sum_{i=0}^{n-1} H_{i,0}$
\end{defn}

We then have a doubled version of the Proposition
\ref{positivedegisom}, which follows from work of Tsymbaliuk \cite{Tsymbaliuk_2017} who calculated the classical limit of affine Yangians. 

\begin{prop} \label{loopcohayangian}
We have an isomorphism of $\C[\hbar]$ linear algebras 
\[ \mathcal{L}^{(n),\CoHA}_{\hbar,\hbar_2}/(\hbar_2)  \simeq \bU_{\C[\hbar]}(\mathcal{W}_{n}). \]
\end{prop}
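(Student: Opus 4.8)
The plan is to build on Theorem~\ref{Theorem2} and its proof, reducing the doubled statement to a classical-limit computation parallel to the positive-half case. First I would observe that the quotient $\mathcal{L}^{(n),\CoHA}_{\hbar_1,\hbar_2}/(\hbar_2)$ is by construction a reduced Drinfeld double of the positive half after setting $\hbar_2 = t_1 + t_2 = 0$, so it assembles a positive part, a negative part, and a Cartan part. The positive part $(\mathcal{Y}^{(n),+,\CoHA}_{\hbar_1,\hbar_2})/(\hbar_2)$ is, by the chain of isomorphisms in Section~\ref{sectiondeformedaffinized}, isomorphic to $\bU_{\C[\hbar]}((\mathcal{W}_{n})^{+})$, and the negative part is its opposite, giving $\bU_{\C[\hbar]}((\mathcal{W}_{n})^{-})$ where $(\mathcal{W}_n)^{-}$ is the obvious negative half. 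The task is therefore to show that gluing these two halves along the reduced Cartan, in the classical limit $\hbar_2 \to 0$, reproduces exactly the full Lie algebra $\mathcal{W}_{n}$, and that the enveloping-algebra structure is preserved.

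The key steps, in order, would be: (i) identify the Lie algebra $S_{n}$ of primitive elements of $\mathcal{L}^{(n),\CoHA}_{\hbar,\hbar_2}/(\hbar_2)$ by the same cocommutative-coproduct / Milnor--Moore argument used implicitly in Section~\ref{factorizationcoproduct}, so that the doubled algebra is a universal enveloping algebra $\bU_{\C[\hbar]}(\mathcal{D})$ of a $\C[\hbar]$-linear Lie algebra $\mathcal{D}$; (ii) extend the isomorphism $G$ of Proposition~\ref{positivedegisom} to a map $\mathcal{D} \to \mathcal{W}_{n}$ by sending the negative generators $X^{-}_{i,r}$ to the evident negative-degree elements $\ad^{r}_{P_n}(T_{-1,0}(E_{1,n}))$, $\ad^r_{P_n}(T_{0,0}(E_{i+1,i}))$, and the doubled Cartan/central generators $H_{i,r}, \mathfrak{K}^{(r)}_{\pm}$ to the corresponding $t_{m,a}$ and $T_{m,a}(H)$ spanning elements; (iii) verify that the defining relations of $\mathcal{L}^{(n),\CoHA}_{\hbar,\hbar_2}/(\hbar_2)$ --- in particular the reduced relation $\sum_i H_{i,0}=0$ and the bracket $[X^{+}_{i,r},X^{-}_{j,s}] = \delta_{ij} H_{i,r+s}$ --- match the bracket relations (\ref{Relations_full_Liealgebra}) of $\mathcal{W}_n$ after the substitution $\hbar_1 = \hbar$, using the explicit matrix-differential-operator formulas $z^k D^a \otimes X$; and (iv) conclude injectivity and surjectivity by a graded-dimension count, exactly as in Proposition~\ref{positivedegisom}, now over all of $\Z$ rather than the positive cone, invoking that $\mathcal{W}_{n}$ is the full integral matrix $\mathcal{W}_{1+\infty}$ Lie algebra whose graded pieces are finite-dimensional and computed by the doubled version of the Kac-polynomial dimension formula in Proposition~\ref{bpsLiealgebracyclic}.

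The hard part will be step~(iii): checking that the reduced Cartan relations produce precisely the central-extension-free bracket $[T_{m,a}(X),T_{n,b}(Y)] = T_{m+n,a+b}([X,Y])$ together with $[t_{m,a},T_{n,b}(X)] = (na-mb)T_{m+n,a+b-1}(X)$ of $\mathcal{W}_n$. In the positive half these were forced by spherical generation and the faithful Heisenberg-module structure; in the doubled setting one must control the mixed brackets $[X^{+}_{i,r}, X^{-}_{j,s}]$ and ensure that quotienting by $\mathbf{c} = \sum_i H_{i,0}$ kills exactly the spurious central $1$-form contributions $\mathrm{\Omega}^1(A)/dA$ that appear in the universal central extension $\widehat{\mathfrak{sl}_{n}}(A)$ of Proposition~\ref{Isomorphismloc}. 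I expect that the cleanest route is to localize at $\hbar^{-1}$, where $G[\hbar^{-1}]$ becomes the full (not merely positive) isomorphism of \cite{Tsymbaliuk_2017}[Theorem~2.2], establishing the localized statement immediately, and then to descend to the integral form by the torsion-freeness of $\mathcal{A}^{\T,\chi}_{\QTC{K},\WTC{K}}$ over $\C[\hbar]$ together with the graded Nakayama lemma, precisely mirroring the argument of Theorem~\ref{equivcoha}. The remaining content is then bookkeeping: confirming that the integral generators $\mathfrak{K}^{(r)}_{\pm}$ map to $\hbar$-rescaled $t_{m,a}$ so that the image is the non-localized $\mathcal{W}_n$ rather than a proper subalgebra or an overextension.
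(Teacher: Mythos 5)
Your proposal is correct and follows essentially the same route as the paper: the paper simply writes down the full Tsymbaliuk morphism $\Psi$ (whose positive half is the $G$ of Proposition~\ref{positivedegisom}) on all generators $X^{\pm}_{i,r}, H_{i,r}$, checks that $\mathfrak{K}^{(0)}_{\pm}$ land on $\pm t_{\pm 1,0}$ so the map is well defined, and concludes by combining the localized isomorphism of \cite{Tsymbaliuk_2017}[Theorem 2.2] with an explicit surjectivity check onto the stated generators of $\mathcal{W}_n$. Your extra scaffolding in steps (i) and (iv) --- the Milnor--Moore identification of primitives in the double and the doubled graded-dimension count --- is not used in the paper and is unnecessary, but the load-bearing idea (invert $\hbar$, invoke Tsymbaliuk, descend to the integral form via surjectivity) is exactly the paper's argument.
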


\begin{proof}
We use the morphism considered in \cite{Tsymbaliuk_2017}[Theorem 2.2]\footnote{The codomain of the morphism considered in \cite{Tsymbaliuk_2017}[Theorem 2.2] is a central extension, however in the proof, they consider the morphism induced after quotienting by the central extension.} whose positive half is the morphism we considered in Theorem \ref{positivedegisom} i.e., we define a morphism of algebras
\[ \Psi: \mathcal{L}^{(n),\CoHA}_{\hbar,\hbar_2}/(\hbar_2) \rightarrow  \bU_{\C[\hbar]}(\mathcal{W}_{n}) \]  which sends
\begin{align*}
X^{+}_{0,r}  & \mapsto n^r D^{r}z \otimes E_{n,1} \\
X^{+}_{i,r} & \mapsto n^r(D + (1-i/n)\hbar)^{r} \otimes E_{i,i+1} \\ 
X^{-}_{0,r} & \mapsto n^{r} (z^{-1}D^{r}) \otimes E_{1,n} \\
X^{-}_{i,r} & \mapsto  n^r(D + (1-i/n)\hbar)^{r} \otimes E_{i+1,i} \\
H_{0,r} & \mapsto n^r(D^r \otimes E_{n,n}  - (D+\hbar)^r \otimes E_{1,1})\\ 
H_{i,r} & \mapsto n^r (D+(1-i/n)\hbar)^r \otimes (E_{i,i}-E_{i+1,i+1}).
\end{align*}
We haven't defined image of $\mathfrak{K}^{(r)}_{\pm}$, but note that we can still calculate image of elements $\mathfrak{Z}_n$, and we can check that this map must send $\mathfrak{K}^{(0)}_{+}$ to $\frac{z}{\hbar} = T_{1,0}(1)/\hbar = t_{1,0}$ while  $\mathfrak{K}^{0}_{-}$ to $-\frac{z^{-1}}{\hbar} = - T_{-1,0}(1)/\hbar =  - t_{-1,0}$ which allows to extend define using the same trick as in proof of Proposition \ref{positivedegisom}. After inverting $\hbar$, this map is an isomorphism by \cite{Tsymbaliuk_2017}[Theorem 2.2] while it is a surjection, since again $D_{\hbar}(\C^{*}) \otimes \tilde{\mathfrak{gl}_{n}}$ is generated by $t_{1,r},t_{-1,r},T_{-1,r}(E_{1,n}),T_{0,r}(E_{i,i+1}),T_{0,r}(E_{i+1,i}),T_{1,r}(E_{n,1})$ all of which can be easily seen to be in the image.  
\end{proof}

We consider the $\C[\hbar_1,\hbar_2]$ linear algebras\footnote{We are switching $\epsilon_1$ to $\hbar_2$ and $\epsilon_2$ to $\hbar_1$.} $Y^K$ and its loop version $L^K$, as defined in \cite{gaiotto2023deformed}[Definition 7.01, 6.02]. 
By \cite{gaiotto2023deformed}[Remark 6.18], the algebra $L^{K}$ satisfies \[ L^{K}/\hbar_2 \simeq \bU_{\C[\hbar]}(\mathcal{W}_K),\] which by Propostion \ref{loopcohayangian} is also satisfied by $\mathcal{L}^{(n),\CoHA}_{\hbar,\hbar_2}$. They also show that the algebras $Y^{K}$ and $L^{K}$ are the integral forms of the affine Yangian $\ddot{\mathcal{Y}}_{\hbar_1,\hbar_2}(\mathfrak{gl}(K))$ and the loop Yangian $\ddot{\mathcal{L}}_{\hbar_1,\hbar_2}(\mathfrak{gl}(K)) := \ddot{\mathcal{Y}}_{\hbar_1,\hbar_2}(\mathfrak{gl}(K))/\textbf{c}$ respectively. For $K=1$, $Y^{K}$ and $L^{K}$ are in fact isomorphic to the affine Yangian $\ddot{\mathcal{Y}}_{\hbar_1,\hbar_2}(\mathfrak{gl}(1))$ of $\widehat{\mathfrak{gl}(1)}$ and its loop version respectively \cite{gaiotto2023deformed}[Theorem 17]. Furthermore, it is  known due to (\cite{davison2022affine}+ \cite{rapcak2023cohomological}) and \cite{schiffmann2012cherednik} that \[\mathcal{A}^{\T}_{\tilde{Q_{\Jor}},\tilde{W}} \simeq \ddot{\mathcal{Y}}^{+}_{\hbar_1,\hbar_2}(\mathfrak{gl}(1)). \]
We thus expect that there is an isomorphism of algebras for all $K>2$, 
\[ Y^K \simeq \mathcal{Y}^{(K),\CoHA}_{\hbar_1,\hbar_2}\]
\printbibliography

\end{document}